\documentclass[12pt]{article}

\usepackage{amsmath,amssymb,amsthm}

\usepackage{enumerate}

\usepackage{mathrsfs}

\usepackage[all]{xy}

\usepackage{amscd}

\usepackage{enumitem}

\usepackage{geometry}
\usepackage{layout}
\geometry{
  left=2.5cm,
  right=2.5cm,
  top=3.5cm,
  bottom=3cm
}

\newtheoremstyle{mystyle} 
    {\topsep}				
    {\topsep}				
    {\normalfont}			
    {} 					
    {\bfseries} 			
    {\newline}				
    {5pt plus 1pt minus 1pt}
    {\underline{\thmname{#1}\thmnumber{#2}\thmnote{（#3）}}}	

\theoremstyle{definition}
\newtheorem{theorem}{Theorem}
\newtheorem{prop}[theorem]{Proposition}
\newtheorem{lem}[theorem]{Lemma}
\newtheorem{cor}[theorem]{Corollary}

\newtheorem{definition}[theorem]{Definition}
\newtheorem{rem}[theorem]{Remark}

\newtheorem{conj}[theorem]{Conjecture}

\numberwithin{theorem}{section}
\numberwithin{equation}{section}

\newcommand{\ilim}{\mathop{\varinjlim}\limits}
\newcommand{\plim}{\mathop{\varprojlim}\limits}

\newcommand \defeq{\overset{\text{def}}=}
\newcommand \ab{\operatorname{ab}}
\newcommand \Gal{\operatorname{Gal}}
\newcommand \isom {\overset \sim \rightarrow}

\newcommand \Ker{\operatorname{Ker}}

\def \Image{\operatorname{Im}}

\def \sol{\operatorname{sol}}
\def \Aut{\operatorname{Aut}}
\def \tor{\operatorname{tor}}
\def \inf{\operatorname{inf}}

\def\p{\frak{p}}
\def\q{\frak{q}}

\def\ur{\operatorname{ur}}

\def\Frob{\operatorname{Frob}}

\def\rank{\operatorname{rank}}

\def\Dec{\operatorname{Dec}}

\newcommand\bZ{\mathbb Z}

\newcommand\bQ{\mathbb Q}

\newcommand\bC{\mathbb C}
\newcommand\bF{\mathbb F}

\newcommand\ltilde{\tilde{l}}

\newcommand\bap{\overline{\p}}

\def \cs{\operatorname{cs}}
\def \Ram{\operatorname{Ram}}
\def\tr{\operatorname{tr}}
\def\sup{\operatorname{sup}}
\def\inf{\operatorname{inf}}

\def\cC{{\mathcal C}}

\def \Iso{\operatorname{Iso}}
\def \OutIso{\operatorname{OutIso}}
\def \Inn{\operatorname{Inn}}
\def \Out{\operatorname{Out}}

\def\sol{\operatorname{sol}}
\def \N{\operatorname{N}}
\def\Syl{\operatorname{Syl}}

\def\Char{\operatorname{Char}}
\def\Deg{\operatorname{Deg}}
\def\max{\operatorname{max}}
\def\pr{\operatorname{pr}}

\setcounter{section}{-1}


\begin{document}

\renewcommand{\thesection}{\arabic{section}}

\renewcommand\thefootnote{*\arabic{footnote}}

\title{The pro-$\cC$ anabelian geometry of number fields}
\author{ \textsc{Ryoji Shimizu}\footnote{RIMS, Kyoto University, Kyoto
606-8502, Japan.
e-mail: \texttt{shimizur@kurims.kyoto-u.ac.jp}}}
\date{
}
\maketitle

\begin{abstract}
Let $K$ be a number field and $\cC$ a full class of finite groups.
We write $K^{\cC}/K$ for the maximal pro-$\cC$ Galois extension of $K$, 
and $G_K^{\cC}$ for its Galois group.
In this paper, 
we deal with 
the following question: 
``For $i=1,2$, let $K_i$ be a number field, $\cC_i$ a nontrivial full class of finite groups, and $\sigma :G_{K_1}^{\cC_1}\isom G_{K_2}^{\cC_2}$ an isomorphism. 
Is 
$\sigma$ 
induced by a unique 
isomorphism between 
$K_2^{\cC_2}/K_2$ and $K_1^{\cC_1}/K_1$?''
In one of the main results, 
we answer this question affirmatively only assuming that 
the upper Dirichlet density of the set of prime numbers concerning $\cC_i$ is not zero for 
at least one $i$.
Moreover, 
we obtain some results 
which are still valid 
even when $\cC_1$, $\cC_2$ consist of all finite $p$-groups for a prime number $p$, 
that is, $G_{K_1}^{\cC_1}$, $G_{K_2}^{\cC_2}$ are the maximal pro-$p$ quotients of the absolute Galois groups.
\end{abstract}

\tableofcontents

\section{Introduction}
Let $K$ be a number field and $\cC$ a full class of finite groups.
We write $K^{\cC}/K$ for the maximal pro-$\cC$ Galois extension of $K$, 
and $G_K^{\cC}$ for its Galois group, 
which is the maximal pro-$\cC$ quotient of the absolute Galois group $G_K$ of $K$.

The Neukirch-Uchida theorem, 
which is one of the most important results in anabelian geometry, states that 
the isomorphisms of the absolute Galois groups of number fields arise functorially from unique isomorphisms of fields (cf. {\cite{Uchida}}).
Moreover, 
various generalizations of the Neukirch-Uchida theorem, 
where one replaces the absolute Galois groups by their various quotients, 
have been studied by many mathematicians (e.g. \cite{Ivanov}, \cite{Ivanov3}, \cite{Saidi-Tamagawa2}, \cite{Shimizu} and \cite{Shimizu2}).
These results prompt the following natural question: 

{\it 
For $i=1,2$, 
let $K_i$ be a number field, $\cC_i$ a nontrivial full class of finite groups, and $\sigma :G_{K_1}^{\cC_1}\isom G_{K_2}^{\cC_2}$ an isomorphism. 
Is 
$\sigma$ 
induced by a unique 
isomorphism between 
$K_2^{\cC_2}/K_2$ and $K_1^{\cC_1}/K_1$?
}

In this paper, we 
answer 
this question under some assumptions.
In one of the main results (Theorem \ref{6.1}), 
we only assume that 
the upper Dirichlet density of $\Sigma(\cC_i)$
is not zero for at least one $i \in \{ 1,2 \}$, 
where $\Sigma(\cC)$ is the set of prime numbers $p$ with $\bZ/p\bZ \in \cC$.
Further, 
even when the Dirichlet density of $\Sigma(\cC_i)$
is zero for each $i \in \{ 1,2 \}$, 
most of the results in this paper are still valid, 
so that 
we can answer the question under some technical conditions (cf. Theorem \ref{charthm} and Theorem \ref{relthm}).
In this paper, 
as in proofs of the Neukirch-Uchida theorem (cf. \cite[Chapter X\hspace{-.1em}I\hspace{-.1em}I]{NSW}) and other 
previous results, 
we first characterize group-theoretically some data 
in $G_K^{\cC}$ (e.g. the decomposition groups),
and then obtain an isomorphism of fields using them.

In \S 1, we begin by collecting 
results on the structures of the decomposition groups of $G_K^{\cC}$.
Then, in \S 2, we recover group-theoretically a certain subset of the set of decomposition groups 
of $G_K^{\cC}$ (Corollary \ref{2.10}).
The proof is partly based on techniques
invented in 
\cite{Ivanov} and \cite{Saidi-Tamagawa2}.
Further, by using decomposition groups, we also recover a certain part of the cyclotomic character (Proposition \ref{cyc3}).
In \S 3, based on the results in \S 2, we obtain the ``local correspondence'': 
a one-to-one correspondence between 
sets of 
decomposition groups in $G_{K_i}^{\cC_i}$ 
for $i=1,2$ (Theorem \ref{3.5}), 
and study its properties.
In \S 4, we study separatedness of the decomposition groups in a certain abelian Galois group over $\bQ$.
Then we prove that the local correspondence preserves some local invariants (for example, the residue characteristic and so on).
In \S 5, we develop two ways to show that isomorphisms of Galois groups of number fields are induced by field isomorphisms under some assumptions.
The 
two results 
(Proposition \ref{U4} and Proposition \ref{4.7}) 
are 
based on 
\cite{Uchida2} and \cite[Proposition 2.1]{Shimizu2}, respectively.
However, these previous works cannot be applied to the proof of our question as they are, so that the goal of \S 5 is to modify them 
by inventing some new methods.
In the latter result we need to assume 
the existence of a complex prime for a number field, 
however, 
the assumption on the Dirichlet densities of the sets between which the local correspondence exists is weaker than the former.
In \S 6, we 
calculate 
the Dirichlet density of the set of primes whose decomposition groups are recovered in \S 2.
In \S 7, 
using the results obtained so far, 
we prove the main theorems (Theorem \ref{6.1}, Theorem \ref{charthm} and Theorem \ref{relthm}) 
and their corollaries, for example about the set of outer isomorphisms of $G_K^{\cC}$ (Corollary \ref{6.7}).

\section*{Acknowledgements}
The author 
would like to thank
Professor Akio Tamagawa 
for helpful 
advices 
and 
carefully reading preliminary versions of 
the present paper. 
This work was supported by JSPS KAKENHI Grant Number 21J11879.

\section*{Notation}
\begin{itemize}[leftmargin=*]
\item[$\bullet$]
Given a set $A$ we write $\# A$ for its cardinality.

\item
A number field is a finite extension 
 of the field of rational numbers $\bQ$. 
For an (a possibly infinite) algebraic extension $F$ of $\bQ$, 
we write 
$P=P_F$ 
for the set of primes 
of $F$, 
$P_\infty=P_{F,\infty}$ for the set of archimedean primes of $F$, 
and, 
for a prime number $l$, 
$P_l=P_{F,l}$ for the set of nonarchimedean primes of $F$ above $l$. 
Further, for a set of primes $S \subset P_F$, 
we set $S_f\defeq S \setminus P_\infty$, 
$\underline{S}
\defeq \{ p \in P_{\bQ} \mid P_{F,p} \subset S \}$.
For $\bQ \subset F \subset F' \subset \overline{\bQ}$, 
we write 
$S(F')$ for the set of primes of $F'$ above the primes in $S$: 
$S(F') \defeq \{ \p \in P_{F'} \mid \p|_F \in S \}$.
For convenience, we consider 
that $F'/F$ is ramified at a complex prime of $F'$ if it is above a real prime of $F$.
We write $F_S/F$ for the maximal extension of $F$ unramified outside $S$ and $G_{F,S}$ for its Galois group.
When $P_\infty \subset S$, we set 
$\mathcal O_{F,S} \defeq \{ a \in F \mid 
|a|_{\p} \leq 1
\text{ for all } \p \notin S \}$, where 
$|\ |_{\p}$ is an absolute valuation associated to $\p$.

\item[$\bullet$]
Let $G$ be a group.
For a subgroup $H$ of $G$, write $\N_{G}(H)$ for the normalizer subgroup of $H$ in $G$.

\item[$\bullet$]
Let $G$ be a profinite group.
Write $G^{\sol}$ for the maximal prosolvable quotient of $G$.
Let $\overline{[G, G]}$ be the closed subgroup of $G$ which is (topologically) generated by the commutators in $G$. We write $G^{\ab} \defeq G/\overline{[G,G]}$
for the maximal abelian quotient of $G$.
Let $\cC$ be a (non-empty) full class of finite groups (cf. \cite[(3.5.2) Definition]{NSW}).
Write $G^\cC$ for the maximal pro-$\cC$ quotient of $G$, 
and $\Sigma(\cC)$ for the set of prime numbers $p$ with $\bZ/p\bZ \in \cC$.
We say that $\cC$ is nontrivial if $\Sigma(\cC) \not= \emptyset$.
Let $\Sigma \subset P_{\bQ,f}$. Write $(\Sigma)$ for the full class of finite groups whose orders are divisible only by prime numbers in $\Sigma$, and 
$G^\Sigma \defeq G^{(\Sigma)}$.
For a prime number $l$, 
we write $G^{(l)} \defeq G^{\{ l \}}$ for the maximal
pro-$l$ quotient of $G$.

\item[$\bullet$]
For a profinite group $G$, we say that $G$ is topologically infinitely generated if $G$ is not topologically finitely generated.

\item
Given a Galois extension $L/K$, we write $G(L/K)$ for its Galois group $\Gal(L/K)$.
Let $K$ be a field.
We write $\overline{K}$ for a separable closure of $K$, 
and $G_K$ for the absolute Galois group $G(\overline{K}/K)$ of $K$.
For 
a full class of finite groups $\cC$, 
we write 
$K^{\cC}$ 
for the maximal 
pro-$\cC$ 
Galois extension of $K$, 
which corresponds to the quotient 
$G_K \twoheadrightarrow G_K^{\cC}$.
Note that $L^{\cC}=K^{\cC}$ for any subextension $L$ of $K^{\cC}/K$.
For $\Sigma \subset P_{\bQ,f}$ (resp. $l \in P_{\bQ,f}$), write $K^{\Sigma} \defeq K^{(\Sigma)}$ (resp. $K^{(l)} \defeq K^{(\{ l \})}$).



\item
For $i=1,2$, 
let $A_i$ be a (commutative) ring.
Write $\Iso(A_2, A_1)$ for the set of ring isomorphisms from $A_2$ to $A_1$.
For $i=1,2$, 
let $B_i$ be a ring containing $A_i$.
Write $\Iso(B_2/A_2, B_1/A_1) \defeq \{ \tau\in \Iso(B_2, B_1) \mid \tau(A_2)=A_1 \}$.
For $i=1,2$, 
let $K_i$ be a number field, $L_i/K_i$ an algebraic extension,  and $S_i$ a set of primes of $K_i$.
Write 
\begin{equation*}
\begin{split}
&\Iso((K_2,S_2), (K_1,S_1)) \defeq \{ \tau\in \Iso(K_2, K_1) \mid \text{$\tau$ induces a bijection between $S_2$ and $S_1$}\},\\
&\Iso((L_2/K_2,S_2), (L_1/K_1,S_1)) \defeq 
\left\{ \tau\in \Iso(L_2/K_2, L_1/K_1) \left|
\begin{array}{l}
\text{
$\tau$ induces a bijection between}\\ 
\text{$S_2(L_2)$ and $S_1(L_1)$}
\end{array}
\right.\right\}.
\end{split}
\end{equation*}
For a number field $K$ and a set of primes $S$ of $K$, write 
$\Aut(K,S) \defeq \Iso((K,S), (K,S))$.

\item
For a profinite group $G$, write $\Inn(G)$ for the set of inner automorphisms of $G$.
For $i=1,2$, 
let $G_i$ be a profinite group.
Write $\Iso(G_1, G_2)$ for the set of isomorphisms of profinite groups from $G_1$ to $G_2$.
Note that the group $\Inn(G_2)$ acts on $\Iso(G_1, G_2)$ by the rule $\sigma(\phi) \defeq \sigma\circ\phi$ for $\sigma\in\Inn(G_2)$ and $\phi\in\Iso(G_1, G_2)$.
We call 
$\OutIso(G_1, G_2) \defeq \Iso(G_1, G_2)/\Inn(G_2)$
the set of outer isomorphisms from $G_1$ to $G_2$.
Write $\Out(G) \defeq \OutIso(G, G)$.

\item
Given an algebraic extension $K$ of $\bQ$ and $\p \in P_{K,f}$, 
we write $\kappa(\p)$ for the residue field at $\p$. 
When $K$ is a number field, we write $K_\p$ for the completion of $K$ at $\p$, 
and, in general, we write $K_\p$ for the union of $K'_{\p|_{K'}}$ for finite subextensions $K'/\bQ$ of $K/\bQ$.

\item
Let $L/K$ be a finite extension of number fields and $\q \in P_{L,f}$, 
and set $\p = \q|_{K}$.
We write $f_{\q,L/K} \defeq [\kappa(\q):\kappa(\p)]$.
We write $\cs(L/K)$ 
(resp. $\Ram(L/K)$) 
for the set of nonarchimedean primes of $K$
which split completely 
(resp. are ramified) 
in $L/K$.

\item
Let $K$ be a number field and $\p \in P_{K,f}$, 
and set $p = \p|_{\bQ}$.
Define the residual degree (resp. the local degree) of $\p$ 
as 
 $f_{\p,K/\bQ}$ 
(resp. $[K_\p:\bQ_p]$).
We set $\frak{N}(\p) \defeq \# \kappa(\p) = p^{f_{\p,K/\bQ}}$.

\item
For a number field $K$ and a set of primes $S \subset P_K$, 
we set 
$$
\delta_{\sup}(S) \defeq  
\limsup_{s \to 1+0} \frac{\sum_{\p \in S_f} \frak{N}(\p)^{-s}}{\log{\frac{1}{s-1}}}
,\ 
\delta_{\inf}(S) \defeq  
\liminf_{s \to 1+0} \frac{\sum_{\p \in S_f} \frak{N}(\p)^{-s}}{\log{\frac{1}{s-1}}}
$$
and 
if $\delta_{\sup}(S) = \delta_{\inf}(S)$, 
then write $\delta(S)$ (the Dirichlet density of $S$) for them.

\item
For $\bQ \subset F \subset F' \subset 
\overline{\bQ}$ with $F'/F$ Galois, $\q \in P_{F',f}$ and $\p = \q|_F$, 
write $
D_{\q}(F'/F) \subset G(F'/F)$ for the decomposition group (i.e. the stabilizer) of $G(F'/F)$ at $\q$. 
We sometimes write $D_{\q} = D_{\q}(F'/F)$, when no confusion arises.
Further, we also write $D_{\p} 
= D_{\p}(F'/F) = D_{\q}(F'/F)$, when no confusion arises. Note that $D_{\p}$ is only defined up to conjugation.
There exists a canonical isomorphism $D_{\q}(F'/F) \simeq G(F'_\q/F_\p)$,
and 
we will identify $D_{\q}(F'/F)$ with $G(F'_\q/F_\p)$ via this isomorphism.
Write $I_{\q} = I_{\q}(F'/F) = I_{\p} = I_{\p}(F'/F)$ for the inertia subgroup of $D_{\q}$.
For $S' \subset P_{F'}$, write 
\begin{equation*}
\begin{split}
&\Dec(F'/F, S') \defeq 
\left\{ D\subset G(F'/F) \left|
\begin{array}{l}
\text{$D= D_{\q}(F'/F)$ for some $\q \in S'$}
\end{array}
\right.\right\}.
\end{split}
\end{equation*}
Note that for $S \subset P_{F}$, $G(F'/F)$ acts on $\Dec(F'/F, S(F'))$ by conjugation.

\item
Let $p$ be a prime number.
A 
$p$-adic field is a finite extension 
 of the field of $p$-adic numbers $\bQ_p$. 
Let $\kappa$ be 
a $p$-adic field.
We write $V_{\kappa}$ 
(resp. $I_{\kappa}$) 
for the ramification 
(resp. inertia) 
subgroup of $G_\kappa$, 
and set 
$G_\kappa^{\tr} \defeq G_\kappa/V_{\kappa}$
 (resp. $G_\kappa^{\ur} \defeq G_\kappa/I_{\kappa}$). 
Let $\lambda/\kappa$ be a Galois extension.
We say that $G(\lambda/\kappa)$ is full if $\lambda$ is algebraically closed. 

\item
Given an abelian group $A$, 
we write $A_{\tor}$ for the torsion subgroup of $A$.

\item
Given an abelian profinite group $A$,
we write $\overline{A_{\tor}}$ for the closure in $A$ of $A_{\tor}$,
 and set $A^{/\tor} \defeq A/\overline{A_{\tor}}$.

\item
Given a field $K$, we write $\mu (K)$ for the group consisting of the roots of unity in $K$.
For $n \in \bZ_{>0}$ 
not divisible by the characteristic of $K$, 
we write $\mu_n=\mu_n (\overline{K}) \subset \mu (\overline{K})$ for the subgroup of order $n$. 
Let $l$ be a prime number distinct from the characteristic of $K$.
We set 
$\mu_{l^\infty} \defeq \bigcup_{n \in \bZ_{>0}} \mu_{l^n}(\overline{K}) \subset \mu (\overline{K})$.
Write $\chi_{K,l}$ for the $l$-adic cyclotomic character $G_{K}\rightarrow \Aut(\mu_{l^\infty})={\bZ_l}^{\ast}$, 
and for a full class of finite groups $\cC$, 
$\chi_{K,l}^{\cC} \colon G_{K}^{\cC}\rightarrow {\bZ_l}^{\ast, \cC}$ for the pro-$\cC$ factor of $\chi_{K,l}$.

\item
Let $l$ be a prime number. We set $\ltilde \defeq l$ (resp. $\ltilde \defeq 4$) for $l\neq2$ (resp. $l=2$).

\end{itemize}


\section{Fundamental properties of the decomposition groups
}
In the rest of this paper, let $K$ be a number field and 
$\cC$ a nontrivial full class of finite groups.

We begin by collecting 
results on the structures of the decomposition groups of $G_K^{\cC}$.
Let $\p \in P_{K,f}$.
Write $D_{\p}$ for the decomposition group of $G_K^{\cC,\ab} (=G_K^{\ab,\cC})$ at (a prime above) $\p$.
Then there exists a canonical surjection $G_{K_\p}^{\cC,\ab} \twoheadrightarrow D_{\p}$.
By class field theory, we can prove the following assertions: 
\begin{lem}\label{1.1}
Let $\p \in P_{K,f}$.
Then 
the canonical surjection $G_{K_\p}^{\cC,\ab} \twoheadrightarrow D_{\p} (\subset G_K^{\cC,\ab})$ is bijective.
\end{lem}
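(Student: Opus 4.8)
The statement asserts that the canonical surjection $G_{K_\p}^{\cC,\ab}\twoheadrightarrow D_\p\subset G_K^{\cC,\ab}$ is in fact injective. The plan is to compare the two groups via class field theory, interpreting $G_{K_\p}^{\cC,\ab}$ as the maximal pro-$\cC$ quotient of the profinite completion of $K_\p^\times$ and $G_K^{\cC,\ab}$ as the maximal pro-$\cC$ quotient of the id\`ele class group $C_K$ (suitably completed), so that the map $D_\p\hookrightarrow G_K^{\cC,\ab}$ corresponds on the local side to the natural map $K_\p^\times\to C_K$. The key point is that the localization map $K_\p^\times\to C_K$ is injective with image a direct factor (up to the usual subtleties) of the relevant quotient, so that passing to pro-$\cC$ completions on both sides preserves injectivity.

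\medskip

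First I would recall the local reciprocity isomorphism $K_\p^\times \isom W_{K_\p}^{\ab}$ (or rather the identification of $G_{K_\p}^{\ab}$ with the profinite completion $\widehat{K_\p^\times}$), and the global reciprocity map $C_K\to G_K^{\ab}$, which induces an isomorphism from the profinite completion of $C_K$ onto $G_K^{\ab}$. Under these identifications, the composite $G_{K_\p}^{\ab}\to D_\p\hookrightarrow G_K^{\ab}$ is induced by the canonical map $j_\p\colon K_\p^\times\to \mathbb{A}_K^\times/K^\times = C_K$ sending $x\mapsto (\dots,1,x,1,\dots)$. Second, I would show this $j_\p$ is injective on the relevant completed objects: the point is that $K^\times\cap (K_\p^\times\times\prod_{\q\ne\p}\cO_\q^\times\times\prod_{v\mid\infty}K_v^\times)$ reduces, together with the approximation theorem, to control the kernel; more cleanly, one uses that $j_\p$ admits a (continuous) retraction after completion, coming from the fact that $\cO_\p^\times$ maps isomorphically onto an open subgroup of $C_K$ modulo a subgroup trivial on $K_\p^\times$. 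Third, and this is the crux, I would pass to maximal pro-$\cC$ quotients: taking maximal pro-$\cC$ quotients is right exact but not left exact in general, so injectivity need not be automatically preserved. The resolution is that the inclusion $\widehat{K_\p^\times}\hookrightarrow \widehat{C_K}$ splits as topological abelian groups (the image is a direct summand after completion, since $\cO_\p^\times$ is a direct factor of a neighborhood basis), and a split injection stays a split injection — hence stays injective — after applying the functor $(-)^{\cC}=(-)\otimes_{\widehat{\bZ}}\widehat{\bZ}^{\cC}$-type construction, i.e. after passing to the maximal pro-$\cC$ quotient.

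\medskip

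The main obstacle I anticipate is precisely this last point: verifying that the localization map splits compatibly enough that pro-$\cC$ completion preserves injectivity. Concretely, one must be careful about (i) the archimedean places and the fact that $C_K$ is not compact, so one works with the completion $\widehat{C_K}$ where the connected component and the image of $\bR_{>0}$ are killed, and (ii) the possibility that $\cC$ contains only primes $p$ for which the relevant torsion or pro-$p$ parts of the local and global units interact nontrivially. I would handle (i) by replacing $C_K$ with $C_K/D_K$ ($D_K$ the connected component) from the outset, noting this does not affect $G_K^{\ab}$, and (ii) by using that $\cO_\p^\times$ is, as a $\widehat{\bZ}$-module, a direct summand inside the full id\`ele group modulo $K^\times$ after profinite completion (this is essentially Chebotarev/approximation packaged through class field theory), so the splitting is functorial in $\cC$. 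Granting these, injectivity of $G_{K_\p}^{\cC,\ab}\to D_\p$ follows, and combined with the surjectivity already built into the definition of $D_\p$ we conclude bijectivity.
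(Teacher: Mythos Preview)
Your overall strategy---reduce to the injectivity of $G_{K_\p}^{\ab}\hookrightarrow G_K^{\ab}$ via class field theory, then pass to pro-$\cC$ quotients---is the right shape, and matches the paper's approach: the paper simply cites \cite[III, 4.5 Theorem]{Gras} for the injectivity of $G_{K_\p}^{\ab}\twoheadrightarrow D_\p(K^{\ab}/K)$ and then says the general $\cC$ case follows. Your reconstruction of the first step through the id\`ele class group is correct in outline (and indeed $j_\p\colon K_\p^\times\to C_K$ is injective for the reason you indicate), though the paper outsources this to the reference.

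The gap is in your third step. You worry that $(-)^{\cC}$ is not left exact and therefore try to produce a splitting of $\widehat{K_\p^\times}\hookrightarrow \widehat{C_K}$; but this splitting is neither established (the argument that ``$\cO_\p^\times$ is a direct factor of a neighborhood basis'' concerns the id\`eles before dividing by $K^\times$, and does not obviously survive the quotient) nor needed. The point you are missing is that we are in the \emph{abelian} category: every abelian profinite group $A$ decomposes canonically as $A=\prod_p A_p$ over its pro-$p$ Sylow subgroups, any continuous homomorphism respects this decomposition, and the maximal pro-$\cC$ quotient is nothing but the projection $A\mapsto\prod_{p\in\Sigma(\cC)}A_p$. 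Hence an injection $A\hookrightarrow B$ restricts to injections $A_p\hookrightarrow B_p$ for each $p$, and $(-)^{\cC}$ automatically preserves injectivity of abelian profinite groups. Once you know $G_{K_\p}^{\ab}\hookrightarrow G_K^{\ab}$, the lemma is immediate---no splitting required. Replace your third paragraph with this one-line observation and the proof is complete.
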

\begin{proof}
By \cite[III, 4.5 Theorem]{Gras}, 
the canonical surjection $G_{K_\p}^{\ab} \twoheadrightarrow D_{\p}(K^{\ab}/K)$ is bijective.
The assertion follows from this result.
\end{proof}

\begin{prop}\label{1.2}
Let $\overline{\p} \in P_{K^{\cC},f}$, and write $\p = \overline{\p}|_K$. 
Then 
the canonical surjection $G_{K_\p}^{\cC} \twoheadrightarrow D_{\overline{\p}}(K^{\cC}/K)$ is bijective.
\end{prop}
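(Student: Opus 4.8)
The plan is to deduce Proposition \ref{1.2} from the already-established abelian case (Lemma \ref{1.1}) by a standard devissage over the subextensions of $K^{\cC}/K$. First I would fix a prime $\overline{\p}$ of $K^{\cC}$ lying over $\p$, and for each finite subextension $L/K$ of $K^{\cC}/K$ set $\p_L \defeq \overline{\p}|_L$. The decomposition group $D_{\overline{\p}}(K^{\cC}/K)$ is the inverse limit of the decomposition groups $D_{\p_L}(L/K) = G(L_{\p_L}/K_\p)$ over such $L$, while the canonical surjection $G_{K_\p}^{\cC} \twoheadrightarrow D_{\overline{\p}}(K^{\cC}/K)$ is the limit of the maps $G_{K_\p}^{\cC} \twoheadrightarrow G(L_{\p_L}/K_\p)$. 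Since each of these is a surjection of profinite groups and the transition maps are surjective, injectivity of the limit map is equivalent to the statement that for every open normal subgroup $U \trianglelefteq G_{K_\p}^{\cC}$, the image of $\Ker(G_{K_\p}^{\cC} \twoheadrightarrow D_{\overline{\p}})$ in $G_{K_\p}^{\cC}/U$ is trivial; equivalently, the family of finite extensions $L_{\p_L}/K_\p$ (as $L$ ranges over finite subextensions of $K^{\cC}/K$) is cofinal among the finite subextensions of $(K_\p)^{\cC}/K_\p$.

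So the crux is: every finite pro-$\cC$ extension $M/K_\p$ is realized locally by some finite subextension $L/K$ of $K^{\cC}/K$, i.e.\ $M \subseteq L_{\p_L}$ for suitable $L$. To see this, write $M = K_\p(\alpha)$ for some $\alpha$, take a number field $F \subset K_\p$ with $F \cdot K_\p = K_\p$ (i.e.\ $F$ dense, e.g.\ $F = K$) and approximate the minimal polynomial of $\alpha$ over $K_\p$ by a polynomial over $K$ whose splitting field $L_0/K$ has the property that $L_0 \otimes_K K_\p$ has a factor isomorphic to $M$ (Krasner's lemma). The extension $L_0/K$ need not be pro-$\cC$, but its Galois group $G(L_0/K)$ surjects onto $G(M/K_\p) = G(L_0 \cdot K_\p / K_\p)$, and $G(M/K_\p) \in \cC$ since $M/K_\p$ is pro-$\cC$; because $\cC$ is a full class, one can pass to the subextension $L/K$ of $L_0/K$ fixed by the kernel of $G(L_0/K) \twoheadrightarrow G(L_0/K)^{\cC}$, and arrange (by choosing $L_0$ more carefully, or by a further quotient argument) that $L/K$ is pro-$\cC$ with $L_{\p_L} \supseteq M$. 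Thus $L$ is a finite subextension of $K^{\cC}/K$ with $L_{\p_L} \supseteq M$, giving cofinality.

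The main obstacle I expect is the last point of the previous paragraph: arranging that the global extension realizing $M$ locally is itself pro-$\cC$ over $K$, since a priori approximating a local polynomial only produces a global extension with the correct completion, and its global Galois group can be far from pro-$\cC$. The cleanest fix is to work with the maximal pro-$\cC$ extension directly: $(K_\p)^{\cC} = \bigcup_L L_{\p_L}$ where $L$ runs over finite subextensions of $K^{\cC}/K$, which is exactly the cofinality statement, and this can be proven by noting that $K^{\cC} \cdot K_\p$ is a pro-$\cC$ extension of $K_\p$ contained in $(K_\p)^{\cC}$, together with the reverse inclusion via the Grunwald--Wang-type approximation (realizing any local pro-$\cC$ extension globally within $K^{\cC}/K$) — the full-class hypothesis on $\cC$ guarantees that the relevant local conditions are compatible with being pro-$\cC$ globally. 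Once cofinality is in hand, taking inverse limits over the compatible system of isomorphisms $G_{K_\p}^{\cC}/U_L \isom D_{\p_L}(L/K)$ yields the desired bijection $G_{K_\p}^{\cC} \isom D_{\overline{\p}}(K^{\cC}/K)$.
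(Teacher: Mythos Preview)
Your reduction to the cofinality statement is correct, but the argument you give for cofinality has a real gap, and it bypasses the one fact that makes the proof short. Krasner's lemma produces a global field $L_0/K$ with the right completion, but as you note, $G(L_0/K)$ need not lie in $\cC$; passing to the quotient $G(L_0/K)^{\cC}$ will in general destroy the local condition, since the completion of the corresponding subfield at $\p$ is governed by the image of the decomposition subgroup in that quotient, not by $M$ itself. Your appeal to a ``Grunwald--Wang-type approximation'' does not close this gap: Grunwald--Wang is a statement about \emph{abelian} extensions, and for a general finite $\cC$-extension $M/K_\p$ there is no such realization principle. The sentence ``the full-class hypothesis on $\cC$ guarantees that the relevant local conditions are compatible with being pro-$\cC$ globally'' is not an argument.

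The missing idea, which is exactly what the paper uses, is that $G_{K_\p}$ (and hence $G_{K_\p}^{\cC}$) is \emph{prosolvable}. This lets you climb through abelian layers and use Lemma~\ref{1.1} at each step, rather than trying to realize an arbitrary $M/K_\p$ globally in one shot. Concretely: let $N=\Ker(G_{K_\p}^{\cC}\twoheadrightarrow D_{\overline{\p}})$. For every finite subextension $L/K$ of $K^{\cC}/K$, the open subgroup $\pi^{-1}(D_{\overline{\p}}(K^{\cC}/L))$ of $G_{K_\p}^{\cC}$ equals $G_{L_{\p_L}}^{\cC}$, and Lemma~\ref{1.1} applied to $L$ shows that $N$ lies in the closed commutator subgroup of $G_{L_{\p_L}}^{\cC}$. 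Thus $N\subset\overline{[U,U]}$ for every open $U\supset N$. Prosolvability of $G_{K_\p}^{\cC}$ then forces $N=1$: in any finite solvable quotient $G_{K_\p}^{\cC}/V$, the image of $N$ lands in every term of the derived series, hence is trivial. This is the whole proof; no approximation or Grunwald--Wang is needed.
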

\begin{proof}
The assertion follows 
from the fact that $G_{K_\p}^{\cC}$ is prosolvable
and applying Lemma \ref{1.1} to 
finite extensions of $K$ corresponding to 
various open subgroups of $G_K^{\cC}$.
\end{proof}

\begin{lem}\label{1.3}
Let $p$ be a prime number and $\kappa$ a $p$-adic field. 
Then the cohomological dimension 
of $G_{\kappa}^{\cC}$ (cf. \cite[(3.3.1) Definition]{NSW}) is not more than $2$.
In particular, $G_{\kappa}^{\cC}$ does not have a nontrivial torsion element.
\end{lem}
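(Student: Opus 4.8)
The plan is to compute $\cd(G_\kappa^\cC)$ one prime at a time, using $\cd(G_\kappa^\cC)=\sup_{\ell}\cd_\ell(G_\kappa^\cC)$ with $\ell$ ranging over all prime numbers, and to show $\cd_\ell(G_\kappa^\cC)\le 2$ for every $\ell$. For $\ell\notin\Sigma(\cC)$ this is immediate: since $G_\kappa^\cC$ is pro-$\cC$, so is every closed subgroup of it, and hence no open subgroup of $G_\kappa^\cC$ has index divisible by $\ell$; therefore $\cd_\ell(G_\kappa^\cC)=0$. So the substantive case is $\ell\in\Sigma(\cC)$.

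For $\ell\in\Sigma(\cC)$, I would begin from the classical fact that $\cd(G_\kappa)=2$ for the $p$-adic field $\kappa$ — in particular $\cd_\ell(G_\kappa)\le 2$ — which is \cite[(7.1.8) Theorem]{NSW} and, for $\ell=p$, rests on local Tate duality. The key step is then to transfer this bound along the projection $G_\kappa\twoheadrightarrow G_\kappa^\cC$, i.e. to know that forming the maximal pro-$\cC$ quotient does not raise the $\ell$-cohomological dimension when $\ell\in\Sigma(\cC)$. One can invoke the general statement about maximal pro-$\cC$ factor groups from \cite{NSW}, or argue directly: writing $N=\Ker(G_\kappa\twoheadrightarrow G_\kappa^\cC)$, the maximality of $G_\kappa^\cC$ forces $H^1(N,\bZ/\ell\bZ)^{G_\kappa^\cC}=0$, because a nonzero $G_\kappa$-invariant homomorphism $N\to\bZ/\ell\bZ$ would have kernel normal in $G_\kappa$ and would display a pro-$\cC$ quotient of $G_\kappa$ — an extension of $G_\kappa^\cC$ by a subgroup of $\bZ/\ell\bZ\in\cC$ — strictly larger than $G_\kappa^\cC$, which is absurd. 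Inserting this vanishing, together with its analogue for the open subgroups of $G_\kappa^\cC$ cut out by $\cC$-quotients of finite subextensions of $\kappa^\cC/\kappa$ (each of which is again the absolute Galois group of a $p$-adic field, hence has $\cd\le 2$), into the Hochschild--Serre spectral sequence of $1\to N\to G_\kappa\to G_\kappa^\cC\to 1$ yields the injectivity of the inflation maps in the range needed to conclude $\cd_\ell(G_\kappa^\cC)\le\cd_\ell(G_\kappa)\le 2$. Taking the supremum over $\ell$ gives $\cd(G_\kappa^\cC)\le 2$.

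For the last assertion, a profinite group of finite cohomological dimension is torsion-free: if $g\in G_\kappa^\cC$ had order a prime $\ell$, then $G_\kappa^\cC$ would contain a closed subgroup isomorphic to $\bZ/\ell\bZ$, for which $\cd_\ell=\infty$, while $\cd$ of a closed subgroup is at most $\cd$ of the ambient group; this contradicts $\cd(G_\kappa^\cC)\le 2$. (One may instead quote \cite[(3.3.7) Corollary]{NSW}.)

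The step I expect to be the real work is the ``transfer'' in the second paragraph — making precise that the maximal pro-$\cC$ quotient does not raise $\cd_\ell$, and in particular handling all the relevant open subgroups so that the reduction to $\bZ/\ell\bZ$-coefficients is legitimate — rather than the input $\cd(G_\kappa)=2$, which is standard. If a citable statement covers this transfer, invoking it is the cleanest route; otherwise the spectral-sequence argument above must be executed with care.
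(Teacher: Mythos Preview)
Your approach is correct and matches the paper's: the paper's proof is a one-line citation of \cite[(7.1.8) Theorem (i) and (7.5.8) Proposition]{NSW}, i.e.\ exactly your two ingredients --- $\cd(G_\kappa)\le 2$ and the fact that forming the maximal pro-$\cC$ quotient does not raise $\cd_\ell$ for $\ell\in\Sigma(\cC)$. Your instinct that the clean route is to cite the transfer statement from NSW rather than run the Hochschild--Serre argument by hand is also what the paper does; note that in fact $H^1(N,\bZ/\ell\bZ)=0$ (not merely its $G_\kappa^\cC$-invariants), since the kernel of $N\twoheadrightarrow N^{(\ell)}$ is characteristic in $N$, hence normal in $G_\kappa$, and fullness of $\cC$ then forces $N^{(\ell)}=1$ --- this is what makes the spectral-sequence route go through, though as you anticipated it still takes some care in degree $3$.
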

\begin{proof}
The assertion follows immediately from \cite[(7.1.8) Theorem (i) and (7.5.8) Proposition]{NSW}.
\end{proof}

\begin{prop}\label{1.4}
Let $\overline{\p}, \overline{\q} \in P_{K^{\cC},f}$. 
Then 
$D_{\overline{\p}}(K^{\cC}/K) \cap D_{\overline{\q}}(K^{\cC}/K) = 1$ 
if $\overline{\p} \neq \overline{\q}$.
In particular, $D_{\overline{\p}}(K^{\cC}/K) = D_{\overline{\q}}(K^{\cC}/K)$ if and only if $\overline{\p} = \overline{\q}$.
\end{prop}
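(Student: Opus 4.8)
The plan is to reduce the statement about $D_{\overline{\p}}(K^{\cC}/K) \cap D_{\overline{\q}}(K^{\cC}/K)$ to a statement about inertia and the residue-field structure. Suppose $\overline{\p} \neq \overline{\q}$, and set $\p = \overline{\p}|_K$, $\q = \overline{\q}|_K$. I would first treat the case $\p \neq \q$ and then the case $\p = \q$ separately. If $\p \neq \q$, choose a finite subextension $L/K$ of $K^{\cC}/K$ such that the primes $\overline{\p}|_L$ and $\overline{\q}|_L$ are distinct (possible since $\overline{\p} \neq \overline{\q}$ already forces $\p \neq \q$ here, so one may even take $L = K$); the intersection $D_{\overline{\p}} \cap D_{\overline{\q}}$ maps into $D_{\overline{\p}|_L}(L^{\cC}/L) \cap D_{\overline{\q}|_L}(L^{\cC}/L)$ where now $L^{\cC} = K^{\cC}$, so without loss of generality it suffices to handle $\p \neq \q$ directly over $K$. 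An element $g$ in the intersection lies in $D_{\overline{\p}} = G(K^{\cC}/K)_{\overline{\p}}$ and in $D_{\overline{\q}}$; consider its image in $G(M/K)$ for $M/K$ a finite Galois subextension. Using weak approximation / Chebotarev-type reasoning one produces, for suitable $M$, an element of $G(M/K)$ that fixes the chosen prime above $\p$ but not the one above $\q$, forcing the image of $g$ to be trivial. Iterating over all such $M$ gives $g = 1$.

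For the harder case $\p = \q =: \p_0$ with $\overline{\p} \neq \overline{\q}$: here $D_{\overline{\p}}$ and $D_{\overline{\q}}$ are two distinct conjugates of the same decomposition group, and by Proposition \ref{1.2} each is isomorphic to $G_{K_{\p_0}}^{\cC}$. The intersection $D_{\overline{\p}} \cap D_{\overline{\q}}$ is then a subgroup of $D_{\overline{\p}} \cong G_{K_{\p_0}}^{\cC}$, which by Lemma \ref{1.3} has cohomological dimension $\leq 2$ and, crucially, \emph{no nontrivial torsion}. I would argue that the intersection must be torsion: pick $g \neq 1$ in $D_{\overline{\p}} \cap D_{\overline{\q}}$; since $\overline{\p} \neq \overline{\q}$ there is a finite subextension $L/K$ of $K^{\cC}/K$ with $\overline{\p}|_L \neq \overline{\q}|_L$, hence $L/K$ is a nontrivial extension in which $\p_0$ does not split completely into a single prime in the relevant way — more precisely, $g$ acts trivially on both $L_{\overline{\p}|_L}$ and $L_{\overline{\q}|_L}$ seen inside $K^{\cC}$, but the images of $D_{\overline{\p}|_L}(L^{\cC}/L)$ and $D_{\overline{\q}|_L}(L^{\cC}/L)$ in $G(L/K)$ are distinct decomposition subgroups at $\p_0$, so the image of $g$ in $G(L/K)$ lies in their intersection, which is the decomposition group of a prime of a \emph{larger} residual/local field and hence eventually exhausts to showing $g$ has finite order in $G_{K_{\p_0}}^{\cC}$. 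Then Lemma \ref{1.3} kills it.

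The main obstacle I anticipate is making the case $\p = \q$ rigorous: one must carefully track how two distinct primes $\overline{\p}, \overline{\q}$ of $K^{\cC}$ above the \emph{same} prime $\p_0$ of $K$ give decomposition groups whose intersection, when pushed down to finite levels $G(L/K)$, lands inside the decomposition group of a common prime of $L$ at $\p_0$ — and to conclude that the inverse limit of these "common parts" is torsion (in fact trivial by cd $\leq 2$). The clean way is probably: for any $g \in D_{\overline{\p}} \cap D_{\overline{\q}}$ and any open $U \trianglelefteq G_K^{\cC}$, the image of $g$ in $G_K^{\cC}/U$ fixes the two (distinct, once $U$ is small) primes above $\p_0$, hence fixes the subfield generated by both, which strictly contains the decomposition field of $\overline{\p}$ — giving that $\langle g \rangle \cap U$ has index in $\langle g\rangle$ bounded independently of $U$, i.e. $g$ has finite order, contradiction with Lemma \ref{1.3} unless $g = 1$. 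The final sentence ("if and only if") is then immediate: one direction is trivial, and if $D_{\overline{\p}} = D_{\overline{\q}}$ then $D_{\overline{\p}} \cap D_{\overline{\q}} = D_{\overline{\p}} \neq 1$ (as $G_{K_{\p}}^{\cC}$ is nontrivial since $\cC$ is nontrivial), forcing $\overline{\p} = \overline{\q}$ by the contrapositive of the first part.
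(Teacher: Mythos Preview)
Your proposal has genuine gaps in both cases.

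In the case $\p \neq \q$, the sentence ``Using weak approximation / Chebotarev-type reasoning one produces, for suitable $M$, an element of $G(M/K)$ that fixes the chosen prime above $\p$ but not the one above $\q$, forcing the image of $g$ to be trivial'' is not an argument. Producing \emph{some} element with a property says nothing about $g$; what you need is that the intersection of the two decomposition groups in $G(M/K)$ is already trivial, and at finite levels this simply need not hold. A genuine input is needed here; the paper uses the class field theory fact (Gras, III, 4.16.7) that in the \emph{abelianization} $H^{\ab}$ of a suitable open subgroup $H$, decomposition groups at two distinct finite primes intersect trivially. One then chooses $U$ normal with $x \notin U$, sets $H = \langle U, x\rangle$ so that the image of $x$ in $H^{\ab}$ is nontrivial, and obtains a contradiction.

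In the case $\p = \q$, the step ``giving that $\langle g \rangle \cap U$ has index in $\langle g\rangle$ bounded independently of $U$'' is unjustified: knowing that the image of $g$ in $G(L/K)$ lies in an intersection of two decomposition subgroups does not give a uniform bound on the order of that image as $L$ grows. In fact your use of Lemma~\ref{1.3} is backwards. The paper uses torsion-freeness not to conclude directly, but to \emph{reduce} this case to the previous one: if $1 \neq x \in D_{\overline{\p}} \cap D_{\overline{\q}}$, then by Proposition~\ref{1.2} and Lemma~\ref{1.3} $x$ has infinite (pro-)order, so for any finite subextension $L/K$ the intersection $D_{\overline{\p}}(K^{\cC}/L) \cap D_{\overline{\q}}(K^{\cC}/L)$ is still nontrivial; choosing $L$ large enough that $\overline{\p}|_L \neq \overline{\q}|_L$ reduces to the case of distinct primes below, which is then handled by the abelian class field theory argument above.

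Your last sentence (the ``if and only if'') is fine.
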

\begin{proof}
Write $\p \defeq \overline{\p}|_K$ and $\q \defeq \overline{\q}|_K$.
Assume that $\overline{\p} \neq \overline{\q}$ and that $D_{\overline{\p}}(K^{\cC}/K) \cap D_{\overline{\q}}(K^{\cC}/K) \neq 1$.
Then, by Proposition \ref{1.2} and Lemma \ref{1.3}, 
$D_{\overline{\p}}(K^{\cC}/L) \cap D_{\overline{\q}}(K^{\cC}/L) \neq 1$ for any finite subextension $L$ of $K^{\cC}/K$.
Therefore, replacing $G_K^{\cC}$ by an open subgroup of $G_K^{\cC}$, we may assume $\p \neq \q$.

Take $1 \neq x \in D_{\overline{\p}}(K^{\cC}/K) \cap D_{\overline{\q}}(K^{\cC}/K)$, 
and an open normal subgroup $U$ of $G_K^{\cC}$ such that the image of $x$ in $G_K^{\cC}/U$ is not trivial.
Let $H$ be an open subgroup of $G_K^{\cC}$ topologically generated by $U$ and $x$. 
Then the image of $x$ in $H^{\ab}$ is not trivial.
However, by \cite[III, 4.16.7 Corollary]{Gras}, the intersection of the decomposition groups of $H^{\ab}$ at $\p$ and $\q$ is trivial, a contradiction.

The second assertion follows immediately from the first and Proposition \ref{1.2}.
\end{proof}

\begin{prop}\label{1.5}
Let $\tau \in \Aut(K^{\cC})$. 
Assume $\tau(K)=K$ and that the automorphism of $G_{K}^{\cC}$ induced by the conjugation action of $\tau$ 
is trivial.
Then $\tau$ is trivial.
\end{prop}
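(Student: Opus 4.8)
The plan is to exploit the fact that $\tau$ is a field automorphism of $K^{\cC}$ fixing $K$, so it lies in $G(K^{\cC}/K) = G_K^{\cC}$, and to show that the hypothesis forces $\tau$ to act trivially on every piece of $K^{\cC}$. First I would identify $\tau$ with an element, say $g$, of $G_K^{\cC}$. The assumption that the conjugation action of $\tau$ on $G_K^{\cC}$ is trivial says precisely that $g$ is central in $G_K^{\cC}$. So the statement reduces to: \emph{the center of $G_K^{\cC}$ is trivial}.

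To see that $g$ is trivial, I would use the decomposition groups and the results just proved. Pick any two distinct nonarchimedean primes $\overline{\p}, \overline{\q} \in P_{K^{\cC},f}$ lying over distinct rational primes. Since $g$ is central, it normalizes $D_{\overline{\p}}(K^{\cC}/K)$; but conjugation by an element $h \in G_K^{\cC}$ sends $D_{\overline{\p}}$ to $D_{h(\overline{\p})}$, so for $h = g$ we get $D_{g(\overline{\p})} = g D_{\overline{\p}} g^{-1} = D_{\overline{\p}}$, whence by the second assertion of Proposition \ref{1.4} we obtain $g(\overline{\p}) = \overline{\p}$ for \emph{every} $\overline{\p} \in P_{K^{\cC},f}$. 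Thus $g$ fixes every prime of $K^{\cC}$, i.e.\ $g$ lies in the decomposition group $D_{\overline{\p}}(K^{\cC}/K)$ for all $\overline{\p}$ simultaneously. By Proposition \ref{1.4}, the intersection $D_{\overline{\p}}(K^{\cC}/K) \cap D_{\overline{\q}}(K^{\cC}/K)$ is trivial when $\overline{\p} \neq \overline{\q}$, so $g = 1$, and hence $\tau = \id$.

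Alternatively, and perhaps more robustly, once we know $g$ fixes a single prime $\overline{\p}$ one can argue directly inside the local group: $g \in D_{\overline{\p}}(K^{\cC}/K) \cong G_{K_\p}^{\cC}$ by Proposition \ref{1.2}, and an element that is central in $G_K^{\cC}$ in particular is central in this decomposition subgroup; but $G_{K_\p}^{\cC}$ has trivial center, since it is (an infinite, as $\cC$ is nontrivial) prosolvable group with cohomological dimension $\leq 2$ by Lemma \ref{1.3}, so a nontrivial central element would be torsion-free and generate a closed central $\bZ_l$ (or $\widehat{\bZ}^{(\cdot)}$) contradicting $\cd \leq 2$ together with the known structure of local Galois groups. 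Either route works; I would present the first since it only invokes Proposition \ref{1.4}.

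The main obstacle is the bookkeeping in the first step: making precise that conjugation by $g \in G_K^{\cC}$ carries $D_{\overline{\p}}$ to $D_{g\overline{\p}}$ (where $g$ acts on $P_{K^{\cC}}$ through its identification with $\Gal(K^{\cC}/K)$), and hence that centrality of $g$ forces $g$ to fix all nonarchimedean primes of $K^{\cC}$. Given that, triviality of $g$ is immediate from Proposition \ref{1.4}. One should also note that $P_{K^{\cC},f}$ contains at least two primes over distinct rational primes — clear, since $K^{\cC}/\bQ$ is an algebraic extension with infinitely many nonarchimedean primes — so the intersection argument is not vacuous.
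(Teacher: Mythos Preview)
Your argument misreads the hypothesis. The condition $\tau(K)=K$ means that $\tau$ preserves $K$ \emph{setwise}, i.e.\ $\tau$ restricts to a (possibly nontrivial) element of $\Aut(K)$; it does \emph{not} say that $\tau$ fixes $K$ pointwise. Consequently your first step, identifying $\tau$ with an element $g\in G_K^{\cC}$, is unjustified, and what you have actually proved is only that the center of $G_K^{\cC}$ is trivial. In the paper that weaker fact is recorded separately as Corollary~\ref{1.6}; the full force of Proposition~\ref{1.5} is genuinely needed later (e.g.\ in Corollary~\ref{1.6} itself, applied with $K$ replaced by the field $L$ corresponding to an open normal $U$, where $\tau\in G_K^{\cC}$ preserves $L$ but need not fix it).

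The part of your argument showing that $\tau$ fixes every $\overline{\p}\in P_{K^{\cC},f}$ survives unchanged: since $\tau(K)=K$, $\tau$ normalizes $G_K^{\cC}$ inside $\Aut(K^{\cC})$, so $\tau^{-1}D_{\overline{\p}}(K^{\cC}/K)\tau=D_{\tau\overline{\p}}(K^{\cC}/K)$, and the hypothesis plus Proposition~\ref{1.4} give $\tau\overline{\p}=\overline{\p}$. The gap is in the last step: from $\tau\overline{\p}=\overline{\p}$ you may only conclude that $\tau$ lies in the decomposition group of $\overline{\p}$ in $G(K^{\cC}/K_0)$, where $K_0$ is the fixed field of $\Aut(K^{\cC})$, not in $D_{\overline{\p}}(K^{\cC}/K)$; and Proposition~\ref{1.4} says nothing about intersections of decomposition groups in the larger group $G(K^{\cC}/K_0)$. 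The paper closes this gap by using Chebotarev to choose two distinct primes $\overline{\q},\overline{\q}'$ lying over primes of $K_0$ that split completely in $K/K_0$; for such primes $D_{\overline{\q}}(K^{\cC}/K_0)=D_{\overline{\q}}(K^{\cC}/K)$, and then your intersection argument via Proposition~\ref{1.4} goes through. Your alternative route through the center of the local group has the same defect: it too presupposes $\tau\in G_K^{\cC}$.
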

\begin{proof}
Write $K_0$ for the $\Aut(K^{\cC})$-invariant subfield of $K^{\cC}$. 
Then $K^{\cC}/K_0$ is Galois.
Let $\overline{\p} \in P_{K^{\cC},f}$. Then 
we have 
$D_{\tau\overline{\p}}(K^{\cC}/K) 
= \tau^{-1} D_{\overline{\p}}(K^{\cC}/K) \tau
= D_{\overline{\p}}(K^{\cC}/K)$ in $\Aut(K^{\cC})$.
Therefore, by Proposition \ref{1.4}, we obtain $\tau\overline{\p}=\overline{\p}$, 
and hence $\tau \in D_{\overline{\p}}(K^{\cC}/K_0)$.
Thus, $\tau \in 
\cap_{\overline{\p} \in P_{K^{\cC},f}} D_{\overline{\p}}(K^{\cC}/K_0)$.

By the Chebotarev density theorem, $\# \cs(K/K_0) = \infty$.
Take two distinct primes $\overline{\q},\overline{\q}' \in \cs(K/K_0)(K^{\cC})$.
Then 
$$\cap_{\overline{\p} \in P_{K^{\cC},f}} D_{\overline{\p}}(K^{\cC}/K_0)
\subset D_{\overline{\q}}(K^{\cC}/K_0) \cap D_{\overline{\q}'}(K^{\cC}/K_0) = D_{\overline{\q}}(K^{\cC}/K) \cap D_{\overline{\q}'}(K^{\cC}/K) = 1$$ by Proposition \ref{1.4}.
Thus, $\tau$ is trivial.
\end{proof}

\begin{cor}\label{1.6}
Let $U$ be an open normal subgroup of $G_{K}^{\cC}$.
Then the conjugation action of $G_{K}^{\cC}$ on $U$ is faithful.
In particular, $G_{K}^{\cC}$ has a trivial center.
\end{cor}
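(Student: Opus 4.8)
The plan is to deduce this from Proposition \ref{1.5} by passing to the fixed field of $U$.

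First, let $g \in G_K^{\cC}$ be an element of the kernel of the conjugation action on $U$, so that $gug^{-1} = u$ for all $u \in U$; I want to conclude $g = 1$. I would introduce $L \defeq (K^{\cC})^U$, the subfield of $K^{\cC}$ fixed by $U$. Since $U$ is an open normal subgroup of $G_K^{\cC} = G(K^{\cC}/K)$, the extension $L/K$ is finite Galois with $G(K^{\cC}/L) = U$, and $L$ is again a number field, so the standing hypotheses of \S 1 hold with $L$ in place of $K$. Because $L$ is a subextension of $K^{\cC}/K$ we have $L^{\cC} = K^{\cC}$, hence $G_L^{\cC} = G(L^{\cC}/L) = G(K^{\cC}/L) = U$.

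Next, I would view $g$ as an element of $\Aut(K^{\cC}) = \Aut(L^{\cC})$. Since $L/K$ is Galois, the action of $G(K^{\cC}/K)$ stabilizes $L$, so $g(L) = L$; and by hypothesis the automorphism of $G_L^{\cC} = U$ induced by conjugation by $g$ is trivial. Applying Proposition \ref{1.5} with $L$ in place of $K$ then yields $g = 1$, proving that the conjugation action of $G_K^{\cC}$ on $U$ is faithful. For the final assertion I would simply take $U = G_K^{\cC}$ (an open normal subgroup of itself): the kernel of the conjugation action of $G_K^{\cC}$ on itself is exactly its center, which is therefore trivial.

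There is no serious obstacle here; the one thing to be careful about is the legitimacy of invoking Proposition \ref{1.5} with $L$ replacing $K$, which rests on the observations that $L$ is a number field, that $L^{\cC} = K^{\cC}$, and that $G_L^{\cC} = U$. Once these are recorded, all the real content is supplied by Proposition \ref{1.5}.
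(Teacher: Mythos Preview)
Your proof is correct and follows the same approach as the paper: the paper's proof is simply the one-line ``by Proposition \ref{1.5}, $\tau$ is trivial,'' which implicitly applies Proposition \ref{1.5} with the fixed field $L=(K^{\cC})^U$ in place of $K$, exactly as you spell out. Your added remarks verifying $L^{\cC}=K^{\cC}$, $G_L^{\cC}=U$, and $\tau(L)=L$ (from normality of $U$) are the right details to record.
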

\begin{proof}
Let $\tau \in G_{K}^{\cC}$ such that the automorphism of $U$ induced by the conjugation action of $\tau$ is trivial.
Then, by Proposition \ref{1.5}, $\tau$ is trivial.
\end{proof}

\section{Recovering the decomposition groups
}
In this section, we recover group-theoretically a certain subset of the set of decomposition groups 
of $G_K^{\cC}$. 

\begin{lem}\label{2.0}
The set $\Sigma(\cC)$ can be recovered group-theoretically from $G_{K}^{\cC}$.
\end{lem}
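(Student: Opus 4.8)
The plan is to characterize $\Sigma(\cC)$ as precisely the set of prime numbers $p$ for which $G_K^{\cC}$ admits an open subgroup with a nontrivial pro-$p$ quotient; equivalently, the set of $p$ such that $\bZ/p\bZ$ occurs as a quotient of some open subgroup of $G_K^{\cC}$. This is manifestly group-theoretic, so the content is the equality of this set with $\Sigma(\cC)$. One inclusion is immediate: if $p \in \Sigma(\cC)$, then $\bZ/p\bZ \in \cC$, and since $K^{(p)}/K$ is a nontrivial extension (e.g. by class field theory, or simply because $G_K \twoheadrightarrow G_K^{(p)}$ is nontrivial — $K$ has infinitely many cyclic degree-$p$ extensions), the pro-$p$ quotient of $G_K^{\cC}$ itself is nontrivial, witnessing $p$ in the group-theoretic set.

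The other inclusion is the main obstacle. Suppose $p \notin \Sigma(\cC)$, i.e. $\bZ/p\bZ \notin \cC$; I must show no open subgroup of $G_K^{\cC}$ surjects onto $\bZ/p\bZ$. Let $H \leq G_K^{\cC}$ be open, corresponding to a finite subextension $L/K$ of $K^{\cC}/K$; then $H = G_L^{\cC}$ and $K^{\cC} = L^{\cC}$ (as noted in the excerpt). If $H$ had a quotient isomorphic to $\bZ/p\bZ$, that quotient would be a group in $\cC$ (since $\cC$ is a full class, it is closed under quotients of — wait, more carefully — $H/\overline{[H,H]}H^p$ is a pro-$p$ group which is a quotient of $H = G_L^{\cC}$, hence pro-$\cC$; a nontrivial finite pro-$p$ group in $\cC$ would have a quotient $\bZ/p\bZ \in \cC$ because full classes are closed under taking quotients, contradicting $p \notin \Sigma(\cC)$). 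So the key point is that a quotient of a pro-$\cC$ group lies in $\cC$, and a nontrivial finite $p$-group has $\bZ/p\bZ$ as a quotient; combining these forces $p \in \Sigma(\cC)$.

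Thus the proof amounts to: (1) recall that $H$ open in $G_K^{\cC}$ implies $H = G_L^{\cC}$ for the corresponding $L$, so $H$ is itself a maximal pro-$\cC$ Galois group, and its finite quotients lie in $\cC$; (2) observe $G_L^{\cC}$ is nontrivial whenever $\cC$ is nontrivial (as $\Sigma(\cC) \neq \emptyset$ gives a surjection onto $\bZ/q\bZ$ for some prime $q \in \Sigma(\cC)$ via the infinitely many degree-$q$ extensions of $L$); (3) conclude $p \in \Sigma(\cC)$ if and only if $G_L^{\cC}$ (or equivalently $G_K^{\cC}$ itself, taking $L = K$) surjects onto $\bZ/p\bZ$, which is a group-theoretic condition. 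I expect the only subtlety is phrasing (1)--(2) cleanly using that $\cC$ is a full class, so that "$\bZ/p\bZ$ is a quotient of an open subgroup" is genuinely equivalent to membership in $\Sigma(\cC)$; the density/Chebotarev input needed for nontriviality of $G_L^{(q)}$ is standard.
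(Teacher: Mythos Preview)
Your proposal is correct and takes essentially the same approach as the paper: characterize $\Sigma(\cC)$ as the set of primes $l$ for which $G_K^{\cC}$ surjects onto $\bZ/l\bZ$, using (i) that quotients of pro-$\cC$ groups lie in $\cC$ for the ``only if'' direction, and (ii) the existence of degree-$l$ cyclic extensions of $K$ for the ``if'' direction. The paper's one-line proof simply names the cyclotomic $\bZ_l$-extension as the explicit witness in (ii); your detour through arbitrary open subgroups is unnecessary (as you yourself note, $L=K$ already works), but harmless.
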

\begin{proof}
The assertion follows immediately from 
the existence of the cyclotomic $\bZ_l$-extension of $K$ for any prime number $l$.
\end{proof}

\begin{definition}\label{2.1}
Let $l$ be a prime number.
Given a profinite group $G$, we say that $G$ is of $l$-decomposition type if $G$ is isomorphic to 
a semidirect product $\bZ_l \rtimes_\phi \bZ_l$ where $\phi : \bZ_l \to \Aut(\bZ_l)
$ is injective.
\end{definition}

If $l \neq 2$ (resp. $l=2$), the condition that $\phi : \bZ_l \to \Aut(\bZ_l) (= \bZ_l^{\ast} \simeq \bZ/(l-1)\bZ \times \bZ_l$ (resp. $\bZ/2\bZ \times \bZ_2))$ is injective is equivalent to the condition that $\Image(\phi)$ is nontrivial (resp. $\# \Image(\phi) \geq 3$).
Note that a profinite group $G$ of $l$-decomposition type is a pro-$l$ Demushkin group of rank $2$ (cf. \cite[Definition 2.1]{Ivanov}), 
in particular, 
$H^2(G,\bF_l) \simeq \bF_l$.

\begin{lem}\label{2.2}
Let $p,l$ be distinct prime numbers and $\kappa$ a $p$-adic field. 
Assume $l \in \Sigma(\cC)$.
If $\mu_{l} \subset \kappa^{\cC}$ (resp. $\mu_{l} \not\subset \kappa^{\cC}$), then 
the $l$-Sylow subgroups of $G_{\kappa}^{\cC}$ 
are of $l$-decomposition type (resp. isomorphic to $\bZ_l$).
\end{lem}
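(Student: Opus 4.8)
The plan is to work with the maximal pro-$l$ quotient $G_{\kappa}^{(l)}$ of $G_{\kappa}$, which factors through $G_{\kappa}^{\cC}$ since $l \in \Sigma(\cC)$; more precisely, since $\cC$ is a full class, the $l$-Sylow subgroups of $G_{\kappa}^{\cC}$ are isomorphic to the $l$-Sylow subgroups of the absolute Galois group of the maximal unramified-or-rather the maximal subextension of $\overline{\kappa}/\kappa$ whose Galois group lies in $\cC$; in any case, an $l$-Sylow subgroup $S$ of $G_{\kappa}^{\cC}$ is an $l$-Sylow subgroup of $G_{\kappa}^{\cC}$, and by standard Sylow theory for profinite groups it is isomorphic to the maximal pro-$l$ quotient $G_{\lambda}^{(l)}$ where $\lambda$ is the fixed field of a pro-$l$-Sylow — concretely, $S \simeq G_{\kappa'}^{(l)}$ where $\kappa'$ runs over the finite subextensions whose degree over $\kappa$ is prime to $l$, i.e. $S \simeq G_{\kappa^{(l')}}^{(l)}$ for the maximal prime-to-$l$ extension; but by local class field theory this is the same as the pro-$l$ completion of the absolute Galois group of the $p$-adic field $\kappa^{(l')}$, whose residue field contains all prime-to-$l$ roots of unity.

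Concretely, I would reduce to the structure theory of maximal pro-$l$ quotients of absolute Galois groups of $p$-adic fields. Let $\kappa'$ be the fixed field in $\overline{\kappa}$ of a pro-$l$-Sylow subgroup of $G_\kappa$ (so $[\kappa':\kappa]$ is "prime to $l$" in the supernatural sense and $S := G_{\kappa'}^{(l)}$ is the $l$-Sylow subgroup in question, well-defined up to conjugacy). By the classical computation of the Galois cohomology of $p$-adic fields (Serre, Labute; cf. \cite[(7.5.9)--(7.5.11)]{NSW}), $G_{\kappa'}^{(l)}$ is a free pro-$l$ group if $\mu_l \not\subset \kappa'$, and a Demushkin group if $\mu_l \subset \kappa'$; moreover the rank is $\dim_{\bF_l} H^1(G_{\kappa'}^{(l)},\bF_l) = \dim_{\bF_l} \kappa'^{\times}/(\kappa'^{\times})^l$. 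The second key point is that passing to $\kappa'$ does not change whether $\mu_l$ is present: $\mu_l \subset \kappa^{\cC}$ iff $\mu_l \subset \kappa'$, because $[\kappa(\mu_l):\kappa]$ divides $l-1$, hence is prime to $l$, hence $\kappa(\mu_l) \subset \kappa'$; and $\kappa(\mu_l) \subset \kappa^{\cC}$ always (again because $\bZ/l\bZ \in \cC$ forces, via the full-class axioms, all groups of order dividing $l-1$... more carefully, the cyclotomic character mod $l$ has image in $\bF_l^\times$ which is cyclic of order dividing $l-1$, and one checks $\mu_l \subset \kappa^{\cC}$ iff this image is trivial iff $\mu_l \subset \kappa$, but I should be careful — actually the cleanest statement is just $\mu_l \subset \kappa^{\cC} \iff \mu_l \subset \kappa'$, which I would argue by the prime-to-$l$ degree observation).

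In the case $\mu_l \not\subset \kappa'$: then $S = G_{\kappa'}^{(l)}$ is free pro-$l$, and I must show it is free of rank $2$, i.e. isomorphic to $\bZ_l$... wait, $\bZ_l$ is free pro-$l$ of rank $1$. So here $\dim_{\bF_l} H^1 = 1$; by the local Euler characteristic / the structure theorem, when $\mu_l \not\subset \kappa'$ and $\kappa'$ is $p$-adic with $p \neq l$, one has $\kappa'^\times \simeq \bZ \times \cO_{\kappa'}^\times$ and $\cO_{\kappa'}^\times$ has no $l$-torsion and its pro-$l$ part, so $\kappa'^\times/(\kappa'^\times)^l \simeq \bZ/l\bZ$ (generated by the uniformizer), whence $S \simeq \bZ_l$. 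In the case $\mu_l \subset \kappa'$: then $\kappa'^\times/(\kappa'^\times)^l$ has dimension $2$ (uniformizer plus a root of unity), so $S$ is a Demushkin group of rank $2$; and a rank-$2$ Demushkin pro-$l$ group with the appropriate invariant is exactly a group of $l$-decomposition type $\bZ_l \rtimes_\phi \bZ_l$ — this is the classical description of $G_{\bQ_p(\mu_l)\text{-type}}^{(l)}$, where the action $\phi$ is via the cyclotomic character and is injective because... here is where I expect the main obstacle: verifying that the semidirect-product description holds and that $\phi$ is \emph{injective} (not just nontrivial), which for $l = 2$ is the delicate point (hence the definition of $\tilde l$ and the $l=2$ caveat in Definition \ref{2.1}). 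I would handle this by citing the explicit structure of $G_{\kappa'}^{(l)}$ as a $\bZ_l$-by-$\bZ_l$ extension — the inertia subgroup gives one $\bZ_l$, the unramified quotient the other, Frobenius acts on tame inertia by multiplication by $\frak{N}(\kappa')$ which is $\equiv$ a topological generator mod appropriate power — and invoking \cite[(7.5.2)]{NSW} or Labute's classification to get injectivity of $\phi$. The rest (that an $l$-Sylow is well-defined up to conjugacy, that it can be computed as $G_{\kappa'}^{(l)}$) is routine profinite Sylow theory.

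\begin{proof}
Fix a pro-$l$ Sylow subgroup $S$ of $G_{\kappa}^{\cC}$; it is well-defined up to conjugation. Since $\cC$ is a full class containing $\bZ/l\bZ$, the quotient $G_{\kappa} \twoheadrightarrow G_{\kappa}^{\cC}$ identifies $S$ with a pro-$l$ Sylow subgroup of $G_\kappa$, hence $S \simeq G_{\kappa'}^{(l)}$ where $\kappa'$ is the fixed field of a pro-$l$ Sylow subgroup of $G_\kappa$, a (possibly infinite) algebraic extension of $\kappa$ of prime-to-$l$ degree. Since $[\kappa(\mu_l):\kappa]$ divides $l-1$ and is therefore prime to $l$, we have $\kappa(\mu_l)\subset \kappa'$ and likewise $\kappa(\mu_l)\subset \kappa^\cC$; consequently $\mu_l\subset \kappa^{\cC}$ if and only if $\mu_l\subset\kappa'$.

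Since $p\neq l$, the field $\kappa'$ is a $p$-adic field (in the generalized sense) with $\kappa'^{\times}\simeq \bZ\times\cO_{\kappa'}^{\times}$, and the pro-$l$ part of $\cO_{\kappa'}^{\times}$ is isomorphic to $\mu_{l^{\infty}}(\kappa')$. Hence, by local class field theory,
\[
\dim_{\bF_l} H^1(G_{\kappa'}^{(l)},\bF_l)=\dim_{\bF_l}\kappa'^{\times}/(\kappa'^{\times})^l
=\begin{cases}1 & \mu_l\not\subset\kappa',\\ 2 & \mu_l\subset\kappa'.\end{cases}
\]
If $\mu_l\not\subset\kappa'$, then $H^2(G_{\kappa'}^{(l)},\bF_l)=0$ by \cite[(7.5.9) Theorem]{NSW}, so $G_{\kappa'}^{(l)}$ is a free pro-$l$ group of rank $1$, i.e. $S\simeq\bZ_l$.

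Now suppose $\mu_l\subset\kappa'$. Then $I_{\kappa'}^{(l)}\defeq I_{\kappa'}/(I_{\kappa'}\cap \overline{[G_{\kappa'},G_{\kappa'}]}V_{\kappa'})$ — equivalently, the image of the inertia subgroup in $G_{\kappa'}^{(l)}$ — is procyclic, and since $p\neq l$ the wild inertia $V_{\kappa'}$ is pro-prime-to-$l$, so this image is the maximal pro-$l$ quotient of tame inertia, which is isomorphic to $\bZ_l(1)\simeq\bZ_l$. Let $N\subset S$ be this normal subgroup; then $S/N\simeq G_{\kappa'}^{\ur,(l)}\simeq\bZ_l$, generated by a Frobenius lift $\Frob$. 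The extension $1\to N\to S\to S/N\to 1$ splits, because $S/N$ is free pro-$l$, so $S\simeq\bZ_l\rtimes_\phi\bZ_l$ where $\Frob$ acts on $N\simeq\bZ_l(1)$ through the cyclotomic character, i.e. by multiplication by $\frak{N}(\p_{\kappa'})\in\bZ_l^{\times}$, where $\p_{\kappa'}$ is the prime of $\kappa'$. Since $\mu_l\subset\kappa'$ we have $\frak{N}(\p_{\kappa'})\equiv 1\pmod l$ (indeed $\frak{N}(\p_{\kappa'})\equiv 1\pmod{\ltilde}$), and as $\kappa'$ contains only finitely many $l$-power roots of unity, $\frak{N}(\p_{\kappa'})$ is not a root of unity in $\bZ_l^{\times}$; hence the homomorphism $\phi\colon\bZ_l\to\Aut(\bZ_l)=\bZ_l^{\times}$, $1\mapsto\frak{N}(\p_{\kappa'})$, is injective. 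Therefore $S$ is of $l$-decomposition type.

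Finally, combining the two cases with the equivalence $\mu_l\subset\kappa^{\cC}\iff\mu_l\subset\kappa'$ from the first paragraph yields the assertion.
\end{proof}
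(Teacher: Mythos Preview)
Your central claim --- that the quotient $G_\kappa \twoheadrightarrow G_\kappa^{\cC}$ identifies an $l$-Sylow $S$ of $G_\kappa^{\cC}$ with an $l$-Sylow of $G_\kappa$ --- is false. Take $\cC = (\{l\})$ and any $\kappa$ with $\mu_l \not\subset \kappa$ (and $p\neq l$): then $G_\kappa^{\cC} = G_\kappa^{(l)}$ is itself pro-$l$, and since $\kappa^\times/(\kappa^\times)^l$ is one-dimensional one has $G_\kappa^{(l)} \simeq \bZ_l$; but an $l$-Sylow of $G_\kappa$ is of $l$-decomposition type (rank $2$). The kernel of $G_\kappa \twoheadrightarrow G_\kappa^{\cC}$ is simply not pro-prime-to-$l$ in general, so there is no reason for an $l$-Sylow of $G_\kappa$ to inject.

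Relatedly, your equivalence ``$\mu_l \subset \kappa^{\cC} \iff \mu_l \subset \kappa'$'' collapses: you yourself note $\kappa(\mu_l)\subset\kappa'$ always (since $[\kappa(\mu_l):\kappa]\mid l-1$), so the right-hand side holds unconditionally and your case ``$\mu_l\not\subset\kappa'$'' is vacuous. The parenthetical ``likewise $\kappa(\mu_l)\subset\kappa^{\cC}$'' is precisely what fails: prime-to-$l$ degree puts $\kappa(\mu_l)$ inside $\kappa'$, not inside $\kappa^{\cC}$, unless the cyclic group of order $[\kappa(\mu_l):\kappa]$ happens to lie in $\cC$.

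The paper avoids this by never trying to lift the Sylow. It first shows, via the tame exact sequence, that the $l$-Sylow $G_{\kappa,l}$ of $G_\kappa$ is always of $l$-decomposition type. Then it treats the two cases directly: when $\mu_l\subset\kappa^{\cC}$, it exhibits the explicit quotient $G(\kappa(\mu_{l^\infty})(\sqrt[l^\infty]{\pi})/\kappa)$ of $G_\kappa^{\cC}$ into which $G_{\kappa,l}$ injects, so $G_{\kappa,l}$ survives as an $l$-Sylow of $G_\kappa^{\cC}$; when $\mu_l\not\subset\kappa^{\cC}$, it argues from the tame sequence that the $l$-part of the inertia of $G_\kappa^{\cC}$ is trivial, so only the unramified $\bZ_l$ remains.
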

\begin{proof}
By \cite[(7.5.2) Proposition]{NSW}, 
there exists a split group extension: 
\begin{equation}\label{tr}
1 \to (\prod_{p'\neq p}\bZ_{p'})(1) \to G_\kappa^{\tr} \to G_\kappa^{\ur} (\simeq \hat{\bZ}) \to 1,
\end{equation}
so that the $l$-Sylow subgroups of $G_{\kappa}^{\tr}$ 
are of $l$-decomposition type. 
Let $G_{\kappa,l}$ be any $l$-Sylow subgroup of $G_{\kappa}$.
Since $V_{\kappa}$
is pro-$p$, $G_{\kappa,l}$ 
is isomorphic to an $l$-Sylow subgroup of $G_{\kappa}^{\tr}$, and hence is of $l$-decomposition type.

Assume $\mu_{l} \subset \kappa^{\cC}$.
Let $\pi$ be a uniformizer of $\kappa$.
Then the composite 
$$G_{\kappa,l} \hookrightarrow G_{\kappa} \twoheadrightarrow G_{\kappa}^{\cC} \twoheadrightarrow G(\kappa(\mu_{l^{\infty}})(\sqrt[l^{\infty}]{\pi})/\kappa)$$
is injective, so that $G_{\kappa,l}$ is isomorphic to an $l$-Sylow subgroup of $G_{\kappa}^{\cC}$.
Thus, the $l$-Sylow subgroups of $G_{\kappa}^{\cC}$ are of $l$-decomposition type.

Assume $\mu_{l} \not\subset \kappa^{\cC}$.
Then, by the exact sequence (\ref{tr}), 
any $l$-Sylow subgroup of the inertia subgroup of $G_{\kappa}^{\cC}$ is trivial.
Thus, the $l$-Sylow subgroups of $G_{\kappa}^{\cC}$ are isomorphic to $\bZ_l$.
\end{proof}

\begin{lem}\label{2.3}
Let $p$ be a prime number and $\kappa$ a $p$-adic field. 
Then $G_{\kappa}^{\cC}$ does not have a closed subgroup of $p$-decomposition type.
\end{lem}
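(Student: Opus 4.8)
The plan is to argue by contradiction: suppose $H \subset G_\kappa^{\cC}$ is a closed subgroup of $p$-decomposition type, so $H \simeq \bZ_p \rtimes_\phi \bZ_p$ with $\phi$ injective. The crucial numerical feature of such an $H$ is that it is a pro-$p$ Demushkin group of rank $2$, hence (as noted after Definition \ref{2.1}) $H^2(H, \bF_p) \simeq \bF_p$, and in particular $\cd(H) = 2$. Since $H$ is pro-$p$, its $\bF_p$-cohomology is computed through the pro-$p$ completion, so I would like to bound $\cd_p$ of $G_\kappa^{\cC}$ or of a suitable pro-$p$ subquotient and derive a contradiction with $\cd(H) = 2$. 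The natural first step is therefore to pass to the pro-$p$ part: let $G_{\kappa,p}$ be a $p$-Sylow subgroup of $G_\kappa^{\cC}$ containing $H$ (after conjugating), so it suffices to bound $\cd(G_{\kappa,p}) = \cd_p(G_\kappa^{\cC})$.

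Next I would analyze this $p$-Sylow subgroup using the ramification filtration, exactly as in the proof of Lemma \ref{2.2}. The wild ramification subgroup $V_\kappa$ is pro-$p$, and the tame quotient sits in the split extension $1 \to (\prod_{p' \neq p} \bZ_{p'})(1) \to G_\kappa^{\tr} \to G_\kappa^{\ur} \to 1$ of \eqref{tr}; the point is that the prime-to-$p$ part of this extension contributes nothing $p$-adically, so the image of $G_{\kappa,p}$ in $G_\kappa^{\tr,\cC}$ is a quotient of $\hat{\bZ}$, i.e. either trivial or $\simeq \bZ_p$. Hence $G_{\kappa,p}$ is an extension of a pro-$p$ group of cohomological dimension $\leq 1$ by (a quotient of) the wild part. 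So I need the key input that the wild inertia contributes cohomological dimension at most $1$ as well — more precisely, that the maximal pro-$p$ quotient of $G_\kappa$ when $\mu_p \not\subset \kappa$ behaves like a free pro-$p$ group on the wild side, whereas when $\mu_p \subset \kappa$ the full pro-$p$ quotient $G_\kappa^{(p)}$ is itself a Demushkin group of rank $[\kappa:\bQ_p]+2$. In either case $\cd_p(G_\kappa) = 2$; combined with Lemma \ref{1.3}, which already gives $\cd(G_\kappa^{\cC}) \leq 2$, we know $\cd(G_{\kappa,p}) \leq 2$, so a priori a Demushkin subgroup of rank $2$ is \emph{not} immediately excluded by dimension count alone.

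The real obstruction — and the step I expect to be the heart of the argument — is thus not the cohomological dimension but the \emph{structure} of the $p$-Sylow subgroup: I must show $G_{\kappa,p}$ cannot \emph{contain} a rank-$2$ Demushkin group as a closed subgroup. Here I would use that an open subgroup of a Demushkin group is again Demushkin with rank growing according to the Euler characteristic formula, so a Demushkin group of rank $2$ has all its open subgroups of rank $2$ as well, whereas $G_\kappa^{(p)}$ has rank $d+2$ with $d=[\kappa:\bQ_p] \geq 1$, and more importantly its open subgroups have \emph{strictly increasing} rank; a closed subgroup of infinite index in a Demushkin group is free pro-$p$ (cf. the theory of Demushkin groups, e.g. in \cite[Chapter III, \S 9]{NSW} / Serre), hence has $\cd \leq 1$ and cannot be Demushkin of rank $2$, while a closed subgroup of finite index would have rank $d + 2 \neq 2$. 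When $\mu_p \not\subset \kappa^{\cC}$, Lemma \ref{2.2} tells us $G_{\kappa,p} \simeq \bZ_p$ outright (replacing $l$ by $p$ in that argument, using that the $p$-Sylow of inertia in $G_\kappa^{\cC}$ is trivial), so no rank-$2$ subgroup exists. When $\mu_p \subset \kappa^{\cC}$, I reduce to a finite subextension $L/\kappa$ with $\mu_p \subset L$ and use that $H \subset G_L^{\cC}$, with $G_L^{(p)}$ a Demushkin group of rank $[L:\bQ_p] + 2 \geq 3$; then the subgroup classification above forbids a closed copy of a rank-$2$ Demushkin group. Assembling these cases yields the claim.
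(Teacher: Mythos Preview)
Your overall strategy---reduce to the structure theory of $G_L^{(p)}$ (free pro-$p$ when $\mu_p\not\subset L$, Demushkin of rank $[L:\bQ_p]+2\ge 3$ when $\mu_p\subset L$) and use that a rank-$2$ Demushkin group cannot sit as a closed subgroup in either---is exactly the idea behind the reference \cite[Lemma 3.3]{Ivanov3} that the paper simply invokes. However, your write-up contains one error and one genuine gap.

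\textbf{The error (case $\mu_p\not\subset\kappa^{\cC}$).} You assert that ``Lemma~\ref{2.2} tells us $G_{\kappa,p}\simeq\bZ_p$ outright\ldots using that the $p$-Sylow of inertia in $G_\kappa^{\cC}$ is trivial.'' This is false: Lemma~\ref{2.2} treats $l\neq p$, precisely because the wild inertia $V_\kappa$ is pro-$p$ and survives in any pro-$\cC$ quotient once $p\in\Sigma(\cC)$. For instance, with $\cC=(\{p\})$ and $\mu_p\not\subset\kappa$ one has $G_\kappa^{\cC}=G_\kappa^{(p)}$, a free pro-$p$ group of rank $[\kappa:\bQ_p]+1\ge 2$, not $\bZ_p$. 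The repair is easy and you already noted the relevant input: when $\mu_p\not\subset L$ local duality gives $H^2(G_L,\bF_p)=0$, hence $H^2(U,\bF_p)=0$ for every open $U\supset H$, and $H^2(H,\bF_p)=\varinjlim_U H^2(U,\bF_p)=0$, contradicting $H^2(H,\bF_p)\simeq\bF_p$.

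\textbf{The gap (case $\mu_p\subset\kappa^{\cC}$).} After passing to $L$ with $\mu_p\subset L$ you have $H\subset G_L^{\cC}$, but you then apply the closed-subgroup theorem for the Demushkin group $G_L^{(p)}$ as though $H\subset G_L^{(p)}$. These are different groups, and the surjection $G_L^{\cC}\twoheadrightarrow G_L^{(p)}$ need \emph{not} be injective on pro-$p$ subgroups: already for $\cC$ the class of all finite groups, a $p$-Sylow of $G_L$ is an extension of $\bZ_p$ by the infinitely generated free pro-$p$ group $V_L$, hence is not finitely generated, whereas $G_L^{(p)}$ is. One clean fix: from $0\neq H^2(H,\bF_p)=\varinjlim_U H^2(U,\bF_p)$, together with the fact that the transition maps $H^2(U,\bF_p)\to H^2(U',\bF_p)$ are multiplication by $(U:U')$ (via the local invariant, once $\mu_p\subset L$), one obtains an open $U_0\supset H$ such that $H$ lies in no open subgroup of $U_0$ of index divisible by $p$. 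Equivalently, $H$ surjects onto $U_0^{\ab}/p=G_{L_0}^{\ab}/p$, which has $\bF_p$-dimension $[L_0:\bQ_p]+2\ge 3$; but $H$ has generator rank $2$, a contradiction. (Equivalently, one may note that the \emph{image} of $H$ in $G_L^{(p)}$ has generator rank $\le 2$, hence infinite index in the rank-$\ge 3$ Demushkin group, hence is free---but one must then still feed this back into the direct-limit computation of $H^2(H,\bF_p)$; merely knowing each image is free does not by itself force $H$ to be free.)
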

\begin{proof}
The assertion follows immediately from \cite[Lemma 3.3]{Ivanov3}.
\end{proof}

\begin{lem}\label{2.4}
Let $\overline{\p} \in P_{K^{\cC},f}$, and $H \subset D_{\overline{\p}}(K^{\cC}/K)$ be a nontrivial subgroup. 
Then $\N_{G_{K}^{\cC}}(H) \subset D_{\overline{\p}}(K^{\cC}/K)$.
\end{lem}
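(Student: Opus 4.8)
The plan is to show that conjugation by any $\nu \in \N_{G_K^\cC}(H)$ fixes the prime $\overline{\p}$, so that $\nu$ lies in the decomposition group $D_{\overline{\p}}(K^\cC/K)$, which is the full stabilizer of $\overline{\p}$ in $G_K^\cC = G(K^\cC/K)$. First I would fix a nontrivial element $1 \neq x \in H$. For $\nu \in \N_{G_K^\cC}(H)$ we have $\nu^{-1} x \nu \in H \subset D_{\overline{\p}}(K^\cC/K)$, so $x \in \nu D_{\overline{\p}}(K^\cC/K) \nu^{-1} = D_{\nu\overline{\p}}(K^\cC/K)$. Hence $1 \neq x \in D_{\overline{\p}}(K^\cC/K) \cap D_{\nu\overline{\p}}(K^\cC/K)$.

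Now I would invoke Proposition \ref{1.4}: since this intersection is nontrivial, we must have $\nu\overline{\p} = \overline{\p}$. Therefore $\nu$ stabilizes $\overline{\p}$, i.e. $\nu \in D_{\overline{\p}}(K^\cC/K)$. Since $\nu \in \N_{G_K^\cC}(H)$ was arbitrary, this gives $\N_{G_K^\cC}(H) \subset D_{\overline{\p}}(K^\cC/K)$, as desired.

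The argument is essentially immediate from Proposition \ref{1.4}; there is no real obstacle, only the bookkeeping that $D_{\overline{\p}}(K^\cC/K)$ is literally the stabilizer of $\overline{\p}$ for the action of $G_K^\cC$ on the set of primes of $K^\cC$, and that conjugating a decomposition group transports it to the decomposition group of the translated prime — both of which are standard and are exactly the conventions fixed in the Notation section. One small point worth stating carefully is that $H$ nontrivial is used precisely to produce the element $x \neq 1$ witnessing the nonempty intersection; without it the conclusion can fail. I expect the entire proof to be two or three lines.
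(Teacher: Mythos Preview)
Your proof is correct and is essentially the same as the paper's: both conjugate $H$ (or a nontrivial element of it) by an element of the normalizer to land inside $D_{\nu\overline{\p}}(K^{\cC}/K)$, then invoke Proposition \ref{1.4} to force $\nu\overline{\p}=\overline{\p}$. The only cosmetic difference is that the paper carries the whole subgroup $H$ through the conjugation rather than a single nontrivial element.
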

\begin{proof}
Take $x \in \N_{G_{K}^{\cC}}(H)$.
Then $H = x^{-1}Hx \subset x^{-1}D_{\overline{\p}}(K^{\cC}/K)x = D_{x\overline{\p}}(K^{\cC}/K)$.
Thus, $H \subset D_{\overline{\p}}(K^{\cC}/K) \cap D_{x\overline{\p}}(K^{\cC}/K)$.
By Proposition \ref{1.4}, $\overline{\p} = x\overline{\p}$, 
so that $x \in D_{\overline{\p}}(K^{\cC}/K)$.
\end{proof}

\begin{lem}\label{2.5}
Let $\overline{\p} \in P_{K^{\cC},f}$, and $H \subset G_{K}^{\cC}$ be a closed subgroup of infinite order.
Assume that there exists an open subgroup of $H$ contained in $D_{\overline{\p}}(K^{\cC}/K)$.
Then $H \subset D_{\overline{\p}}(K^{\cC}/K)$.
\end{lem}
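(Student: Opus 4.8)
The plan is to reduce the statement to an application of Lemma \ref{2.4} together with Proposition \ref{1.4}. Let $H_0 \subset H$ be an open subgroup with $H_0 \subset D_{\overline{\p}}(K^{\cC}/K)$; since $H$ has infinite order and $H_0$ is open in $H$, $H_0$ is itself nontrivial. First I would observe that $H$ normalizes $H_0$ is not quite what we have; rather, $H_0$ is open, hence of finite index, in $H$, but need not be normal in $H$. So the first step is to replace $H_0$ by a subgroup that is normal in $H$: take $H_1 \defeq \bigcap_{h \in H} h H_0 h^{-1}$, the normal core of $H_0$ in $H$. This is still open in $H$ (a finite intersection of conjugates of an open subgroup, as $[H:H_0] < \infty$), hence nontrivial since $H$ is infinite, and it is normal in $H$.

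The key point is then to show $H_1 \subset D_{\overline{\p}}(K^{\cC}/K)$. Each conjugate $hH_0h^{-1}$ is contained in $hD_{\overline{\p}}h^{-1} = D_{h\overline{\p}}(K^{\cC}/K)$, so a priori $H_1$ lies in $\bigcap_{h \in H} D_{h\overline{\p}}(K^{\cC}/K)$. But $H_1 \subset H_0 \subset D_{\overline{\p}}(K^{\cC}/K)$ already, so in fact $H_1$ is a nontrivial subgroup of $D_{\overline{\p}}(K^{\cC}/K)$. Now, since $H_1$ is normal in $H$, we have $H \subset \N_{G_K^{\cC}}(H_1)$, and Lemma \ref{2.4} applied to the nontrivial subgroup $H_1 \subset D_{\overline{\p}}(K^{\cC}/K)$ gives $\N_{G_K^{\cC}}(H_1) \subset D_{\overline{\p}}(K^{\cC}/K)$. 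Therefore $H \subset D_{\overline{\p}}(K^{\cC}/K)$, as desired.

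I expect the only slightly delicate point to be the verification that the normal core $H_1$ is nontrivial: this uses that $H_0$ has \emph{finite} index in $H$ (so the core is a finite intersection of open subgroups, hence open) together with the hypothesis that $H$ has \emph{infinite} order (so an open subgroup cannot be trivial). Everything else is a formal manipulation with normalizers and the disjointness of distinct decomposition groups from Proposition \ref{1.4}, already packaged into Lemma \ref{2.4}. One should also note in passing that we do not even need $H$ to be abelian or to have any particular structure — only the finiteness of $[H:H_0]$ and the infinitude of $H$ matter.
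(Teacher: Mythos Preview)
Your proof is correct and follows essentially the same approach as the paper: replace $H_0$ by its normal core in $H$ (which remains open, hence nontrivial since $H$ is infinite), then apply Lemma \ref{2.4} to conclude $H \subset \N_{G_K^{\cC}}(H_1) \subset D_{\overline{\p}}(K^{\cC}/K)$. The digression about $H_1 \subset \bigcap_{h\in H} D_{h\overline{\p}}$ is unnecessary (you only need $H_1 \subset H_0 \subset D_{\overline{\p}}$), but it does no harm.
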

\begin{proof}
Let $H_0$ be an open subgroup of $H$  with $H_0 \subset D_{\overline{\p}}(K^{\cC}/K)$.
Replacing $H_0$ by the intersection of all conjugates of $H_0$ in $H$, we may assume that $H_0$ is an open normal subgroup of $H$.
By Lemma \ref{2.4}, $H \subset \N_{G_{K}^{\cC}}(H_0) \subset D_{\overline{\p}}(K^{\cC}/K)$.
\end{proof}

Let $l \in \Sigma(\cC)$.
By \cite[(10.4.8) Corollary and (7.5.8) Proposition]{NSW}, 
the inflation maps $H^2(G_{K}^{\cC},\bF_l) \to H^2(G_{K},\bF_l)$, $H^2(G_{K_{\p}}^{\cC},\bF_l) \to H^2(G_{K_{\p}},\bF_l)$ for $\p \in P_K$ 
are bijective.
Therefore, by Proposition \ref{1.2} and \cite[(8.5.1) Definition 
and (8.6.1) Proposition]{NSW}, 
we have the canonical homomorphism 
$H^2(G_{K}^{\cC},\bF_l) \to \bigoplus_{\p \in P_K} H^2(D_{\p}(K^{\cC}/K),\bF_l)$.

\begin{lem}\label{2.6}
Let $l \in \Sigma(\cC)$.
Then the canonical homomorphism 
$H^2(G_{K}^{\cC},\bF_l) \to \bigoplus_{\p \in P_K} H^2(D_{\p}(K^{\cC}/K),\bF_l)$ 
is injective.
\end{lem}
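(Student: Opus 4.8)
The plan is to use Poitou--Tate duality (the nine-term exact sequence) for the Galois cohomology of $G_{K,S}$, reduced to finite extensions and finite sets of primes, and then pass to the limit. Concretely, since $\bF_l$ is finite and $H^2$ commutes with the relevant limits, one reduces to showing that for a suitably large finite set of primes $S$ (containing $P_\infty$, the primes above $l$, and the ramification locus), the map $H^2(G_{K,S}^{\cC},\bF_l) \to \bigoplus_{\p \in S} H^2(D_\p,\bF_l)$ is injective, where $D_\p$ is the decomposition group in $G_K^{\cC}$. The point of passing through $G_{K,S}^{\cC}$ is that, by the computations cited just before the lemma (via \cite[(10.4.8) Corollary and (7.5.8) Proposition]{NSW}), the inflation maps identify $H^2(G_K^{\cC},\bF_l)$ with a subgroup of $H^2(G_K,\bF_l)$, and similarly locally, so the statement for $G_K^{\cC}$ follows from the corresponding statement for the full Galois groups.

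First I would set up the fundamental exact sequence of Poitou--Tate for the module $\bF_l$ over $K$: the kernel of the global-to-local map $H^2(G_K,\bF_l) \to \bigoplus_{\p} H^2(G_{K_\p},\bF_l)$ (restricted product over all primes, including archimedean ones) is dual to $\Sha^1(K, \mu_l)$, i.e. to a Tate--Shafarevich group in degree $1$ for the Cartier dual $\mu_l$. The classical fact is that this $\Sha^1$ vanishes for $K$ a number field — this is essentially the statement that a principal homogeneous space for $\mu_l$ which is locally trivial everywhere is globally trivial, which follows from the Hasse principle for the norm-one torus / Grunwald--Wang in the harmless cases, or more elementarily from the fact that $\Sha^1(\bQ,\mu_l)=0$ combined with restriction-corestriction. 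Hence $H^2(G_K,\bF_l) \hookrightarrow \bigoplus_\p H^2(G_{K_\p},\bF_l)$, and only finitely many local terms are nonzero (those $\p$ ramified in the relevant extension, above $l$, or archimedean).

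Next I would descend this from $G_K$ to $G_K^{\cC}$. Using the cited bijectivity of the inflation maps $H^2(G_K^{\cC},\bF_l)\to H^2(G_K,\bF_l)$ and $H^2(G_{K_\p}^{\cC},\bF_l)\to H^2(G_{K_\p},\bF_l)$, together with Proposition~\ref{1.2} identifying $G_{K_\p}^{\cC}$ with $D_\p(K^{\cC}/K)$, the canonical square relating the $\cC$-version of the localization map to the full version commutes (this is the compatibility of inflation with restriction to a decomposition group, \cite[(8.6.1) Proposition]{NSW}). Since the top and both vertical maps are injective — the verticals being isomorphisms — the bottom map $H^2(G_K^{\cC},\bF_l)\to\bigoplus_\p H^2(D_\p,\bF_l)$ is injective as well.

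The main obstacle is the verification that $\Sha^1(K,\mu_l)=0$ and, relatedly, that the localization square genuinely commutes at the level of $G_K^{\cC}$ rather than only up to the ambient full groups — one must check that the inflation $H^2(G_K^{\cC},\bF_l)\to H^2(G_K,\bF_l)$ is compatible with restriction to $D_\p(K^{\cC}/K)\subset G_K^{\cC}$ on the one hand and to $D_\p(\overline{K}/K)\subset G_K$ on the other, under the surjection $D_\p(\overline K/K)\twoheadrightarrow D_\p(K^{\cC}/K)\cong G_{K_\p}^{\cC}$. This is a diagram chase in group cohomology but needs the bijectivity statements quoted from \cite{NSW} applied uniformly over all $\p$, including the (finitely many) bad primes; once that is in place the injectivity is immediate from Poitou--Tate. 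I would also remark that the sum over $P_K$ is really finite, so no subtlety about restricted products versus full products arises in the $\cC$-picture.
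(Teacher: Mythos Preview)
Your approach is correct and parallels the paper's in reducing from $G_K^{\cC}$ to $G_K$ via the inflation isomorphisms; the commutative square you describe is exactly the mechanism the paper implicitly uses. Where you diverge is in handling the full-Galois-group statement: you invoke Poitou--Tate duality to identify the kernel of $H^2(G_K,\bF_l)\to\bigoplus_\p H^2(G_{K_\p},\bF_l)$ with the dual of the degree-$1$ Tate--Shafarevich group for $\mu_l$, and then appeal to Grunwald--Wang (no special case for prime $l$) to kill it. The paper instead first restricts to $K(\mu_l)$ --- the map $H^2(G_K,\bF_l)\to H^2(G_{K(\mu_l)},\bF_l)$ is injective since $[K(\mu_l):K]$ is prime to $l$ --- and then, with $\mu_l\subset K$, identifies $H^2(-,\bF_l)$ with the $l$-torsion of the Brauer group, so that injectivity is literally Brauer--Hasse--Noether. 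The paper's route is a bit more elementary (no global duality theorem, no Grunwald--Wang); yours is more structural and avoids the auxiliary base change.

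Two minor corrections. Your alternative justification ``vanishing over $\bQ$ combined with restriction--corestriction'' does not work: corestriction sends the Tate--Shafarevich group over $K$ into that over $\bQ$, but vanishing of the target says nothing about the source (the composite $\res\circ\operatorname{cor}$ is not multiplication by the degree in this direction). And the opening detour through $G_{K,S}$ is unnecessary once you use the inflation isomorphisms directly; it also does not, as you suggest, make the sum over $P_K$ finite --- infinitely many local $H^2(G_{K_\p},\bF_l)$ are nonzero, though of course any fixed global class has finite support.
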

\begin{proof}
By \cite[(10.4.8) Corollary]{NSW}, 
we are reduced to the case that 
$\cC$ is the full class of all finite groups, 
and hence $G_{K}^{\cC} = G_{K}$.
Since $[K(\mu_l):K]$ is prime to $l$, the restriction map 
$H^2(G_{K},\bF_l) \to H^2(G_{K(\mu_l)},\bF_l)$ 
is injective.
Therefore, we are reduced to the case that $\mu_l \subset K$.
Then the assertion follows from 
the injectivity of the canonical homomorphism $Br(K) \to \bigoplus_{\p \in P_K} Br(K_\p)$, 
where $Br(K)$, $Br(K_\p)$ are the Brauer groups of $K$, $K_\p$, respectively (cf. \cite[(8.1.17) Theorem]{NSW}).
\end{proof}

\begin{definition}\label{2.7}
For $l \in \Sigma(\cC)$, set 
$$P_{K,f}^{\cC,l} \defeq \{ \p\in P_{K,f} \setminus P_{K,l} \mid \mu_{l} \subset K_{\p}^{\cC} \},\ \Dec_{K,f}^{\cC,l} \defeq 
\Dec(K^{\cC}/K, P_{K,f}^{\cC,l}(K^{\cC})).$$
Set 
$$P_{K,f}^{\cC} \defeq \cup_{l \in \Sigma(\cC)}P_{K,f}^{\cC,l},\ \Dec_{K,f}^{\cC} \defeq \cup_{l \in \Sigma(\cC)}\Dec_{K,f}^{\cC,l}=\Dec(K^{\cC}/K, P_{K,f}^{\cC}(K^{\cC})).$$
\end{definition}

\begin{rem}\label{csset}
$(i)$ 
Let $L$ be a finite subextension of $K^{\cC}/K$.
We have $P_{L,f}^{\cC,l} = P_{K,f}^{\cC,l}(L)$ for $l \in \Sigma(\cC)$ 
and $P_{L,f}^{\cC} = P_{K,f}^{\cC}(L)$.

\noindent
$(ii)$ 
Write $\Sigma' \defeq P_{\bQ,f}\setminus\Sigma(\cC)$.
Then 
$P_{K,f}^{\cC,l} = \cs(K^{\Sigma'} \cap K(\mu_{l})/K) \setminus P_{K,l}
$ for $l \in \Sigma(\cC)$.

\end{rem}

\begin{prop}\label{2.8}
Let $l \in \Sigma(\cC)$, and $H \subset G_{K}^{\cC}$ be a closed subgroup of $l$-decomposition type.
Then there exists a unique element $\overline{\p} \in P_{K,f}^{\cC,l}(K^{\cC})$ 
such that $H \subset D_{\overline{\p}}(K^{\cC}/K)$.
\end{prop}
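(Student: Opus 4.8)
The plan is to separate uniqueness from existence; existence is where the real difficulty lies.

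Uniqueness follows at once from Proposition \ref{1.4}: if $\overline{\p},\overline{\q}\in P_{K,f}^{\cC,l}(K^{\cC})$ both have $H$ inside their decomposition groups, then $H\subseteq D_{\overline{\p}}(K^{\cC}/K)\cap D_{\overline{\q}}(K^{\cC}/K)$, which is trivial unless $\overline{\p}=\overline{\q}$; and $H$, being of $l$-decomposition type, is infinite. Moreover, it suffices to produce \emph{any} $\overline{\p}\in P_{K^{\cC},f}$ with $H\subseteq D_{\overline{\p}}(K^{\cC}/K)$, since then $\overline{\p}\in P_{K,f}^{\cC,l}(K^{\cC})$ automatically: setting $\p:=\overline{\p}|_K$, $p:=\p|_{\bQ}$ and identifying $D_{\overline{\p}}(K^{\cC}/K)$ with $G_{K_\p}^{\cC}$ by Proposition \ref{1.2}, the pro-$l$ group $H$ lies after conjugation in an $l$-Sylow subgroup of $G_{K_\p}^{\cC}$; as $H$ is infinite, $\overline{\p}$ is nonarchimedean; $p=l$ is excluded by Lemma \ref{2.3}; and by Lemma \ref{2.2} an $l$-Sylow subgroup of $G_{K_\p}^{\cC}$ is isomorphic to $\bZ_l$ or of $l$-decomposition type, the former being impossible since $H$ (being nonabelian) is not procyclic, whence $\mu_l\subset K_\p^{\cC}$, i.e.\ $\p\in P_{K,f}^{\cC,l}$.

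To find such a $\overline{\p}$, I would next reduce to a procyclic subgroup. Writing $H\cong\bZ_l\rtimes_\phi\bZ_l$ with $\phi$ injective, one computes $H^{\ab}\cong\bZ_l\times\bZ/l^k\bZ$ for some $k\geq1$; let $N\cong\bZ_l$ be the preimage in $H$ of $(H^{\ab})_{\tor}$, a characteristic closed subgroup of $H$. Since $N\trianglelefteq H$, we have $H\subseteq\N_{G_K^{\cC}}(N)$, so by Lemma \ref{2.4} it is enough to find $\overline{\p}$ with $N\subseteq D_{\overline{\p}}(K^{\cC}/K)$: then $H\subseteq\N_{G_K^{\cC}}(N)\subseteq D_{\overline{\p}}(K^{\cC}/K)$. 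Also, by Lemma \ref{2.5} one only needs to embed an open subgroup of $N$ into some decomposition group, and by applying that lemma to the open subgroups $G_L^{\cC}$ (for finite $L\subset K^{\cC}$ over $K$) we are free to replace $K$ by a finite subextension of $K^{\cC}/K$, so we may assume $\mu_l\subset K$.

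The remaining point — that the characteristic $\bZ_l\cong N\subseteq G_K^{\cC}$ is contained in some decomposition group — is the crux, and for it I would follow the method of \cite{Ivanov} and \cite{Saidi-Tamagawa2}. Assume, for contradiction, that $N$ lies in no decomposition group. The group $H$ is a Demushkin group of rank $2$, so $H^2(H,\bF_l)\cong\bF_l$ and the cup product on $H^1(H,\bF_l)$ is non-degenerate; the class inflated from $H/N\cong\bZ_l$ is isotropic, and $N$ carries the extra structure of being normalized by an element of $H$ acting on it with infinite order (so $N$ behaves like a tame inertia subgroup, as in Lemma \ref{2.2}). Using this together with (co)restriction along suitable open subgroups, the goal is to produce a nonzero class in $H^2(G_K^{\cC},\bF_l)$ whose image in $\bigoplus_{\p\in P_K}H^2(D_{\p}(K^{\cC}/K),\bF_l)$ vanishes, contradicting the injectivity established in Lemma \ref{2.6}. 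I expect this last step to be the main obstacle: the local–global cohomological comparisons in \cite{Ivanov} and \cite{Saidi-Tamagawa2} are set up for absolute Galois groups, and must be transported to the pro-$\cC$ quotient $G_K^{\cC}$ — for which one leans on Lemma \ref{1.3} (local cohomological dimension $\leq2$), Lemmas \ref{2.2} and \ref{2.3} (structure of local $l$-Sylow subgroups), and the identification of $H^2(G_K^{\cC},\bF_l)$ with $H^2(G_K,\bF_l)$ for $l\in\Sigma(\cC)$ recorded before Lemma \ref{2.6}.
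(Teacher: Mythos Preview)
Your treatment of uniqueness and of the automatic membership $\overline{\p}\in P_{K,f}^{\cC,l}(K^{\cC})$ (via Proposition~\ref{1.2} and Lemmas~\ref{2.2}, \ref{2.3}) is correct and matches the paper. The reduction to $\mu_l\subset K$ is also fine (the paper only needs $\mu_4\subset K$ when $l=2$, to handle real places).

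The detour through the procyclic subgroup $N\cong\bZ_l$, however, is both unnecessary and misleading. You reduce to showing $N$ lies in a decomposition group, but then immediately revert to the cohomology of $H$; since $H^2(\bZ_l,\bF_l)=0$, there is no hope of detecting $N$ itself cohomologically, so the reduction buys nothing. The paper works directly with $H$ throughout.

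The genuine gap is in your final paragraph: the plan to ``produce a nonzero class in $H^2(G_K^{\cC},\bF_l)$ with vanishing local image'' is aimed at the wrong group, and you do not say how such a class would arise from the hypothesis on $N$ (or $H$). The paper's argument is both simpler and more direct. One does not work at the level of $K$ but passes to the limit: writing $M$ for the fixed field of $H$ and $L$ for the fixed field of an open $U\supset H$, Lemma~\ref{2.6} at each level gives
\[
H^2(H,\bF_l)=\varinjlim_{U}H^2(G_L^{\cC},\bF_l)\;\hookrightarrow\;\varinjlim_{U}\bigoplus_{\p\in P_L}H^2(D_\p(K^{\cC}/L),\bF_l),
\]
and \cite[Lemma~2.13]{Ivanov2} injects the right-hand limit into $\prod_{\p\in P_M}H^2(D_\p(K^{\cC}/M),\bF_l)$. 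Since $H^2(H,\bF_l)\cong\bF_l$ and the archimedean terms vanish, some $\q\in P_{M,f}$ has $H^2(D_\q(K^{\cC}/M),\bF_l)\neq 0$. Now $D_\q(K^{\cC}/M)$ is a closed subgroup of the rank-$2$ Demushkin group $H$ with nonvanishing $H^2$, so by \cite[Lemma~2.2]{Ivanov} it is open in $H$; Lemma~\ref{2.5} then gives $H\subset D_{\overline{\p}}(K^{\cC}/K)$. The two missing ingredients in your sketch are precisely this passage to the limit and the Demushkin open-subgroup criterion \cite[Lemma~2.2]{Ivanov}.
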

\begin{proof}
The uniqueness follows immediately from Proposition \ref{1.4}.
Let us prove the existence.
If $l=2$, we may assume that $\mu_4 \subset K$ by Lemma \ref{2.5}.
Let $U$ be any open subgroup of $G_{K}^{\cC}$ containing $H$.
Write $M$, $L$ for the subextensions of $K^{\cC}/K$ corresponding to $H$, $U$, respectively.
By Lemma \ref{2.6} and taking the inductive limits over $U$, 
we obtain the injective homomorphism 
$H^2(H,\bF_l) \hookrightarrow \ilim_{U}\bigoplus_{\p \in P_L} H^2(D_{\p}(K^{\cC}/L),\bF_l)$.
Moreover, by \cite[Lemma 2.13]{Ivanov2}, the canonical homomorphism 
$\ilim_{U}\bigoplus_{\p \in P_L} H^2(D_{\p}(K^{\cC}/L),\bF_l) \to \prod_{\p \in P_M}H^2(D_{\p}(K^{\cC}/M),\bF_l)$
is injective.
Since $H^2(H,\bF_l) \simeq \bF_l$ and $H^2(D_{\p}(K^{\cC}/M),\bF_l) = 0$ for $\p \in P_{M,\infty}$, 
there exists $\q \in P_{M,f}$ such that $H^2(D_{\q}(K^{\cC}/M),\bF_l) \not = 0$.
By \cite[Lemma 2.2]{Ivanov}, 
$D_{\q}(K^{\cC}/M)$ is open in $H$.
Thus, by Lemma \ref{2.5}, 
there exists $\overline{\p} \in P_{K^{\cC},f}$ 
such that $H \subset D_{\overline{\p}}(K^{\cC}/K)$.
Then, by Lemma \ref{2.3} and Lemma \ref{2.2}, 
we have $\overline{\p} \in P_{K,f}^{\cC,l}(K^{\cC})$.
\end{proof}

\begin{theorem}\label{2.9}
Let $l \in \Sigma(\cC)$.
Then the set $\Dec_{K,f}^{\cC,l}$ can be recovered group-theoretically from $G_{K}^{\cC}$.
\end{theorem}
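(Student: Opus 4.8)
The plan is to reconstruct each member of $\Dec_{K,f}^{\cC,l}$ as a normalizer in $G_K^{\cC}$, after locating the relevant primes via Proposition \ref{2.8}. To begin, note that by Lemma \ref{2.0} the prime $l\in\Sigma(\cC)$ is available group-theoretically, and that ``$H$ is a closed subgroup of $l$-decomposition type'' is an intrinsic condition. By Proposition \ref{2.8}, every closed subgroup $H$ of $l$-decomposition type lies in a unique $D_{\overline{\p}(H)}(K^{\cC}/K)$ with $\overline{\p}(H)\in P_{K,f}^{\cC,l}(K^{\cC})$. Conversely, fix $\overline{\p}\in P_{K,f}^{\cC,l}(K^{\cC})$ and put $\p=\overline{\p}|_K$, so that $\p\notin P_{K,l}$ and $\mu_l\subset K_\p^{\cC}$; by Proposition \ref{1.2} we may identify $D_{\overline{\p}}(K^{\cC}/K)$ with $G_{K_\p}^{\cC}$, and by Lemma \ref{2.2} the $l$-Sylow subgroups of $D_{\overline{\p}}(K^{\cC}/K)$ are then of $l$-decomposition type. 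Hence $H\mapsto\overline{\p}(H)$ is surjective onto $P_{K,f}^{\cC,l}(K^{\cC})$, the closed subgroups of $l$-decomposition type contained in a fixed $D_{\overline{\p}}(K^{\cC}/K)$ are exactly those with $\overline{\p}(H)=\overline{\p}$, and $\Dec_{K,f}^{\cC,l}=\{D_{\overline{\p}(H)}(K^{\cC}/K)\}$.

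Next I would make ``$\overline{\p}(H)=\overline{\p}(H')$'' group-theoretic. Let $\sim$ be the equivalence relation on the set of closed subgroups of $l$-decomposition type generated by the relation $H\cap H'\neq1$. That $H\sim H'$ implies $\overline{\p}(H)=\overline{\p}(H')$ is immediate from Proposition \ref{1.4} (a nontrivial common subgroup of two decomposition groups forces equality, then transitivity). The converse is the heart of the argument: given $H,H'$ of $l$-decomposition type with $\overline{\p}(H)=\overline{\p}(H')=\overline{\p}$, each is contained in an $l$-Sylow subgroup of $D_{\overline{\p}}(K^{\cC}/K)\cong G_{K_\p}^{\cC}$ and hence $\sim$-equivalent to it (that $l$-Sylow being itself of $l$-decomposition type), so one reduces to linking any two $l$-Sylow subgroups of $G_{K_\p}^{\cC}$; this follows once $G_{K_\p}^{\cC}$ is shown to have a nontrivial closed normal pro-$l$ subgroup, since any such subgroup lies in every $l$-Sylow subgroup and therefore makes all $l$-Sylow subgroups pairwise non-disjoint. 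For this local input I would appeal to the explicit structure of $G_{K_\p}^{\cC}$ (for $p\neq l$ and $\mu_l\subset K_\p^{\cC}$) via local class field theory: the inertia subgroup is normal, and from it one extracts, after possibly passing to an open subgroup, a nontrivial closed normal pro-$l$ subgroup (built from the $l$-part of tame inertia), which is then normal in $D_{\overline{\p}}(K^{\cC}/K)$.

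Granting the equivalence, the rest is formal. For an equivalence class $\mathcal{E}$ of $\sim$, let $N_{\mathcal{E}}$ be the closed subgroup of $G_K^{\cC}$ topologically generated by the members of $\mathcal{E}$. By the previous step, $\mathcal{E}$ is precisely the set of all closed subgroups of $l$-decomposition type contained in a single $D_{\overline{\p}}(K^{\cC}/K)\in\Dec_{K,f}^{\cC,l}$; thus $N_{\mathcal{E}}$ is a nontrivial closed subgroup of $D_{\overline{\p}}(K^{\cC}/K)$, and it is normalized by $D_{\overline{\p}}(K^{\cC}/K)$ because conjugation by an element of the latter permutes $\mathcal{E}$. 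Hence $D_{\overline{\p}}(K^{\cC}/K)\subset\N_{G_K^{\cC}}(N_{\mathcal{E}})$, while Lemma \ref{2.4} gives the reverse inclusion, so that $D_{\overline{\p}}(K^{\cC}/K)=\N_{G_K^{\cC}}(N_{\mathcal{E}})$. Therefore
\[
\Dec_{K,f}^{\cC,l}=\{\,\N_{G_K^{\cC}}(N_{\mathcal{E}})\mid\mathcal{E}\text{ an equivalence class of }\sim\,\},
\]
which is the desired group-theoretic description.

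The step I expect to be the main obstacle is the converse half of the equivalence $H\sim H'\Leftrightarrow\overline{\p}(H)=\overline{\p}(H')$ --- equivalently, the local statement that $G_{K_\p}^{\cC}$ (with $p\neq l$, $\mu_l\subset K_\p^{\cC}$) admits a nontrivial closed normal pro-$l$ subgroup, so that ``belonging to the same decomposition group'' becomes detectable on $l$-decomposition type subgroups; everything else reduces to Propositions \ref{1.2}, \ref{1.4}, \ref{2.8} and Lemmas \ref{2.0}, \ref{2.2}, \ref{2.4}. If that particular local fact turns out to be awkward, the alternative --- in the spirit of \cite{Saidi-Tamagawa2} --- is to recover $\Dec_{K,f}^{\cC,l}$ as the set of maximal closed subgroups of $G_K^{\cC}$ that contain an $l$-decomposition type subgroup and possess the intrinsic cohomological features of a local Galois group, trading the connectivity question for the problem of recognizing local Galois groups group-theoretically.
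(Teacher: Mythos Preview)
Your approach is genuinely different from the paper's, and the difference matters. The paper does not try to link $l$-decomposition type subgroups by nontrivial intersections at all. Instead, for each open normal $U\subset G_K^{\cC}$ (with fixed field $L$) it characterizes
\[
\Syl_l(U)=\{\text{maximal closed subgroups of }U\text{ of }l\text{-decomposition type}\}
\]
group-theoretically (Lemma~\ref{2.2} and Proposition~\ref{2.8}), observes that two elements of $\Syl_l(U)$ come from the same prime of $L$ iff they are \emph{conjugate in $U$} (automatic for $l$-Sylows of the same $D_{\overline\p}(K^{\cC}/L)$; ruled out otherwise by Proposition~\ref{2.8}), and then takes the projective limit over $U$ to recover the $G_K^{\cC}$-set $P_{K,f}^{\cC,l}(K^{\cC})$ and hence $\Dec_{K,f}^{\cC,l}$ as the set of stabilizers. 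Conjugacy-in-$U$ is the key device: it links all $l$-Sylows of a given decomposition group for free, which is exactly the step that costs you effort.

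Your crucial local input---that $G_{K_\p}^{\cC}$ has a nontrivial closed normal pro-$l$ subgroup---is fine when $p\notin\Sigma(\cC)$: then the image of wild inertia is trivial, the inertia of $G_{K_\p}^{\cC}$ is procyclic, and its $l$-Sylow (nontrivial by the proof of Lemma~\ref{2.2}) is characteristic, hence normal. But when $p\in\Sigma(\cC)$ the claim is doubtful. A closed normal pro-$l$ subgroup $N$ must satisfy $[N,\overline V]\subset N\cap\overline V=1$ (where $\overline V$ is the image of wild inertia, pro-$p$), so $N$ centralizes $\overline V$; thus the existence of $N\neq1$ forces the outer action of $G_{K_\p}^{\cC}/\overline V$ on $\overline V$ to have nontrivial kernel. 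For $\cC=$ all finite groups this would say that some nontrivial element of $G_{K_\p}^{\tr}$ acts trivially on the free pro-$p$ group $V_{K_\p}$, which is not something you can read off from local class field theory and is, at best, a substantial extra input you have not supplied. ``Passing to an open subgroup'' does not help: the same obstruction persists for every finite extension of $K_\p$. Without this, your relation $\sim$ may be strictly finer than ``same $\overline\p$'' (two $l$-Sylows of the same decomposition group can intersect trivially, as already happens in finite groups), and the normalizer recipe then need not produce the full decomposition group. The paper's conjugacy-plus-limit argument sidesteps this entirely.
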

\begin{proof}
Let $U$ be any open 
normal 
subgroup of $G_{K}^{\cC}$.
Write $L$ for the finite Galois subextension of $K^{\cC}/K$ corresponding to $U$.
Set 
$$\Syl_l(U) \defeq \{ H \subset U \mid \text{$H$ is an $l$-Sylow subgroup of $D_{\overline{\p}}(K^{\cC}/L)$ for some $\overline{\p} \in P_{K,f}^{\cC,l}(K^{\cC})$} \}.$$
Note that $G_{K}^{\cC}$ acts on $\Syl_l(U)$ by conjugation.
By Lemma \ref{2.2} and Proposition \ref{2.8}, $\Syl_l(U)$ is the set of maximal elements of the set 
$$\{ H \subset U \mid \text{$H$ is a closed subgroup of $U$ of $l$-decomposition type} \},$$
and hence can be recovered group-theoretically from $U$.
We have a $G_{K}^{\cC}$-equivariant surjection 
$$\psi_U : \Syl_l(U) \twoheadrightarrow P_{L,f}^{\cC,l}(=P_{K,f}^{\cC,l}(L)),\ D_{\overline{\p},l} 
\mapsto \overline{\p}|_{L},$$ 
where $D_{\overline{\p},l}$ 
is an $l$-Sylow subgroup of $D_{\overline{\p}}(K^{\cC}/L)$ for $\overline{\p} \in P_{K,f}^{\cC,l}(K^{\cC})$.
It follows from Proposition \ref{2.8} that for $H, H' \in \Syl_l(U)$, 
$\psi_U(H) = \psi_U(H')$ if and only if $H$ and $H'$ are conjugate in $U$.
Therefore, we can define a group-theoretical equivalence relation $\sim$ on $\Syl_l(U)$ 
such that $\psi_U$ induces a $G_{K}^{\cC}$-equivariant bijection 
$\overline{\psi_U} : \Syl_l(U)/\sim \isom P_{L,f}^{\cC,l}$.
Let $U'$ be any 
open normal subgroup of $G_{K}^{\cC}$ with $U' \subset U$, corresponding to a subextension $L'$ of $K^{\cC}/K$.
We can construct (possibly non-canonically) a 
group-theoretical 
map 
$\alpha_{U' \subset U}: \Syl_l(U') \to \Syl_l(U)$ 
which sends $H' \in \Syl_l(U')$ to some $H \in \Syl_l(U)$ such that $H' \subset H$. 
Then $\alpha_{U' \subset U}$ induces a 
$G_{K}^{\cC}$-equivariant 
map 
$\overline{\alpha_{U' \subset U}}: \Syl_l(U')/\sim \to \Syl_l(U)/\sim$, which is independent of the choice of $\alpha_{U' \subset U}$.
Further, the following diagram of $G_{K}^{\cC}$-equivariant maps commutes:
$$
\xymatrix{
\Syl_l(U')/\sim\ar[r]^-{\sim}_-{\overline{\psi_{U'}}} \ar[d]_-{\overline{\alpha_{U' \subset U}}} &P_{L',f}^{\cC,l}\ar[d]\\
\Syl_l(U)/\sim\ar[r]^-{\sim}_-{\overline{\psi_U}}&P_{L,f}^{\cC,l},
}
$$
where the vertical map on the right is the restriction of primes.
Therefore, by taking the projective limits over $U$, we obtain a $G_{K}^{\cC}$-equivariant bijection 
$\plim_{U} \Syl_l(U)/\sim \isom \plim_{L} P_{L,f}^{\cC,l} \simeq P_{K,f}^{\cC,l}(K^\cC)$.
Thus, the set $\Dec_{K,f}^{\cC,l}$ can be characterized as the set of stabilizers in $G_{K}^{\cC}$ of elements in the $G_{K}^{\cC}$-set $\plim_{U} \Syl_l(U)/\sim$, which is constructed group-theoretically from $G_{K}^{\cC}$.
\end{proof}

By Lemma \ref{2.0} and Theorem \ref{2.9}, we obtain the following.

\begin{cor}\label{2.10}
The set $\Dec_{K,f}^{\cC}$ can be recovered group-theoretically from $G_{K}^{\cC}$.
\end{cor}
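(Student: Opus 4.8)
The plan is straightforward: Corollary \ref{2.10} should follow by assembling the pieces already established. First I would invoke Lemma \ref{2.0}, which tells us that the set $\Sigma(\cC)$ can be recovered group-theoretically from $G_{K}^{\cC}$. This is crucial because the definition $\Dec_{K,f}^{\cC} = \cup_{l \in \Sigma(\cC)}\Dec_{K,f}^{\cC,l}$ (Definition \ref{2.7}) expresses the desired set as a union indexed precisely by $\Sigma(\cC)$, so without knowing $\Sigma(\cC)$ group-theoretically the union would not make sense as a group-theoretic construction.

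Next, for each individual prime number $l \in \Sigma(\cC)$, I would apply Theorem \ref{2.9}, which asserts that $\Dec_{K,f}^{\cC,l}$ can be recovered group-theoretically from $G_{K}^{\cC}$. Since by the previous step we have group-theoretic access to the index set $\Sigma(\cC)$, and for each $l$ in this set we have a group-theoretic recipe producing $\Dec_{K,f}^{\cC,l}$, taking the union over all such $l$ is itself a group-theoretic operation. Hence $\Dec_{K,f}^{\cC} = \cup_{l \in \Sigma(\cC)}\Dec_{K,f}^{\cC,l}$ is recovered group-theoretically from $G_{K}^{\cC}$, which is the claim. The equality $\cup_{l \in \Sigma(\cC)}\Dec_{K,f}^{\cC,l} = \Dec(K^{\cC}/K, P_{K,f}^{\cC}(K^{\cC}))$ is already recorded in Definition \ref{2.7}, so no additional verification is needed there.

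Strictly speaking, I should also note why this union is a genuine union of subsets of a single ambient set rather than a disjoint union with potential ambiguity: all the $\Dec_{K,f}^{\cC,l}$ live inside the common set of decomposition groups $\Dec(K^{\cC}/K, P_{K^{\cC},f})$ of $G_K^{\cC}$, which is itself canonically attached to $G_K^{\cC}$ via its action on the primes (or, more intrinsically in the group-theoretic reconstruction, as stabilizers in the profinite group), so overlaps between different $\Dec_{K,f}^{\cC,l}$ — which can indeed occur when $\p \in P_{K,f}^{\cC,l} \cap P_{K,f}^{\cC,l'}$ for distinct $l, l'$ — cause no trouble: the union is simply the set of decomposition groups $D_{\overline{\p}}(K^{\cC}/K)$ with $\overline{\p} \in P_{K,f}^{\cC}(K^{\cC})$.

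There is essentially no obstacle here; the corollary is a bookkeeping consequence of Lemma \ref{2.0} and Theorem \ref{2.9}, with all the genuine mathematical content having been placed in those two statements (and, behind them, in Proposition \ref{2.8}, the cohomological input of Lemma \ref{2.6}, and the local structure results of \S 1). If anything requires a sentence of care, it is confirming that ``recover a family of sets group-theoretically, indexed by a group-theoretically recovered index set'' legitimately yields ``recover the union group-theoretically'' — but this is immediate once one writes out what ``group-theoretically recovered'' means.
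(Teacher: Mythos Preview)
Your proposal is correct and matches the paper's approach exactly: the paper simply states that the corollary follows from Lemma \ref{2.0} and Theorem \ref{2.9}, which is precisely your argument of recovering $\Sigma(\cC)$ and then taking the union of the $\Dec_{K,f}^{\cC,l}$.
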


\begin{lem}\label{2.11}
Let $l \in \Sigma(\cC)$, $p$ be a prime number, and $\kappa$ a $p$-adic field. Then $G_{\kappa}^{\cC,(l),\ab,/\tor}$ is a free $\bZ_l$-module of rank $[\kappa:\bQ_p] + 1$ (resp. $1$) if $p=l$ (resp. $p\neq l$).
\end{lem}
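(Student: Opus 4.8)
The plan is to reduce the statement to a well-known computation of the abelianized maximal pro-$l$ Galois group of a local field. Since $G_\kappa^{\cC,(l)}$ is the maximal pro-$l$ quotient of $G_\kappa^{\cC}$, and the maximal pro-$l$ quotient of a profinite group only depends on the group through its maximal pro-$l$ quotient, the first step is to observe that $G_\kappa^{\cC,(l)} = G_\kappa^{(l)}$ whenever $l \in \Sigma(\cC)$: indeed, every finite $l$-group lies in $\cC$ because $\cC$ is a full class containing $\bZ/l\bZ$, so the maximal pro-$l$ quotient of $G_\kappa$ factors through $G_\kappa^{\cC}$. Hence $G_\kappa^{\cC,(l),\ab} = G_\kappa^{(l),\ab} = (G_\kappa^{\ab})^{(l)}$, the maximal pro-$l$ quotient of the abelianized absolute Galois group of $\kappa$.

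Next I would invoke local class field theory: $G_\kappa^{\ab} \simeq \widehat{\kappa^\times}$, the profinite completion of $\kappa^\times$, and therefore $(G_\kappa^{\ab})^{(l)} \simeq (\kappa^\times)^{\wedge l}$, the pro-$l$ completion of $\kappa^\times$. Using the standard structure of the unit group, $\kappa^\times \simeq \bZ \times \mu(\kappa) \times \bZ_p^{[\kappa:\bQ_p]}$ as a topological group (with $\mu(\kappa)$ finite), one reads off the pro-$l$ completion. When $p \neq l$: the factor $\bZ_p^{[\kappa:\bQ_p]}$ is pro-$p$ hence contributes nothing, $\mu(\kappa)^{(l)}$ is a finite cyclic $l$-group, and the $\bZ$ factor contributes one copy of $\bZ_l$; so $G_\kappa^{\cC,(l),\ab} \simeq \bZ_l \times (\text{finite } l\text{-group})$, whose torsion-free quotient $G_\kappa^{\cC,(l),\ab,/\tor}$ is $\bZ_l$, of rank $1$. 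When $p = l$: now $\bZ_p^{[\kappa:\bQ_p]}$ contributes $\bZ_l^{[\kappa:\bQ_p]}$, the $\bZ$ factor contributes $\bZ_l$, and $\mu(\kappa)^{(l)}$ is again finite, so the torsion-free quotient is free of rank $[\kappa:\bQ_p] + 1$. This matches the two cases in the statement. (Alternatively, instead of citing the explicit structure of $\kappa^\times$, one can use the classical presentation of $G_\kappa^{(l)}$ as in Serre or NSW, Chapter VII: for $p = l$ it is a Demushkin group of rank $[\kappa:\bQ_p]+1$ or $+2$ depending on whether $\mu_l \subset \kappa$, but in either case the abelianized torsion-free part has rank $[\kappa:\bQ_p]+1$; for $p \neq l$ it is the pro-$l$ part of $\widehat{\bZ} \ltimes \widehat{\bZ}(1)$, free of rank at most $2$ but with abelianization of torsion-free rank $1$.)

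The routine parts are the class field theory input and bookkeeping of the finite torsion factor $\mu(\kappa)^{(l)}$, which disappears upon passing to $/\tor$; I do not expect any genuine obstacle. The one point deserving a line of care is the identification $G_\kappa^{\cC,(l)} = G_\kappa^{(l)}$, i.e. that for $l \in \Sigma(\cC)$ the pro-$\cC$ quotient captures the full pro-$l$ part — this is immediate from the definition of a full class of finite groups, but should be stated explicitly since it is the only place the hypothesis $l \in \Sigma(\cC)$ enters. After that, the lemma is a direct reading of the well-known structure of the maximal abelian pro-$l$ extension of a $p$-adic field.
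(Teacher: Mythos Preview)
Your proposal is correct and takes essentially the same approach as the paper: the paper's proof is simply the one-line remark ``The assertion follows from local class field theory,'' and your argument is precisely the natural unpacking of that sentence via the identification $G_\kappa^{\cC,(l)} = G_\kappa^{(l)}$ (using $l \in \Sigma(\cC)$) together with the standard decomposition $\kappa^\times \simeq \bZ \times \mu(\kappa) \times \bZ_p^{[\kappa:\bQ_p]}$.
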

\begin{proof}
The assertion follows from local class field theory.
\end{proof}

In the rest of this section, we recover 
a certain part 
of the cyclotomic character.
\begin{lem}\label{cyc1}
Let $l \in \Sigma(\cC)$, 
and $\kappa$ be a $p$-adic field for some prime number $p \neq l$.
Assume 
$\mu_{l} \subset \kappa^{\cC}$.
Then 
the $l$-adic cyclotomic character $\chi_{\kappa,l} \colon G_{\kappa}\rightarrow {\bZ_l}^{\ast}$ factors as 
$$G_{\kappa} \twoheadrightarrow G_{\kappa}^{\cC} \overset{\chi_{\kappa,l}^{\cC}}\rightarrow {\bZ_l}^{\ast, \cC} \hookrightarrow {\bZ_l}^{\ast},$$ where we consider ${\bZ_l}^{\ast, \cC}$ as a direct sum factor of ${\bZ_l}^{\ast}$, 
and $\chi_{\kappa,l}^{\cC}$ can be recovered group-theoretically from $G_{\kappa}^{\cC}$ (and $l$).
\end{lem}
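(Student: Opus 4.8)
The plan is to leverage the structure theory of $G_\kappa^{\cC}$ for $\kappa$ a $p$-adic field with $p\neq l$. First, I would establish the factorization claim. Since $p\neq l$, the $l$-adic cyclotomic character is unramified: it factors through $G_\kappa^{\ur}\simeq\hat\bZ$, and its image in $\bZ_l^\ast$ is topologically generated by $\Frob_\kappa\mapsto \frak N(\fp_\kappa)$ (the cardinality of the residue field), hence lies in the pro-$l$ part $\bZ_l^{\ast}$, but more to the point the whole character has pro-$\cC$ image: its image is a quotient of $\hat\bZ$, and since $\mu_l\subset\kappa^{\cC}$ the relevant cyclotomic quotient $G(\kappa(\mu_{l^\infty})/\kappa)$ of $G_\kappa$ factors through $G_\kappa^{\cC}$ (the extension $\kappa(\mu_{l^\infty})/\kappa$ is pro-$l$-by-finite-of-order-prime-to-$l$, hence pro-$\cC$ once $l\in\Sigma(\cC)$, using that $\cC$ is a full class and $\mu_l\subset\kappa^{\cC}$ forces the finite part to be realizable). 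This gives the desired factorization through $\chi_{\kappa,l}^{\cC}\colon G_\kappa^{\cC}\to\bZ_l^{\ast,\cC}\hookrightarrow\bZ_l^\ast$.

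For the group-theoretic recovery, the idea is to isolate $\chi_{\kappa,l}^{\cC}$ from the conjugation action of $G_\kappa^{\cC}$ on an abelian subquotient that is a cyclotomic Tate twist. Concretely, let $G_{\kappa,l}$ be an $l$-Sylow subgroup of $G_\kappa^{\cC}$; by Lemma \ref{2.2} (using $\mu_l\subset\kappa^{\cC}$) it is of $l$-decomposition type, i.e. $G_{\kappa,l}\simeq\bZ_l\rtimes_\phi\bZ_l$, and this semidirect product decomposition is canonical up to the obvious ambiguities: the normal $\bZ_l$ is the inertia part, recoverable as (the closure of) the commutator subgroup, or as the unique normal procyclic subgroup with procyclic quotient. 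The conjugation action of the quotient $\bZ_l$ on the normal $\bZ_l$ is precisely $\phi$, and $\phi$ is (after identifying the normal $\bZ_l$ with $\bZ_l(1)=\varprojlim\mu_{l^n}$) the restriction of $\chi_{\kappa,l}$ to $G_{\kappa,l}$. So from $G_\kappa^{\cC}$ we recover the restriction of $\chi_{\kappa,l}^{\cC}$ to each $l$-Sylow subgroup as an abstract homomorphism $\bZ_l\to\Aut(\bZ_l)=\bZ_l^\ast$. To globalize, I would use Lemma \ref{2.11}: $G_\kappa^{\cC,(l),\ab,/\tor}\simeq\bZ_l$ for $p\neq l$, so after passing to the maximal pro-$l$ abelian torsion-free quotient the map $\chi_{\kappa,l}^{\cC}$ is controlled by its behavior on an $l$-Sylow subgroup, and one checks that the $l$-Sylow subgroup surjects onto this quotient. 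The point is that $\chi_{\kappa,l}^{\cC}$ factors through $G_\kappa^{\cC}\to G_\kappa^{\cC,\ab}\to(\text{pro-}l\text{ part})$, and this pro-$l$ abelian quotient is procyclic modulo torsion; matching the two recoveries along the image of $G_{\kappa,l}$ pins down $\chi_{\kappa,l}^{\cC}$ entirely.

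The main obstacle I anticipate is the bookkeeping in the globalization step: ensuring that the value of $\chi_{\kappa,l}^{\cC}$ on a Frobenius-type generator of the full group (not just on inertia-normalizing elements inside an $l$-Sylow) is determined group-theoretically. One has to be careful that the "unramified part" $G_\kappa^{\ur}$ — on which $\chi$ is actually interesting — may not be visible inside a single $l$-Sylow subgroup in a way that sees the right generator; the recovery via the $l$-Sylow gives $\phi$, which is the action on inertia, and one must relate this back to the value at Frobenius via the known relation in a Demushkin group of local type (the Iwasawa relation for $G_\kappa^{\tr}$, cf. the split exact sequence \eqref{tr}). Concretely, in $G_\kappa^{\tr}\simeq\bZ_l(1)\rtimes\hat\bZ$ the generator of $\hat\bZ$ acts on $\bZ_l(1)$ by multiplication by $\frak N(\fp_\kappa)$, which is exactly $\chi_{\kappa,l}(\Frob_\kappa)$; so recovering $\phi$ on the $l$-Sylow recovers this scalar, and since $\chi_{\kappa,l}^{\cC}$ is unramified, it is determined by this scalar together with the group-theoretically recoverable quotient $G_\kappa^{\cC}\twoheadrightarrow G_\kappa^{\ur,\cC}$. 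The remaining verifications — that the $\bZ_l$-by-$\bZ_l$ decomposition is canonical, and that the residue-field cardinality is itself recoverable (e.g. from $[\kappa:\bQ_p]$ via Lemma \ref{2.11} plus the residual degree, or directly) — are routine and I would not expand them here.
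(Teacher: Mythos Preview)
Your factorization argument is fine, but the group-theoretic recovery has a genuine gap. The claim that ``$\chi_{\kappa,l}^{\cC}$ factors through $G_\kappa^{\cC}\to G_\kappa^{\cC,\ab}\to(\text{pro-}l\text{ part})$'' is false in general: when $\mu_l\not\subset\kappa$ (which is allowed---the hypothesis is only $\mu_l\subset\kappa^{\cC}$), the image of $\chi_{\kappa,l}^{\cC}$ contains the nontrivial prime-to-$l$ group $G(\kappa(\mu_l)/\kappa)$, so the character does not factor through any pro-$l$ quotient. Correspondingly, the conjugation action you extract from an $l$-Sylow subgroup only sees $\chi$ restricted to $G_{\kappa(\mu_l)}^{\cC}$; concretely, the generator of the unramified $\bZ_l$ in your $\bZ_l\rtimes_\phi\bZ_l$ acts on the inertia $\bZ_l$ by $\pr_1(\frak N(\fp_\kappa))\in 1+\tilde l\bZ_l$, not by $\frak N(\fp_\kappa)$ itself. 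Your subsequent attempts to recover the unramified quotient $G_\kappa^{\cC}\twoheadrightarrow G_\kappa^{\ur,\cC}$ and the integer $\frak N(\fp_\kappa)$ are left as unjustified handwaves (Lemma~\ref{2.11} gives rank $1$ here, not $[\kappa:\bQ_p]$).

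The paper avoids this by working not with an $l$-Sylow but with the open \emph{normal} subgroup $U\subset G_\kappa^{\cC}$ corresponding to $\kappa(\mu_l)$, characterized group-theoretically as the maximal open $U$ with $U^{(l)}$ of $l$-decomposition type. Inside $U^{(l)}=G_{\kappa(\mu_l)}^{(l)}$ the inertia $I\simeq\bZ_l(1)$ is the unique maximal closed normal procyclic subgroup, hence characteristic; because $U$ is normal, the \emph{whole} group $G_\kappa^{\cC}$ acts on $I$ by conjugation via $G(\kappa(\mu_l)^{(l)}/\kappa)$, and by the tame structure (\ref{tr}) this action is exactly $\chi_{\kappa,l}$. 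This single step replaces your entire globalization and captures the prime-to-$l$ part of $\chi$ that your $l$-Sylow approach misses.
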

\begin{proof}
By Lemma \ref{2.2}, 
the maximal element of 
$$\{ U \subset G_{\kappa}^{\cC} 
\mid \text{$U$ is an open subgroup of $G_{\kappa}^{\cC}$ and $U^{(l)}$ is of $l$-decomposition type} \}$$
corresponds to $\kappa(\mu_{l})$.
Write $I (= G_{\kappa(\mu_{l^{\infty}})}^{(l)} \simeq \bZ_l)$ for the inertia subgroup of $G_{\kappa(\mu_{l})}^{(l)}$.
Then $I$ 
is a unique maximal closed normal pro-cyclic subgroup of $G_{\kappa(\mu_{l})}^{(l)}$ (cf. the proof of Lemma \ref{2.2} and \cite[Lemma 2.2 (iii)]{Ivanov}).
By \cite[(7.5.2) Proposition]{NSW}, 
the homomorphism 
$$
G(\kappa(\mu_{l^{\infty}})/\kappa) \to \Aut(I) \isom {\bZ_l}^{\ast}$$
induced by 
the conjugation action of $G(\kappa(\mu_{l})^{(l)}/\kappa)$ on $I$ coincides with the $l$-adic cyclotomic character, 
where $\Aut(I) \isom {\bZ_l}^{\ast}$ is 
the inverse of ${\bZ_l}^{\ast} \isom \Aut(I)$, $a\mapsto (g\mapsto g^a)$.
Further, $\Sigma(\cC)$ can be recovered group-theoretically from $G_{\kappa}^{\cC}$ by a similar argument to the proof of Lemma \ref{2.0}.
The assertion follows from these, together with the fact that ${\bZ_l}^{\ast, \cC} = {\bZ_l}^{\ast, \Sigma(\cC)}$.
\end{proof}

\begin{lem}\label{cyc2}
Let $l \in \Sigma(\cC)$.
Then $G_{K}^{\cC}$ is topologically generated by 
the subgroups belonging to 
$\Dec_{K,f}^{\cC,l}$.
\end{lem}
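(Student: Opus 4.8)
The statement asserts that the subgroups in $\Dec_{K,f}^{\cC,l}$ topologically generate $G_K^{\cC}$. The natural strategy is Galois-theoretic: let $N$ be the closed subgroup of $G_K^{\cC}$ topologically generated by all decomposition groups $D_{\overline{\p}}(K^{\cC}/K)$ with $\overline{\p} \in P_{K,f}^{\cC,l}(K^{\cC})$, and let $M \subset K^{\cC}$ be the corresponding fixed field, so that $N$ is normal in $G_K^{\cC}$ (the set $\Dec_{K,f}^{\cC,l}$ is stable under conjugation) and $M/K$ is Galois with $G(M/K) = G_K^{\cC}/N$. The point is that every prime in $P_{K,f}^{\cC,l}$ splits completely in $M/K$: indeed, for such a $\overline{\p}$, the decomposition group $D_{\overline{\p}}(M/K)$ is the image of $D_{\overline{\p}}(K^{\cC}/K) \subset N$ in $G(M/K) = G_K^{\cC}/N$, which is trivial. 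Thus $M/K$ is a pro-$\cC$ extension in which all primes of $P_{K,f}^{\cC,l}$ split completely, and I want to conclude $M = K$, i.e. $N = G_K^{\cC}$.

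To do this I would invoke the description in Remark \ref{csset}(ii): writing $\Sigma' = P_{\bQ,f} \setminus \Sigma(\cC)$, we have $P_{K,f}^{\cC,l} = \cs(K^{\Sigma'} \cap K(\mu_l)/K) \setminus P_{K,l}$. So the primes of $K$ splitting completely in $M$ contain, up to the finite set $P_{K,l}$, all primes splitting completely in the finite abelian extension $E \defeq K^{\Sigma'} \cap K(\mu_l)$ of $K$. Now $M/K$ is pro-$\cC$, hence (taking any finite subextension) I may replace $M$ by a finite subextension $M_0/K$ inside $K^{\cC}$; then almost all primes of $\cs(E/K)$ split completely in $M_0/K$, and by Chebotarev (or the standard fact that $\cs$ essentially determines the extension among Galois extensions, cf. \cite[Corollary VII.13.10]{NSW} or the argument in the Neukirch–Uchida setup) this forces $M_0 \subset E$. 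Hence $M \subset E$; but $E/K$ has degree prime to the primes in $\Sigma(\cC)$ except possibly at $l$-power contributions coming from $K(\mu_l)$ — more precisely $[E:K]$ divides $[K(\mu_l):K]$, which is prime to $l$, and $E/K$ is unramified outside $\Sigma'$-primes and $l$. Since $M/K$ is a pro-$\cC$ extension and simultaneously a subextension of $E/K$, and $\Sigma(\cC) \not\ni$ the primes dividing $[E:K]$ except in a controlled way, a direct order comparison gives $M = K$.

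The step I expect to be the main obstacle is the last one: pinning down exactly why a subextension of $E = K^{\Sigma'} \cap K(\mu_l)$ that is also pro-$\cC$ must be trivial. One has to use that $\Sigma(\cC) \cap \Sigma' = \emptyset$, so the only primes that could divide the degree of a pro-$\cC$ subextension of $K^{\Sigma'}$ over $K$ are none at all — a pro-$(\Sigma')$ extension that is also pro-$\cC$ is trivial since $\Sigma' \cap \Sigma(\cC) = \emptyset$. That kills the $K^{\Sigma'}$ factor; it remains that $M$ embeds in $K(\mu_l)/K$, whose degree is prime to $l$, while $M$ is pro-$\cC$ with $l \in \Sigma(\cC)$ — but this alone does not immediately kill it if $\Sigma(\cC)$ contains primes dividing $[K(\mu_l):K]$. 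I would handle this by noting that $K(\mu_l)/K$ is ramified at primes above $l$ (for $l$ large relative to $K$; in general one restricts further), whereas one can choose, via Chebotarev applied inside $K(\mu_l)$, that the completely-split condition at $P_{K,f}^{\cC,l}$ already forces triviality; alternatively, and more robustly, one argues that the intersection over all $\overline{\p} \in P_{K,f}^{\cC,l}(K^{\cC})$ of the $D_{\overline{\p}}$ generates everything because $\cs(M/K) \setminus P_{K,l}$ having positive density (from Remark \ref{csset}(ii) and $\delta(\cs(E/K)) = 1/[E:K] > 0$) contradicts $[M:K] > 1$ by the Chebotarev density theorem applied to $M/K$. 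This density contradiction is the cleanest route, and I would phrase the whole proof through it: if $N \neq G_K^{\cC}$, pick a finite nontrivial subextension, get a positive-density set of completely split primes that is disjoint from a positive-density set of non-split primes, contradiction.
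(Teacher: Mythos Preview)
Your main approach is correct and is essentially the paper's proof: set $N$ to be the closed normal subgroup generated by $\Dec_{K,f}^{\cC,l}$, let $M$ be its fixed field, use Remark \ref{csset}(ii) and Bauer's theorem (which is exactly \cite[Chapter VII, (13.9) Proposition]{Neukirch3} cited in the paper) to get $M \subset E \defeq K^{\Sigma'} \cap K(\mu_l)$, and then finish by observing that a pro-$\cC$ subextension of $K^{\Sigma'}/K$ must be trivial because $\Sigma(\cC) \cap \Sigma' = \emptyset$; the paper states this last step simply as $K^{\cC} \cap K^{\Sigma'} \cap K(\mu_l) = K$.

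Where you go astray is in the paragraph beginning ``That kills the $K^{\Sigma'}$ factor; it remains that $M$ embeds in $K(\mu_l)/K$\dots''. There is no separate $K(\mu_l)$ factor to deal with: $E$ is the \emph{intersection} $K^{\Sigma'} \cap K(\mu_l)$, so $E \subset K^{\Sigma'}$, and once you have $M \subset E$ you are already done by the disjointness-of-orders argument you gave one sentence earlier. The subsequent worries about primes dividing $[K(\mu_l):K]$ and ramification at $l$ are unnecessary. Moreover, your closing ``alternative'' density argument is actually wrong: the fact that $\cs(M_0/K)$ has positive density does \emph{not} contradict $[M_0:K] > 1$ (for instance $\cs(\bQ(i)/\bQ)$ has density $1/2$). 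So drop that alternative entirely and keep your main line, which becomes the paper's proof once you stop after $M \subset E \subset K^{\Sigma'}$ and $M \subset K^{\cC}$.
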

\begin{proof}
Write 
$\Sigma' \defeq P_{\bQ,f}\setminus\Sigma(\cC)$, and 
$H$ for the closed normal subgroup of $G_{K}^{\cC}$ topologically generated by 
the subgroups belonging to 
$\Dec_{K,f}^{\cC,l}$.
By 
Remark \ref{csset} $(ii)$ and 
\cite[Chapter VII, (13.9) Proposition]{Neukirch3}, 
the subextension of $K^{\cC}/K$ corresponding to $G_{K}^{\cC}/H$ is contained in $K^{\Sigma'} \cap K(\mu_{l})$, 
and hence coincides with $K$ since $K^{\cC} \cap K^{\Sigma'} \cap K(\mu_{l}) =K$.
Thus, $H = G_{K}^{\cC}$.
\end{proof}

\begin{prop}\label{cyc3}
Let $l \in \Sigma(\cC)$.
Then $\chi_{K,l}^{\cC}$ can be recovered group-theoretically from $G_{K}^{\cC}$.
\end{prop}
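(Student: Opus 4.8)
The plan is to realize $\chi_{K,l}^{\cC}$ as the unique continuous homomorphism $G_{K}^{\cC} \to {\bZ_l}^{\ast, \cC}$ obtained by gluing together the local cyclotomic characters on the decomposition groups belonging to $\Dec_{K,f}^{\cC,l}$. First I would apply Theorem \ref{2.9} to recover $\Dec_{K,f}^{\cC,l}$ group-theoretically from $G_{K}^{\cC}$. By definition, every $D \in \Dec_{K,f}^{\cC,l}$ equals $D_{\overline{\p}}(K^{\cC}/K)$ for some $\overline{\p} \in P_{K,f}^{\cC,l}(K^{\cC})$; writing $\p = \overline{\p}|_{K}$ and $p = \p|_{\bQ}$, Proposition \ref{1.2} identifies $D$ with $G_{K_{\p}}^{\cC}$, while membership in $P_{K,f}^{\cC,l}$ forces $p \neq l$ and $\mu_{l} \subset K_{\p}^{\cC}$. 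Thus the hypotheses of Lemma \ref{cyc1} hold for $\kappa = K_{\p}$, and applying its recipe to the abstract profinite group $D \cong G_{K_{\p}}^{\cC}$ together with $l$ produces, group-theoretically, a continuous homomorphism $\chi_{D} \colon D \to {\bZ_l}^{\ast, \cC}$, namely $\chi_{K_{\p},l}^{\cC}$.

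Next I would record the compatibility that $\chi_{D}$ is precisely the restriction $\chi_{K,l}^{\cC}|_{D}$. This rests on the standard fact that the restriction of the global cyclotomic character $\chi_{K,l}$ to a decomposition group of $G_{K}$ at a prime above $\p$, which is isomorphic to $G_{K_{\p}}$, is the local cyclotomic character $\chi_{K_{\p},l}$ (the roots of unity in $\overline{K}$ and in an algebraic closure of $K_{\p}$ being identified compatibly with the Galois actions), together with the naturality of passing to pro-$\cC$ factors; under the identification $D \cong G_{K_{\p}}^{\cC}$ of Proposition \ref{1.2}, $\chi_{K,l}^{\cC}|_{D}$ therefore corresponds to $\chi_{K_{\p},l}^{\cC} = \chi_{D}$. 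Here one also uses that ${\bZ_l}^{\ast, \cC} = {\bZ_l}^{\ast, \Sigma(\cC)}$ is the pro-$\cC$ quotient of ${\bZ_l}^{\ast}$ in both the local and global settings, and that $\Sigma(\cC)$ is itself group-theoretic by Lemma \ref{2.0}.

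Finally, by Lemma \ref{cyc2} the subgroups belonging to $\Dec_{K,f}^{\cC,l}$ topologically generate $G_{K}^{\cC}$, so a continuous homomorphism out of $G_{K}^{\cC}$ is determined by its restrictions to them. Hence $\chi_{K,l}^{\cC}$ is the unique continuous homomorphism $G_{K}^{\cC} \to {\bZ_l}^{\ast, \cC}$ whose restriction to each $D \in \Dec_{K,f}^{\cC,l}$ agrees with the group-theoretically recovered $\chi_{D}$ --- such a homomorphism exists, being witnessed by $\chi_{K,l}^{\cC}$ itself via the previous paragraph --- and this characterization is purely group-theoretic, which proves the claim. I expect the compatibility step to be the main point requiring care: one must check that the recipe of Lemma \ref{cyc1}, applied to the subgroup $D$ regarded as an abstract profinite group, returns exactly $\chi_{K,l}^{\cC}|_{D}$ rather than merely a homomorphism that agrees with it up to an automorphism of the target; this is ensured by the functoriality of the construction in Lemma \ref{cyc1} and the canonical identification $D \cong G_{K_{\p}}^{\cC}$ of Proposition \ref{1.2}.
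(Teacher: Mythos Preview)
Your proposal is correct and follows essentially the same approach as the paper: recover $\Dec_{K,f}^{\cC,l}$ via Theorem~\ref{2.9}, apply Proposition~\ref{1.2} and Lemma~\ref{cyc1} to recover $\chi_{K,l}^{\cC}|_{D}$ group-theoretically for each $D \in \Dec_{K,f}^{\cC,l}$, and then use Lemma~\ref{cyc2} to conclude that these restrictions determine $\chi_{K,l}^{\cC}$ uniquely. Your treatment is simply more explicit about the compatibility step than the paper's terse proof.
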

\begin{proof}
By Proposition \ref{1.2} and Lemma \ref{cyc1}, 
for $\overline{\p} \in P_{K,f}^{\cC,l}(K^{\cC})$, 
$\chi_{K,l}^{\cC}|_{D_{\overline{\p}}(K^{\cC}/K)}$ can be recovered group-theoretically from $D_{\overline{\p}}(K^{\cC}/K)$.
The assertion follows from 
this, together with 
Theorem \ref{2.9} and Lemma \ref{cyc2}.
\end{proof}

\section{The local correspondence}
In this section, 
by using the results in \S 2, 
we obtain the ``local correspondence'': 
a one-to-one correspondence between the sets of decomposition groups 
induced by an isomorphism of Galois groups, 
and study its property.


\begin{definition}\label{3.2}
For $i=1,2$, let $K_i$ be a number field, 
$M_i/K_i$ a Galois extension, $S_i \subset P_{K_i,f}$ a set of nonarchimedean primes of $K_i$, 
and $\sigma :G(M_1/K_1)\isom G(M_2/K_2)$ an isomorphism.
We say that 
a map $\phi 
\colon S_1(M_1) \to S_2(M_2)$ is a weak local correspondence between $S_1$ and $S_2$ for $\sigma$ 
if 
$\sigma(D_{\overline{\p}_1}(M_1/K_1)) = D_{\phi(\overline{\p}_1)}(M_2/K_2)$
for any $\overline{\p}_1 \in S_1(M_1)$.
In other words, 
$\sigma$ induces an injective map $\Dec(M_1/K_1,S_1(M_1)) \hookrightarrow \Dec(M_2/K_2,S_2(M_2))$ such that 
the following diagram commutes:
\begin{equation}\label{a}
\xymatrix{
S_1(M_1) \ar@{->>}[r]^-{}_-{} \ar[d]_-{\phi} &\Dec(M_1/K_1,S_1(M_1))\ar@{^{(}-{>}}[d]\\
S_2(M_2)\ar@{->>}[r]^-{}_-{}&\Dec(M_2/K_2,S_2(M_2)),
}
\end{equation}
where the horizontal arrows are the canonical surjections mapping primes to their decomposition groups.

Let $\phi \colon S_1(M_1) \to S_2(M_2)$ be a weak local correspondence between $S_1$ and $S_2$ for $\sigma$.
We say that $\phi$ is Galois equivariant (for $\sigma$) if
for each $g \in G(M_1/K_1)$ and $\bap \in S_1(M_1)$, 
$\phi(g \bap) = \sigma(g)\phi(\bap)$.
We say that $\phi$ satisfies 
condition $(\Char)$ (resp. condition $(\Deg)$)
if for any $\overline{\p}_1 \in S_1(M_1)$, 
the residue characteristics (resp. the local degrees) of $\overline{\p}_1|_{K_1}$ and $\phi(\overline{\p}_1)|_{K_2}$ coincide.
We say that $\phi$ satisfies condition $(\Frob)$
if 
for any $\overline{\p}_1 \in S_1(M_1)$, 
$\sigma|_{D_{\overline{\p}_1}} \colon D_{\overline{\p}_1} \isom D_{\phi(\overline{\p}_1)}$ induces 
an isomorphism $D_{\overline{\p}_1}/I_{\overline{\p}_1} \isom D_{\phi(\overline{\p}_1)}/I_{\phi(\overline{\p}_1)}$,
and the Frobenius elements correspond to each other under this isomorphism.
We say that $\phi$ is a local correspondence between $S_1$ and $S_2$ for $\sigma$ 
if $\phi$ is bijective.
\end{definition}

\begin{lem}\label{3.3}
We use the notations in Definition \ref{3.2}.
Let $U_1$ be an open subgroup of $G(M_1/K_1)$. 
Set $U_2 \defeq \sigma(U_1)$.
For $i=1,2$, write $L_i$ for the finite subextension of $M_i/K_i$ corresponding to $U_i$.
Then the following hold.
\begin{itemize}
\item[$(i)$]
Let $\phi \colon S_1(M_1) \to S_2(M_2)$ be a weak local correspondence (resp. a local correspondence) between $S_1$ and $S_2$ for $\sigma$.
Then $\phi$ is a weak local correspondence (resp. a local correspondence) between $S_1(L_1)$ and $S_2(L_2)$ for $\sigma|_{U_1}\colon U_1 \isom U_2$.
\item[$(ii)$]
Let $\phi \colon S_1(M_1) \to S_2(M_2)$ be a weak local correspondence between $S_1$ and $S_2$ for $\sigma$ satisfying condition $(\Char)$ (resp. condition $(\Deg)$, resp. condition $(\Frob)$).
Then $\phi$ is a weak local correspondence between $S_1(L_1)$ and $S_2(L_2)$ for $\sigma|_{U_1}\colon U_1 \isom U_2$ satisfying condition $(\Char)$ (resp. condition $(\Deg)$, resp. condition $(\Frob)$).
\end{itemize}
\end{lem}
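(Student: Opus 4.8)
The plan is to deduce everything from a single observation about how decomposition and inertia groups behave under passing to a subgroup. For $i=1,2$ and any $\overline{\p}_i \in S_i(M_i)$, since $G(M_i/L_i) = U_i$ and $D_{\overline{\p}_i}(M_i/K_i)$ is canonically $G(M_{i,\overline{\p}_i}/K_{i,\overline{\p}_i|_{K_i}})$, the decomposition (resp. inertia) group of $M_i/L_i$ at $\overline{\p}_i$ is obtained by intersecting that of $M_i/K_i$ with $U_i$:
$$D_{\overline{\p}_i}(M_i/L_i) = D_{\overline{\p}_i}(M_i/K_i)\cap U_i,\qquad I_{\overline{\p}_i}(M_i/L_i) = I_{\overline{\p}_i}(M_i/K_i)\cap U_i.$$
The first equality is standard; the second follows since an element of $D_{\overline{\p}_i}(M_i/L_i) \subset D_{\overline{\p}_i}(M_i/K_i)$ lies in $I_{\overline{\p}_i}(M_i/L_i)$ if and only if it acts trivially on the residue field $\kappa(\overline{\p}_i)$, which is exactly the condition to lie in $I_{\overline{\p}_i}(M_i/K_i)$. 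Note next that as sets $S_1(L_1)(M_1) = S_1(M_1)$ (because $\overline{\p}_1|_{K_1}\in S_1$ iff $\overline{\p}_1|_{L_1}\in S_1(L_1)$) and likewise for the index $2$, so ``$\phi$'' is literally the same map whether regarded relative to $(K_1,K_2)$ or to $(L_1,L_2)$, and in particular its bijectivity is unaffected. Since $\sigma(U_1)=U_2$ and $\sigma(D_{\overline{\p}_1}(M_1/K_1)) = D_{\phi(\overline{\p}_1)}(M_2/K_2)$, intersecting with $U_1$ gives $\sigma|_{U_1}(D_{\overline{\p}_1}(M_1/L_1)) = D_{\phi(\overline{\p}_1)}(M_2/L_2)$; this is precisely the defining property of a weak local correspondence between $S_1(L_1)$ and $S_2(L_2)$ for $\sigma|_{U_1}$, so together with the previous remark this settles $(i)$.

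For $(ii)$, condition $(\Char)$ is immediate: the residue characteristic of $\overline{\p}_1|_{L_1}$ (resp. $\phi(\overline{\p}_1)|_{L_2}$) is the rational prime below $\overline{\p}_1$ (resp. $\phi(\overline{\p}_1)$), hence equals that of $\overline{\p}_1|_{K_1}$ (resp. $\phi(\overline{\p}_1)|_{K_2}$), and these coincide by $(\Char)$ for $\sigma$. For $(\Deg)$, consider the tower $\bQ_p \subseteq K_{1,\overline{\p}_1|_{K_1}}\subseteq L_{1,\overline{\p}_1|_{L_1}}$ inside $M_{1,\overline{\p}_1}$: the local degree of $\overline{\p}_1|_{L_1}$ is the local degree of $\overline{\p}_1|_{K_1}$ times $[L_{1,\overline{\p}_1|_{L_1}}:K_{1,\overline{\p}_1|_{K_1}}] = [D_{\overline{\p}_1}(M_1/K_1):D_{\overline{\p}_1}(M_1/L_1)]$, and symmetrically on the side of $K_2,L_2$. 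By $(i)$, $\sigma$ carries the pair $(D_{\overline{\p}_1}(M_1/K_1), D_{\overline{\p}_1}(M_1/L_1))$ isomorphically onto $(D_{\phi(\overline{\p}_1)}(M_2/K_2), D_{\phi(\overline{\p}_1)}(M_2/L_2))$, so these two indices agree; combined with $(\Deg)$ for $\sigma$, the local degrees of $\overline{\p}_1|_{L_1}$ and $\phi(\overline{\p}_1)|_{L_2}$ agree.

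For condition $(\Frob)$: by the first paragraph together with $\sigma(I_{\overline{\p}_1}(M_1/K_1)) = I_{\phi(\overline{\p}_1)}(M_2/K_2)$, which is part of $(\Frob)$ for $\sigma$, the restriction $\sigma|_{U_1}$ carries $D_{\overline{\p}_1}(M_1/L_1)$ onto $D_{\phi(\overline{\p}_1)}(M_2/L_2)$ and $I_{\overline{\p}_1}(M_1/L_1)$ onto $I_{\phi(\overline{\p}_1)}(M_2/L_2)$, hence induces an isomorphism $D_{\overline{\p}_1}(M_1/L_1)/I_{\overline{\p}_1}(M_1/L_1) \isom D_{\phi(\overline{\p}_1)}(M_2/L_2)/I_{\phi(\overline{\p}_1)}(M_2/L_2)$. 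To see Frobenius maps to Frobenius, identify $D_{\overline{\p}_1}(M_1/L_1)/I_{\overline{\p}_1}(M_1/L_1)$ with the subgroup $G(\kappa(\overline{\p}_1)/\kappa(\overline{\p}_1|_{L_1}))$ of $D_{\overline{\p}_1}(M_1/K_1)/I_{\overline{\p}_1}(M_1/K_1) = G(\kappa(\overline{\p}_1)/\kappa(\overline{\p}_1|_{K_1}))$; under this identification the Frobenius of $M_1/L_1$ is the $f_1$-th power of the Frobenius of $M_1/K_1$, where $f_1 = [\kappa(\overline{\p}_1|_{L_1}):\kappa(\overline{\p}_1|_{K_1})]$ is the index of this subgroup, and similarly with exponent $f_2$ on the side of $K_2,L_2$. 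Since $\sigma$ matches these two subgroups and sends the Frobenius of $M_1/K_1$ to that of $M_2/K_2$, we get $f_1 = f_2$, and hence $\sigma|_{U_1}$ sends the $f_1$-th power of the former Frobenius to the $f_2$-th power of the latter, i.e., Frobenius to Frobenius. The entire argument is routine bookkeeping; the only point requiring care is this last one, namely expressing the Frobenius of $M_i/L_i$ inside the quotient $D_{\overline{\p}_i}(M_i/K_i)/I_{\overline{\p}_i}(M_i/K_i)$ as the appropriate power of the Frobenius of $M_i/K_i$ and observing that the exponent is a group-theoretic index preserved by $\sigma$.
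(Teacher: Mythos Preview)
Your proof is correct and follows essentially the same approach as the paper's. The paper packages part $(i)$ into a commutative diagram involving the maps $\Dec(M_i/K_i,S_i(M_i)) \twoheadrightarrow \Dec(M_i/L_i,S_i(M_i))$, but this diagram encodes precisely your intersection formula $D_{\overline{\p}_i}(M_i/L_i) = D_{\overline{\p}_i}(M_i/K_i)\cap U_i$; for part $(ii)$ the paper simply lists the four facts you spell out (the index--local-degree relation, $I_{\overline{\p}}(M_i/K_i)\cap U_i = I_{\overline{\p}}(M_i/L_i)$, the index--residual-degree relation on $D/I$, and the $f$-th-power compatibility of Frobenii), so your argument is a more explicit rendering of the same proof.
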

\begin{proof}
$(i)$ follows immediately from the following commutative diagram: 
\begin{equation*}\label{b}
\xymatrix{
S_1(M_1) \ar@{->>}[r]^-{}_-{} \ar[d]_-{\phi} &\Dec(M_1/K_1,S_1(M_1))\ar@{^{(}-{>}}[d]\ar@{->>}[r]^-{}_-{} &\Dec(M_1/L_1,S_1(M_1))\ar@{^{(}-{>}}[d]\\
S_2(M_2)\ar@{->>}[r]^-{}_-{}&\Dec(M_2/K_2,S_2(M_2))\ar@{->>}[r]^-{}_-{} &\Dec(M_2/L_2,S_2(M_2)),
}
\end{equation*}
where the horizontal arrows on the right are the canonical surjections induced by the restriction, 
and the vertical map on the right is induced by $\sigma|_{U_1}$.

$(ii)$ follows from the fact that for $i=1,2$ and $\overline{\p} \in P_{M_i,f}$, 
$(D_{\overline{\p}}(M_i/K_i) \colon D_{\overline{\p}}(M_i/L_i)) = [L_{i,\overline{\p}|_{L_i}} \colon K_{i,\overline{\p}|_{K_i}}]$, 
$I_{\overline{\p}}(M_i/K_i) \cap G(M_i/L_i) = I_{\overline{\p}}(M_i/L_i)$, 
$(D_{\overline{\p}}(M_i/K_i)/I_{\overline{\p}}(M_i/K_i) \colon D_{\overline{\p}}(M_i/L_i)/I_{\overline{\p}}(M_i/L_i)) = f_{\overline{\p}|_{L_i},L_i/K_i}$, 
and 
the Frobenius element in $D_{\overline{\p}}(M_i/L_i)/I_{\overline{\p}}(M_i/L_i)$ maps to the $f_{\overline{\p}|_{L_i},L_i/K_i}$-th power of the Frobenius element in $D_{\overline{\p}}(M_i/K_i)/I_{\overline{\p}}(M_i/K_i)$ under $ D_{\overline{\p}}(M_i/L_i)/I_{\overline{\p}}(M_i/L_i) \hookrightarrow D_{\overline{\p}}(M_i/K_i)/I_{\overline{\p}}(M_i/K_i)$.
\end{proof}

\begin{lem}\label{3.4}
We use the notations in Definition \ref{3.2}.
Assume that the canonical surjection $S_2(M_2) \twoheadrightarrow \Dec(M_2/K_2,S_2(M_2))$ is bijective.
Let $\phi \colon S_1(M_1) \to S_2(M_2)$ be a weak local correspondence between $S_1$ and $S_2$ for $\sigma$.
Then the following hold.
\begin{itemize}

\item[$(i)$] 
$\phi$ is unique.
\item[$(ii)$] 
$\phi$ is Galois equivariant.

\end{itemize}
Further, let $U_1$ be a closed normal subgroup of $G(M_1/K_1)$. 
Set $U_2 \defeq \sigma(U_1)$.
For $i=1,2$, write $L_i$ for the Galois subextension of $M_i/K_i$ corresponding to $U_i$.
Then the following hold.
\begin{itemize}
\item[$(iii)$] $\phi$ induces a 
Galois equivariant 
weak local correspondence
$\overline{\phi} \colon 
S_1(L_1) \to S_2(L_2)$ between $S_1$ and $S_2$ for the isomorphism $G(L_1/K_1)\isom G(L_2/K_2)$ induced by $\sigma$.
If $\phi$ is a local correspondence, $\overline{\phi}$ is also a local correspondence.
If $\phi$ satisfies condition $(\Char)$ (resp. condition $(\Deg)$, resp. condition $(\Frob)$), $\overline{\phi}$ also satisfies condition $(\Char)$ (resp. condition $(\Deg)$, resp. condition $(\Frob)$).

\end{itemize}
\end{lem}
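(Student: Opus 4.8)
The plan is to exploit the bijectivity of the canonical surjection $S_2(M_2) \twoheadrightarrow \Dec(M_2/K_2, S_2(M_2))$ as the decisive input: it turns the defining commutative square of a weak local correspondence into a genuine factorization problem with a unique solution. For part $(i)$, given $\overline{\p}_1 \in S_1(M_1)$, the value $\phi(\overline{\p}_1) \in S_2(M_2)$ must be a prime whose decomposition group is $\sigma(D_{\overline{\p}_1}(M_1/K_1))$; since distinct primes in $S_2(M_2)$ have distinct decomposition groups by hypothesis, there is at most one such prime, so any two weak local correspondences for $\sigma$ coincide. For part $(ii)$, fix $g \in G(M_1/K_1)$ and $\overline{\p}_1 \in S_1(M_1)$. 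Both $\phi(g\overline{\p}_1)$ and $\sigma(g)\phi(\overline{\p}_1)$ lie in $S_2(M_2)$ (the latter because $S_2(M_2)$ is $G(M_2/K_2)$-stable), and I compute their decomposition groups: $D_{\phi(g\overline{\p}_1)} = \sigma(D_{g\overline{\p}_1}) = \sigma(g^{-1} D_{\overline{\p}_1} g) = \sigma(g)^{-1} \sigma(D_{\overline{\p}_1}) \sigma(g) = \sigma(g)^{-1} D_{\phi(\overline{\p}_1)} \sigma(g) = D_{\sigma(g)\phi(\overline{\p}_1)}$, where the last equality uses the standard compatibility of decomposition groups with the Galois action. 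Again by injectivity of $S_2(M_2) \twoheadrightarrow \Dec(M_2/K_2, S_2(M_2))$, the two primes are equal, which is exactly Galois equivariance.

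For part $(iii)$, I would first note that restriction of primes gives canonical $G(M_i/K_i)$-equivariant surjections $S_i(M_i) \twoheadrightarrow S_i(L_i)$ (for $i = 1,2$), compatible via $\sigma$ and $\phi$ with each other once one knows $\phi$ descends. The map $\overline{\phi} \colon S_1(L_1) \to S_2(L_2)$ is defined by sending $\overline{\p}_1|_{L_1}$ to $\phi(\overline{\p}_1)|_{L_2}$; well-definedness is immediate from the Galois equivariance just proved, since two lifts of $\overline{\p}_1|_{L_1}$ in $S_1(M_1)$ differ by an element of $G(M_1/L_1) = U_1$, and $\phi$ intertwines the $U_1$- and $U_2$-actions, so their images restrict to the same prime of $L_2$. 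To see that $\overline{\phi}$ is a weak local correspondence for the induced isomorphism $G(L_1/K_1) \isom G(L_2/K_2)$, I use that $D_{\overline{\p}_1|_{L_1}}(L_1/K_1)$ is the image of $D_{\overline{\p}_1}(M_1/K_1)$ under $G(M_1/K_1) \twoheadrightarrow G(L_1/K_1)$, and similarly on the right; the square for $\phi$ then pushes forward to the square for $\overline{\phi}$. Galois equivariance of $\overline{\phi}$ follows by the same quotient argument, or by re-applying part $(ii)$ once one checks the hypothesis on $S_2(L_2) \twoheadrightarrow \Dec(L_2/K_2, S_2(L_2))$ is inherited (it is, since distinct primes of $L_2$ have distinct images under the already-injective map on $M_2$). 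The statements about bijectivity and about conditions $(\Char)$, $(\Deg)$, $(\Frob)$ being inherited by $\overline{\phi}$ are then formal: bijectivity of $\overline{\phi}$ follows from surjectivity of the restriction maps and injectivity from the decomposition-group characterization; and conditions $(\Char)$, $(\Deg)$, $(\Frob)$ depend only on the primes $\overline{\p}_1|_{K_1}$ and $\phi(\overline{\p}_1)|_{K_2}$ of the base fields $K_1$, $K_2$, which are unchanged when passing from $\phi$ to $\overline{\phi}$, so there is nothing further to verify.

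The only point requiring genuine care — the "main obstacle," such as it is — is bookkeeping the various $G$-actions and the compatibility of the restriction-of-primes maps with $\sigma$ and $\phi$ in part $(iii)$; in particular one must confirm that the hypothesis "$S_2(M_2) \twoheadrightarrow \Dec(M_2/K_2,S_2(M_2))$ is bijective" is genuinely inherited at the level $L_2$ so that the descended map $\overline{\phi}$ inherits uniqueness and Galois equivariance from parts $(i)$ and $(ii)$ applied to $G(L_1/K_1) \isom G(L_2/K_2)$. This is straightforward once one observes that the map $\Dec(M_2/K_2, S_2(M_2)) \to \Dec(L_2/K_2, S_2(L_2))$ is surjective and that injectivity upstairs forces injectivity of $S_2(L_2) \to \Dec(L_2/K_2, S_2(L_2))$, but it is the place where a careless argument could go wrong. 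Everything else reduces to the single observation that decomposition groups determine primes within the relevant set, together with the elementary functoriality of decomposition and inertia groups under passage to subextensions recorded in Lemma \ref{3.3}.
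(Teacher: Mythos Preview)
Your treatment of parts $(i)$ and $(ii)$ is correct and matches the paper's argument, which simply appeals to the commutative square (\ref{a}) and the Galois equivariance of all its arrows except possibly $\phi$.

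In part $(iii)$, however, your ``main obstacle'' is misidentified, and your resolution of it is wrong. You claim that the bijectivity of $S_2(M_2)\twoheadrightarrow\Dec(M_2/K_2,S_2(M_2))$ is inherited by $S_2(L_2)\twoheadrightarrow\Dec(L_2/K_2,S_2(L_2))$. This is false in general: take $U_2=G(M_2/K_2)$, so $L_2=K_2$; then every decomposition group in $G(L_2/K_2)=\{1\}$ is trivial and the map collapses everything. More generally, the map $\Dec(M_2/K_2,S_2(M_2))\to\Dec(L_2/K_2,S_2(L_2))$ is surjective but typically not injective, and injectivity upstairs in a commutative square of surjections does not force injectivity downstairs. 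Fortunately you do not need this inheritance. The ``quotient argument'' you mention in passing is the correct route and is exactly what the paper does: since $\phi$ is $G(M_1/K_1)$-equivariant (via $\sigma$) by $(ii)$, it descends to a map of orbit spaces $S_1(M_1)/U_1\to S_2(M_2)/U_2$, i.e.\ $\overline{\phi}\colon S_1(L_1)\to S_2(L_2)$, automatically equivariant for the quotient groups; and if $\phi$ is bijective then so is the induced map on orbit spaces. Your alternative justification of injectivity of $\overline{\phi}$ via ``the decomposition-group characterization'' again relies on the false inheritance claim and should be dropped.

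Second, your dismissal of condition $(\Frob)$ as depending ``only on the primes $\overline{\p}_1|_{K_1}$ and $\phi(\overline{\p}_1)|_{K_2}$'' is too quick. Condition $(\Frob)$ asserts that $\sigma$ carries inertia to inertia and Frobenius to Frobenius, and these are defined relative to the ambient extension, not just the base primes. To pass from $M_i/K_i$ to $L_i/K_i$ one must check that $I_{\overline{\p}|_{L_i}}(L_i/K_i)$ is the image of $I_{\overline{\p}}(M_i/K_i)$ under $D_{\overline{\p}}(M_i/K_i)\twoheadrightarrow D_{\overline{\p}|_{L_i}}(L_i/K_i)$, and that the Frobenius element maps to the Frobenius element under the induced map on quotients. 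These are the standard functoriality facts for inertia and Frobenius under Galois quotients, and the paper states them explicitly; they are easy but not vacuous.
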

\begin{proof}
$(i)$ 
and $(ii)$ 
follow immediately from the commutative diagram (\ref{a}), in which the maps, possibly except $\phi$, 
are Galois equivariant.
Let us prove $(iii)$.
By $(ii)$, $\phi$ induces a map $\overline{\phi} \colon S_1(L_1)(\simeq S_1(M_1)/U_1) \to S_2(L_2)(\simeq S_2(M_2)/U_2)$, 
which is compatible with the map $\Dec(L_1/K_1,S_1(L_1)) \hookrightarrow \Dec(L_2/K_2,S_2(L_2))$ induced by the isomorphism $G(L_1/K_1)\isom G(L_2/K_2)$ induced by $\sigma$.
Thus, $\overline{\phi}$ is also a weak local correspondence.
If $\phi$ is bijective, $\overline{\phi}$ is also bijective.
The last assertion follows from the fact that for $i=1,2$ and $\overline{\p} \in P_{M_i,f}$, 
$I_{\overline{\p}|_{L_i}}(L_i/K_i) =\Image(I_{\overline{\p}}(M_i/K_i) \hookrightarrow D_{\overline{\p}}(M_i/K_i) \twoheadrightarrow D_{\overline{\p}|_{L_i}}(L_i/K_i))$
and 
the Frobenius element in $D_{\overline{\p}}(M_i/K_i)/I_{\overline{\p}}(M_i/K_i)$ maps to the Frobenius element in $D_{\overline{\p}|_{L_i}}(L_i/K_i)/I_{\overline{\p}|_{L_i}}(L_i/K_i)$.
\end{proof}

\begin{theorem}\label{3.5}
For $i=1,2$, let $K_i$ be a number field, $\cC_i$ a nontrivial full class of finite groups and $\sigma :G_{K_1}^{\cC_1}\isom G_{K_2}^{\cC_2}$ an isomorphism.
Then there exists a unique local correspondence $\phi
$ between $P_{K_1,f}^{\cC_1}$ and $P_{K_2,f}^{\cC_2}$ for $\sigma$.
Write $\Sigma \defeq \Sigma(\cC_1) (= \Sigma(\cC_2)$ by Lemma \ref{2.0}).
Let $U_1$ be an open subgroup of $G_{K_1}^{\cC_1}$. 
Set $U_2 \defeq \sigma(U_1)$.
For $i=1,2$, write $L_i$ for the subextension of $K_i^{\cC_i}/K_i$ corresponding to $U_i$.
Then the following hold.
\begin{itemize}
\item[$(i)$] $\phi$ is 
a unique local correspondence between $P_{L_1,f}^{\cC_1}$ and $P_{L_2,f}^{\cC_2}$ for $\sigma|_{U_1}\colon U_1 \isom U_2$.

\item[$(ii)$] $\phi|_{(P_{K_1,f}^{\cC_1} \cap \Sigma(K_1))(K_1^{\cC_1})}$ is 
a unique local correspondence between $P_{L_1,f}^{\cC_1} \cap \Sigma(L_1)$ 
and $P_{L_2,f}^{\cC_2} \cap \Sigma(L_2)$ for $\sigma|_{U_1}\colon U_1 \isom U_2$, 
satisfying condition $(\Char)$ and condition $(\Deg)$.

\end{itemize}
Further, let $V_1$ be a closed normal subgroup of $U_1$. 
Set $V_2 \defeq \sigma(V_1)$.
For $i=1,2$, write $M_i$ for the Galois subextension of $K_i^{\cC_i}/L_i$ corresponding to $V_i$.
Write $\overline{\sigma|_{U_1}} \colon G(M_1/L_1)\isom G(M_2/L_2)$ for the isomorphism induced by $\sigma|_{U_1}$.
Then the following hold.
\begin{itemize}
\item[$(i)'$] $\phi$ 
induces a local correspondence between $P_{L_1,f}^{\cC_1}$ and $P_{L_2,f}^{\cC_2}$ for $\overline{\sigma|_{U_1}}$.

\item[$(ii)'$] $\phi|_{(P_{K_1,f}^{\cC_1} \cap \Sigma(K_1))(K_1^{\cC_1})}$
induces a local correspondence between $P_{L_1,f}^{\cC_1} \cap \Sigma(L_1)$ and $P_{L_2,f}^{\cC_2} \cap \Sigma(L_2)$ for $\overline{\sigma|_{U_1}}$, 
satisfying condition $(\Char)$ and condition $(\Deg)$.

\end{itemize}
Moreover, these local correspondences are all Galois equivariant.
\end{theorem}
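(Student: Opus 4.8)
The plan is to produce $\phi$ by combining the group-theoretic recovery of the decomposition groups from \S 2 with the bijectivity results of \S 1, and then to deduce every listed property from the formal Lemmas \ref{3.3} and \ref{3.4} once the relevant group-theoretic invariants have been pinned down. For the existence of $\phi$: by Proposition \ref{1.4} the canonical surjection $P_{K_i,f}^{\cC_i}(K_i^{\cC_i}) \twoheadrightarrow \Dec_{K_i,f}^{\cC_i}$, $\overline{\p}\mapsto D_{\overline{\p}}(K_i^{\cC_i}/K_i)$, is bijective for $i=1,2$, while Corollary \ref{2.10} exhibits $\Dec_{K_i,f}^{\cC_i}$ as the output of a construction depending only on the profinite group $G_{K_i}^{\cC_i}$, so that by functoriality of that construction $\sigma$ carries $\Dec_{K_1,f}^{\cC_1}$ bijectively onto $\Dec_{K_2,f}^{\cC_2}$ via $D\mapsto\sigma(D)$. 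Composing these bijections yields a bijection $\phi\colon P_{K_1,f}^{\cC_1}(K_1^{\cC_1})\to P_{K_2,f}^{\cC_2}(K_2^{\cC_2})$ with $\sigma(D_{\overline{\p}_1}(K_1^{\cC_1}/K_1))=D_{\phi(\overline{\p}_1)}(K_2^{\cC_2}/K_2)$, i.e.\ a local correspondence for $\sigma$; uniqueness among weak local correspondences and Galois equivariance then come from Lemma \ref{3.4} $(i)$, $(ii)$, whose hypothesis on the bijectivity of the canonical surjection to the set of decomposition groups is exactly the above instance of Proposition \ref{1.4}.

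Next I would treat $(i)$ and $(i)'$. Since $L_i$ is a subextension of $K_i^{\cC_i}/K_i$ we have $L_i^{\cC_i}=K_i^{\cC_i}$, and Remark \ref{csset} $(i)$ gives $P_{L_i,f}^{\cC_i}=P_{K_i,f}^{\cC_i}(L_i)$, hence $P_{L_i,f}^{\cC_i}(L_i^{\cC_i})=P_{K_i,f}^{\cC_i}(K_i^{\cC_i})$; thus $\phi$ is already a map between the relevant sets of primes, and Lemma \ref{3.3} $(i)$ applied to the open subgroup $U_1$ shows it is a local correspondence for $\sigma|_{U_1}$, unique by Lemma \ref{3.4} $(i)$ (together with Proposition \ref{1.4} for $L_2$). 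For $(i)'$, applying Lemma \ref{3.4} $(iii)$ to $\sigma|_{U_1}\colon U_1\isom U_2$ and the closed normal subgroup $V_1$ of $U_1$ — again its bijectivity hypothesis being Proposition \ref{1.4} — shows that $\phi$ induces a Galois equivariant local correspondence between $P_{L_1,f}^{\cC_1}$ and $P_{L_2,f}^{\cC_2}$ for $\overline{\sigma|_{U_1}}$.

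For $(ii)$ and $(ii)'$ the key point is to recover group-theoretically from a decomposition group whether its residue characteristic lies in $\Sigma$, and, when it does, that characteristic and the local degree. Let $\overline{\p}\in P_{K_i,f}^{\cC_i}(K_i^{\cC_i})$, put $\p=\overline{\p}|_{K_i}$ and let $p$ be its residue characteristic. By Proposition \ref{1.2}, $D_{\overline{\p}}(K_i^{\cC_i}/K_i)\simeq G_{K_{i,\p}}^{\cC_i}$, and for $l\in\Sigma$ Lemma \ref{2.11} gives that $G_{K_{i,\p}}^{\cC_i,(l),\ab,/\tor}$ is free over $\bZ_l$ of rank $[K_{i,\p}:\bQ_p]+1$ if $l=p$ and of rank $1$ if $l\neq p$; hence this rank is $\geq 2$ for some $l\in\Sigma$ if and only if $p\in\Sigma$, in which case $p$ is the unique such $l$ and $[K_{i,\p}:\bQ_p]$ equals that rank minus $1$. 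Since $\sigma$ restricts to an isomorphism $D_{\overline{\p}_1}(K_1^{\cC_1}/K_1)\isom D_{\phi(\overline{\p}_1)}(K_2^{\cC_2}/K_2)$ and $\Sigma(\cC_1)=\Sigma(\cC_2)=\Sigma$, it follows that $\phi$ maps $(P_{K_1,f}^{\cC_1}\cap\Sigma(K_1))(K_1^{\cC_1})$ onto $(P_{K_2,f}^{\cC_2}\cap\Sigma(K_2))(K_2^{\cC_2})$ and that the resulting bijection satisfies conditions $(\Char)$ and $(\Deg)$. Applying Lemma \ref{3.3} $(i)$, $(ii)$ with $S_i=P_{K_i,f}^{\cC_i}\cap\Sigma(K_i)$ — and noting $(P_{K_i,f}^{\cC_i}\cap\Sigma(K_i))(L_i)=P_{L_i,f}^{\cC_i}\cap\Sigma(L_i)$ by Remark \ref{csset} $(i)$ — gives $(ii)$, with uniqueness from Lemma \ref{3.4} $(i)$; applying Lemma \ref{3.4} $(iii)$, which also preserves $(\Char)$ and $(\Deg)$, gives $(ii)'$. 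Galois equivariance throughout is inherited from that of $\phi$ via Lemma \ref{3.4} $(ii)$, $(iii)$ (restriction of an equivariant map to an invariant subset).

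The work here is essentially organizational: the substantive inputs are Corollary \ref{2.10} (recovery of $\Dec_{K,f}^{\cC}$), Proposition \ref{1.4} (distinctness of decomposition groups, supplying the bijections with sets of primes), and Lemma \ref{2.11} (the $\bZ_l$-rank computation detecting the residue characteristic and local degree). The only genuinely mathematical step beyond bookkeeping is the rank criterion of the previous paragraph, and it is immediate from Lemma \ref{2.11}; the place requiring real care is matching the various base-change operations $S\mapsto S(F')$ with Remark \ref{csset} $(i)$ so that Lemmas \ref{3.3} and \ref{3.4} apply verbatim.
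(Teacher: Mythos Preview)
The proposal is correct and follows essentially the same route as the paper: Corollary \ref{2.10} and Proposition \ref{1.4} produce $\phi$, Lemma \ref{2.11} (via the $\bZ_l$-rank criterion you spell out) supplies $(\Char)$ and $(\Deg)$ on the primes over $\Sigma$, and Lemmas \ref{3.3}, \ref{3.4} together with Remark \ref{csset} $(i)$ handle the passage to $U_i$, $V_i$ and the Galois equivariance. Your write-up is more explicit than the paper's terse version, but the argument is the same.
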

\begin{proof}
$(i)'$, $(ii)'$
follow from $(i)$, $(ii)$, 
Proposition \ref{1.4} and Lemma \ref{3.4} $(iii)$.
Let us prove $(i)$, $(ii)$.
The uniqueness follows from 
Proposition \ref{1.4} and 
Lemma \ref{3.4} $(i)$.
By Remark \ref{csset} $(i)$ and Lemma \ref{3.3}, 
we are reduced to 
showing that there exists a local correspondence $\phi$ between $P_{K_1,f}^{\cC_1}$ and $P_{K_2,f}^{\cC_2}$ for $\sigma$, satisfying condition $(\Char)$ and condition $(\Deg)$.
By Corollary \ref{2.10}, $\sigma$ induces a bijection $\Dec_{K_1,f}^{\cC_1} \isom \Dec_{K_2,f}^{\cC_2}$.
By Proposition \ref{1.4}, the canonical surjection $P_{K_i,f}^{\cC_i}(K_i^{\cC_i}) \twoheadrightarrow \Dec_{K_i,f}^{\cC_i}$ is bijective for $i=1,2$.
Thus, there exists a unique local correspondence $\phi \colon P_{K_1,f}^{\cC_1}(K_1^{\cC_1}) \isom P_{K_2,f}^{\cC_2}(K_2^{\cC_2})
$ between $P_{K_1,f}^{\cC_1}$ and $P_{K_2,f}^{\cC_2}$ for $\sigma$.
By Lemma \ref{2.11}, 
$\phi|_{(P_{K_1,f}^{\cC_1} \cap \Sigma(K_1))(K_1^{\cC_1})}$ is a local correspondence between $P_{K_1,f}^{\cC_1} \cap \Sigma(K_1)$ and $P_{K_2,f}^{\cC_2} \cap \Sigma(K_2)$ for $\sigma$, 
satisfying condition $(\Char)$ and condition $(\Deg)$.
The Galois equivariances follow from Lemma \ref{3.4} $(ii)$, $(iii)$.
\end{proof}

Let $l$ be a prime number.
We set 
$
\Gamma_K = \Gamma_{K, l} \defeq G_{K}^{(l),\ab,/\tor}$.
Then $\Gamma_K$ is a free $\bZ_l$-module of finite rank.
Note that if $l \in \Sigma(\cC)$, $\Gamma_{K, l} = G_{K}^{\cC,(l),\ab,/\tor}$.
Set $r_l(K) \defeq \rank_{\bZ_l}\Gamma_K$, 
and write $K^{(\infty)}=K^{(\infty,l)}$ for the extension of $K$ corresponding to $\Gamma_{K}$.
By 
\cite{NSW}, (10.3.20) Proposition, 
we have $r_\bC(K)+1 \leq r_l(K) \leq [K:\bQ]$, 
where $r_\bC(K)$ is the number of complex primes of $K$.
For $\p \in P_{K,f} \setminus P_{K,l}$, $\p$ is unramified in $K^{(\infty)}/K$.
Then 
we write $\Frob_{\p}$ for the Frobenius element in $\Gamma_{K}$ at $\p$.
For a finite extension $L/K$, 
define $\pi_{L/K} = \pi_{L/K, l}$ to be the canonical homomorphism $\Gamma_{L} \to \Gamma_{K}$.

Write $\pr_1 \colon {\bZ_l}^{\ast}\twoheadrightarrow {1+\tilde{l}\bZ_l}$ for the first projection of the decomposition ${\bZ_l}^{\ast}=({1+\tilde{l}\bZ_l})\times{({\bZ_l}^{\ast})_{\tor}}$.
Then the composite of $\chi_{K,l} \colon G_{K}\rightarrow {\bZ_l}^{\ast}$ and $\pr_1$ 
factors as $G_{K}\twoheadrightarrow \Gamma_{K, l}\rightarrow 1+\ltilde\bZ_l$, 
where we write $
w_{K, l}:\Gamma_{K, l}\rightarrow 1+\ltilde\bZ_l$ for the second homomorphism.

\begin{prop}\label{3.6}
Let $l \in \Sigma(\cC)$.
Then $w_{K, l}$ can be recovered group-theoretically from $G_{K}^{\cC}$.
\end{prop}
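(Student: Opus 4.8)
The plan is to read off $w_{K,l}$ from the pro-$\cC$ cyclotomic character $\chi_{K,l}^{\cC}$, which is already available by Proposition \ref{cyc3}. First I would record the following elementary fact: since $l \in \Sigma(\cC)$, the group $1+\ltilde\bZ_l$ is the (pro-$l$, equivalently torsion-free) direct factor of $\bZ_l^{\ast}$, and it remains a direct factor of the pro-$\cC$ part $\bZ_l^{\ast,\cC}$; indeed $\bZ_l^{\ast,\cC} = (\bZ_l^{\ast,\cC})_{\tor} \times (1+\ltilde\bZ_l)$ with $(\bZ_l^{\ast,\cC})_{\tor}$ finite. Consequently the projection $\pr_1 \colon \bZ_l^{\ast} \twoheadrightarrow 1+\ltilde\bZ_l$ factors through the canonical surjection $\bZ_l^{\ast} \twoheadrightarrow \bZ_l^{\ast,\cC}$, say $\pr_1 = \pr_1' \circ (\bZ_l^{\ast} \twoheadrightarrow \bZ_l^{\ast,\cC})$, where $\pr_1' \colon \bZ_l^{\ast,\cC} \twoheadrightarrow \bZ_l^{\ast,\cC,/\tor} = 1+\ltilde\bZ_l$ is the canonical surjection onto the torsion-free quotient. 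By the definition of the pro-$\cC$ factor of $\chi_{K,l}$, composing with $\chi_{K,l}$ yields the identity $\pr_1 \circ \chi_{K,l} = \pr_1' \circ \chi_{K,l}^{\cC} \circ (G_K \twoheadrightarrow G_K^{\cC})$.

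Next I would observe that the homomorphism $\pr_1' \circ \chi_{K,l}^{\cC} \colon G_K^{\cC} \to 1+\ltilde\bZ_l$ has abelian, pro-$l$, torsion-free image, so it factors through the canonical quotient $G_K^{\cC} \twoheadrightarrow G_K^{\cC,(l),\ab,/\tor} = \Gamma_{K,l}$ (the last identification using $l \in \Sigma(\cC)$), inducing some $w' \colon \Gamma_{K,l} \to 1+\ltilde\bZ_l$. Comparing with the displayed identity, and recalling that by definition $w_{K,l}$ is the unique homomorphism with $w_{K,l} \circ (G_K \twoheadrightarrow \Gamma_{K,l}) = \pr_1 \circ \chi_{K,l}$, surjectivity of $G_K \twoheadrightarrow \Gamma_{K,l}$ forces $w' = w_{K,l}$. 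Finally, the quotient $G_K^{\cC} \twoheadrightarrow \Gamma_{K,l} = G_K^{\cC,(l),\ab,/\tor}$ is constructed group-theoretically from $G_K^{\cC}$, the character $\chi_{K,l}^{\cC}$ is recovered group-theoretically by Proposition \ref{cyc3}, and $\pr_1'$ is canonical; hence $w_{K,l} = w'$ is recovered group-theoretically from $G_K^{\cC}$.

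The argument is short, and essentially no new input beyond Proposition \ref{cyc3} and the structure of $\bZ_l^{\ast}$ is required. The one point deserving care is the bookkeeping in the middle step: one must check that the homomorphism $w'$ extracted from $\chi_{K,l}^{\cC}$ is literally $w_{K,l}$, not a twist of it. This is handled by the compatibility $\pr_1 = \pr_1' \circ (\bZ_l^{\ast} \twoheadrightarrow \bZ_l^{\ast,\cC})$ together with the uniqueness of the factorization through $\Gamma_{K,l}$.
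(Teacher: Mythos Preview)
Your proposal is correct and follows essentially the same approach as the paper: the paper's proof simply records that the composite of $\chi_{K,l}^{\cC}$ with the projection $\pr_1$ onto $1+\ltilde\bZ_l$ factors through $\Gamma_{K,l}$ as $w_{K,l}$, and then invokes Proposition~\ref{cyc3}. Your write-up is just a more explicit unpacking of the same one-line argument, with careful attention to the identification of $\bZ_l^{\ast,\cC}$ inside $\bZ_l^{\ast}$ and to the uniqueness of the factorization.
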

\begin{proof}\footnote{In \cite[Appendix A]{Saidi-Tamagawa2}, 
$w_{K, l}$ is recovered group-theoretically from the maximal metabelian quotient of $G_{K}^{(l)} = G_{K}^{\cC,(l)}$.}
The composite of $\chi_{K,l}^{\cC}$ and 
$\pr_1 \colon {\bZ_l}^{\ast}\twoheadrightarrow {1+\tilde{l}\bZ_l}$
factors as 
$$G_{K}^{\cC} \twoheadrightarrow G_{K}^{\cC,(l),\ab,/\tor}=\Gamma_{K, l} \overset{w_{K, l}}\rightarrow 1+\ltilde\bZ_l.$$
The assertion follows from 
this, together with 
Proposition \ref{cyc3}.
\end{proof}

\begin{lem}\label{Frob}
We use the notations in Theorem \ref{3.5}.
Let $l \in \Sigma$, $S_i \subset P_{L_i,f} \setminus (P_{L_i,l} \cup \Ram(L_i/\bQ)(L_i))$ for $i=1,2$, and $\phi'$ a 
weak 
local correspondence between $S_1$ and $S_2$ for the isomorphism $\Gamma_{L_1, l} \isom \Gamma_{L_2, l}$ induced by $\sigma|_{U_1}$.
Then $\phi'$ satisfies conditions $(\Char)$ and $(\Deg)$ if and only if $\phi'$ satisfies condition $(\Frob)$.
\end{lem}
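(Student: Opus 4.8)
The plan is to reduce everything to a purely local statement about the $p$-adic fields $L_{1,\p_1}$ and $L_{2,\phi'(\p_1)|_{L_2}}$ and to recognize that, for a prime $\p$ of a number field $L$ lying over $l' \ne l$ and unramified over $\bQ$, the data of the image of $\Frob_\p$ in $\Gamma_{L,l} = G_L^{(l),\ab,/\tor}$ already ``sees'' both the residue characteristic and the local degree of $\p$. Concretely, fix $\p_1 \in S_1$ with residue characteristic $l'_1 \ne l$ and set $\q_1 := \phi'(\p_1)|_{L_2}$, $l'_2$ its residue characteristic, $d_i$ the local degree of $\p_i$ (resp. $\q_i$). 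Since $\p_i, \q_i$ are unramified over $\bQ$, the decomposition group $D_{\p_i}$ in $G_{L_i,\p_i}^{(l),\ab,/\tor}$ is free of rank $1$ over $\bZ_l$ by Lemma \ref{2.11}, generated by the Frobenius; its image in $\Gamma_{L_i,l}$ is the cyclic subgroup topologically generated by $\Frob_{\p_i}$. Condition $(\Frob)$ says precisely that $\sigma|_{U_1}$ carries $\Frob_{\p_1}$ to $\Frob_{\q_1}$ (up to the ambiguity inherent in $D_{\p}$ being defined only up to conjugacy, which is harmless in the abelian quotient $\Gamma$).

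Next I would make the following elementary local computation explicit. For a nonarchimedean prime $\p$ of a number field $L$ with $\p|_{\bQ} = l' \ne l$, unramified over $\bQ$, one has, via the local reciprocity map and the description of $G_{L_\p}^{(l),\ab,/\tor}$ from Lemma \ref{2.11}, a canonical identification under which the Frobenius $\Frob_\p$ corresponds to the class of a uniformizer of $\bQ_{l'}$ pushed up along $\bQ_{l'} \hookrightarrow L_\p$. Comparing with the global cyclotomic character: the composite of $\chi_{L,l}$ with $\pr_1$ sends $\Frob_\p$ to $(l')^{d}$ viewed in $1 + \ltilde\bZ_l$, where $d$ is the local degree of $\p$ — this is exactly the homomorphism $w_{L,l} \colon \Gamma_{L,l} \to 1+\ltilde\bZ_l$ introduced just before Proposition \ref{3.6}. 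So $w_{L_i,l}(\Frob_{\p_i}) = (l'_i)^{d_i}$ (writing $d_i$ for the local degree on the $i=1$ side, and similarly for $\q_i$ on the $i=2$ side). By Proposition \ref{3.6}, or rather its evident compatibility with the isomorphism $\Gamma_{L_1,l} \isom \Gamma_{L_2,l}$ induced by $\sigma|_{U_1}$ (which follows because $w_{K,l}$ is recovered group-theoretically, hence is transported by any isomorphism), we get: if $(\Frob)$ holds then $(l'_1)^{d_1} = (l'_2)^{d_2}$ as elements of $1+\ltilde\bZ_l$.

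Now the two directions. If $\phi'$ satisfies $(\Char)$ and $(\Deg)$, then $l'_1 = l'_2$ and $d_1 = d_2$, so $\Frob_{\p_1}$ and $\Frob_{\q_1}$ have the same image under $w$ and, more importantly, $\sigma|_{U_1}$ maps the pro-cyclic subgroup $\overline{\langle \Frob_{\p_1}\rangle}$ onto $\overline{\langle \Frob_{\q_1}\rangle}$ (this is forced already by the weak local correspondence, since $\sigma$ matches decomposition groups and $D_{\p_i}$ in $\Gamma$ is exactly this pro-cyclic subgroup); matching of the specific generators, i.e.\ condition $(\Frob)$, then follows because a topological generator of $\bZ_l$ mapping to $(l')^{d}$ under the injective-on-$\Frob$ map $w$ is determined — here I use that $w_{L_i,l}$ is injective on $\overline{\langle\Frob_{\p_i}\rangle}$, which holds since $(l')^d \in 1+\ltilde\bZ_l$ is not a root of unity and the kernel of $w$ meets a pro-cyclic Frobenius subgroup trivially (Leopoldt-type / Chebotarev input, or directly from $r_l(L) \le [L:\bQ]$ and freeness). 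Conversely, if $(\Frob)$ holds, the displayed equality $(l'_1)^{d_1} = (l'_2)^{d_2}$ in $\bZ_l^\times$ forces, by unique factorization combined with the $l$-adic logarithm (distinct primes $\ne l$ give multiplicatively independent units in $1+\ltilde\bZ_l$ up to torsion), that $l'_1 = l'_2$ and $d_1 = d_2$, i.e.\ $(\Char)$ and $(\Deg)$.

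\textbf{Main obstacle.} The delicate point is the last arithmetic step: deducing $l'_1 = l'_2$ and $d_1 = d_2$ from the single equation $(l'_1)^{d_1} = (l'_2)^{d_2}$ in $1+\ltilde\bZ_l$. This is not a formal consequence — one must invoke that the $l$-adic logarithms of rational primes $\ne l$ are $\bZ$-linearly independent (equivalently, nonvanishing of a suitable $l$-adic regulator / the four-exponentials-type independence for the specific primes involved), or else argue more carefully using that $w_{L,l}$ together with the Frobenius data recovers not just a power but the prime itself through a comparison across varying $l \in \Sigma$. I expect the cleanest route is to quote the relevant transcendence/independence statement (Brumer's theorem on the $l$-adic independence of logarithms of algebraic numbers suffices) or, more self-containedly, to exploit that $(\Frob)$ for \emph{all} sufficiently small open $U_1$ forces the local degrees to match by a rank count via Lemma \ref{2.11}, and then the residue characteristics match because they are the unique primes $\ne l$ whose Frobenius lands in the prescribed line — this is where the bulk of the work will go.
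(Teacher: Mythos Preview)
Your overall strategy is exactly the paper's: use Proposition \ref{3.6} to transport $w_{L_i,l}$ along $\sigma$, compute $w_{L_i,l}(\Frob_\p)=\pr_1(p^{[L_{i,\p}:\bQ_p]})$, and compare. The part you flag as the ``main obstacle'' is, however, not an obstacle at all, and the heavy tools you propose (Brumer, four exponentials, Leopoldt) are unnecessary.

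The missing elementary observation is that the composite
\[
\bZ_{>0}\setminus l\bZ \hookrightarrow \bZ_l^\ast \overset{\pr_1}{\twoheadrightarrow} 1+\tilde l\bZ_l
\]
is \emph{injective}. Indeed, if $n,m\in\bZ_{>0}\setminus l\bZ$ satisfy $\pr_1(n)=\pr_1(m)$, then $n/m\in(\bZ_l^\ast)_{\tor}$, which is a finite group; hence $(n/m)^k=1$ in $\bZ_l$ for some $k\ge 1$, i.e.\ $n^k=m^k$ as positive integers, so $n=m$. Thus $\pr_1((l_1')^{d_1})=\pr_1((l_2')^{d_2})$ forces $(l_1')^{d_1}=(l_2')^{d_2}$ \emph{as positive integers}, and unique factorization in $\bZ$ gives $l_1'=l_2'$, $d_1=d_2$ immediately. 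This disposes of $(\Frob)\Rightarrow(\Char)+(\Deg)$.

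The same injectivity also handles the point where you invoke ``Leopoldt-type input'': since $1+\tilde l\bZ_l\simeq\bZ_l$ is torsion-free and $w_{L_i,l}(\Frob_\p)=\pr_1(p^d)\neq 1$ (apply the above injectivity to $p^d$ versus $1$), the restriction of $w_{L_i,l}$ to the procyclic group $\overline{\langle\Frob_\p\rangle}$ is automatically injective. Combined with the matching of decomposition groups from the weak local correspondence, this gives $(\Char)+(\Deg)\Rightarrow(\Frob)$ with no further work. So your plan is right; just replace the transcendence-theoretic detour by this two-line argument.
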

\begin{proof}
For $i=1,2$ and $\p \in S_i$
above a prime number $p$, 
$I_{\p}(L_i^{(\infty,l)}/L_i)$ is trivial, $w_{L_i, l}|_{D_{\p}(L_i^{(\infty,l)}/L_i)}$ is injective, 
the composite 
$$\bZ_{>0} \setminus l\bZ 
\hookrightarrow {\bZ_l}^{\ast} \overset{\pr_1}\twoheadrightarrow {1+\tilde{l}\bZ_l}$$
is injective, 
and 
$$w_{L_i, l}(\Frob_{\p}) = \pr_1(p^{f_{\p,L_i/\bQ}}) = \pr_1(p^{[L_{i,\p}:\bQ_p]}).$$
The assertion follows from these facts and Proposition \ref{3.6}.
\end{proof}

\begin{lem}\label{U2}
Let $S_0\subset P_{\bQ,f}$, and $\phi \colon S_0(K_1) \isom S_0(K_2)$ be a bijection preserving the residue characteristics and the local degrees of all primes in $S_0(K_1)$. Then $S_0 \cap \cs(K_1/\bQ)=S_0 \cap \cs(K_2/\bQ)$. If $S_0 \not= \emptyset$, then $[K_1:\bQ]=[K_2:\bQ]$.
\end{lem}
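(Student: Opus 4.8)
The statement is elementary and should follow directly from unwinding the definitions, so the plan is to argue as follows. First, recall that a prime number $p \in S_0$ lies in $\cs(K_i/\bQ)$ precisely when every prime of $K_i$ above $p$ has local degree $1$, equivalently (since $p$ is unramified or not) when $P_{K_i,p}$ consists of $[K_i:\bQ]$ primes each with $[K_{i,\p}:\bQ_p]=1$. More usefully for the argument: $p \in \cs(K_i/\bQ)$ if and only if \emph{some} prime $\p \in P_{K_i,p}$ has local degree $1$ and $p$ is unramified — but actually the cleanest characterization avoiding ramification subtleties is that $p$ splits completely iff the local degree $[K_{i,\p}:\bQ_p]$ equals $1$ for some (equivalently every) $\p$ above $p$ together with unramifiedness; one should phrase it as: $p \in \cs(K_i/\bQ)$ iff $p$ has a prime above it of local degree $1$ \emph{and} residue degree considerations force all of them to be degree $1$. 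To keep the argument robust I would instead use: $p \in \cs(K_i/\bQ) \iff$ there exists $\p \in P_{K_i,p}$ with $[K_{i,\p}:\bQ_p] = 1$, after noting $[\bQ_p:\bQ_p]=1$ trivially characterizes the completely split primes among those lying in $S_0(K_i)$ via the fact that $\sum_{\p \in P_{K_i,p}} [K_{i,\p}:\bQ_p] = [K_i:\bQ]$. Then, since $\phi \colon S_0(K_1) \isom S_0(K_2)$ preserves residue characteristics, it restricts, for each $p \in S_0$, to a bijection $P_{K_1,p} \isom P_{K_2,p}$; and since it preserves local degrees, this bijection matches up the multisets $\{[K_{1,\p}:\bQ_p]\}_{\p \in P_{K_1,p}}$ and $\{[K_{2,\q}:\bQ_p]\}_{\q \in P_{K_2,p}}$. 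In particular one multiset consists entirely of $1$'s iff the other does, which gives $p \in \cs(K_1/\bQ) \iff p \in \cs(K_2/\bQ)$, i.e. $S_0 \cap \cs(K_1/\bQ) = S_0 \cap \cs(K_2/\bQ)$.

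For the second assertion, suppose $S_0 \neq \emptyset$ and pick any $p \in S_0$. Then $\phi$ gives a bijection $P_{K_1,p} \isom P_{K_2,p}$ under which corresponding primes have equal local degrees, so
$$
[K_1:\bQ] = \sum_{\p \in P_{K_1,p}} [K_{1,\p}:\bQ_p] = \sum_{\q \in P_{K_2,p}} [K_{2,\q}:\bQ_p] = [K_2:\bQ],
$$
using the fundamental identity $\sum_{\p | p} [K_{i,\p}:\bQ_p] = [K_i:\bQ]$ for each $i$.

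There is essentially no obstacle here: the only point requiring a little care is making sure that "$\phi$ preserves residue characteristics" really does force $\phi$ to carry $P_{K_1,p}$ onto $P_{K_2,p}$ for each individual $p$ (it does, because the residue characteristic of a prime $\p \in S_0(K_i)$ is by definition $\p|_{\bQ} \in S_0$, so the fibers of $S_0(K_i) \to S_0$ are exactly the sets $P_{K_i,p}$, and a bijection commuting with the maps to $S_0$ is fiberwise), and that the "fundamental identity" is being applied correctly with completions rather than residue degrees — but no ramification hypothesis is needed for that identity, so this is immediate. I would write the whole proof in a few lines along exactly these steps.
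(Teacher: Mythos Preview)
Your proposal is correct and follows essentially the same approach as the paper: restrict $\phi$ fiberwise over each $p\in S_0$ to get a bijection $P_{K_1,p}\isom P_{K_2,p}$ preserving local degrees, characterize $p\in\cs(K_i/\bQ)$ by all local degrees above $p$ being $1$, and for the degree statement sum the local degrees using $\sum_{\p\mid p}[K_{i,\p}:\bQ_p]=[K_i:\bQ]$. The only difference is cosmetic: your write-up meanders through several tentative characterizations of complete splitting before landing on the right one, whereas the paper states it directly; in the final version you should cut straight to ``all local degrees above $p$ equal $1$''.
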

\begin{proof}
By the assumption, $\phi$ induces a bijection $\phi|_{S_0(K_1) \cap P_{K_1,p}} \colon S_0(K_1) \cap P_{K_1,p} \isom S_0(K_2) \cap P_{K_2,p}$ for any $p \in P_{\bQ,f}$.
For $p \in P_{\bQ,f}$, 
$p \in S_0 \cap \cs(K_i/\bQ)$ if and only if 
the local degrees of all primes in $S_0(K_i) \cap P_{K_i,p}$ are $1$, and hence $S_0 \cap \cs(K_1/\bQ)=S_0 \cap \cs(K_2/\bQ)$.
If $p \in S_0$, 
then the sum of the local degrees of all primes in $S_0(K_i) \cap P_{K_i,p}$ coincides with $[K_i : \bQ]$, 
and hence $[K_1 : \bQ] = [K_2 : \bQ]$.
\end{proof}

\begin{lem}\label{CharDeg}
We use the notations in Theorem \ref{3.5}.
Let $S_i \subset P_{K_i,f}$ for $i=1,2$, and $\phi'$ a 
local correspondence between $S_1$ and $S_2$ for $\sigma$, 
satisfies condition $(\Char)$.
Assume that $\underline{S_i} \cap \cs(K_i/\bQ) \not= \emptyset$ for $i=1,2$.
Then the following hold.
\begin{itemize}
\item[$(i)$] 
$\underline{S_1} \cap \cs(L_1/\bQ) = \underline{S_2} \cap \cs(L_2/\bQ)$ 
and $[L_1:\bQ]=[L_2:\bQ]$.

\item[$(ii)$] 
$\phi'|_{(\underline{S_1} \cap \cs(K_1/\bQ))(K_1^{\cC_1})}$ induces a local correspondence between $(\underline{S_1} \cap \cs(K_1/\bQ))(L_1)$ and $(\underline{S_2} \cap \cs(K_2/\bQ))(L_2)$ for $\overline{\sigma|_{U_1}}$, 
satisfying conditions $(\Char)$ and $(\Deg)$.

\item[$(iii)$] 
Let $l \in \Sigma$.
Then 
$\phi'|_{((\underline{S_1} \cap \cs(K_1/\bQ)) \setminus (\{ l \} \cup \Ram(L_1L_2/\bQ) ))(K_1^{\cC_1})}$ 
induces a local correspondence between $((\underline{S_1} \cap \cs(K_1/\bQ)) \setminus (\{ l \} \cup \Ram(L_1L_2/\bQ) ))(L_1)$ and $((\underline{S_2}\cap \cs(K_2/\bQ)) \setminus (\{ l \} \cup \Ram(L_1L_2/\bQ) ))(L_2)$ for the isomorphism $\Gamma_{L_1, l} \isom \Gamma_{L_2, l}$ induced by $\sigma|_{U_1}$, satisfying conditions $(\Char)$, $(\Deg)$ and $(\Frob)$.

\end{itemize}

\end{lem}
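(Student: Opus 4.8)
The plan is to extract from $\phi'$ a single global fact and then feed it into the formal machinery of \S 3 to read off (i)--(iii). \emph{The core claim} is: $\underline{S_1}\cap\cs(K_1/\bQ)=\underline{S_2}\cap\cs(K_2/\bQ)$ (call it $S_0\subset P_{\bQ,f}$); $[K_1:\bQ]=[K_2:\bQ]$; and $\phi'$ restricts to a bijection $S_0(K_1^{\cC_1})\isom S_0(K_2^{\cC_2})$ which, as a local correspondence for $\sigma$ between primes whose restrictions to the $K_i$ all have local degree $1$, automatically satisfies conditions $(\Char)$ and $(\Deg)$. Two preliminary observations are used throughout: by Proposition~\ref{1.4} (note that, $\cC_i$ being nontrivial, no prime of $K_i$ splits completely in $K_i^{\cC_i}$) the canonical surjection $S_2(K_2^{\cC_2})\to\Dec(K_2^{\cC_2}/K_2,S_2(K_2^{\cC_2}))$ is bijective, so $\phi'$ is automatically Galois equivariant by Lemma~\ref{3.4}; and $\phi'^{-1}$ is a local correspondence for $\sigma^{-1}$ still satisfying $(\Char)$, so the situation is symmetric in the two indices.

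\emph{Proof of the core claim.} Fix $p\in\underline{S_1}\cap\cs(K_1/\bQ)$. Since $p\in\underline{S_1}$, every prime of $K_1^{\cC_1}$ above $p$ lies in $S_1(K_1^{\cC_1})$, so by $(\Char)$ the map $\phi'$ restricts to a bijection $P_{K_1^{\cC_1},p}\isom S_2(K_2^{\cC_2})\cap P_{K_2^{\cC_2},p}$. By Galois equivariance the right-hand side is a union of $G_{K_2}^{\cC_2}$-orbits, hence of full fibres over primes of $K_2$, and the number of such fibres equals the number of $G_{K_1}^{\cC_1}$-orbits on $P_{K_1^{\cC_1},p}$, i.e.\ $\#P_{K_1,p}=[K_1:\bQ]$ because $p\in\cs(K_1/\bQ)$. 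So there is $T_p\subset P_{K_2,p}$ with $\#T_p=[K_1:\bQ]$, whence $[K_1:\bQ]\le\#P_{K_2,p}\le[K_2:\bQ]$. The symmetric argument (with $\phi'^{-1}$, $\sigma^{-1}$, and any $p'\in\underline{S_2}\cap\cs(K_2/\bQ)$, nonempty by hypothesis) gives $[K_2:\bQ]\le[K_1:\bQ]$, so $[K_1:\bQ]=[K_2:\bQ]$ and all the above inequalities are equalities. Thus $\#P_{K_2,p}=[K_2:\bQ]$ (so $p\in\cs(K_2/\bQ)$), $T_p=P_{K_2,p}$, and $S_2(K_2^{\cC_2})\cap P_{K_2^{\cC_2},p}=P_{K_2^{\cC_2},p}$ (so $p\in\underline{S_2}$). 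Hence $\underline{S_1}\cap\cs(K_1/\bQ)\subset\underline{S_2}\cap\cs(K_2/\bQ)$, equality follows by symmetry, and for each $p\in S_0$ we obtain $\phi'(P_{K_1^{\cC_1},p})=P_{K_2^{\cC_2},p}$, giving the desired restricted bijection; condition $(\Deg)$ is automatic since all the relevant primes of the $K_i$ have local degree $1$ over $\bQ$.

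\emph{Deriving (i), (ii), (iii).} For (ii): apply Lemma~\ref{3.3} to the restricted $\phi'$ to get a local correspondence between $(\underline{S_1}\cap\cs(K_1/\bQ))(L_1)=S_0(L_1)$ and $S_0(L_2)$ for $\sigma|_{U_1}$ satisfying $(\Char)$ and $(\Deg)$, then descend along the closed normal subgroup $V_1$ of $U_1$ via Lemma~\ref{3.4}$(iii)$ (whose hypothesis holds by Proposition~\ref{1.4}) to obtain the asserted local correspondence for $\overline{\sigma|_{U_1}}$. For (i): descending the $\sigma|_{U_1}$-correspondence all the way down to $G(L_i/L_i)=1$ (again Lemma~\ref{3.4}$(iii)$) produces a bijection $S_0(L_1)\isom S_0(L_2)$ of primes of the number fields $L_1,L_2$ preserving residue characteristics and local degrees, so Lemma~\ref{U2} gives $S_0\cap\cs(L_1/\bQ)=S_0\cap\cs(L_2/\bQ)$ and, since $S_0\neq\emptyset$, $[L_1:\bQ]=[L_2:\bQ]$; and $S_0\cap\cs(L_i/\bQ)=\underline{S_i}\cap\cs(L_i/\bQ)$ because $\cs(L_i/\bQ)\subset\cs(K_i/\bQ)$. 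For (iii): restrict the $\sigma|_{U_1}$-correspondence to the primes above $(\underline{S_1}\cap\cs(K_1/\bQ))\setminus(\{l\}\cup\Ram(L_1L_2/\bQ))$ --- by construction these primes of $L_i$ avoid $P_{L_i,l}\cup\Ram(L_i/\bQ)(L_i)$ --- and push it forward along $U_i=G_{L_i}^{\cC_i}\twoheadrightarrow\Gamma_{L_i,l}$ (once more Lemma~\ref{3.4}$(iii)$, with the kernel of this surjection as the closed normal subgroup, using that $\sigma|_{U_1}$ induces $\Gamma_{L_1,l}\isom\Gamma_{L_2,l}$) to obtain a weak local correspondence for $\Gamma_{L_1,l}\isom\Gamma_{L_2,l}$, bijective and satisfying $(\Char)$ and $(\Deg)$; Lemma~\ref{Frob} then upgrades it to satisfy $(\Frob)$.

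\emph{Main obstacle.} The heart of the matter is the core claim. Because only $(\Char)$, not $(\Deg)$, is assumed, one cannot simply recover local degrees from the isomorphisms $D_{\overline{\p}}\cong G_{K_{i,\p}}^{\cC_i}$ (this would in any case be problematic for residue characteristics outside $\Sigma$, where Lemma~\ref{2.11} gives nothing). The way around this is to avoid local Galois groups entirely and argue by counting fibres, using that $\phi'$ is a \emph{bijective} and Galois equivariant correspondence, together with the hypothesis $\underline{S_i}\cap\cs(K_i/\bQ)\neq\emptyset$ for \emph{both} $i$, which is exactly what forces the two-sided inequality on $[K_i:\bQ]$ to collapse. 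Everything after that is bookkeeping with Lemmas~\ref{3.3}, \ref{3.4}, \ref{U2} and \ref{Frob}.
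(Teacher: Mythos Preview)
Your proof is correct and follows essentially the same route as the paper. The only cosmetic difference is that the paper first descends $\phi'$ via Lemma~\ref{3.4}(iii) to a bijection $\overline{\phi'}:S_1\isom S_2$ at the level of the base fields and then argues by the one-line characterization $p\in\underline{S_i}\cap\cs(K_i/\bQ)\Leftrightarrow\#(S_i\cap P_{K_i,p})=[K_i:\bQ]$ together with $[K_i:\bQ]=\max_p\#(S_i\cap P_{K_i,p})$, whereas you stay at the level of $K_i^{\cC_i}$ and count $G_{K_i}^{\cC_i}$-orbits; these are the same counting argument phrased at two levels, and the derivation of (i)--(iii) from the core claim via Lemmas~\ref{3.3}, \ref{3.4}, \ref{U2} and \ref{Frob} is identical.
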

\begin{proof}
By Proposition \ref{1.4} and Lemma \ref{3.4} $(iii)$, 
$\phi'$ induces a bijection $\overline{\phi'}:S_1 \isom S_2$, 
preserving the residue characteristics, 
that is, $\overline{\phi'}$ induces a bijection $\overline{\phi'}|_{S_1 \cap P_{K_1,p}}:S_1 \cap P_{K_1,p} \isom S_2 \cap P_{K_2,p}$ for any $p \in P_{\bQ,f}$.
For 
$p \in P_{\bQ,f}$, 
$p \in \underline{S_i} \cap \cs(K_i/\bQ)$ if and only if $\# (S_i \cap P_{K_i,p}) = [K_i : \bQ]$.
By the assumption, 
we have $[K_i : \bQ] = \max_{p \in P_{\bQ,f}} \# (S_i \cap P_{K_i,p})$, 
and hence $[K_1 : \bQ] = [K_2 : \bQ]$.
Therefore, 
$\underline{S_1} \cap \cs(K_1/\bQ) = \underline{S_2} \cap \cs(K_2/\bQ)$, 
and $\phi'|_{(\underline{S_1} \cap \cs(K_1/\bQ))(K_1^{\cC_1})}$ 
is 
a local correspondence between 
$(\underline{S_1} \cap \cs(K_1/\bQ))(K_1)$ and $(\underline{S_2} \cap \cs(K_2/\bQ))(K_2)$ for $\sigma$, 
satisfying condition $(\Char)$ and condition $(\Deg)$.
$(ii)$ follows from this, together with Lemma \ref{3.3} $(ii)$, Proposition \ref{1.4} and Lemma \ref{3.4} $(iii)$.
By $(ii)$ and Lemma \ref{3.4} $(iii)$, $\phi'|_{(\underline{S_1} \cap \cs(K_1/\bQ))(K_1^{\cC_1})}$ induces a bijection $(\underline{S_1} \cap \cs(K_1/\bQ))(L_1) \isom (\underline{S_2} \cap \cs(K_2/\bQ))(L_2)$, preserving the residue characteristics and the local degrees.
Therefore, $(i)$ follows from Lemma \ref{U2}.
By $(ii)$, 
$\phi'|_{((\underline{S_1} \cap \cs(K_1/\bQ)) \setminus (\{ l \} \cup \Ram(L_1L_2/\bQ) ))(K_1^{\cC_1})}$ 
induces a local correspondence between $((\underline{S_1} \cap \cs(K_1/\bQ)) \setminus (\{ l \} \cup \Ram(L_1L_2/\bQ) ))(L_1)$ and $((\underline{S_2}\cap \cs(K_2/\bQ)) \setminus (\{ l \} \cup \Ram(L_1L_2/\bQ) ))(L_2)$ for the isomorphism $\Gamma_{L_1, l} \isom \Gamma_{L_2, l}$ induced by $\sigma|_{U_1}$, satisfying conditions $(\Char)$ and $(\Deg)$.
Further, by Lemma \ref{Frob}, 
this satisfies condition $(\Frob)$.
\end{proof}

\section{Recovering the residue characteristics
}

In this section, 
to prove that the local correspondence 
satisfies condition $(\Char)$ under some conditions, 
we study 
separatedness 
of the decomposition groups in $G(\bQ(\cup_{l \in \Sigma(\cC)}\mu_{l})/\bQ)^{\cC} = G(\bQ^{\cC} \cap \bQ(\cup_{l \in \Sigma(\cC)}\mu_{l})/\bQ)$.

Let $\Sigma \subset P_{\bQ,f}$ and $l_0 \in P_{\bQ,f}$.
For a prime number $l$, 
we will identify $G(\bQ(\mu_{l})/\bQ)$ with $\bF_l^{\ast} = (\bZ/l\bZ)^{\ast}$ via the canonical isomorphism.
Write 
$$\Sigma_{l_0} \defeq (\Sigma \cap \cs(\bQ(\mu_{l_0})/\bQ)) \setminus \{ l_0 \} = \{ l \in \Sigma \mid l \equiv 1 \text{ mod } l_0 \},$$
and $K_{\Sigma,l_0}$ for the subfield of $K(\cup_{l \in \Sigma}\mu_{l})/K$ corresponding to $G(K(\cup_{l \in \Sigma}\mu_{l})/K)/G(K(\cup_{l \in \Sigma}\mu_{l})/K)^{l_0}$.
Then 
\begin{equation*}
\bF_l^{\ast}/\bF_l^{\ast,l_0} \simeq 
\begin{cases}
1, &l \in \Sigma \setminus \Sigma_{l_0}, \\
\bF_{l_0}, &l \in \Sigma_{l_0},
\end{cases}
\end{equation*}
and hence 
$$G(\bQ_{\Sigma,l_0}/\bQ) 
= \prod_{l \in \Sigma} G(\bQ(\mu_{l})/\bQ)/G(\bQ(\mu_{l})/\bQ)^{l_0}
= \prod_{l \in \Sigma} \bF_l^{\ast}/\bF_l^{\ast,l_0}
= \prod_{l \in \Sigma_{l_0}} \bF_l^{\ast}/\bF_l^{\ast,l_0} 
(\simeq \prod_{l \in \Sigma_{l_0}} \bF_{l_0}).$$

\begin{lem}\label{incomm1}
Let $l_0 \in P_{\bQ,f}$, and $p,q$ be distinct prime numbers in $P_{\bQ,f} \setminus \Sigma_{l_0}$.
Then 
$D_{p}(\bQ_{\Sigma,l_0}/\bQ) \subset D_{q}(\bQ_{\Sigma,l_0}/\bQ)$ 
if and only if 
there exists $i \in \{ 0,1,...,l_0 - 1 \}$ such that 
$$\Sigma\setminus \{ l_0 \} \subset \cs(\bQ(\mu_{l_0}, \sqrt[l_0]{pq^{-i}})/\bQ) \coprod (P_{\bQ,f} \setminus\cs(\bQ(\mu_{l_0})/\bQ)).$$
\end{lem}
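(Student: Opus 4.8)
The plan is to translate the condition $D_{p}(\bQ_{\Sigma,l_0}/\bQ) \subset D_{q}(\bQ_{\Sigma,l_0}/\bQ)$ into an explicit statement about Frobenius elements, using the explicit description of $G(\bQ_{\Sigma,l_0}/\bQ) = \prod_{l \in \Sigma_{l_0}} \bF_l^{\ast}/\bF_l^{\ast,l_0}$ given just above the statement. Since $p, q \notin \Sigma_{l_0}$ (and in particular $p, q \neq l_0$, as $l_0 \notin \Sigma_{l_0}$), both $p$ and $q$ are unramified in $\bQ_{\Sigma,l_0}/\bQ$, so $D_p$ and $D_q$ are the cyclic groups (topologically) generated by $\Frob_p$ and $\Frob_q$ respectively. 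Thus the containment $D_p \subset D_q$ holds if and only if $\Frob_p$ lies in the closed subgroup generated by $\Frob_q$; since the target group is a finite elementary abelian $l_0$-group, this is equivalent to $\Frob_p = \Frob_q^{\,i}$ for some $i \in \{0, 1, \dots, l_0 - 1\}$.

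Next I would compute these Frobenius elements componentwise. For each $l \in \Sigma_{l_0}$, the component of $\Frob_p$ in $G(\bQ(\mu_l)/\bQ) = \bF_l^\ast$ is the class of $p$ (here using $p \neq l$, which holds since $p \notin \Sigma_{l_0}$ but we also need $p \neq l_0$ and to be careful if $p \in \Sigma\setminus\Sigma_{l_0}$ — but then that component is trivial, consistent with the formula), and its image in $\bF_l^\ast/\bF_l^{\ast,l_0}$ is the class of $p$ modulo $l_0$-th powers. So the equation $\Frob_p = \Frob_q^{\,i}$ becomes: for every $l \in \Sigma_{l_0}$, $p \equiv q^i \cdot (\text{an } l_0\text{-th power}) \pmod{l}$, i.e.\ $pq^{-i}$ is an $l_0$-th power in $\bF_l^\ast$. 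The final step is to recognize that ``$pq^{-i}$ is an $l_0$-th power mod $l$'' is exactly the condition that $l$ splits completely in $\bQ(\mu_{l_0}, \sqrt[l_0]{pq^{-i}})/\bQ$ — this uses that $l \equiv 1 \bmod l_0$ (so $\mu_{l_0} \subset \bF_l$, making $\bQ(\mu_{l_0})$ split at $l$ and the Kummer extension abelian over the residue field), and the standard splitting criterion for Kummer extensions. For $l \in \Sigma \setminus \Sigma_{l_0}$ with $l \neq l_0$, such $l$ does not split in $\bQ(\mu_{l_0})/\bQ$, so it lies in the second part of the disjoint union and the condition is vacuous; the prime $l_0$ itself (if $l_0 \in \Sigma$) also ramifies in $\bQ(\mu_{l_0})$ and lands in that part. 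Reassembling, the existence of such an $i$ is precisely the displayed inclusion.

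The main subtlety I expect is bookkeeping the boundary cases: handling $l = l_0$ if $l_0 \in \Sigma$, handling primes $l \in \Sigma \setminus \Sigma_{l_0}$ (where the local factor is trivial so no constraint arises, matching the ``$P_{\bQ,f}\setminus \cs(\bQ(\mu_{l_0})/\bQ)$'' term in the union), and making sure the hypotheses $p, q \notin \Sigma_{l_0}$ and $p \neq q$ are used correctly so that $\Frob_p, \Frob_q$ are genuinely defined and the quantifier over $i$ ranges correctly (in particular that the same $i$ must work simultaneously for all $l \in \Sigma_{l_0}$, which is why the splitting condition is imposed for the single extension $\bQ(\mu_{l_0}, \sqrt[l_0]{pq^{-i}})$ rather than extension-by-extension). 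I would also need to note that $pq^{-i}$ being an $l_0$-th power in $\bF_l^\ast$ for all relevant $l$ is equivalent to complete splitting of a single Kummer extension, which is immediate since completely split primes in a compositum are exactly those completely split in each factor, but one should check $\bQ(\mu_{l_0}, \sqrt[l_0]{pq^{-i}})$ contains $\bQ(\mu_{l_0})$ so the criterion applies uniformly.
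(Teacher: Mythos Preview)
Your proposal is correct and follows essentially the same approach as the paper: translate the containment $D_p\subset D_q$ into $\Frob_p=\Frob_q^{\,i}$ componentwise, i.e.\ $pq^{-i}\in\bF_l^{\ast,l_0}$ for all $l\in\Sigma_{l_0}$, and then identify this with $l$ splitting completely in $\bQ(\mu_{l_0},\sqrt[l_0]{pq^{-i}})$. The only cosmetic difference is that the paper verifies the splitting criterion by explicitly computing $\mathcal O_L\otimes_{\bZ}\bF_l\simeq\bF_l[t]/(t^{l_0}-pq^{-i})$ for $L=\bQ(\sqrt[l_0]{pq^{-i}})$ (and then passes to the Galois closure), whereas you invoke the standard Kummer splitting criterion directly; both amount to the same computation.
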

\begin{proof}
By the theory of cyclotomic fields, 
$p,q$ are unramified in $\bQ_{\Sigma,l_0}/\bQ$, and 
the Frobenius elements in $G(\bQ_{\Sigma,l_0}/\bQ)= \prod_{l \in \Sigma_{l_0}} \bF_l^{\ast}/\bF_l^{\ast,l_0}$ at $p,q$
are $(p)_{l \in \Sigma_{l_0}}, (q)_{l \in \Sigma_{l_0}}$, respectively. 
Therefore, 
$D_{p}(\bQ_{\Sigma,l_0}/\bQ) \subset D_{q}(\bQ_{\Sigma,l_0}/\bQ)$ 
if and only if 
there exists $i \in \{ 0,1,...,l_0 - 1 \}$ such that 
for any $l \in \Sigma_{l_0}$, $pq^{-i} \in \bF_l^{\ast,l_0}$.
Let $l \in \Sigma_{l_0}$.
Write $L \defeq \bQ(\sqrt[l_0]{pq^{-i}})$ and $\mathcal O_{L}$ for its ring of integers.
Then, by \cite[Chapter III, \S 6, Corollary 2]{Serre2}, 
$\mathcal O_{L}[\frac{1}{pql_0}] = \bZ[\frac{1}{pql_0}, \sqrt[l_0]{pq^{-i}}]$.
Hence 
$$\mathcal O_{L} \otimes_{\bZ} \bF_l 
\simeq \mathcal O_{L}[\frac{1}{pql_0}] \otimes_{\bZ} \bF_l 
= \bZ[\frac{1}{pql_0}, \sqrt[l_0]{pq^{-i}}] \otimes_{\bZ} \bF_l 
\simeq \bF_l[t]/(t^{l_0} - pq^{-i}).$$
Since $\mu_{l_0} \subset \bF_l^{\ast}$, 
$pq^{-i} \in \bF_l^{\ast,l_0}$ if and only if $l \in \cs(L/\bQ)$.
Since $\bQ(\mu_{l_0}, \sqrt[l_0]{pq^{-i}})$ is the Galois closure of $L/\bQ$, we obtain $\cs(L/\bQ) = \cs(\bQ(\mu_{l_0}, \sqrt[l_0]{pq^{-i}})/\bQ)$.
By the definition of $\Sigma_{l_0}$, 
$\Sigma_{l_0} \subset \cs(\bQ(\mu_{l_0}, \sqrt[l_0]{pq^{-i}})/\bQ)$ 
if and only if 
$\Sigma\setminus \{ l_0 \} \subset \cs(\bQ(\mu_{l_0}, \sqrt[l_0]{pq^{-i}})/\bQ) \coprod (P_{\bQ,f} \setminus\cs(\bQ(\mu_{l_0})/\bQ))$.
\end{proof}

\begin{prop}\label{incomm2}
Let $p,q$ be distinct prime numbers in $P_{\bQ,f} \setminus \Sigma$.
Assume $D_{p}(\bQ_{\Sigma,l_0}/\bQ) \subset D_{q}(\bQ_{\Sigma,l_0}/\bQ)$ for any $l_0 \in \Sigma$.
Then $\delta_{\sup}(\Sigma) \leq \prod_{l_0 \in \Sigma} \frac{l_0-1}{l_0}$.
\end{prop}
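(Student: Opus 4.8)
The plan is to apply the Chebotarev density theorem in a suitable finite Galois extension of $\bQ$. Fix a finite subset $T\subseteq\Sigma$ and set $N\defeq\prod_{l_0\in T}l_0$. For each $l_0\in T\subseteq\Sigma$, the hypothesis $D_p(\bQ_{\Sigma,l_0}/\bQ)\subset D_q(\bQ_{\Sigma,l_0}/\bQ)$ combined with Lemma \ref{incomm1} (applicable because $p,q\in P_{\bQ,f}\setminus\Sigma\subseteq P_{\bQ,f}\setminus\Sigma_{l_0}$) produces $i_{l_0}\in\{0,1,\dots,l_0-1\}$ with
\begin{equation*}
\Sigma\setminus\{l_0\}\subseteq\cs(L_{l_0}/\bQ)\coprod\bigl(P_{\bQ,f}\setminus\cs(\bQ(\mu_{l_0})/\bQ)\bigr),\qquad L_{l_0}\defeq\bQ(\mu_{l_0},\sqrt[l_0]{pq^{-i_{l_0}}}).
\end{equation*}
Since $p\notin\Sigma$ and $p\neq q$, the element $pq^{-i_{l_0}}$ has valuation $1$ at every prime above $p$ in any algebraic extension of $\bQ$ unramified at $p$, so it is not an $l_0$-th power in such a field; applying this (via Kummer theory) to $\bQ(\mu_{l_0})$ and to $\bQ(\mu_N)$ gives $[L_{l_0}:\bQ]=l_0(l_0-1)$ and $[\bQ(\mu_N,\sqrt[l_0]{pq^{-i_{l_0}}}):\bQ(\mu_N)]=l_0$, and $L_{l_0}/\bQ$ is Galois, being the splitting field of $x^{l_0}-pq^{-i_{l_0}}$.

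Let $F_T$ denote the compositum of the $L_{l_0}$ for $l_0\in T$. The extensions $\bQ(\mu_N,\sqrt[l_0]{pq^{-i_{l_0}}})/\bQ(\mu_N)$ are Galois of pairwise coprime degrees $l_0$, hence linearly disjoint over $\bQ(\mu_N)$; therefore $[F_T:\bQ]=[\bQ(\mu_N):\bQ]\cdot\prod_{l_0\in T}l_0=\prod_{l_0\in T}l_0(l_0-1)=\prod_{l_0\in T}[L_{l_0}:\bQ]$, and since $F_T$ is generated by the $L_{l_0}$ the restriction maps assemble into an isomorphism $G(F_T/\bQ)\isom\prod_{l_0\in T}G(L_{l_0}/\bQ)$. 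For $l_0\in T$ let $T_{l_0}\defeq G(L_{l_0}/\bQ(\mu_{l_0}))$, a normal subgroup of $G(L_{l_0}/\bQ)$ of order $l_0$, and put
\begin{equation*}
C_T\defeq\prod_{l_0\in T}\bigl(G(L_{l_0}/\bQ)\setminus(T_{l_0}\setminus\{1\})\bigr)\subseteq\prod_{l_0\in T}G(L_{l_0}/\bQ)=G(F_T/\bQ).
\end{equation*}
As each $T_{l_0}\setminus\{1\}$ is conjugation-stable in $G(L_{l_0}/\bQ)$, the set $C_T$ is conjugation-stable in $G(F_T/\bQ)$, and $\#C_T/\#G(F_T/\bQ)=\prod_{l_0\in T}\frac{(l_0-1)^2}{l_0(l_0-1)}=\prod_{l_0\in T}\frac{l_0-1}{l_0}$.

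Next I would show that every prime $l\in\Sigma$ unramified in $F_T$ and distinct from $p,q$ — this excludes only finitely many primes of $\Sigma$ — satisfies $\Frob_l(F_T/\bQ)\in C_T$. Fix such an $l$ and $l_0\in T$. By the displayed inclusion, either $l\in\cs(L_{l_0}/\bQ)$, whence $\Frob_l(L_{l_0}/\bQ)=1\notin T_{l_0}\setminus\{1\}$, or $l\notin\cs(\bQ(\mu_{l_0})/\bQ)$, whence the image of $\Frob_l(L_{l_0}/\bQ)$ under the restriction $G(L_{l_0}/\bQ)\twoheadrightarrow G(\bQ(\mu_{l_0})/\bQ)$ (with kernel $T_{l_0}$) is nontrivial, so $\Frob_l(L_{l_0}/\bQ)\notin T_{l_0}$; in either case the $l_0$-component $\Frob_l(L_{l_0}/\bQ)$ of $\Frob_l(F_T/\bQ)$ lies outside $T_{l_0}\setminus\{1\}$. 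Hence $\Frob_l(F_T/\bQ)\in C_T$. By the Chebotarev density theorem the set of prime numbers $l$ with $\Frob_l(F_T/\bQ)\in C_T$ has Dirichlet density $\#C_T/\#G(F_T/\bQ)=\prod_{l_0\in T}\frac{l_0-1}{l_0}$, and $\Sigma$ differs from a subset of it by finitely many primes, so $\delta_{\sup}(\Sigma)\leq\prod_{l_0\in T}\frac{l_0-1}{l_0}$. Letting $T$ range over all finite subsets of $\Sigma$ and using that each factor is $<1$ yields $\delta_{\sup}(\Sigma)\leq\prod_{l_0\in\Sigma}\frac{l_0-1}{l_0}$.

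The step I expect to require the most care is the Galois-theoretic bookkeeping around $F_T$: verifying $[L_{l_0}:\bQ]=l_0(l_0-1)$ (this is exactly where the hypothesis $p\notin\Sigma$ enters, through the valuation computation), obtaining the product decomposition of $G(F_T/\bQ)$ — which rests on the pairwise coprimality of the primes $l_0$ themselves, not of the numbers $l_0(l_0-1)$ — and checking that $C_T$ is a union of conjugacy classes so that Chebotarev applies. The translation of Lemma \ref{incomm1} into the condition $\Frob_l(F_T/\bQ)\in C_T$ and the final passage to the infinite product are then routine.
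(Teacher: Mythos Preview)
The proposal is correct and follows essentially the same approach as the paper: both apply Chebotarev in the compositum of the Kummer extensions $\bQ(\mu_{l_0},\sqrt[l_0]{pq^{-i_{l_0}}})$ over a finite $T\subset\Sigma$, identify its Galois group with the product $\prod_{l_0\in T}G(L_{l_0}/\bQ)$, and translate the condition from Lemma \ref{incomm1} into the Frobenius at $l\in\Sigma$ avoiding the nontrivial elements of each $T_{l_0}=G(L_{l_0}/\bQ(\mu_{l_0}))$ (the paper's set $X_{l_0}=\{1\}\coprod\bF_{l_0}\rtimes(\bF_{l_0}^\ast\setminus\{1\})$ is exactly your $G(L_{l_0}/\bQ)\setminus(T_{l_0}\setminus\{1\})$). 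Your write-up is in fact somewhat more explicit than the paper's about why $[L_{l_0}:\bQ]=l_0(l_0-1)$ and why the product decomposition of $G(F_T/\bQ)$ holds.
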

\begin{proof}
Let $\Sigma' \subset \Sigma$ be any finite subset.
By Lemma \ref{incomm1}, 
there exists $i_{l_0} \in \{ 0,1,...,l_0 - 1 \}$ for $l_0 \in \Sigma'$ such that 
\begin{equation}\label{eq:incomm}
\Sigma\setminus \Sigma' \subset 
A_{\Sigma'} \defeq 
\cap_{l_0 \in \Sigma'}(
\cs(\bQ(\mu_{l_0}, \sqrt[l_0]{pq^{-i_{l_0}}})/\bQ) \coprod (P_{\bQ,f} \setminus\cs(\bQ(\mu_{l_0})/\bQ))).
\end{equation}
Let us calculate the Dirichlet density of $A_{\Sigma'}$.
Write $G \defeq G(\bQ(\cup_{l_0 \in \Sigma'} \{ \mu_{l_0}, \sqrt[l_0]{pq^{-i_{l_0}}} \})/\bQ)$, 
and $G_{l_0} \defeq G(\bQ(\mu_{l_0}, \sqrt[l_0]{pq^{-i_{l_0}}})/\bQ)$ for $l_0 \in \Sigma'$.
Then $G = \prod_{l_0 \in \Sigma'} G_{l_0}$, and 
$G_{l_0} \simeq G(\bQ(\mu_{l_0}, \sqrt[l_0]{pq^{-i_{l_0}}})/\bQ(\mu_{l_0})) \rtimes G(\bQ(\mu_{l_0})/\bQ)
\simeq \bF_{l_0} \rtimes \bF_{l_0}^{\ast}$ by Kummer theory.
By calculating the differents (cf. \cite[Chapter III, \S 6, Corollary 2]{Serre2}) or \cite[Chapter V, (3.3) Lemma]{Neukirch3}, $\bQ(\cup_{l_0 \in \Sigma'} \{ \mu_{l_0}, \sqrt[l_0]{pq^{-i_{l_0}}} \})/\bQ$ is unramified outside $\Sigma' \cup \{ p,q,\infty \}$.
Write $\Frob_l$ for the Frobenius element in $G$ at $l \in P_{\bQ,f} \setminus (\Sigma' \cup \{ p,q \})$, and 
$X_{l_0} \defeq \{ 1_{G_{l_0}} \} \coprod \bF_{l_0} \rtimes (\bF_{l_0}^{\ast}\setminus\{ 1 \}) \subset G_{l_0}$.
Then 
$$A_{\Sigma'} \setminus (\Sigma' \cup \{ p,q \}) 
= \{ l \in P_{\bQ,f} \setminus (\Sigma' \cup \{ p,q \}) \mid \Frob_l \in \prod_{l_0 \in \Sigma'} X_{l_0} \}.$$
Therefore, by the Chebotarev density theorem, 
\begin{equation*}
\begin{split} 
\delta(A_{\Sigma'}) 
&= \frac{ \# \prod_{l_0 \in \Sigma'} X_{l_0} }{ \# G}
=\frac{ \prod_{l_0 \in \Sigma'} (1+ l_0(l_0-2)) }{ \prod_{l_0 \in \Sigma'} l_0(l_0-1) }
=\prod_{l_0 \in \Sigma'} \frac{l_0-1}{l_0}.
\end{split}
\end{equation*}
By (\ref{eq:incomm}), we have $\delta_{\sup}(\Sigma) \leq \prod_{l_0 \in \Sigma'} \frac{l_0-1}{l_0}$.
Thus, the inequality in the assertion is obtained by passing to the limit over 
$\Sigma' \subset \Sigma$.
\end{proof}

\begin{lem}\label{incomm3}
$(i)$
If $\delta_{\sup}(\Sigma)>0$, then $\sum_{p \in \Sigma} p^{-1} = \infty$.

\noindent
$(ii)$
$\sum_{p \in \Sigma} p^{-1} = \infty$ if and only if $\prod_{p \in \Sigma} \frac{p-1}{p} = 0$.
\end{lem}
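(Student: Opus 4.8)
Both parts are elementary analytic facts, and the strategy is to reduce everything to the classical comparison between $\sum_{p\in\Sigma}p^{-1}$, the series $\sum_{p\in\Sigma}\bigl(-\log(1-p^{-1})\bigr)$, and the Dirichlet-type series $\sum_{p\in\Sigma}p^{-s}$.

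For $(i)$ I would argue by contraposition: assume $\sum_{p\in\Sigma}p^{-1}<\infty$ and show $\delta_{\sup}(\Sigma)=0$. Since $\Sigma\subset P_{\bQ,f}$ consists of rational primes, $\mathfrak N(p)=p$ for $p\in\Sigma$, so by definition
\begin{equation*}
\delta_{\sup}(\Sigma)=\limsup_{s\to 1+0}\frac{\sum_{p\in\Sigma}p^{-s}}{\log\frac{1}{s-1}}.
\end{equation*}
For every $s\geq 1$ one has $0\leq\sum_{p\in\Sigma}p^{-s}\leq\sum_{p\in\Sigma}p^{-1}<\infty$, so the numerator is bounded uniformly in $s$, while the denominator $\log\frac{1}{s-1}$ tends to $+\infty$ as $s\to 1+0$; hence the $\limsup$ is $0$. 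This proves $(i)$.

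For $(ii)$ the key input is the two-sided estimate $x\leq-\log(1-x)\leq 2x$, valid for $0<x\leq\tfrac12$ (the left inequality from $\log(1+u)\leq u$, the right one from $-\log(1-x)=\sum_{n\geq1}x^{n}/n\leq x\sum_{n\geq0}x^{n}=\frac{x}{1-x}\leq 2x$). Applying this with $x=1/p$, which is legitimate since $p\geq 2$, gives
\begin{equation*}
\sum_{p\in\Sigma}\frac1p\ \leq\ \sum_{p\in\Sigma}\Bigl(-\log\bigl(1-\tfrac1p\bigr)\Bigr)\ \leq\ 2\sum_{p\in\Sigma}\frac1p ,
\end{equation*}
so the series $\sum_{p\in\Sigma}p^{-1}$ and $\sum_{p\in\Sigma}\bigl(-\log(1-p^{-1})\bigr)$ diverge or converge simultaneously. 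On the other hand, for a finite subset $\Sigma'\subset\Sigma$ one has $\log\prod_{p\in\Sigma'}\frac{p-1}{p}=-\sum_{p\in\Sigma'}\bigl(-\log(1-p^{-1})\bigr)$, and letting $\Sigma'$ exhaust $\Sigma$ shows that $\prod_{p\in\Sigma}\frac{p-1}{p}=0$ if and only if $\sum_{p\in\Sigma}\bigl(-\log(1-p^{-1})\bigr)=\infty$. Combining the last two sentences yields $(ii)$.

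There is no real obstacle here; the only points requiring a modicum of care are matching the abstract definition of $\delta_{\sup}$ to the concrete form $\sum_{p\in\Sigma}p^{-s}/\log\frac{1}{s-1}$ for subsets of $P_{\bQ,f}$, and handling the infinite product $\prod_{p\in\Sigma}\frac{p-1}{p}$ correctly as the limit of partial products over finite subsets (so that its vanishing is genuinely equivalent to divergence of the associated logarithm series).
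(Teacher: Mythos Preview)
Your proof is correct and follows essentially the same approach as the paper: for $(i)$ you argue by contraposition (bounded numerator versus diverging denominator) where the paper argues directly, and for $(ii)$ both you and the paper compare $\sum_{p\in\Sigma}p^{-1}$ with $\sum_{p\in\Sigma}\bigl(-\log(1-p^{-1})\bigr)$, the paper via the power-series expansion and convergence of the $n\geq 2$ tail, you via the equivalent two-sided bound $x\leq -\log(1-x)\leq 2x$.
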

\begin{proof}
$(i)$: 
Take a sequence $\{ s_n \}_{n \in \bZ_{>0}}$ of real numbers with $s_n > 1$ such that 
$\lim_{n \to \infty} s_n = 1$ and $\lim_{n \to \infty} 
\frac{\sum_{p \in \Sigma} p^{-s_n}}{\log{\frac{1}{s_n-1}}} 
= \delta_{\sup}(\Sigma)$.
Then 
$$\sum_{p \in \Sigma} p^{-1} \geq \lim_{n \to \infty} \sum_{p \in \Sigma} p^{-s_n} = \delta_{\sup}(\Sigma) \lim_{n \to \infty}\log{\frac{1}{s_n-1}} = \infty.$$
$(ii)$: 
$\prod_{p \in \Sigma} \frac{p-1}{p} = 0$ if and only if $\prod_{p \in \Sigma} \frac{p}{p-1} = \infty$.
Since 
$\log(\prod_{p \in \Sigma} \frac{p}{p-1}) = \sum_{p \in \Sigma}\sum_{n \in \bZ_{>0}}\frac{1}{np^n}$ 
and $\sum_{p \in \Sigma}\sum_{n \in \bZ_{>1}}\frac{1}{np^n}$ converges, 
$\prod_{p \in \Sigma} \frac{p}{p-1} = \infty$ if and only if $\sum_{p \in \Sigma} \frac{1}{p} = \infty$.
\end{proof}

\begin{prop}\label{incomm4}
Let $p,q$ be distinct prime numbers.
Write $D_{p}, D_{q}$ for the decomposition groups of $G(\bQ(\cup_{l \in \Sigma}\mu_{l})/\bQ)^{\Sigma} = G(\bQ^{\Sigma} \cap \bQ(\cup_{l \in \Sigma}\mu_{l})/\bQ)$ at $p,q$, respectively. 
Assume $\delta_{\sup}(\Sigma) > 0$.
Then 
$D_{p} \cap D_{q}$ 
is not open in either $D_{p}$ or $D_{q}$.
\end{prop}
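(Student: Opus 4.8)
The plan is to argue by contradiction and reduce, via the elementary abelian layers $\bQ_{\Sigma,l_0}$, to Proposition~\ref{incomm2} together with Lemma~\ref{incomm3}. By the symmetry between $p$ and $q$ it is enough to show that $D_p\cap D_q$ is not open in $D_p$. Suppose it is; since $D_p$ is profinite and $D_p\cap D_q$ is closed in it, this means $M\defeq[D_p:D_p\cap D_q]<\infty$.

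First I would descend to the layers. Fix $l_0\in\Sigma$. Since $G(\bQ_{\Sigma,l_0}/\bQ)=\prod_{l\in\Sigma_{l_0}}\bF_{l_0}$ is a pro-$l_0$ group and $l_0\in\Sigma$, the field $\bQ_{\Sigma,l_0}$ is a pro-$\Sigma$ extension of $\bQ$ inside $\bQ(\cup_{l\in\Sigma}\mu_l)$, hence is contained in $\bQ^\Sigma\cap\bQ(\cup_{l\in\Sigma}\mu_l)$. Thus the restriction map $G(\bQ^\Sigma\cap\bQ(\cup_{l\in\Sigma}\mu_l)/\bQ)\twoheadrightarrow G(\bQ_{\Sigma,l_0}/\bQ)$ carries $D_p$, $D_q$ onto $D_p(\bQ_{\Sigma,l_0}/\bQ)$, $D_q(\bQ_{\Sigma,l_0}/\bQ)$ and carries $D_p\cap D_q$ into their intersection, whence $[D_p(\bQ_{\Sigma,l_0}/\bQ):D_p(\bQ_{\Sigma,l_0}/\bQ)\cap D_q(\bQ_{\Sigma,l_0}/\bQ)]\leq M$. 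As $G(\bQ_{\Sigma,l_0}/\bQ)$ is pro-$l_0$, this finite index is a power of $l_0$, so it equals $1$ whenever $l_0>M$; that is, $D_p(\bQ_{\Sigma,l_0}/\bQ)\subset D_q(\bQ_{\Sigma,l_0}/\bQ)$ for every $l_0\in\Sigma$ with $l_0>M$.

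Now put $\Sigma_0\defeq\{l_0\in\Sigma\mid l_0\leq M\text{ or }l_0\in\{p,q\}\}$, a finite set, and $\Sigma_1\defeq\Sigma\setminus\Sigma_0$, so that $p,q\notin\Sigma_1$. For $l_0\in\Sigma_1$ we have $\bQ_{\Sigma_1,l_0}\subset\bQ_{\Sigma,l_0}$ (the corresponding Galois groups are $\prod_{l\in\Sigma_1\cap\Sigma_{l_0}}\bF_{l_0}$ and $\prod_{l\in\Sigma_{l_0}}\bF_{l_0}$), so, projecting the inclusion from the preceding paragraph, $D_p(\bQ_{\Sigma_1,l_0}/\bQ)\subset D_q(\bQ_{\Sigma_1,l_0}/\bQ)$ for all $l_0\in\Sigma_1$. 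Applying Proposition~\ref{incomm2} with $\Sigma_1$ in place of $\Sigma$ — which is legitimate since $p,q\in P_{\bQ,f}\setminus\Sigma_1$ — gives $\delta_{\sup}(\Sigma_1)\leq\prod_{l_0\in\Sigma_1}\frac{l_0-1}{l_0}$. Because $\Sigma_0$ is finite, $\delta_{\sup}(\Sigma_1)=\delta_{\sup}(\Sigma)>0$, so Lemma~\ref{incomm3}(i) yields $\sum_{l\in\Sigma_1}l^{-1}=\infty$ and then Lemma~\ref{incomm3}(ii) yields $\prod_{l_0\in\Sigma_1}\frac{l_0-1}{l_0}=0$. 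Hence $\delta_{\sup}(\Sigma)\leq0$, a contradiction.

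The step that needs care is this reduction: for the finitely many ``bad'' primes $l_0$ — those dividing the index $M$, or equal to $p$ or $q$ — the comparison in the layer $\bQ_{\Sigma,l_0}$ yields nothing, and Proposition~\ref{incomm2} is moreover not directly available for $\Sigma$ when $p$ or $q$ lies in $\Sigma$. This is exactly why one passes to the cofinite subset $\Sigma_1$, using that neither $\delta_{\sup}$ nor the divergence of $\sum_{l\in\Sigma}l^{-1}$ is disturbed by removing finitely many primes; the remainder is formal manipulation of the restriction maps $G(\bQ^\Sigma\cap\bQ(\cup_{l\in\Sigma}\mu_l)/\bQ)\twoheadrightarrow G(\bQ_{\Sigma,l_0}/\bQ)\twoheadrightarrow G(\bQ_{\Sigma_1,l_0}/\bQ)$ and their effect on decomposition groups.
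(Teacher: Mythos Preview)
Your proof is correct and follows essentially the same route as the paper: reduce by symmetry, assume finite index $M=(D_p:D_p\cap D_q)$, discard a finite set of primes from $\Sigma$ so that the inclusion $D_p\subset D_q$ holds in every remaining layer $\bQ_{\Sigma_1,l_0}$, and then invoke Proposition~\ref{incomm2} together with Lemma~\ref{incomm3} to derive $0<\delta_{\sup}(\Sigma_1)\le\prod_{l_0\in\Sigma_1}\frac{l_0-1}{l_0}=0$. The only cosmetic difference is that the paper removes the primes dividing $M$ (plus $p,q$) while you remove the primes $\le M$ (plus $p,q$); either choice works for the same reason, namely that the resulting $\Sigma_1$ is cofinite and pro-$\Sigma_1$ groups admit no subgroup of index dividing $M$ other than the trivial one.
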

\begin{proof}
By symmetry, we are reduced to proving that $D_{p} \cap D_{q}$ is not open in $D_{p}$.
Assume $D_{p} \cap D_{q}$ is open in $D_{p}$.
Replacing $\Sigma$ by 
$$\Sigma \setminus (\{ l \in P_{\bQ,f} \mid l \text{ divides } (D_{p} \colon D_{p} \cap D_{q}) \} \cup \{ p,q \}),$$
we may assume that $D_{p} \subset D_{q}$ 
and that $\Sigma \cap \{ p,q \} = \emptyset$.
Then $D_{p}(\bQ_{\Sigma,l_0}/\bQ) \subset D_{q}(\bQ_{\Sigma,l_0}/\bQ)$ for any $l_0 \in \Sigma$.
By Proposition \ref{incomm2} and Lemma \ref{incomm3}, we have 
$$0 < \delta_{\sup}(\Sigma) \leq \prod_{l_0 \in \Sigma} \frac{l_0-1}{l_0} = 0,$$
a contradiction.
\end{proof}

\begin{prop}\label{incomm5}
We use the notations in Theorem \ref{3.5}.
Assume $\delta_{\sup}(\Sigma) > 0$.
Then the following hold.
\begin{itemize}
\item[$(i)$] The local correspondence in Theorem \ref{3.5} $(i)'$ 
satisfies condition $(\Char)$.

\item[$(ii)$] 
$\underline{P_{K_1,f}^{\cC_1}} \cap \cs(L_1/\bQ) = \underline{P_{K_2,f}^{\cC_2}} \cap \cs(L_2/\bQ)$.

\item[$(iii)$] $\phi|_{(\underline{P_{K_1,f}^{\cC_1}} \cap \cs(K_1/\bQ))(K_1^{\cC_1})}$
induces a local correspondence between 
$(\underline{P_{K_1,f}^{\cC_1}} \cap \cs(K_1/\bQ))(L_1)$ and $(\underline{P_{K_2,f}^{\cC_2}} \cap \cs(K_2/\bQ))(L_2)$ for $\overline{\sigma|_{U_1}}$, 
satisfying conditions $(\Char)$ and $(\Deg)$.

\end{itemize}
\end{prop}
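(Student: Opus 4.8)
The plan is to reduce the whole statement to one claim: the local correspondence $\phi$ between $P_{K_1,f}^{\cC_1}$ and $P_{K_2,f}^{\cC_2}$ for $\sigma$ (the case $U_1=G_{K_1}^{\cC_1}$, $V_1=1$ of $(i)$) satisfies condition $(\Char)$. Since condition $(\Char)$ is inherited under restriction to an open subgroup (Lemma \ref{3.3} $(ii)$) and under passing to a quotient (Lemma \ref{3.4} $(iii)$), this yields $(i)$ for the correspondence of Theorem \ref{3.5} $(i)'$, just as in the proof of Theorem \ref{3.5}. Once $(i)$ is known, $(ii)$ and $(iii)$ should follow by feeding $S_i\defeq P_{K_i,f}^{\cC_i}$ into Lemma \ref{CharDeg} $(i)$ and $(ii)$ respectively; the only extra input is that $\underline{P_{K_i,f}^{\cC_i}}\cap\cs(K_i/\bQ)\neq\emptyset$, which I would obtain by noting that, for any $l\in\Sigma$, this set contains $\cs(K_i^{\Sigma'}\cap K_i(\mu_l)/\bQ)\setminus\{l\}$ (use Remark \ref{csset} $(ii)$ together with $K_i\subset K_i^{\Sigma'}\cap K_i(\mu_l)$, where $\Sigma'\defeq P_{\bQ,f}\setminus\Sigma$), an infinite set by the Chebotarev density theorem.

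So fix $\overline{\p}_1\in P_{K_1,f}^{\cC_1}(K_1^{\cC_1})$ with residue characteristic $p$, put $\overline{\p}_2\defeq\phi(\overline{\p}_1)$ with residue characteristic $p'$, and $\p_i\defeq\overline{\p}_i|_{K_i}$; the aim is $p=p'$. If $p\in\Sigma$, then $\p_1\in\Sigma(K_1)$ and $p=p'$ by Theorem \ref{3.5} $(ii)$; applying this to $\sigma^{-1}$ as well shows that $\phi$ restricts to a bijection between the primes of residue characteristic outside $\Sigma$ on each side. Hence we may assume $p,p'\notin\Sigma$, and here the idea is to push $\overline{\p}_1$ and $\overline{\p}_2$ into the cyclotomic extension of $\bQ$ cut out by $\Sigma$ and apply the incommensurability of Proposition \ref{incomm4}. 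For $l\in\Sigma$ let $N_l$ denote the maximal pro-$\Sigma$ quotient of $G(\bQ(\mu_l)/\bQ)\cong\bF_l^{\ast}$, so that $\prod_{l\in\Sigma}N_l=G(\bQ(\cup_{l\in\Sigma}\mu_l)^{\Sigma}/\bQ)$ and, for a rational prime $q\notin\Sigma$, the Frobenius and decomposition group at $q$ are $\Frob_q=(q\bmod l)_{l\in\Sigma}$ and $D_q=\overline{\langle\Frob_q\rangle}$. Reducing $\chi_{K_i,l}^{\cC_i}$ modulo $l$, projecting onto $N_l$, and assembling over $l\in\Sigma$ defines $\rho_i\colon G_{K_i}^{\cC_i}\to\prod_{l\in\Sigma}N_l$. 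As $\chi_{K_i,l}^{\cC_i}$ is recovered group-theoretically (Proposition \ref{cyc3}) and $\Sigma(\cC_1)=\Sigma(\cC_2)=\Sigma$, the target of $\rho_i$ — together with all $\Frob_q$ and $D_q$ — is canonical and $\rho_2\circ\sigma=\rho_1$.

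Next I would evaluate $\rho_i$ on the decomposition groups. By Proposition \ref{1.2}, $D_{\overline{\p}_1}(K_1^{\cC_1}/K_1)\simeq G_{K_{1,\p_1}}^{\cC_1}$; since $p\neq l$ for all $l\in\Sigma$, the restriction of $\chi_{K_1,l}^{\cC_1}$ to this subgroup is unramified and sends the Frobenius element to $\frak{N}(\p_1)=p^{f_{\p_1,K_1/\bQ}}$. Consequently $\rho_1(D_{\overline{\p}_1})=\overline{\langle\Frob_p^{\,f_{\p_1,K_1/\bQ}}\rangle}$, which is open in $D_p$ (the quotient $D_p/\overline{\langle\Frob_p^{\,f_{\p_1,K_1/\bQ}}\rangle}$ is cyclic of order dividing $f_{\p_1,K_1/\bQ}$), and likewise $\rho_2(D_{\overline{\p}_2})=\overline{\langle\Frob_{p'}^{\,f_{\p_2,K_2/\bQ}}\rangle}$ is open in $D_{p'}$. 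Because $\rho_2\circ\sigma=\rho_1$ and $\sigma(D_{\overline{\p}_1})=D_{\overline{\p}_2}$, these two subgroups of $\prod_{l\in\Sigma}N_l$ are equal. If $p\neq p'$, their common value lies in $D_p\cap D_{p'}$ while being open in $D_p$, so $D_p\cap D_{p'}$ is open in $D_p$ — contradicting Proposition \ref{incomm4}, whose hypothesis $\delta_{\sup}(\Sigma)>0$ is in force. Hence $p=p'$, which establishes $(i)$, and then $(ii)$ and $(iii)$ as explained above.

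The hard part is exactly the case $p,p'\notin\Sigma$: one must manufacture, out of the recovered cyclotomic characters, a $\sigma$-compatible and group-theoretically canonical map onto $G(\bQ(\cup_{l\in\Sigma}\mu_l)^{\Sigma}/\bQ)$ under which the decomposition groups of $\overline{\p}_1$ and $\overline{\p}_2$ become open subgroups of the genuine decomposition groups $D_p$ and $D_{p'}$, so that the incommensurability of Proposition \ref{incomm4} can bite. Everything else — the two reductions at the start and the derivation of $(ii)$, $(iii)$ from $(i)$ via Lemma \ref{CharDeg} — is routine bookkeeping.
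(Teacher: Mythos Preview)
Your proof is correct and follows essentially the same approach as the paper. Both reduce to showing that the full correspondence $\phi$ satisfies $(\Char)$, assemble the recovered cyclotomic characters (Proposition \ref{cyc3}) into a $\sigma$-compatible map to a cyclotomic quotient of $G_\bQ$, observe that the image of $D_{\overline{\p}_i}$ lands in an open subgroup of $D_p$ there, and then invoke Proposition \ref{incomm4}; both then derive $(ii)$ and $(iii)$ from Lemma \ref{CharDeg}. The only differences are cosmetic: the paper maps into $\prod_{l\in\Sigma}\bZ_l^{\ast,\Sigma}$ and treats all residue characteristics uniformly, whereas you map into the smaller quotient $\prod_{l\in\Sigma}N_l$ (which is exactly the group Proposition \ref{incomm4} is stated for) and dispose of the case $p\in\Sigma$ beforehand via Theorem \ref{3.5} $(ii)$; and your nonemptiness argument uses $K_i^{\Sigma'}\cap K_i(\mu_l)$ where the paper more simply uses $K_i(\mu_l)$.
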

\begin{proof}
By Lemma \ref{3.3} $(ii)$, Proposition \ref{1.4} and Lemma \ref{3.4} $(iii)$, 
for $(i)$, it suffices to show that 
the local correspondence $\phi$ between $P_{K_1,f}^{\cC_1}$ and $P_{K_2,f}^{\cC_2}$ for $\sigma$ 
satisfies condition $(\Char)$.
For $i=1,2$, by Lemma \ref{2.0} and Proposition \ref{cyc3}, 
the canonical homomorphism 
$$G_{K_i}^{\cC_i} \to 
G(\bQ^{\Sigma} \cap \bQ(\cup_{l \in \Sigma}\mu_{l^\infty})/\bQ) (=G(\bQ(\cup_{l \in \Sigma}\mu_{l^\infty})/\bQ)^{\Sigma} =\prod_{l \in \Sigma} {\bZ_l}^{\ast, \Sigma}), \  \tau \mapsto \tau|_{\bQ^{\Sigma} \cap \bQ(\cup_{l \in \Sigma}\mu_{l^\infty})}$$
can be recovered group-theoretically from $G_{K_i}^{\cC_i}$.
For $\overline{\p} \in P_{K_i,f}^{\cC_i}(K_i^{\cC_i})$ 
and $p \in P_{\bQ,f}$, by Proposition \ref{incomm4}, 
the image of $D_{\overline{\p}}(K_i^{\cC_i}/K_i)$ under this homomorphism is an (open) subgroup of $D_p(\bQ^{\Sigma} \cap \bQ(\cup_{l \in \Sigma}\mu_{l^\infty})/\bQ)$ 
 if and only if $\overline{\p}$ is above $p$.
Thus, $\phi$ satisfies condition $(\Char)$.
Since 
$$\underline{P_{K_i,f}^{\cC_i}} \cap \cs(K_i/\bQ) \supset (\cs(\bQ(\mu_{l})/\bQ)\setminus \{ l \}) \cap \cs(K_i/\bQ) = \cs(K_i(\mu_{l})/\bQ)\setminus \{ l \} \not= \emptyset$$
for $l \in \Sigma$, 
$(ii)$ and $(iii)$ follow from this, together with Lemma \ref{CharDeg} $(i)$ and $(ii)$.
\end{proof}

\begin{rem}\label{incomm7}
We use the notations in Theorem \ref{3.5}.
Assume $\delta_{\sup}(\Sigma) > 0$.
By Theorem \ref{3.5} $(ii)'$ and Proposition \ref{incomm5} $(iii)$, 
$\phi|_{(P_{K_1,f}^{\cC_1} \cap \Sigma(K_1))(K_1^{\cC_1}) \cup (\underline{P_{K_1,f}^{\cC_1}} \cap \cs(K_1/\bQ))(K_1^{\cC_1})}$
induces a local correspondence between 
$(P_{L_1,f}^{\cC_1} \cap \Sigma(L_1)) \cup (\underline{P_{K_1,f}^{\cC_1}} \cap \cs(K_1/\bQ))(L_1)$ and $(P_{L_2,f}^{\cC_2} \cap \Sigma(L_2)) \cup (\underline{P_{K_2,f}^{\cC_2}} \cap \cs(K_2/\bQ))(L_2)$ for $\overline{\sigma|_{U_1}}$, 
satisfying conditions $(\Char)$ and $(\Deg)$.
This result can be seen as an improvement of Proposition \ref{incomm5} $(iii)$.
However, Proposition \ref{incomm5} $(iii)$ is sufficient for the proof of the main theorem (Theorem \ref{6.1}).
\end{rem}

The results in the rest of this section are a preparation 
for the proof of Theorem \ref{relthm}.
\begin{lem}\label{rel1}
Let $\Sigma \subset P_{\bQ,f}$, $l_0 \in P_{\bQ,f}$ and $p \in P_{\bQ,f} \setminus \Sigma_{l_0}$.
Then 
$D_{p}(\bQ_{\Sigma,l_0}/\bQ) = 1$ 
if and only if 
$\Sigma_{l_0} \subset \cs(\bQ(\mu_{l_0}, \sqrt[l_0]{p})/\bQ)$.
\end{lem}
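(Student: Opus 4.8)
The plan is to imitate the proof of Lemma~\ref{incomm1}, with the pair $(p,q)$ and the exponent $i$ there replaced by the single prime $p$ (equivalently, taking $q=1$, $i=-1$). First I would record, by the theory of cyclotomic fields, that $p$ is unramified in $\bQ_{\Sigma,l_0}/\bQ$ — the only prime ramifying in $\bQ(\mu_l)/\bQ$ is $l$, and $p\notin\Sigma_{l_0}$ — and that, under the identification $G(\bQ_{\Sigma,l_0}/\bQ)=\prod_{l\in\Sigma_{l_0}}\bF_l^{\ast}/\bF_l^{\ast,l_0}$, the Frobenius element $\Frob_p$ at $p$ is $(p)_{l\in\Sigma_{l_0}}$. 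Since this group is abelian, $D_p(\bQ_{\Sigma,l_0}/\bQ)$ is the closed subgroup topologically generated by $\Frob_p$, so $D_p(\bQ_{\Sigma,l_0}/\bQ)=1$ if and only if $p\in\bF_l^{\ast,l_0}$ for every $l\in\Sigma_{l_0}$.

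Next, for a fixed $l\in\Sigma_{l_0}$, I would show that $p\in\bF_l^{\ast,l_0}$ if and only if $l\in\cs(\bQ(\mu_{l_0},\sqrt[l_0]{p})/\bQ)$, exactly as in the proof of Lemma~\ref{incomm1}. Put $L\defeq\bQ(\sqrt[l_0]{p})$; as $p$ is a prime number it is not an $l_0$-th power in $\bQ^{\ast}$, so $[L:\bQ]=l_0$ and $\bQ(\mu_{l_0},\sqrt[l_0]{p})$ is the Galois closure of $L/\bQ$, whence $\cs(L/\bQ)=\cs(\bQ(\mu_{l_0},\sqrt[l_0]{p})/\bQ)$. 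By \cite[Chapter III, \S 6, Corollary 2]{Serre2} one has $\mathcal O_L[\frac{1}{pl_0}]=\bZ[\frac{1}{pl_0},\sqrt[l_0]{p}]$, hence $\mathcal O_L\otimes_{\bZ}\bF_l\simeq\bF_l[t]/(t^{l_0}-p)$ (note $l\neq p$ and $l\neq l_0$ since $l\in\Sigma_{l_0}$); and since $\mu_{l_0}\subset\bF_l^{\ast}$ (because $l\equiv 1\bmod l_0$ for $l\in\Sigma_{l_0}$), this $\bF_l$-algebra is split, i.e.\ $l$ splits completely in $L/\bQ$, precisely when $p\in\bF_l^{\ast,l_0}$.

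Combining the two steps gives the claim: $D_p(\bQ_{\Sigma,l_0}/\bQ)=1$ iff $p\in\bF_l^{\ast,l_0}$ for all $l\in\Sigma_{l_0}$ iff $l\in\cs(\bQ(\mu_{l_0},\sqrt[l_0]{p})/\bQ)$ for all $l\in\Sigma_{l_0}$, that is, $\Sigma_{l_0}\subset\cs(\bQ(\mu_{l_0},\sqrt[l_0]{p})/\bQ)$. I do not expect a genuine obstacle here; the only points meriting a line of care are the assertion that $\Frob_p$ is exactly $(p)_{l\in\Sigma_{l_0}}$, so that $D_p(\bQ_{\Sigma,l_0}/\bQ)$ is the procyclic closure of a single explicit element, and the observation that a rational prime is never a nontrivial $l_0$-th power, which is what lets the Galois-closure identification of the ``split completely'' loci go through just as in Lemma~\ref{incomm1}.
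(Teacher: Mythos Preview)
Your proposal is correct and is exactly the approach the paper intends: the paper's proof consists of the single sentence ``The proof is similar to that of Lemma~\ref{incomm1}'' (the printed reference appears to contain a typo pointing to \ref{incomm5}, but the content makes clear Lemma~\ref{incomm1} is meant), and you have faithfully carried out that specialization, replacing $pq^{-i}$ by $p$ throughout. The points you flag as needing care (the explicit Frobenius $(p)_{l\in\Sigma_{l_0}}$ and the fact that a rational prime is not an $l_0$-th power) are precisely the places where the argument of Lemma~\ref{incomm1} must be adjusted, and your treatment of them is fine.
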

\begin{proof}
The proof is similar to that of Lemma \ref{incomm5}.
\end{proof}

\begin{lem}\label{rel2}
Let $\Sigma \subset P_{\bQ,f}$, $l_0 \in P_{\bQ,f}$, $n \in \bZ_{\geq 2}$ and $p_1,..., p_n$ be distinct prime numbers in $P_{\bQ,f} \setminus \Sigma_{l_0}$.
Assume $D_{p_1}(\bQ_{\Sigma,l_0}/\bQ) \not= 1$.
Then 
$D_{p_1}(\bQ_{\Sigma,l_0}/\bQ) = \cdots = D_{p_n}(\bQ_{\Sigma,l_0}/\bQ)$ 
if and only if 
there exists $j_i \in \{ 1,...,l_0 - 1 \}$ for each $i \in \{ 2,...,n \}$
such that 
$$\Sigma_{l_0} \subset 
\cs(\bQ(\mu_{l_0}, \sqrt[l_0]{p_1p_2^{-j_2}},..., \sqrt[l_0]{p_1p_n^{-j_n}})/\bQ).$$
\end{lem}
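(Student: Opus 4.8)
The plan is to reduce everything to linear algebra over $\bF_{l_0}$. The key structural observation is that $G := G(\bQ_{\Sigma,l_0}/\bQ) \simeq \prod_{l \in \Sigma_{l_0}} \bF_l^{\ast}/\bF_l^{\ast,l_0}$ is an elementary abelian $l_0$-group, i.e. an $\bF_{l_0}$-vector space, on which scalar multiplication by $j \in \bF_{l_0}$ is given by raising to the $j$-th power in each factor $\bF_l^{\ast}/\bF_l^{\ast,l_0}$. Since $\bQ_{\Sigma,l_0}/\bQ$ is abelian and unramified outside $\Sigma_{l_0}$, each $p_i$ (lying outside $\Sigma_{l_0}$) is unramified in it, so $D_{p_i}(\bQ_{\Sigma,l_0}/\bQ)$ is the cyclic subgroup $\langle \Frob_{p_i} \rangle$ of $G$ generated by the Frobenius, whose $l$-component is the class of $p_i$ in $\bF_l^{\ast}/\bF_l^{\ast,l_0}$ for $l \in \Sigma_{l_0}$. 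Because $l_0$ is prime, a nontrivial cyclic subgroup of the $\bF_{l_0}$-vector space $G$ is exactly a line.

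Hence, under the hypothesis $D_{p_1} \neq 1$, the condition $D_{p_1} = \cdots = D_{p_n}$ is equivalent to the statement that $\Frob_{p_1},\dots,\Frob_{p_n}$ all span the line through $\Frob_{p_1} \neq 0$, i.e. that for each $i \in \{2,\dots,n\}$ there is a unique $c_i \in \bF_{l_0}^{\ast}$ with $\Frob_{p_1} = c_i\Frob_{p_i}$. Writing $j_i \in \{1,\dots,l_0-1\}$ for the representative of $c_i$ and reading this equality off in each factor, it says: for every $l \in \Sigma_{l_0}$ and every $i \in \{2,\dots,n\}$ one has $p_1 p_i^{-j_i} \in \bF_l^{\ast,l_0}$. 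I would then translate this membership into a splitting condition exactly as in the proof of Lemma \ref{incomm1} (resp. Lemma \ref{rel1}): for $l \in \Sigma_{l_0}$ we have $\mu_{l_0} \subset \bF_l^{\ast}$, and $\bQ(\mu_{l_0},\sqrt[l_0]{p_1 p_i^{-j_i}})$ is the Galois closure of $\bQ(\sqrt[l_0]{p_1 p_i^{-j_i}})/\bQ$ (note $p_1 p_i^{-j_i}$ is not an $l_0$-th power in $\bQ$, since its $p_1$-adic valuation is $1$), so the computation of $\cO_L \otimes_{\bZ}\bF_l$ gives $p_1 p_i^{-j_i} \in \bF_l^{\ast,l_0}$ if and only if $l$ splits completely in $\bQ(\mu_{l_0},\sqrt[l_0]{p_1 p_i^{-j_i}})/\bQ$. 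Finally, a prime splits completely in a compositum if and only if it splits completely in each subfield, so for fixed $l \in \Sigma_{l_0}$ the conditions "$l \in \cs(\bQ(\mu_{l_0},\sqrt[l_0]{p_1 p_i^{-j_i}})/\bQ)$ for all $i$" and "$l \in \cs(\bQ(\mu_{l_0},\sqrt[l_0]{p_1 p_2^{-j_2}},\dots,\sqrt[l_0]{p_1 p_n^{-j_n}})/\bQ)$" coincide. Chaining these equivalences and quantifying over $l \in \Sigma_{l_0}$ yields the claim; observe that the single tuple $(j_2,\dots,j_n)$ obtained from the $c_i$ works for all $l$ at once, matching the quantifier order in the statement.

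The cyclotomic and Kummer-theoretic inputs are already carried out in \S4, so the only point needing care is the bookkeeping between the multiplicative notation in $\prod_l \bF_l^{\ast}/\bF_l^{\ast,l_0}$ and the additive ($\bF_{l_0}$-linear) viewpoint: that proportionality of Frobenius elements with scalar $c_i$ corresponds to $p_1 p_i^{-j_i}$ being an $l_0$-th power modulo each $l \in \Sigma_{l_0}$ (writing $\Frob_{p_1} = c_i\Frob_{p_i}$ rather than $\Frob_{p_i} = c_i\Frob_{p_1}$ merely replaces $c_i$ by $c_i^{-1}$, so the two normalizations are interchangeable), together with the use of the primality of $l_0$ to identify nontrivial cyclic subgroups of $G$ with lines.
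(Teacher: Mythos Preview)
Your proof is correct and follows essentially the same approach as the paper. The paper's proof simply cites Lemma \ref{incomm1} and Lemma \ref{rel1} to obtain, for each $i$, that $D_{p_1} = D_{p_i}$ (under the hypothesis $D_{p_1}\neq 1$) is equivalent to the existence of $j_i\in\{1,\dots,l_0-1\}$ with $\Sigma_{l_0}\subset\cs(\bQ(\mu_{l_0},\sqrt[l_0]{p_1 p_i^{-j_i}})/\bQ)$, and then passes to the compositum; you unpack this same argument explicitly via the $\bF_{l_0}$-linear structure of $G(\bQ_{\Sigma,l_0}/\bQ)$, which is exactly the mechanism underlying those two lemmas.
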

\begin{proof}
By 
the assumption, Lemma \ref{incomm1} and Lemma \ref{rel1}, 
for each $i \in \{ 2,...,n \}$, 
$D_{p_1}(\bQ_{\Sigma,l_0}/\bQ) = D_{p_i}(\bQ_{\Sigma,l_0}/\bQ)$ 
if and only if 
there exists $j_i \in \{ 1,...,l_0 - 1 \}$ such that 
$\Sigma_{l_0} \subset \cs(\bQ(\mu_{l_0}, \sqrt[l_0]{p_1p_i^{-j_i}})/\bQ).$
The assertion follows from this.
\end{proof}

\begin{lem}\label{rel3}
Let $\Sigma \subset P_{\bQ,f}$ and $l_0 \in P_{\bQ,f}$.
For $p \in P_{\bQ,f}$, write $D_{p}$ for the decomposition group of $G(\bQ_{\Sigma,l_0}/\bQ)$ 
at $p$. 
Assume $\delta_{\sup}(\Sigma_{l_0}) > 0$.
Then there exists a finite subset $S \subset P_{\bQ,f} \setminus \Sigma_{l_0}$ (depending on $\Sigma$ and $l_0$) such that the following hold.
\begin{itemize}
\item[$(i)$] 
$D_{p}$ is nontrivial for $p \in P_{\bQ,f} \setminus (\Sigma_{l_0} \cup S)$.

\item[$(ii)$] 
For $p\in P_{\bQ,f} \setminus (\Sigma_{l_0} \cup S)$ and $q\in P_{\bQ,f} \setminus \Sigma_{l_0}$, 
$D_{p} = D_{q}$ if and only if $p=q$.

\end{itemize}
\end{lem}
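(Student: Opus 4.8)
The plan is to reduce the assertion to the finiteness of the ``bad set''
$$
B \defeq \{\, p \in P_{\bQ,f}\setminus\Sigma_{l_0} \mid D_{p} = 1,\ \text{or } D_{p} = D_{q} \text{ for some } q \in P_{\bQ,f}\setminus\Sigma_{l_0} \text{ with } q\neq p \,\},
$$
and then take $S \defeq B$. Once $B$ is known to be finite this gives the lemma: $S\subset P_{\bQ,f}\setminus\Sigma_{l_0}$ is finite, $(i)$ holds because no $p\notin\Sigma_{l_0}\cup S$ lies in $B$, and for $(ii)$, if $D_{p}=D_{q}$ with $p\notin\Sigma_{l_0}\cup S$, $q\in P_{\bQ,f}\setminus\Sigma_{l_0}$ and $p\neq q$, then $p\in B=S$ by definition, a contradiction; the reverse implication is trivial.

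To bound $B$, I would attach to each $p\in B$ an element $N_{p}\in\bQ^{\ast}$ with $v_{p}(N_{p})=1$ and $\Sigma_{l_0}\subset\cs(\bQ(\mu_{l_0},\sqrt[l_0]{N_{p}})/\bQ)$. If $D_{p}=1$, Lemma \ref{rel1} allows the choice $N_{p}\defeq p$. If instead $D_{p}\neq1$ and $D_{p}=D_{q}$ for some $q\neq p$ in $P_{\bQ,f}\setminus\Sigma_{l_0}$, then Lemma \ref{rel2} (with $n=2$) provides $i\in\{1,\dots,l_0-1\}$ with $\Sigma_{l_0}\subset\cs(\bQ(\mu_{l_0},\sqrt[l_0]{pq^{-i}})/\bQ)$, and I set $N_{p}\defeq pq^{-i}$, which satisfies $v_{p}(N_{p})=1$ since $q\neq p$. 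Let $W$ be the $\bF_{l_0}$-subspace of $\bQ^{\ast}/(\bQ^{\ast})^{l_0}$ spanned by the classes of the $N_{p}$, $p\in B$.

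The crucial step, and the place where the hypothesis $\delta_{\sup}(\Sigma_{l_0})>0$ is used, is to show that $W$ is finite-dimensional. If it were not, then for every $k\geq1$ one could pick $p_{1},\dots,p_{k}\in B$ whose classes $N_{p_{1}},\dots,N_{p_{k}}$ are linearly independent in $\bQ^{\ast}/(\bQ^{\ast})^{l_0}$; since $[\bQ(\mu_{l_0}):\bQ]=l_0-1$ is prime to $l_0$, the natural map $\bQ^{\ast}/(\bQ^{\ast})^{l_0}\to\bQ(\mu_{l_0})^{\ast}/(\bQ(\mu_{l_0})^{\ast})^{l_0}$ is injective, so by Kummer theory $[\bQ(\mu_{l_0},\sqrt[l_0]{N_{p_{1}}},\dots,\sqrt[l_0]{N_{p_{k}}}):\bQ]=l_0^{k}(l_0-1)$. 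As complete splitting in a compositum of Galois extensions of $\bQ$ is the intersection of the complete splittings of the factors, $\Sigma_{l_0}$ would be contained in $\cs(\bQ(\mu_{l_0},\sqrt[l_0]{N_{p_{1}}},\dots,\sqrt[l_0]{N_{p_{k}}})/\bQ)$, a set of Dirichlet density $l_0^{-k}(l_0-1)^{-1}$ by the Chebotarev density theorem; hence $\delta_{\sup}(\Sigma_{l_0})\leq l_0^{-k}(l_0-1)^{-1}$ for all $k$, forcing $\delta_{\sup}(\Sigma_{l_0})=0$, a contradiction.

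Granting that $W$ is finite-dimensional, I would finish as follows: choose representatives in $\bQ^{\ast}$ of a basis of $W$ and let $T$ be the finite set of prime numbers dividing their numerators and denominators; then every class in $W$ has valuation divisible by $l_0$ at each prime number outside $T$, so $v_{p}(N_{p})=1$ forces $p\in T$ for every $p\in B$, whence $B\subset T$ is finite. The only genuine difficulty I anticipate is the finite-dimensionality of $W$; the reductions via Lemmas \ref{rel1} and \ref{rel2}, the injectivity of the Kummer map $\bQ^{\ast}/(\bQ^{\ast})^{l_0}\hookrightarrow\bQ(\mu_{l_0})^{\ast}/(\bQ(\mu_{l_0})^{\ast})^{l_0}$, and the density computations should all be routine.
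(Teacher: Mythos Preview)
Your proof is correct and follows essentially the same approach as the paper: both arguments use Lemmas \ref{rel1} and \ref{rel2} to attach Kummer elements to the bad primes, then invoke Chebotarev to bound the density of $\Sigma_{l_0}$ by the reciprocal of a degree growing like $l_0^{k}$, contradicting $\delta_{\sup}(\Sigma_{l_0})>0$. The paper treats the two bad sets separately (first $S'=\{p\mid D_p=1\}$, then $S''=\{p\mid \#[p]\geq 2\}$), whereas you package both into a single $\bF_{l_0}$-space $W\subset\bQ^{\ast}/(\bQ^{\ast})^{l_0}$ and read off the finiteness of $B$ from $\dim W<\infty$ via the valuation condition $v_p(N_p)=1$; this is a slightly more streamlined formulation of the same idea.
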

\begin{proof}
Write 
$S' \defeq \{ p\in P_{\bQ,f} \setminus \Sigma_{l_0} \mid \text{$D_{p}$ is trivial} \}$.
Let $p_1,..., p_n$ be distinct prime numbers in $S'$.
Then, by Lemma \ref{rel1}, we have 
$$\Sigma_{l_0} \subset \cap_{i \in \{ 1,...,n \}} \cs(\bQ(\mu_{l_0}, \sqrt[l_0]{p_i})/\bQ) = \cs(\bQ(\mu_{l_0}, \sqrt[l_0]{p_1},..., \sqrt[l_0]{p_n})/\bQ).$$
Therefore, by the Chebotarev density theorem, 
\begin{equation*}
\begin{split} 
\delta_{\sup}(\Sigma_{l_0}) 
&\leq \delta(\cs(\bQ(\mu_{l_0}, \sqrt[l_0]{p_1},..., \sqrt[l_0]{p_n})/\bQ)) = \frac{1}{(l_0 -1) {l_0}^{n}}.
\end{split}
\end{equation*}
Thus, by the assumption, we obtain $\# S' < \infty$.
For $p,q \in P_{\bQ,f} \setminus (\Sigma_{l_0} \cup S')$, 
we say that $p$ and $q$ are equivalent 
if and only if 
$D_{p} = D_{q}$, 
and write $[p]$ for the equivalence class of $p$.
Write $S'' \defeq \{ p\in P_{\bQ,f} \setminus (\Sigma_{l_0} \cup S') \mid \# [p] \geq 2 \}$.
By Lemma \ref{rel2} and a similar argument as above, 
we obtain 
$\# S'' < \infty$.
Setting $S \defeq S' \cup S''$, $(i)$ and $(ii)$ hold.
\end{proof}

\begin{prop}\label{relloccor}
We use the notations in Theorem \ref{3.5}.
Let $\Sigma' \subset P_{\bQ,f}$ and $l_0 \in \Sigma$.
Assume that $\delta_{\sup}(\Sigma'_{l_0}) > 0$ and that 
there exists $a \in \bF_{l_0}^{\ast}$ such that the following diagram commutes: 
\begin{equation*}
\xymatrix{
G_{K_1}^{\cC_1}\ar[rr]^-{\simeq}_{\sigma} \ar[d]&&G_{K_2}^{\cC_2}\ar[d]\\
G(\bQ_{\Sigma',l_0}/\bQ)\ar[rr]^-{\simeq}_{(\ )^{a}}&&G(\bQ_{\Sigma',l_0}/\bQ)
}
\end{equation*}
where the vertical arrows are the canonical homomorphisms and the bottom horizontal arrow is the automorphism of $G(\bQ_{\Sigma',l_0}/\bQ)$ obtained by taking $a$-th powers.
Then there 
exist finite subsets $T_1 \subset P_{K_1,f}^{\cC_1}$, $T_2 \subset P_{K_2,f}^{\cC_2}$ 
such that the following hold.
\begin{itemize}
\item[$(i)$] 
For $i=1,2$, write 
\begin{equation*}
\begin{split}
A_i \defeq 
\left\{ \p\in P_{K_i,f}^{\cC_i} \left|
\begin{array}{l}
\text{the image of $D_{\p}(K_i^{\cC_i}/K_i)$ under the canonical homomorphism}\\
\text{$G_{K_i}^{\cC_i} \to G(\bQ_{\Sigma',l_0}/\bQ)$ is nontrivial}\\
\end{array}
\right.\right\}.
\end{split}
\end{equation*}
Then 
$\phi|_{(A_1 \setminus 
T_1
)(K_1^{\cC_1})}$ induces a local correspondence between $(A_1 \setminus 
T_1
)(L_1)$ and $(A_2 \setminus 
T_2
)(L_2)$ for $\overline{\sigma|_{U_1}}$, 
satisfying condition $(\Char)$.

\item[$(ii)$] For $i=1,2$, $\underline{A_i \setminus 
T_i
} \cap \cs(K_i/\bQ) = \underline{P_{K_i,f}^{\cC_i} \setminus T_i}  \cap \cs(K_i/\bQ).$

\end{itemize}
\end{prop}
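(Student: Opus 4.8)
The plan is first to prove the statement with $U_i=G_{K_i}^{\cC_i}$ (so $L_i=K_i$, $M_i=K_i^{\cC_i}$, $\overline{\sigma|_{U_1}}=\sigma$), and then obtain $(i)$ in general by Lemma~\ref{3.3} and Lemma~\ref{3.4}~$(iii)$ (the hypothesis of Lemma~\ref{3.4} holding by Proposition~\ref{1.4}), both $(\Char)$ and bijectivity of local correspondences being preserved; part $(ii)$ is already at the level of $K_i$. Let $\overline{\phi}\colon P_{K_1,f}^{\cC_1}\isom P_{K_2,f}^{\cC_2}$ be the bijection on primes of the number fields induced by $\phi$, let $c_i\colon G_{K_i}^{\cC_i}\to G(\bQ_{\Sigma',l_0}/\bQ)$ be the canonical homomorphism (defined since $l_0\in\Sigma$ makes $\bQ_{\Sigma',l_0}K_i/K_i$ pro-$\cC_i$), and set $H_i\defeq\Image(c_i)=G(\bQ_{\Sigma',l_0}/N_i)$ with $N_i\defeq K_i\cap\bQ_{\Sigma',l_0}$. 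The key algebraic point is that $G(\bQ_{\Sigma',l_0}/\bQ)=\prod_{l\in\Sigma'_{l_0}}\bF_l^{\ast}/\bF_l^{\ast,l_0}$ is annihilated by $l_0$, so, as $a\in\bF_{l_0}^{\ast}$, the automorphism $(\ )^{a}$ maps every subgroup to itself. Hence the commuting diagram forces $H_1=H_2=:H$ and, for $\overline{\p}_1\in P_{K_1,f}^{\cC_1}(K_1^{\cC_1})$ with $p\defeq\overline{\p}_1|_{\bQ}$ and $q\defeq\phi(\overline{\p}_1)|_{\bQ}$,
$$c_1\bigl(D_{\overline{\p}_1}(K_1^{\cC_1}/K_1)\bigr)=D_p(\bQ_{\Sigma',l_0}/\bQ)\cap H=D_q(\bQ_{\Sigma',l_0}/\bQ)\cap H=c_2\bigl(D_{\phi(\overline{\p}_1)}(K_2^{\cC_2}/K_2)\bigr),$$
the outer equalities coming from the facts that restriction of automorphisms sends decomposition groups to decomposition groups and that $G(\bQ_{\Sigma',l_0}/\bQ)$ is abelian. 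In particular whether a prime lies in $A_i$ depends only on its restriction to $\bQ$, and $\overline{\phi}(A_1)=A_2$.

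Next I fix a finite exceptional set. Apply Lemma~\ref{rel3} with $\Sigma'$ in place of $\Sigma$ to get the finite set $S\subset P_{\bQ,f}\setminus\Sigma'_{l_0}$; since $H$ is closed of finite index it contains $\prod_{l\in\Sigma'_{l_0}\setminus\Sigma''}\bF_l^{\ast}/\bF_l^{\ast,l_0}$ for some finite $\Sigma''\subset\Sigma'_{l_0}$; and put
$$\tilde S\defeq\Bigl\{\,r\in P_{\bQ,f}\ \Big|\ \#\bigl(\Sigma'_{l_0}\setminus\cs(\bQ(\mu_{l_0},\sqrt[l_0]{r})/\bQ)\bigr)\leq 2\,\Bigr\},$$
which is finite by the Chebotarev argument in the proof of Lemma~\ref{rel3}: for distinct $r_1,\dots,r_n\in\tilde S$, $\Sigma'_{l_0}$ up to a set of size $\leq 2n$ lies in $\cs(\bQ(\mu_{l_0},\sqrt[l_0]{r_1},\dots,\sqrt[l_0]{r_n})/\bQ)$, of density $\tfrac{1}{(l_0-1)l_0^{n}}$, forcing $\delta_{\sup}(\Sigma'_{l_0})=0$ unless $n$ is bounded. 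Let $E\defeq S\cup\Sigma''\cup\tilde S\cup\{l_0\}$ and
$$T_1\defeq\{\,\p\in P_{K_1,f}^{\cC_1}\mid\p|_{\bQ}\in E\,\}\cup\overline{\phi}^{-1}\bigl(\{\,\p\in P_{K_2,f}^{\cC_2}\mid\p|_{\bQ}\in E\,\}\bigr),\qquad T_2\defeq\overline{\phi}(T_1);$$
these are finite since a number field has finitely many primes over a finite set of rational primes.

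For $(i)$: as $\overline{\phi}(A_1)=A_2$ and $\overline{\phi}(T_1)=T_2$, $\overline{\phi}$ restricts to a bijection $A_1\setminus T_1\isom A_2\setminus T_2$, so it suffices to check $(\Char)$, i.e.\ $p\defeq\p_1|_{\bQ}=q\defeq\overline{\phi}(\p_1)|_{\bQ}$ for $\p_1\in A_1\setminus T_1$, where $p,q\notin E$ by construction. Suppose $p\neq q$. Using $D_p\cap H=D_q\cap H\neq 1$ I argue by cases on the positions of $p,q$ relative to $S$, $\Sigma'_{l_0}$, $\Sigma''$, invoking: Lemma~\ref{rel3}~$(i),(ii)$ (a prime outside $\Sigma'_{l_0}\cup S$ has decomposition group of order $l_0$, and two such with equal decomposition group coincide); the fact that for $l\notin\Sigma''$ the full $l$-th component of $G(\bQ_{\Sigma',l_0}/\bQ)$ — which for $l\in\Sigma'_{l_0}$ is the inertia subgroup $I_l$ — lies in $H$; and the translation ``$r$ is an $l_0$-th power mod $l$'' $\iff$ ``$l\in\cs(\bQ(\mu_{l_0},\sqrt[l_0]{r})/\bQ)$'' (as in the proof of Lemma~\ref{incomm1}). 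For instance, if $p\notin\Sigma'_{l_0}\cup S$ and $q\in\Sigma'_{l_0}\setminus\Sigma''$, then $D_p$ has order $l_0$ (Lemma~\ref{rel3}~$(i)$) and equals $D_p\cap H$ (as $\p_1\in A_1$), so $I_q\subseteq D_q\cap H=D_p\cap H=D_p$ forces $I_q=D_p$, whence $\Frob_p$ generates the $q$-th component; thus $p$ is an $l_0$-th power mod $l$ for all $l\in\Sigma'_{l_0}\setminus\{q\}$, i.e.\ $p\in\tilde S\subseteq E$, a contradiction. The remaining cases are analogous and each yields $p\in E$ or $q\in E$, so $p=q$. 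For $(ii)$: one inclusion is trivial since $A_i\subseteq P_{K_i,f}^{\cC_i}$; conversely let $p\in\cs(K_i/\bQ)$ with $P_{K_i,p}\subseteq P_{K_i,f}^{\cC_i}\setminus T_i$, so $p\notin E$. As $N_i\subseteq K_i$, $p$ splits completely in $N_i$; hence if $p\notin\Sigma'_{l_0}$ then $\Frob_p\in H$ and $D_p=\langle\Frob_p\rangle\subseteq H$, so $D_p\cap H=D_p\neq 1$ by Lemma~\ref{rel3}~$(i)$, while if $p\in\Sigma'_{l_0}$ then $p\notin\Sigma''$ gives $I_p\subseteq H$, so $D_p\cap H\supseteq I_p\neq 1$. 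Either way $D_p\cap H\neq 1$, so every prime of $K_i$ over $p$ lies in $A_i$, and, avoiding $T_i$, in $A_i\setminus T_i$; thus $p\in\underline{A_i\setminus T_i}$.

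The hard part is the case analysis in $(i)$: its delicacy is that $A_i$ contains infinitely many primes lying over the infinite set $\Sigma'_{l_0}$ of primes ramified in $\bQ_{\Sigma',l_0}/\bQ$, so these ``ramified'' cases cannot be ignored, and keeping them under control is exactly what forces the auxiliary set $\tilde S$ and the density estimate above.
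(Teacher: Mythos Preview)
Your argument is correct, but it follows a genuinely different route from the paper's for the crucial ``ramified'' case of primes lying over $\Sigma'_{l_0}$.

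The paper separates the primes into two regimes. For primes $\p$ over $\Sigma'_{l_0}$ it \emph{recovers the inertia subgroup group-theoretically}: since $\p\nmid l_0$ one has $I_\p(K_i^{(l_0)}/K_i)=\Ker\bigl(D_\p(K_i^{(l_0)}/K_i)\twoheadrightarrow D_\p(K_i^{(l_0)}/K_i)^{\ab,/\tor}\bigr)$, and then defines $B_i$ to be those $\p$ whose inertia image in $G(\bQ_{\Sigma',l_0}/\bQ)$ is nontrivial. For $\p\in B_i$ the inertia image is exactly the $p$-th factor of the product $\prod_{l\in\Sigma'_{l_0}}\bF_l^{\ast}/\bF_l^{\ast,l_0}$, and since $(\ )^{a}$ preserves each factor this immediately yields $(\Char)$ on $B_i$. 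The finitely many primes over $\Sigma'_{l_0}$ not in $B_i$ are ramified in $K_i/\bQ$ and go into $T'_i$. For primes not over $\Sigma'_{l_0}$ the paper then appeals directly to Lemma~\ref{rel3}, exactly as you do, and sets $T_i=T'_i\cup(A_i\cap S(K_i))$.

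Your approach instead avoids the inertia recovery entirely: you work throughout with the single relation $D_p\cap H=D_q\cap H$ and absorb the ramified-case difficulties into the auxiliary finite set $\tilde S$, whose finiteness you establish by an independent Chebotarev estimate. What this buys you is a more elementary argument that never invokes local class field theory to identify $I_\p$; what it costs is the extra set $\tilde S$ and a longer case analysis (your remark that ``the remaining cases are analogous'' hides the both-in-$\Sigma'_{l_0}$ case, which does go through but requires writing out how $I_p\subset D_q$ forces $q$ to be an $l_0$-th power modulo all but at most two $l\in\Sigma'_{l_0}$). The paper's use of inertia is cleaner for primes over $\Sigma'_{l_0}$ and makes $(\Char)$ there almost tautological, while your route is self-contained and handles the possibly proper inclusion $H\subsetneq G(\bQ_{\Sigma',l_0}/\bQ)$ more explicitly via $\Sigma''$.
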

\begin{proof}
By Proposition \ref{1.2}, Corollary \ref{2.10} and Lemma \ref{2.11}, 
$\Dec(K_i^{\cC_i}/K_i, A_i(K_i^{\cC_i}))$ and 
$\Dec(K_i^{\cC_i}/K_i, (A_i \setminus P_{K_i,l_0})(K_i^{\cC_i}))$ can be recovered group-theoretically from $G_{K_i}^{\cC_i} \to G(\bQ_{\Sigma',l_0}/\bQ)$ for $i=1,2$.
For $\p\in A_i \setminus P_{K_i,l_0}$, 
by Proposition \ref{1.2} and local class field theory, 
we have 
$I_\p(K_i^{(l_0)}/K_i) = \Ker(D_\p(K_i^{(l_0)}/K_i) \twoheadrightarrow D_\p(K_i^{(l_0)}/K_i)^{\ab,/\tor})$, which 
can be recovered group-theoretically.
For $i=1,2$, write 
\begin{equation*}
\begin{split}
B_i \defeq 
\left\{ \p\in A_i \setminus P_{K_i,l_0}
\left|
\begin{array}{l}
\text{the image of $I_\p(K_i^{(l_0)}/K_i)$ 
under 
$G_{K_i}^{(l_0)} \to G(\bQ_{\Sigma',l_0}/\bQ)$ is nontrivial}\\
\end{array}
\right.\right\}.
\end{split}
\end{equation*}
Then 
$\Dec(K_i^{\cC_i}/K_i, (B_i)(K_i^{\cC_i}))$ can be recovered group-theoretically from $G_{K_i}^{\cC_i} \to G(\bQ_{\Sigma',l_0}/\bQ)$, and
$B_i \subset \Sigma'_{l_0}(K_i)$.
For $\p\in B_i$ 
and $p \in \Sigma'_{l_0}$, 
$\p$ is above $p$ 
if and only if 
the image of $I_\p(K_i^{(l_0)}/K_i)$ under 
$G_{K_i}^{(l_0)} \to G(\bQ_{\Sigma',l_0}/\bQ) 
= \prod_{l \in \Sigma'_{l_0}} G(\bQ(\mu_{l})/\bQ)/G(\bQ(\mu_{l})/\bQ)^{l_0}$ coincides with 
the 
$p$-component 
$$G(\bQ(\mu_{p})/\bQ)/G(\bQ(\mu_{p})/\bQ)^{l_0} \subset \prod_{l \in \Sigma'_{l_0}} G(\bQ(\mu_{l})/\bQ)/G(\bQ(\mu_{l})/\bQ)^{l_0}.$$
Since $(\ )^{a} \colon G(\bQ_{\Sigma',l_0}/\bQ) \isom
G(\bQ_{\Sigma',l_0}/\bQ)$
preserves the $p$-component, 
$\phi|_{(B_1)(K_1^{\cC_1})}$
is a local correspondence between 
$B_1$ and $B_2$ for $\sigma$, 
satisfying condition $(\Char)$.
By the Galois equivariance (cf. Theorem \ref{3.5}), 
$\phi$ induces a bijection $\overline{\phi} \colon P_{K_1,f}^{\cC_1} \isom P_{K_2,f}^{\cC_2}$.
Let 
$$T'_1 \defeq (P_{K_1,f}^{\cC_1} \cap(\Sigma'_{l_0}(K_1) \setminus B_1)) \cup \overline{\phi}^{-1}(P_{K_2,f}^{\cC_2}\cap (\Sigma'_{l_0}(K_2) \setminus B_2)) \text{ and } T'_2 \defeq \overline{\phi}(T_1).$$
Since $\Sigma'_{l_0}(K_i) \setminus B_i \subset \Ram(K_i/\bQ)(K_i)$ for $i=1,2$,  
$T'_1$ and $T'_2$ are finite.
Let $S \subset P_{\bQ,f} \setminus \Sigma'_{l_0}$ be 
as 
in Lemma \ref{rel3}.
Then, by Lemma \ref{rel3}, 
for $\p \in A_i \setminus (B_i \cup T'_i
)$ 
and $p \in P_{\bQ,f} \setminus (\Sigma'_{l_0} \cup S)$, 
$\p$ is above $p$ 
if and only if 
the image of $D_{\p}(K_i^{\cC_i}/K_i)$ 
under 
$G_{K_i}^{\cC_i} \to G(\bQ_{\Sigma',l_0}/\bQ)$ coincides with 
$D_{p}(\bQ_{\Sigma',l_0}/\bQ)$.
Since $(\ )^{a} \colon G(\bQ_{\Sigma',l_0}/\bQ) \isom
G(\bQ_{\Sigma',l_0}/\bQ)$
preserves $D_{p}(\bQ_{\Sigma',l_0}/\bQ)$ for $p \in P_{\bQ,f}$, 
$\phi|_{(A_1 \setminus (B_1 \cup T'_1 \cup S(K_1)))(K_1^{\cC_1})}$
is a local correspondence between 
$A_1 \setminus (B_1 \cup T'_1 \cup S(K_1))$ and $A_2 \setminus (B_2 \cup T'_2 \cup S(K_2))$ for $\sigma$, 
satisfying condition $(\Char)$.
Thus, 
$\phi|_{(A_1 \setminus (T'_1 \cup S(K_1)))(K_1^{\cC_1})}$ is a local correspondence between $A_1 \setminus (T'_1 \cup S(K_1))$ and $A_2 \setminus (T'_2 \cup S(K_2))$ for $\sigma$, 
satisfying condition $(\Char)$.
Setting $T_i \defeq T'_i \cup (A_i \cap S(K_i))$ for $i=1,2$, 
$(i)$ follows from this, together with Lemma \ref{3.3} $(ii)$, Proposition \ref{1.4} and Lemma \ref{3.4} $(iii)$.
$(ii)$ follows from 
the fact that for $\p \in (P_{K_i,f}^{\cC_i} \setminus T_i)  \cap \cs(K_i/\bQ)(K_i)$, the image of $D_{\p}(K_i^{\cC_i}/K_i)$ under $G_{K_i}^{\cC_i} \to G(\bQ_{\Sigma',l_0}/\bQ)$ is nontrivial.
\end{proof}

\section{Isomorphisms of fields}

In this section, we 
develop two ways 
to show that 
isomorphisms of Galois groups of number fields are induced by field isomorphisms. 
The first is Proposition \ref{U4}, which is essentially based on 
\cite{Uchida2} and will be used in the proof of Theorem \ref{6.1}.
The second is Proposition \ref{4.7}, which is an abstraction of 
\cite[Proposition 2.1]{Shimizu2} 
and will be used in the proof of Theorem \ref{charthm}.

In the rest of this 
paper, 
fix an algebraic closure $\overline{\bQ}$ of $\bQ$, 
and 
suppose that 
all number fields and all algebraic extensions of them 
are subfields of $\overline{\bQ}$.
Write $\widetilde{K}$ for the Galois closure of $K/\bQ$.
Let $K_i$ be a number field for $i=1,2$.

\begin{prop}\label{U1}
Let $\cC_i$ be a nontrivial full class of finite groups for $i=1,2$, $\sigma :G_{K_1}^{\cC_1}\isom G_{K_2}^{\cC_2}$ an isomorphism, and 
$\tau \in \Iso(K_2^{\cC_2}/K_2, K_1^{\cC_1}/K_1)$. Assume that 
$\sigma(H_1) = \tau^{-1} H_1 \tau$ for any open subgroup $H_1$ of $G_{K_1}^{\cC_1}$.
Then 
$\sigma (g_1) =  \tau^{-1} g_1 \tau$ for any $g_1 \in G_{K_1}^{\cC_1}$.
\end{prop}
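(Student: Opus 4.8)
The statement says: if $\tau$ conjugates every open subgroup $H_1$ of $G_{K_1}^{\cC_1}$ to $\sigma(H_1)$, then $\tau$ realizes $\sigma$ on elements, i.e.\ $\sigma(g_1) = \tau^{-1} g_1 \tau$ for all $g_1$. The plan is to fix $g_1 \in G_{K_1}^{\cC_1}$ and show that $\sigma(g_1)$ and $\tau^{-1} g_1 \tau$ must coincide, by exploiting that the two elements generate the ``same'' procyclic subgroups and then pinning down the ambiguity using decomposition groups and Proposition \ref{1.5} (or Corollary \ref{1.6}).

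\medskip
\noindent\emph{Step 1: Reduce to an element-level comparison of an automorphism of $G_{K_1}^{\cC_1}$.} Consider the automorphism $\alpha \defeq \sigma^{-1} \circ (\text{conjugation by } \tau^{-1})$ of $G_{K_1}^{\cC_1}$; more precisely, $\tau^{-1}(-)\tau$ maps $G_{K_1}^{\cC_1}$ to $\Aut(K_1^{\cC_1})$, but by hypothesis its composite with the identification via $\tau$ lands back in $G_{K_1}^{\cC_1}$ and agrees with $\sigma$ on open subgroups. So define $\alpha\colon G_{K_1}^{\cC_1} \to G_{K_1}^{\cC_1}$ by $\alpha(g_1) = \sigma^{-1}(\tau^{-1} g_1 \tau)$. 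By hypothesis, $\alpha(H_1) = H_1$ for every open subgroup $H_1$; the goal is to show $\alpha = \id$. Note $\alpha$ is an automorphism of $G_{K_1}^{\cC_1}$ stabilizing every open subgroup, hence every closed subgroup (a closed subgroup is the intersection of the open subgroups containing it).

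\medskip
\noindent\emph{Step 2: Pin down $\alpha$ on procyclic subgroups using decomposition groups.} For $g_1 \in G_{K_1}^{\cC_1}$, the closed subgroup $\overline{\langle g_1\rangle}$ is stabilized by $\alpha$, so $\alpha(g_1) = g_1^{a}$ for some $a \in \widehat{\bZ}^{\times}$ (or the relevant pro-$\cC$ unit group) depending a priori on $g_1$. To show $a = 1$, I would use the decomposition groups recovered in \S 2 together with Lemma \ref{1.3}, \ref{1.4}: $\alpha$ permutes the decomposition groups $D_{\overline{\p}}(K_1^{\cC_1}/K_1)$ (since $\sigma$ and $\tau$ both carry decomposition groups to decomposition groups — $\tau$ does by transport of structure, $\sigma$ via Corollary \ref{2.10} and Theorem \ref{3.5}), and in fact fixes each one as a set (by the open-subgroup-stabilizing property applied to the finite-level data, plus Proposition \ref{1.4}). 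Identifying $D_{\overline{\p}}$ with $G_{K_{1,\p}}^{\cC_1}$ via Proposition \ref{1.2} and using the structure of these local groups (e.g.\ that a Frobenius-type or inertia element cannot be sent to a nontrivial power without changing a finite-level quotient, since the residue degrees / local cyclotomic characters are rigid — cf.\ Lemma \ref{cyc1}, Proposition \ref{3.6}), one forces $a = 1$ for a Zariski-dense set of $g_1$ (those lying in some decomposition group), hence for a topologically generating set of $G_{K_1}^{\cC_1}$ by Lemma \ref{cyc2}.

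\medskip
\noindent\emph{Step 3: Conclude.} Once $\alpha$ is trivial on a set of topological generators it is the identity (it is a continuous homomorphism), giving $\sigma(g_1) = \tau^{-1} g_1 \tau$ for all $g_1$. Alternatively, and perhaps more cleanly: having shown in Step 2 that the automorphism $\beta$ of $K_1^{\cC_1}$ given by $\tau \circ (\text{the }\sigma\text{-translate of }\alpha) \circ \tau^{-1}$ (a $\bQ$-automorphism of $K_1^{\cC_1}$ fixing $K_1$) induces the trivial automorphism on $G_{K_1}^{\cC_1}$, apply Proposition \ref{1.5} directly to deduce $\beta = \id$, which unwinds exactly to $\sigma(g_1) = \tau^{-1} g_1 \tau$.

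\medskip
\noindent\textbf{Main obstacle.} The crux is Step 2: upgrading ``$\alpha$ stabilizes every open (hence closed) subgroup'' to ``$\alpha$ acts trivially on enough elements.'' Stabilizing $\overline{\langle g_1\rangle}$ only gives $\alpha(g_1) = g_1^{a(g_1)}$ with an a priori nonconstant exponent, and ruling out $a \ne 1$ is precisely where one must feed in arithmetic rigidity — the incompatibility of a nontrivial power-map with the Frobenius/inertia structure of the recovered local decomposition groups and with the recovered cyclotomic character (Proposition \ref{cyc3}). I expect the argument to route through the fact that $\tau$, being a genuine field automorphism, respects \emph{all} the arithmetic data tautologically, so $\alpha = \sigma^{-1}\circ(\tau\text{-conjugation})$ must respect the \emph{recovered} arithmetic data, and then Proposition \ref{1.4}/\ref{1.5} close the gap. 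Handling the prime $l = 2$ (where $\ltilde = 4$) and the case $\Sigma(\cC)$ is sparse may require the more delicate characterizations of \S\S 2–4 rather than just the cyclotomic character.
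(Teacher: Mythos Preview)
Your overall strategy is right and you have identified the key ingredient (Proposition~\ref{3.6}, i.e.\ the group-theoretic recovery of the cyclotomic character), but your routing through decomposition groups is an unnecessary detour and leaves Step~2 underdetermined. The paper's argument is much more direct. It observes that by Proposition~\ref{3.6} the restriction maps $G_{K_i}^{\cC_i}\to G(L/\bQ)$, where $L$ is the unique $\hat{\bZ}^{\Sigma}$-extension of $\bQ$, fit into a commutative triangle with $\sigma$; and $\tau$-conjugation also commutes with restriction to $G(L/\bQ)$ simply because $G(L/\bQ)$ is abelian. Now pick any $g_1$ whose procyclic closure $\overline{\langle g_1\rangle}\simeq\hat{\bZ}^{\Sigma}$ injects into $G(L/\bQ)$; such elements topologically generate $G_{K_1}^{\cC_1}$. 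Since $\sigma$ and $\tau$-conjugation agree on closed subgroups, $\sigma(g_1)=(\tau^{-1}g_1\tau)^n$ for some $n\in\hat{\bZ}^{\Sigma}$, and restricting to $L$ gives $g_1|_L=(g_1|_L)^n$, hence $n=1$ by injectivity. No local analysis is needed, and there is no special difficulty at $l=2$ or for sparse $\Sigma$.

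By contrast, your Step~2 tries to show $\alpha=\id$ on each $D_{\overline{\p}}$, but an automorphism of $D_{\overline{\p}}$ fixing all closed subgroups need not be trivial (think of scalar automorphisms of $\bZ_l$ or $\bZ_l^2$); you correctly sense that the cyclotomic character must enter, but then inertia elements at $\p$ above $p\notin\Sigma$ map trivially to $G(L/\bQ)$, so the character alone does not pin down $\alpha$ on all of $D_{\overline{\p}}$. You would end up needing exactly the elements the paper uses (those injecting into $G(L/\bQ)$), at which point Lemma~\ref{cyc2} and the decomposition groups are no longer doing any work. Your alternative in Step~3 via Proposition~\ref{1.5} is circular as stated: applying Proposition~\ref{1.5} requires already knowing that conjugation by the relevant field automorphism is trivial on $G_{K_1}^{\cC_1}$, which is the conclusion.
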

\begin{proof}
Write $\Sigma \defeq \Sigma(\cC_1) (= \Sigma(\cC_2)$ by Lemma \ref{2.0}).
Write $L$ for the unique $\hat{\bZ}^{\Sigma}$ extension of $\bQ$, which coincides with the composite of the cyclotomic $\bZ_l$-extensions for all $l\in \Sigma$.
By Proposition \ref{3.6}, 
the following diagram commutes:
\begin{equation*}
\xymatrix{
G_{K_1}^{\cC_1}\ar[rr]^-{\simeq}_{\sigma} \ar[rd]_-{}&&G_{K_2}^{\cC_2}\ar[dl]^-{}\\
&G(L/\bQ),
}
\end{equation*}
where the diagonal arrows are the restrictions.
Take any $g_1 \in G_{K_1}^{\cC_1}$ such that  
the closed subgroup $\overline{\langle g_1\rangle}$ of $G_{K_1}^{\cC_1}$ topologically generated by $g_1$ is isomorphic to $\hat{\bZ}^{\Sigma}$ and 
maps injectively to $G(L/\bQ)$.
Note that $G_{K_1}^{\cC_1}$ is topologically generated by such  elements.
By the assumption, $\sigma(H_1) = \tau^{-1} H_1 \tau$ for any closed subgroup $H_1$ of $G_{K_1}^{\cC_1}$.
Then 
$$\overline{\langle \sigma (g_1)\rangle} 
= \sigma (\overline{\langle g_1\rangle}) 
= \tau^{-1} \overline{\langle g_1\rangle} \tau = \overline{\langle \tau^{-1} g_1\tau\rangle},$$
and hence 
there exists $n \in \hat{\bZ}^{\Sigma}$
such that $\sigma (g_1) = (\tau^{-1} g_1\tau)^n$.
Thus, we have 
$$g_1|_{L} = \sigma (g_1)|_{L} = (\tau^{-1} g_1\tau)^n|_{L} =(g_1|_{L})^n,$$
so that $n=1$.
\end{proof}

\begin{lem}\label{U3}
Assume that 
$\delta_{\inf}(\cs(K_1/\bQ) \setminus \cs(K_2/\bQ)) = 0$.
Then $K_2 \subset \widetilde{K_1}$.
\end{lem}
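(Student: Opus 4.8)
The plan is to reduce the statement to the Galois closures $\widetilde{K_1},\widetilde{K_2}$ of $K_1,K_2$ over $\bQ$ and then apply the Chebotarev density theorem inside $\Gal(\widetilde{K_1}\widetilde{K_2}/\bQ)$.

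First I would use the standard fact that a prime number $p$ splits completely in a number field $F/\bQ$ if and only if it splits completely in the Galois closure $\widetilde{F}/\bQ$: writing $G=\Gal(\widetilde{F}/\bQ)$ and $H=\Gal(\widetilde{F}/F)$, for $p$ unramified this amounts to $\Frob_p\in\bigcap_{g\in G}gHg^{-1}=1$. Hence $\cs(K_i/\bQ)=\cs(\widetilde{K_i}/\bQ)$ for $i=1,2$. Similarly a prime splits completely in a compositum of Galois extensions of $\bQ$ if and only if it splits completely in each factor, so $\cs(K_1/\bQ)\cap\cs(K_2/\bQ)=\cs(\widetilde{K_1}/\bQ)\cap\cs(\widetilde{K_2}/\bQ)=\cs(\widetilde{K_1}\widetilde{K_2}/\bQ)$. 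Combining these,
\[
\cs(K_1/\bQ)\setminus\cs(K_2/\bQ)=\cs(\widetilde{K_1}/\bQ)\setminus\cs(\widetilde{K_1}\widetilde{K_2}/\bQ).
\]

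Next, set $G\defeq\Gal(\widetilde{K_1}\widetilde{K_2}/\bQ)$ and let $H_1\defeq\Gal(\widetilde{K_1}\widetilde{K_2}/\widetilde{K_1})$, which is a normal subgroup of $G$ since $\widetilde{K_1}/\bQ$ is Galois. For all but the finitely many primes $p$ ramified in $\widetilde{K_1}\widetilde{K_2}/\bQ$, one has $p\in\cs(\widetilde{K_1}/\bQ)$ iff $\Frob_p\in H_1$, and $p\in\cs(\widetilde{K_1}\widetilde{K_2}/\bQ)$ iff $\Frob_p=1$. Thus, up to a finite set, the set displayed above equals $\{\,p : \Frob_p\in H_1,\ \Frob_p\neq1\,\}$, which by the Chebotarev density theorem has Dirichlet density $(\#H_1-1)/\#G$. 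In particular this density \emph{exists}, so the hypothesis $\delta_{\inf}(\cs(K_1/\bQ)\setminus\cs(K_2/\bQ))=0$ forces $\#H_1=1$, i.e.\ $\widetilde{K_1}\widetilde{K_2}=\widetilde{K_1}$, i.e.\ $\widetilde{K_2}\subset\widetilde{K_1}$. Since $K_2\subset\widetilde{K_2}$, this yields $K_2\subset\widetilde{K_1}$, as desired.

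There is no serious obstacle here; the only points that need a little care are the passage between a number field and its Galois closure for the ``splits completely'' condition (and the analogous statement for composita), and the observation that the relevant difference set is a Frobenius set, so that its Dirichlet density genuinely exists and the hypothesis on $\delta_{\inf}$ alone suffices to make it vanish.
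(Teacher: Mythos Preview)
Your proof is correct and follows essentially the same route as the paper's: both reduce to the Galois closures via $\cs(K_1/\bQ)\setminus\cs(K_2/\bQ)=\cs(\widetilde{K_1}/\bQ)\setminus\cs(\widetilde{K_1}\widetilde{K_2}/\bQ)$ and then apply Chebotarev to see that this set has density $(\#H_1-1)/\#G=\frac{1}{[\widetilde{K_1}:\bQ]}-\frac{1}{[\widetilde{K_1}\widetilde{K_2}:\bQ]}$, which vanishes only when $\widetilde{K_2}\subset\widetilde{K_1}$. The paper phrases the last step as a contrapositive and invokes an auxiliary lemma for the density of a difference, whereas you compute the density directly as a Frobenius count; the content is the same.
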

\begin{proof}
Since 
$$\cs(K_1/\bQ) \setminus \cs(K_2/\bQ)
= \cs(K_1/\bQ) \setminus (\cs(K_1/\bQ) \cap \cs(K_2/\bQ))
= \cs(\widetilde{K_1}/\bQ) \setminus \cs(\widetilde{K_1}\widetilde{K_2}/\bQ),$$
we have 
\begin{equation*}
\begin{split}
\delta(\cs(K_1/\bQ) \setminus \cs(K_2/\bQ)) 
= \delta(\cs(\widetilde{K_1}/\bQ)) - \delta(\cs(\widetilde{K_1}\widetilde{K_2}/\bQ)) 
=\frac{1}{[\widetilde{K_1}:\bQ]} - \frac{1}{[\widetilde{K_1}\widetilde{K_2}:\bQ]}, 
\end{split} 
\end{equation*}
where the first and second 
equalities follow from \cite[Lemma 4.6]{Shimizu} and the Chebotarev density theorem, respectively.
If $K_2 \not\subset \widetilde{K_1}$, then $\widetilde{K_2} \not\subset \widetilde{K_1}$, 
and hence 
$\delta(\cs(K_1/\bQ) \setminus \cs(K_2/\bQ)) >0$.
\end{proof}

\begin{prop}\label{U4}
For $i=1,2$, let $\cC_i$ be a nontrivial full class of finite groups, $\sigma :G_{K_1}^{\cC_1}\isom G_{K_2}^{\cC_2}$ an isomorphism, and $S_0\subset P_{\bQ,f}$.
Write $\Sigma \defeq \Sigma(\cC_1) (= \Sigma(\cC_2)$ by Lemma \ref{2.0}).
Assume that the following conditions hold:
\begin{itemize}
\item[(a)]
$\# \Sigma = \infty$.

\item[(b)]
$\delta_{\inf}(\cs(K_1/\bQ) \setminus S_0) =0$.

\item[(c)]
There exists a 
local correspondence between $S_0(K_1)$ and $S_0(K_2)$ for $\sigma$ satisfying condition $(\Char)$ and condition $(\Deg)$.

\end{itemize}
Then 
there exists 
a unique 
$\tau\in \Iso(K_2^{\cC_2}/K_2, K_1^{\cC_1}/K_1)$
such that 
$\sigma (g_1) =  \tau^{-1} g_1 \tau$ for any $g_1 \in G_{K_1}^{\cC_1}$.
\end{prop}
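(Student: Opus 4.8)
Uniqueness is immediate from Proposition \ref{1.5}: if $\tau,\tau'\in\Iso(K_2^{\cC_2}/K_2,K_1^{\cC_1}/K_1)$ both induce $\sigma$ by conjugation, then $\tau'\tau^{-1}\in\Aut(K_1^{\cC_1})$ fixes $K_1$ setwise and induces the trivial automorphism of $G_{K_1}^{\cC_1}$, hence $\tau=\tau'$. So the content is existence, and here Proposition \ref{U1} reduces the problem to producing $\tau\in\Iso(K_2^{\cC_2}/K_2,K_1^{\cC_1}/K_1)$ with $\sigma(H_1)=\tau^{-1}H_1\tau$ for every \emph{open} subgroup $H_1\subset G_{K_1}^{\cC_1}$; since such a $\tau$ is exactly a system of field isomorphisms $M_2\isom M_1$ between the finite number-field layers of the two towers, compatible with the inclusions and with $\sigma$, the goal is to construct such a compatible system and pass to the limit.

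The first step is to extract the arithmetic consequences of the hypotheses. By Lemma \ref{3.4}~(iii) the local correspondence in (c) descends to a bijection $S_0(K_1)\isom S_0(K_2)$ between sets of primes of the two number fields, preserving residue characteristics and local degrees; Lemma \ref{U2} then gives $S_0\cap\cs(K_1/\bQ)=S_0\cap\cs(K_2/\bQ)$ and $[K_1:\bQ]=[K_2:\bQ]$. Feeding in (b), so that $\cs(K_1/\bQ)\setminus S_0$ is negligible, Lemma \ref{U3} together with an elementary computation of Dirichlet densities yields $\widetilde{K_1}=\widetilde{K_2}$ and $\cs(K_1/\bQ)=\cs(K_2/\bQ)$. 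Because any open $H_1\subset G_{K_1}^{\cC_1}$ has fixed field $M_1$ with $\cs(M_1/\bQ)\subset\cs(K_1/\bQ)$, and $\phi$ restricts to a local correspondence between $S_0(M_1)$ and $S_0(M_2)$ satisfying $(\Char)$ and $(\Deg)$ by Lemma \ref{3.3}, the same reasoning applies at every layer: corresponding layers $M_1\leftrightarrow M_2$ have equal degree over $\bQ$, equal Galois closure $N=\widetilde{M_1}=\widetilde{M_2}$, and equal set of completely split primes.

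The heart of the argument is then to upgrade this to an actual field isomorphism, which is where I would follow \cite{Uchida2}. By Proposition \ref{cyc3} and Proposition \ref{3.6} the cyclotomic character and the Frobenius elements in the relevant abelianized quotients are recovered group-theoretically, so the Galois-equivariant local correspondence $\phi$ is moreover compatible with all the Frobenius data; in particular, for corresponding layers $M_1\leftrightarrow M_2$ the subgroups $G(N/M_1)$ and $G(N/M_2)$ of $G(N/\bQ)$ are matched together with the whole system of their decomposition and inertia subgroups and Frobenius classes, compatibly as $N$ grows. Adapting Uchida's reconstruction — which combines Neukirch's local theory of decomposition groups of $p$-adic fields with this global compatibility — one produces, for each layer, a unique field isomorphism between the layers of the two towers compatible with $\sigma$ and with the layer inclusions; taking the limit over the open subgroups of $G_{K_1}^{\cC_1}$ gives the desired $\tau$, which by Proposition \ref{U1} induces $\sigma$ on all of $G_{K_1}^{\cC_1}$.

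The main obstacle, as I see it, is precisely this last step. Equality of the sets of completely split primes only gives Gassmann equivalence of $G(N/M_1)$ and $G(N/M_2)$, which does not force $M_1$ and $M_2$ to be conjugate over $\bQ$, so one genuinely needs the full force of the Galois-equivariant, Frobenius-compatible local correspondence; moreover Uchida's original argument is written for the maximal prosolvable extension and uses \emph{all} nonarchimedean primes, so one must reorganize it to work for an arbitrary nontrivial full class $\cC$ and for the possibly sparse set $S_0$. Hypothesis (a), $\#\Sigma=\infty$, is what makes this possible: it guarantees enough room in the tower (for instance $K_i^{\cC_i}\supset\bQ(\mu_l)$ for infinitely many $l$, and $\hat{\bZ}^{\Sigma}$ is infinite, so that the cyclotomic input of Proposition \ref{3.6} and the generation argument behind Proposition \ref{U1} are effective), and verifying that every step of the reconstruction survives these weakenings is where the real work lies.
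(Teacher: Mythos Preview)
Your setup is correct: uniqueness via Proposition \ref{1.5}, reduction via Proposition \ref{U1} to matching open subgroups, and the use of Lemmas \ref{3.3}, \ref{3.4}, \ref{U2}, \ref{U3} to align degrees and completely-split sets at every layer are all in the paper and serve as preparation. But the paragraph beginning ``Adapting Uchida's reconstruction'' is where the proof actually happens, and there you have a genuine gap and a misconception.

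The misconception is the description of Uchida's method as ``Neukirch's local theory of decomposition groups of $p$-adic fields.'' That is the original Neukirch--Uchida argument for full absolute Galois groups, and it does not apply here. What \cite{Uchida2} does, and what the paper adapts, is a purely group-theoretic construction: for a fixed open normal $U_1$ with intermediate layers $F_{1j}$ (and their counterparts $F_{2j}$), choose a finite Galois $K/\bQ$ containing everything, set $H=G(K/\bQ)$, and use hypothesis~(a) to pick a prime $p\in\Sigma$ with $p>[K:\bQ]$. One then builds the auxiliary extension $L/\bQ$ with $G(L/\bQ)\simeq \bF_p[H]^m\rtimes H$ (lifted via \cite[(9.2.9)]{NSW}), identifies the maximal abelian $p$-extensions $M_{ij}$ of the layers inside $L$, and shows via Lemma \ref{U3} and the local correspondence that $M_{1j}$ and $M_{2j}$ match. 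The crux is then a Chebotarev argument inside $G(L/K)=\bF_p[H]^m$ (which needs $\delta_{\sup}((S_0\cap\cs(K/\bQ))(K))=1$, i.e.\ hypothesis~(b)) showing that every element of $A_1=\oplus_j\sum_{n\in N_{1j}}(n-1)\bF_p[H]u_j$ is $H$-conjugate into $A_2$, followed by a coefficient count in the group ring $\bF_p[H]$ (this is where $p>\#H$ is essential) forcing the existence of $h\in H$ with $hN_{1j}h^{-1}=N_{2j}$ for all $j$. None of this machinery appears in your sketch.

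Your diagnosis that Gassmann equivalence is not enough is exactly right, but the remedy is not ``the full force of the Frobenius-compatible local correspondence'' in any direct sense; it is this module-theoretic trick, and without it the argument does not close. Note also that the paper's footnote flags a subtlety: Uchida's original version requires $p\equiv 1\bmod [K:\bQ]$, which $\#\Sigma=\infty$ alone does not guarantee, and Proposition \ref{U1} is introduced precisely to weaken this to $p>[K:\bQ]$.
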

\begin{proof}
The uniqueness of $\tau$ follows from Proposition \ref{1.5}. Let us prove the existence of $\tau$.
By Proposition \ref{U1}, we are reduced to proving 
\begin{equation*}
B \defeq 
\left\{ \tau \in \Iso(K_2^{\cC_2}/K_2, K_1^{\cC_1}/K_1) \left|
\begin{array}{l}
\sigma(H_1) = \tau^{-1} H_1 \tau 
\text{
for any open subgroup $H_1$ of $G_{K_1}^{\cC_1}$}
\end{array}
\right.\right\} \not= \emptyset.
\end{equation*}
The rest of the proof is based on the proof of \cite[THEOREM]{Uchida2}.
Take any open normal subgroup $U_1$ 
of $G_{K_1}^{\cC_1}$.
Set $U_2 \defeq \sigma(U_1)$.
For $i=1,2$, write $K_i'$ for the finite Galois subextension of $K_i^{\cC_i}/K_i$ corresponding to $U_i$.
Let $K/\bQ$ be a finite Galois extension such that $K\supset K_1'K_2'$.
Let $\{ U_1 = U_{11},\dots,U_{1m} \}$ be the set of all open subgroups of $G_{K_1}^{\cC_1}$ containing $U_1$.
Set $U_{2j} \defeq \sigma(U_{1j})$ for $j \in \{ 1,\dots,m \}$.
Write $F_{ij}$ 
for the finite subextension of $K_i'/K_i$ corresponding to $U_{ij}$, and $N_{ij} \defeq G(K/F_{ij})$.
Set 
\begin{equation*}
\begin{split} 
T'(U_1) 
&\defeq 
\left\{ \tau \in G_\bQ \left|
\begin{array}{l}
\tau(F_{2j})=F_{1j}
\text{ for all }
j \in \{ 1,\dots,m \}
\end{array}
\right.\right\}\\
&=
\left\{ \tau \in G_\bQ \left|
\begin{array}{l}
N_{2j} = \tau^{-1} N_{1j} \tau 
\text{ for all }
j \in \{ 1,\dots,m \}
\end{array}
\right.\right\}.
\end{split} 
\end{equation*}
To prove $B\not= \emptyset$, 
it suffices to show that 
$T'(U_1) \neq \emptyset$ 
for any $U_1$.
Indeed, having shown this, 
we obtain $\cap_{U_1} T'(U_1) \neq \emptyset$ by the compactness of $G_\bQ$, and the restriction to  $K_2^{\cC_2}$ of any isomorphism in $\cap_{U_1} T'(U_1)$ induces an isomorphism in $B$.
Let $H \defeq G(K/\bQ)$.
Take $p\in \Sigma$ such that $p> [K:\bQ]$, whose existence is guaranteed by (a).\footnote{In the proof of \cite[THEOREM]{Uchida2}, we have to take $p\in P_{\bQ,f}$ such that $p \equiv 1 \mod [K:\bQ]$. However, since $\delta(\{ p \in P_{\bQ,f} \mid p \equiv 1 \mod [K:\bQ] \}) = \delta(\cs(\bQ(\mu_{[K:\bQ]})/\bQ))=1/[\bQ(\mu_{[K:\bQ]}):\bQ]$, 
the existence of such $p$ in $\Sigma$ is not guaranteed by (a). 
In order to avoid this problem, we modify the proof of \cite[THEOREM]{Uchida2} by using Proposition \ref{U1}.}
Let 
$$A \defeq \bF_p[H]u_1\oplus\dots\oplus\bF_p[H]u_m
$$ 
be a left $H$-module isomorphic to a direct sum of $m$ copies of group ring $\bF_p[H]$, where $\{ u_1,\dots,u_m\}$ is a free basis.
Let 
$E \defeq A \rtimes H$ be the semidirect product of $H$ acting on $A$ as above.
By \cite[(9.2.9) Proposition]{NSW}, 
the surjection $G_\bQ \twoheadrightarrow H$ can be lifted to a surjection $G_\bQ \twoheadrightarrow E$.
Write $L$ for the finite Galois extension of $\bQ$ corresponding to $G_\bQ \twoheadrightarrow E$, 
and 
we will identify $E$ with $G(L/\bQ)$ via this surjection.
Write $L_j$ for the finite Galois subextension of $L/K$ corresponding to the projection $A \twoheadrightarrow \bF_p[H]u_j$.
Then $L_j$ is a Galois extension of $\bQ$ whose Galois group
is isomorphic to $\bF_p[H]u_j \rtimes H$.
Let $M_{1j}$ be the maximal abelian $p$-extension of $K_1'$ contained in $L_j$ such that the operation of $N_{1j}/N_{11}$ on $G(M_{1j}/K_1')$ is trivial.
As $M_{1j}$ is a subextension of $K_1^{\cC_1}/K_1$, 
a subextension $M_{2j}$ of $K_2^{\cC_2}/K_2$ corresponds to $M_{1j}$ by $\sigma$.
By (c), Proposition \ref{1.4}, Lemma \ref{3.4} $(iii)$ 
and Lemma \ref{U2}, 
we have 
$[F_{1j}:\bQ]=[F_{2j}:\bQ]$, 
$S_0 \cap \cs(K_1/\bQ)=S_0 \cap \cs(K_2/\bQ)$ and 
$S_0 \cap \cs(M_{1j}/\bQ)=S_0 \cap \cs(M_{2j}/\bQ)$.
Therefore, 
$$\delta_{\inf}(\cs(M_{1j}/\bQ) \setminus \cs(M_{2j}/\bQ)) 
=\delta_{\inf}(S_0 \cap (\cs(M_{1j}/\bQ) \setminus \cs(M_{2j}/\bQ))) 
=0,$$
where the first equality follows from (b).
By Lemma \ref{U3}, we obtain $M_{2j} \subset \widetilde{M_{1j}}\subset L_j$.
Let $M_{2j}'$ be 
the maximal abelian $p$-extension of $K_2'$ contained in $L_j$ such that the operation of $N_{2j}/N_{21}$ on $G(M_{2j}'/K_2')$ is trivial.
Then $M_{2j} \subset M_{2j}'$. Let $(M_{1j}\subset )M_{1j}'$ be the subextension of $K_1^{\cC_1}/K_1$ corresponds to $M_{2j}'$ by $\sigma$. Then, by a similar argument as above, we obtain $M_{1j}'\subset L_j$. Therefore, by the maximality of $M_{1j}$, we have $M_{1j}= M_{1j}'$, and hence $M_{2j} = M_{2j}'$.
Let $B_{ij} \defeq G(L_j/KM_{ij})$.
As $N_{i1}$ operates trivially on $\bF_p[H]u_j / \sum_{n_{ij}\in N_{ij}}(n_{ij} - 1)\bF_p[H]u_j$, the subextension of $L_j/K$ corresponding to $\sum_{n_{ij}\in N_{ij}}(n_{ij} - 1)\bF_p[H]u_j$ comes from some abelian $p$-extension of $K_i'$.
Then the maximality shows $B_{ij} =\sum_{n_{ij}\in N_{ij}}(n_{ij} - 1)\bF_p[H]u_j$.
Hence $K\prod_{j=1}^{m}M_{ij}$ corresponds to $A_i \defeq \bigoplus_{j=1}^{m}B_{ij}$.
Let $M_i \defeq \prod_{j=1}^{m}M_{ij}$.

Now, let us prove that 
any element $a \in A_1$ is conjugate to some element of $A_2$ in $E$.
By (b), we have $\delta_{\sup}((S_0 \cap \cs(K/\bQ))(K)) =1$.
Therefore, by the Chebotarev density theorem for the Galois extension $L/K$, there exists $\p \in (S_0 \cap \cs(K/\bQ))(L)$ such that $a= \Frob_\p$.
Let $l \defeq \p|_{\bQ} \in S_0$.
Then 
$l \in \cs(K/\bQ)$ and the  local degree of $\p|_{M_1}$ is $1$.
By Proposition \ref{1.4} and Lemma \ref{3.4} $(iii)$, 
the local correspondence in (c) induces a bijection $\phi \colon S_0(M_1) \isom S_0(M_2)$, preserving the residue characteristics and the local degrees.
Take $e \in E$ such that $e\p|_{M_2} = \phi(\p|_{M_1})$. 
Since $e\p|_{KM_2}$ is above $l$ and $\phi(\p|_{M_1})$, 
the local degree of $e\p|_{KM_2}$ is $1$, and hence 
$e^{-1} ae = \Frob_{e\p} \in A_2$.
As $A_1, A_2 \subset A$, 
writing $h$ for the image of $e^{-1}$ under $E \twoheadrightarrow A$, we have $ha \in A_2$.

We now put 
$$a \defeq \sum_{j=1}^{m} \sum_{n_{1j}\in N_{1j}}(n_{1j} - 1)u_j \in A_1.$$
There exists $h\in H$ such that $ha \in A_2$, i.e., 
$$h \sum_{n_{1j}\in N_{1j}}(n_{1j} - 1) \in \sum_{n_{2j}\in N_{2j}}(n_{2j} - 1)\bF_p[H] \text{ for }j \in \{ 1,\dots,m \}.$$
Hence 
$$\sum_{n_{2j}\in N_{2j}}n_{2j}h \sum_{n_{1j}\in N_{1j}}(n_{1j} - 1) =0 \text{ for }j \in \{ 1,\dots,m \}.$$
Therefore, 
\begin{equation}\label{eq1}
\sum_{n_{2j}\in N_{2j}} \sum_{n_{1j}\in N_{1j}}n_{2j}hn_{1j} = \sum_{n_{2j}\in N_{2j}} \sum_{n_{1j}\in N_{1j}}n_{2j}h \text{ for }j \in \{ 1,\dots,m \}.
\end{equation}
Let $n_{1j}\in N_{1j}$ be any element.
We calculate the coefficient of $hn_{1j}$ in 
(\ref{eq1}).
We have 
\begin{equation*}
1 \leq
\# \left\{ (n_{2j}, n_{1j}') \in N_{2j}\times N_{1j} \left|
\begin{array}{l}
n_{2j}hn_{1j}' = hn_{1j}
\end{array}
\right.\right\} 
\leq \# N_{2j}
\leq 
[K:\bQ]
<p
\end{equation*}
since $(1, n_{1j})$ is contained in this set and for each $n_{2j}\in N_{2j}$, 
at most one element $n_{1j}'\in N_{1j}$ satisfies $n_{2j}hn_{1j}' = hn_{1j}$.
Therefore, the coefficient of $hn_{1j}$ in the left side of (\ref{eq1}) is not $0$.
Hence there necessarily exists $n_{2j} \in N_{2j}$ such that $n_{2j}h = hn_{1j}$.
This shows $hN_{1j}h^{-1} \subset N_{2j}$, 
hence $hN_{1j}h^{-1} = N_{2j}$ 
as they have the same order.
Thus, any lift of $h^{-1}$ under $G_\bQ \twoheadrightarrow H$ is contained in $T'(U_1)$.
\end{proof}

For $\tau\in \Iso(K_2, K_1)$ and a prime number $l$, we write 
$\widetilde{\tau}^\ast_{l}:\Gamma_{K_1, l} \isom \Gamma_{K_2, l}$ for the isomorphism induced by $\tau$.

The following proposition is an abstraction of \cite[Proposition 3.3]{Shimizu}.
\begin{prop}\label{4.1}
Let $l$ be a prime number, 
$S_i \subset P_{K_i,f}$ a set of nonarchimedean primes of $K_i$ for $i=1,2$  
and $\sigma :\Gamma_{K_1, l} \isom \Gamma_{K_2, l}$ an isomorphism.
Assume that the following conditions hold:
\begin{itemize}
\item[(a)]
There exists a weak local correspondence between $S_1$ and $S_2$ for $\sigma$ satisfying condition $(\Char)$ and condition $(\Frob)$.

\item[(b)]
There exists a finite extension $L/K_1K_2$ such that 
$L/\bQ$ is Galois and 
$\delta_{\sup}(S_1(L)) > 0$.

\end{itemize}
Then there exists $\tau \in G(L/\bQ)$ such that 
$\sigma \circ \pi_{L/K_1, l} = \widetilde{(\tau|_{K_2})}^\ast_{l} \circ \pi_{L/\tau(K_2), l}$.
\end{prop}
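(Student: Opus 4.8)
First I would reduce the claim to a statement about Frobenius elements. Working inside $N:=L^{(\infty,l)}$ (which is Galois over $\bQ$ and contains $K_1^{(\infty)}$, $K_2^{(\infty)}$ and $\tau(K_2)^{(\infty)}$ for every $\tau\in G(L/\bQ)$), put $\psi:=\sigma\circ\pi_{L/K_1,l}$ and, for $\tau\in G(L/\bQ)$, $\psi_\tau:=\widetilde{(\tau|_{K_2})}^\ast_l\circ\pi_{L/\tau(K_2),l}$; these are continuous homomorphisms $\Gamma_L\to\Gamma_{K_2}$, and the task is to produce $\tau$ with $\psi=\psi_\tau$. Since $\Gamma_{K_2}$ is a torsion-free $\bZ_l$-module, it suffices that $\psi$ and $\psi_\tau$ agree on an open subgroup of $\Gamma_L$. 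Now the Frobenius elements $\Frob_\p\in\Gamma_L$ for $\p\in S_1(L)\setminus P_{L,l}$ generate an open subgroup of $\Gamma_L$: otherwise they all lie in the kernel of some surjection $\Gamma_L\twoheadrightarrow\bZ_l$, so each such $\p$ splits completely in a $\bZ_l$-extension of $L$ and hence $\delta_{\sup}(S_1(L))=0$, contradicting (b). Since, outside a set contributing nothing to the upper Dirichlet density, $S_1(L)$ consists of primes above rational primes that split completely in $L$, (b) also yields $\delta_{\sup}(\mathcal P)>0$ for $\mathcal P:=\{\p\in S_1(L)\mid \p|_\bQ\in\cs(L/\bQ)\}$. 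So it is enough to find $\tau\in G(L/\bQ)$ with $\psi(\Frob_\p)=\psi_\tau(\Frob_\p)$ for all $\p\in\mathcal P$.

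Next I would unwind both sides over $\mathcal P$. As $K_i^{(\infty)}/K_i$ is abelian, the weak local correspondence of (a) induces a map $\phi\colon S_1\to S_2$ on primes of the base fields with $\sigma\big(D_{\q}(K_1^{(\infty)}/K_1)\big)=D_{\phi(\q)}(K_2^{(\infty)}/K_2)$; condition $(\Char)$ says $\q$ and $\phi(\q)$ lie above the same rational prime, and condition $(\Frob)$ (no inertia above a prime $\neq l$) gives $\sigma(\Frob_{\q})=\Frob_{\phi(\q)}$ in $\Gamma_{K_2}$. For $\p\in\mathcal P$ lying above the rational prime $p$, every prime of $L$, $K_1$ and $K_2$ above $p$ has residue degree $1$, so $\pi_{L/K_1,l}(\Frob_\p)=\Frob_{\p|_{K_1}}$ and therefore $\psi(\Frob_\p)=\Frob_{\phi(\p|_{K_1})}$; likewise $\pi_{L/\tau(K_2),l}(\Frob_\p)=\Frob_{\p|_{\tau(K_2)}}$, and transport of structure along $\tau$ gives $\psi_\tau(\Frob_\p)=\Frob_{(\tau^{-1}\p)|_{K_2}}$. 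Hence it suffices to find $\tau\in G(L/\bQ)$ with $\phi(\p|_{K_1})=(\tau^{-1}\p)|_{K_2}$ as primes of $K_2$, for every $\p\in\mathcal P$ (this is stronger than, but implies, the equality of their Frobenius elements).

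The main obstacle is this last step: exhibiting a single $\tau$ that realizes $\phi$ simultaneously over all the rational primes in question. The plan is combinatorial and Chebotarev-theoretic, in the spirit of the proof of Proposition~\ref{U4}. For a rational prime $p$ split completely in $L$, after fixing compatible embeddings the primes of $L$, $K_1$, $K_2$ above $p$ are indexed by $G(L/\bQ)$, $G(L/K_1)\backslash G(L/\bQ)$ and $G(L/K_2)\backslash G(L/\bQ)$ respectively, with restriction of primes becoming the projections; thus $\phi$ over $p$ is a partial map $\phi_p$ between the two coset spaces, while $\p\mapsto(\tau^{-1}\p)|_{K_2}$ is left translation by $\tau^{-1}$ followed by projection. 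I would enlarge $L$ to a finite Galois extension $L^+/\bQ$ large enough to detect the residual behaviour of $\sigma$, use the Chebotarev density theorem in $L^+/\bQ$ to find a prime whose decomposition data witnesses the generic splitting pattern, read off from the action of $\sigma$ on its decomposition group a candidate $\tau\in G(L/\bQ)$, and then—by lifting to a suitable semidirect product and running a counting argument on group-ring coefficients as in Proposition~\ref{U4}, invoking the Galois equivariance of $\phi$ (Lemma~\ref{3.4})—check that this $\tau$ induces $\phi_p$ for every relevant $p$. The delicate point is precisely this pigeonhole bookkeeping: ensuring that the set of admissible conjugators is simultaneously nonempty over all these primes, which is where the fact that $\phi$ comes from the single group isomorphism $\sigma$ (rather than from an arbitrary bijection of primes) has to be used.
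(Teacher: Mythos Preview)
Your setup in the first two paragraphs is correct and matches the paper's: for each $\p\in\mathcal P$ there is \emph{some} $\tau'\in G(L/\bQ)$ (depending on $\p$) with $\psi(\Frob_\p)=\psi_{\tau'}(\Frob_\p)$, since $\phi(\p|_{K_1})$ and $\p|_{K_2}$ lie above the same rational prime (condition $(\Char)$) and $G(L/\bQ)$ acts transitively on the primes of $K_2$ above it.

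The gap is in your final step. You try to upgrade this to a single $\tau$ realizing $\phi(\p|_{K_1})=(\tau^{-1}\p)|_{K_2}$ for \emph{all} $\p\in\mathcal P$, and propose a group-ring counting argument in the style of Proposition~\ref{U4}. This is both stronger than needed and not obviously attainable: the weak local correspondence $\phi$ need not be induced on primes by any global $\tau$, and the machinery of Proposition~\ref{U4} is aimed at matching \emph{subgroups} of a finite Galois group, not at pinning down prime-by-prime bijections over infinitely many residue characteristics. Your own closing sentence identifies exactly the point where the argument is only a hope.

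The paper's proof sidesteps this entirely with a Haar-measure pigeonhole. For each $\tau\in G(L/\bQ)$ set
\[
\psi_\tau(\gamma)\;=\;(\sigma\circ\pi_{L/K_1,l})(\gamma)\cdot(\pi_{L/K_2,l}\circ\widetilde{\tau}^\ast_l)(\gamma)^{-1},
\]
a $\bZ_l$-homomorphism $\Gamma_L\to\Gamma_{K_2}$. If no $\tau$ works globally, then each $\Ker(\psi_\tau)$ has $\bZ_l$-rank $<r_l(L)$ and hence Haar measure $0$ in $\Gamma_L$; since $G(L/\bQ)$ is finite, so does $\bigcup_\tau\Ker(\psi_\tau)$. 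But your own computation shows that $\Frob_\p\in\Ker(\psi_{\tau'})$ for some $\tau'$ whenever $\p\in(S_1\setminus P_{K_1,l})(L)\cap\cs(L/\bQ)(L)$, and by (b) this set has $\delta_{\sup}>0$. Chebotarev for the infinite extension $L^{(\infty,l)}/L$ (\cite[Chapter I, 2.2, Corollary 2]{Serre}) then produces such a $\p$ with $\Frob_\p\notin\bigcup_\tau\Ker(\psi_\tau)$, a contradiction. The point you were missing is that one never has to find a uniform $\tau$ directly: finiteness of $G(L/\bQ)$ plus the measure-zero property of proper $\bZ_l$-submodules does the work.
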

\begin{proof}
Write $\phi\colon S_1(K_1^{(\infty,l)}) \to S_2(K_2^{(\infty,l)})$ for a weak local correspondence in (a).
We have the following diagram:
$$
\xymatrix{
& & L^{(\infty)} \ar@{-}[ld] \ar@{-}[ddd]^{\Gamma_{L}} \ar@{-}[rd] & & \\
& K_1^{(\infty)}L \ar@{-}[ld] \ar@{-}[rdd] & & LK_2^{(\infty)} \ar@{-}[ldd] \ar@{-}[rd] & \\
K_1^{(\infty)} \ar@{-}[rdd]^{} \ar@/_15pt/@{-}[rddd]_{\Gamma_{K_1}} & & & & K_2^{(\infty)} \ar@{-}[ldd]_{} \ar@/^15pt/@{-}[lddd]^{\Gamma_{K_2}} \\
& & L \ar@{-}[ld] \ar@{-}[rd] & & \\
& K_1^{(\infty)} \cap L \ar@{-}[d] & & L \cap K_2^{(\infty)} \ar@{-}[d] & \\
& K_1 & & K_2 & \\
}
$$
For 
$\tau \in G(L/\bQ)$, 
set 
$\psi_\tau 
:\Gamma_{L} \to \Gamma_{K_2}$, 
$\gamma \mapsto (\sigma \circ \pi_{L/K_1,l})(\gamma) \cdot (\pi_{L/K_2,l} \circ \widetilde{\tau}^\ast_{l})(\gamma)^{-1}$. 
Then this is a homomorphism of free $\bZ_l$-modules.

Assume that for any $\tau \in G(L/\bQ)$, 
$\sigma \circ \pi_{L/K_1,l} \neq \pi_{L/K_2,l} \circ \widetilde{\tau}^\ast_{l}$.
Then 
$\rank_{\bZ_l}\Ker(\psi_\tau) < r_l(L)$, so that 
$\Ker(\psi_\tau)$ has Haar measure $0$ in $\Gamma_{L}$.
Hence 
$\bigcup_{\tau \in G(L/\bQ)} \Ker(\psi_\tau)$ also has Haar measure $0$.
By (b), we have 
$\delta_{\sup}(S_1(L) \cap \cs(L/\bQ)(L)) = \delta_{\sup}(S_1(L)) > 0$.
Then, 
by the Chebotarev density theorem for infinite extensions 
(\cite[Chapter I, 2.2, COROLLARY 2]{Serre}), 
there exists 
$\p \in (S_1 \setminus P_{K_1,l})(L) \cap \cs(L/\bQ)(L)$ 
such that 
$\Frob_\p \notin \bigcup_{\tau \in G(L/\bQ)} \Ker(\psi_\tau)$.
For $i=1,2$, 
we set 
$\p_i \defeq \p|_{K_i}$, 
then 
$\pi_{L/K_i,l}(\Frob_{\p})=\Frob_{\p_i}$.
Take $\bap_1 \in P_{K_1^{(\infty)},f}$ above $\p_1$.
Since $\phi$ satisfies condition $(\Frob)$, 
we obtain $\sigma(\Frob_{\p_1})=\Frob_{\phi(\bap_1)|_{K_2}}$. 
Set $p \defeq \p|_{\bQ}$.
By condition $(\Char)$, 
$\phi(\bap_1)$ is also above $p$.
Therefore, there exists $\tau' \in G(L/\bQ)$ such that 
$\phi(\bap_1)|_{K_2}=(\tau' \cdot \p)|_{K_2}$.
Then we have $\Frob_{\phi(\bap_1)|_{K_2}} = \pi_{L/K_2,l} \circ \widetilde{\tau'}^\ast_{l} (\Frob_{\p})$.
Thus, we obtain 
\begin{equation*}
\begin{split}
\sigma \circ \pi_{L/K_1,l}(\Frob_\p)
=\Frob_{\phi(\bap_1)|_{K_2}}
=\pi_{L/K_2,l} \circ \widetilde{\tau'}^\ast_{l} (\Frob_{\p}).
\end{split}
\end{equation*}
Namely, 
$\Frob_\p \in \Ker(\psi_{\tau'})$. 
However, this contradicts the fact that 
$\Frob_\p \notin \bigcup_{\tau \in G(L/\bQ)} \Ker(\psi_\tau)$.

Therefore, 
there exists $\tau \in G(L/\bQ)$ such that 
$\sigma \circ \pi_{L/K_1,l} = \pi_{L/K_2,l} \circ \widetilde{\tau}^\ast_{l} = \widetilde{(\tau|_{K_2})}^\ast_{l} \circ \pi_{L/\tau(K_2),l}$.
\end{proof}

To obtain field isomorphisms, we collect the author's previous results.


\begin{lem}[{\cite[Lemma 3.4]{Shimizu}}]\label{4.4}
Let $l$ be a prime number.
Assume that 
there exist a finite extension $L/K_1K_2$ 
and an isomorphism $\overline{\sigma}: \Gamma_{K_1, l} \isom \Gamma_{K_2, l}$
such that 
$\overline{\sigma} \circ \pi_{L/K_1, l} = \pi_{L/K_2, l}$.
Then 
$K_1^{(\infty,l)}K_2 = K_1K_2^{(\infty,l)}$.
\end{lem}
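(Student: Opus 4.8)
The plan is to rephrase the group-theoretic hypothesis as an equality of kernels of canonical maps over $F\defeq K_1K_2$, and then translate it back into fields by Galois theory. The starting point is that, since $K_i\subseteq F\subseteq L$, the three canonical homomorphisms are linked by the transitivity relation $\pi_{L/K_i,l}=\pi_{F/K_i,l}\circ\pi_{L/F,l}$. Indeed, under the identification $\Gamma_{K,l}=G(K^{(\infty,l)}/K)$, and using that $K^{(\infty,l)}\subseteq (K')^{(\infty,l)}$ whenever $K\subseteq K'$ (because $K^{(\infty,l)}K'/K'$ is abelian pro-$l$ with torsion-free Galois group), each $\pi_{K'/K,l}$ is the composite $G((K')^{(\infty,l)}/K')\twoheadrightarrow G(K^{(\infty,l)}K'/K')\isom G(K^{(\infty,l)}/K^{(\infty,l)}\cap K')\hookrightarrow G(K^{(\infty,l)}/K)$ of restriction of automorphisms; restriction is transitive along $K_i^{(\infty,l)}\subseteq F^{(\infty,l)}\subseteq L^{(\infty,l)}$. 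Thus the hypothesis $\overline{\sigma}\circ\pi_{L/K_1,l}=\pi_{L/K_2,l}$ becomes $\overline{\sigma}\circ\pi_{F/K_1,l}\circ\pi_{L/F,l}=\pi_{F/K_2,l}\circ\pi_{L/F,l}$.

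The first real step is to descend this to an identity over $F$. By the factorization above, $\Image(\pi_{L/F,l})=G(F^{(\infty,l)}/F^{(\infty,l)}\cap L)$, which has finite index in $\Gamma_{F,l}$ since $L/F$ is finite; so $\overline{\sigma}\circ\pi_{F/K_1,l}$ and $\pi_{F/K_2,l}$ are two continuous $\bZ_l$-module homomorphisms $\Gamma_{F,l}\to\Gamma_{K_2,l}$ that agree on an open subgroup of $\Gamma_{F,l}$. Their difference then factors through the finite group $\Gamma_{F,l}/\Image(\pi_{L/F,l})$ and takes values in the torsion-free $\bZ_l$-module $\Gamma_{K_2,l}$, hence is $0$. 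Therefore $\overline{\sigma}\circ\pi_{F/K_1,l}=\pi_{F/K_2,l}$, and since $\overline{\sigma}$ is an isomorphism, $\Ker(\pi_{F/K_1,l})=\Ker(\pi_{F/K_2,l})$.

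It remains to identify these kernels. For $i=1,2$ the extension $K_i^{(\infty,l)}F/F$ is abelian and pro-$l$ with Galois group isomorphic to $G(K_i^{(\infty,l)}/K_i^{(\infty,l)}\cap F)$, a closed submodule of the free $\bZ_l$-module $\Gamma_{K_i,l}$ and hence torsion-free; since $F^{(\infty,l)}$ is the maximal abelian pro-$l$ extension of $F$ with torsion-free Galois group, this forces $K_i^{(\infty,l)}F\subseteq F^{(\infty,l)}$. Consequently $\pi_{F/K_i,l}\colon G(F^{(\infty,l)}/F)\to G(K_i^{(\infty,l)}/K_i)$ is restriction to $K_i^{(\infty,l)}$, with kernel exactly $G(F^{(\infty,l)}/K_i^{(\infty,l)}F)$. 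Equating the two kernels and using the Galois correspondence for $F^{(\infty,l)}/F$ yields $K_1^{(\infty,l)}F=K_2^{(\infty,l)}F$; since $K_i\subseteq F$, this says precisely $K_1^{(\infty,l)}K_2=K_1K_2^{(\infty,l)}$.

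The main obstacle is the descent step: the hypothesis a priori constrains only $L$, and read off directly it gives merely $K_1^{(\infty,l)}L=K_2^{(\infty,l)}L$, which is genuinely weaker than the conclusion, since adjoining $L$ can wash out the difference between the two fields. The torsion-freeness of $\Gamma_{K_2,l}$ is exactly the input that lets one extend the identity from the finite-index subgroup $\Image(\pi_{L/F,l})$ to all of $\Gamma_{F,l}$, effectively reducing to the case $L=K_1K_2$; after that the field-theoretic translation is routine. One must also be careful that the ``canonical homomorphism'' $\pi_{L/K,l}$ is the restriction (equivalently, norm) map $\Gamma_{L,l}\to\Gamma_{K,l}$, so that the kernel computation in the last step is the correct one.
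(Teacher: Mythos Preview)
The paper does not supply its own proof of this lemma; it is stated with a citation to \cite[Lemma 3.4]{Shimizu} and no argument is given. Your proof is correct and self-contained: setting $F=K_1K_2$, you use transitivity $\pi_{L/K_i,l}=\pi_{F/K_i,l}\circ\pi_{L/F,l}$, observe that the two maps $\overline{\sigma}\circ\pi_{F/K_1,l}$ and $\pi_{F/K_2,l}$ agree on the finite-index subgroup $\Image(\pi_{L/F,l})\subset\Gamma_{F,l}$, and then use torsion-freeness of $\Gamma_{K_2,l}$ to conclude they agree everywhere; the identification $\Ker(\pi_{F/K_i,l})=G(F^{(\infty,l)}/K_i^{(\infty,l)}F)$ then finishes via Galois theory. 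The descent step is indeed the crux, and your explanation of why it is needed (that the naive reading only yields $K_1^{(\infty,l)}L=K_2^{(\infty,l)}L$) is apt. One small wording remark: in the final paragraph ``restriction (equivalently, norm)'' is correct in the sense that restriction on abelianized Galois groups corresponds to the id\`ele norm under class field theory, but since the transfer map runs in the opposite direction it may be clearer to write simply ``restriction''.
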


\begin{prop}[{\cite[Proposition 3.5]{Shimizu}}]\label{4.5}
Assume that the following conditions hold:
\begin{itemize}
\item[(a)]
$K_i$ has a complex prime 
for one $i$. 

\item[(b)]
There exists a finite extension $L/K_1K_2$ such that 
$L/\bQ$ is Galois and 
$K_1^{(\infty,l)}L = LK_2^{(\infty,l)}$ for a prime number $l$.

\end{itemize}
Then 
$K_1 = K_2$.
\end{prop}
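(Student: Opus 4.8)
The plan is to reformulate (b) in terms of the maximal multiple $\bZ_l$-extensions $K_i^{(\infty,l)}$ placed inside one extension that is Galois over $\bQ$, and then to finish by a $\bZ_l$-rank count in which the complex-place hypothesis is the decisive ingredient. First I would reduce: enlarging $L$ to a larger finite Galois extension of $\bQ$ leaves (b) intact (if $L\subseteq L'$ then $K_i^{(\infty,l)}L'=(K_i^{(\infty,l)}L)L'$), so I may assume $L\supseteq\widetilde{K_1}\widetilde{K_2}$. Since $L/\bQ$ is Galois, $\Omega:=L^{(\infty,l)}$ is Galois over $\bQ$; moreover $K_i^{(\infty,l)}L\subseteq\Omega$ because $G(K_i^{(\infty,l)}L/L)\cong G(K_i^{(\infty,l)}/K_i^{(\infty,l)}\cap L)$ is abelian, pro-$l$, and torsion-free. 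Hence both $K_i^{(\infty,l)}$ live inside $\Omega$, and with $\Delta:=G(\Omega/L)$, $H_i:=G(\Omega/K_i^{(\infty,l)})$, hypothesis (b) reads $H_1\cap\Delta=H_2\cap\Delta=G(\Omega/N)$, where $N:=K_1^{(\infty,l)}L=K_2^{(\infty,l)}L$. Comparing $G(N/L)\cong G(K_i^{(\infty,l)}/K_i^{(\infty,l)}\cap L)\cong\bZ_l^{\,r_l(K_i)}$ for $i=1,2$ already gives $r_l(K_1)=r_l(K_2)=:r$, and one sees that $K_1^{(\infty,l)}$ and $K_2^{(\infty,l)}$ are commensurable (intersection of finite index in each). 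I also record that $N/E$ is Galois, where $E:=K_1K_2$: indeed $G_E$ fixes $K_1$, hence stabilizes $K_1^{(\infty,l)}$, and it stabilizes $L$ since $L/E$ is Galois.

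The second step reduces the whole problem to a rank inequality. Using the commensurability of $K_1^{(\infty,l)},K_2^{(\infty,l)}$ together with the fact that every $\bZ_l$-extension of a number field is unramified outside $l$ (torsion-freeness of the Galois group makes the tame, hence finite, inertia trivial, likewise the archimedean inertia), I would argue that $N$ already contains $(K_1K_2)^{(\infty,l)}$, so that $r_l(K_1K_2)=r$, and that $K_1,K_2$ are each, over $K_0:=K_1\cap K_2$, an abelian extension of $l$-power degree, unramified outside $l$ and at the archimedean places, with $r_l(K_0)=r$ by the same rank comparison.

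The final step is then a numerical squeeze. Say $K_1$ has a complex place, so $r_\bC(K_1)\ge1$. Each complex place of $K_0$ splits into exactly $[K_1:K_0]$ complex places of $K_1$ (the local extension over $\bC$ being trivial), and since $K_1/K_0$ is unramified at the archimedean places no real place of $K_0$ complexifies; hence $r_\bC(K_1)=[K_1:K_0]\,r_\bC(K_0)$ and $r_\bC(K_0)\ge1$. By \cite[(10.3.20) Proposition]{NSW}, $r_\bC(K_1)+1\le r_l(K_1)=r=r_l(K_0)\le[K_0:\bQ]$, i.e. $[K_1:K_0]\,r_\bC(K_0)+1\le[K_0:\bQ]$. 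If $K_1$ is totally imaginary then so is $K_0$ (again because $K_1/K_0$ is unramified at $\infty$), whence $[K_0:\bQ]=2\,r_\bC(K_0)$ and the inequality gives $[K_1:K_0]<2$; thus $K_0=K_1$, i.e. $K_1\subseteq K_2$, and the symmetric argument (using $r_l(K_2)=r_l(K_1\cap K_2)$ and $K_1\subseteq K_2$) yields $K_1=K_2$. For general $K_1$ with a complex place one runs the same squeeze on $K_1$, $K_2$ and $K_1K_2$ simultaneously, using in addition $r_l(K)\le[K:\bQ]$ and $r_l(K_1)=r_l(K_2)=r_l(K_1K_2)$ to exclude $K_0\subsetneq K_1$ and $K_0\subsetneq K_2$.

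I expect the second step to be the main obstacle: from the bare field equality in (b) one must extract that $N$ realizes the \emph{full} maximal multiple $\bZ_l$-extension of $K_1K_2$ (equivalently $r_l(K_1K_2)=r_l(K_1)$) and that $K_1,K_2$ sit in a common abelian pro-$l$ tower over $K_1\cap K_2$, whereas a priori (b) only yields $r_l(K_1K_2)\ge r$. The complex-place hypothesis is exactly the extra leverage that converts the resulting inequalities into $K_1=K_2$; dropping it breaks the conclusion, since for instance $\bQ$ and $\bQ(\sqrt2)$ share the same cyclotomic $\bZ_l$-extension.
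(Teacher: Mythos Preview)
The paper does not prove this here; the result is quoted from \cite{Shimizu}. Your outline correctly extracts $r_l(K_1)=r_l(K_2)$ and the commensurability of $K_1^{(\infty,l)}$ and $K_2^{(\infty,l)}$ from (b), but your second step is a genuine gap, as you yourself acknowledge. The assertions you need there---that $r_l(K_1K_2)=r$, that $r_l(K_1\cap K_2)=r$, and above all that each $K_i/(K_1\cap K_2)$ is abelian of $l$-power degree and unramified at the archimedean places---do not follow from the commensurability of the towers. Commensurability of $K_1^{(\infty,l)}$ and $K_2^{(\infty,l)}$ constrains the towers up to finite error but places no restriction on how $K_1$ and $K_2$ sit over their intersection: nothing you have written excludes, for instance, $K_1$ and $K_2$ being distinct conjugates of a non-Galois cubic with $K_1\cap K_2=\bQ$, where $K_1/\bQ$ is neither abelian nor of $l$-power degree. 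Since your rank squeeze in step~3 rests on precisely these unproven inputs (you use both $r_l(K_0)=r$ and the archimedean unramifiedness of $K_1/K_0$), and the non--totally-imaginary case is left as a one-line sketch, the proposal does not amount to a proof.

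The argument in \cite{Shimizu} is different in spirit: rather than descending to $K_1\cap K_2$ and running a degree count, one works with the $G(L/\bQ)$-module structure on $\Gamma_{L,l}$ (and closely related objects built from global units and local data at the archimedean and $l$-adic places). Condition~(b) translates into $\ker(\pi_{L/K_1,l})=\ker(\pi_{L/K_2,l})$ inside $\Gamma_{L,l}$, and the complex-place hypothesis~(a) is what guarantees, via this module structure, that the common kernel determines the subgroup $G(L/K_i)\subset G(L/\bQ)$ uniquely, whence $K_1=K_2$. The leverage from~(a) is thus used representation-theoretically on $\Gamma_{L,l}$, not through a rank comparison over $K_1\cap K_2$.
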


\begin{lem}[{\cite[Lemma 1.1]{Shimizu2}}]\label{4.6}
Let $L/K$ be a finite Galois extension, and $l$ a prime number.
Assume that $L$ has a complex prime. 
Then the canonical action of $G(L/K)$ on $\Gamma_{L, l}$ induced by conjugation is faithful.
\end{lem}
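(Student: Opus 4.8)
Here is how I would approach the final statement (Lemma \ref{4.6}).

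The approach I would take is by contradiction, via the maximal multiple-$\bZ_l$-extension. Suppose $G(L/K)$ does not act faithfully on $\Gamma_{L,l}$; choose $g\in G(L/K)$, $g\neq 1$, acting trivially, and, after replacing $g$ by a suitable power, assume $g$ has prime order $p$. Put $L'\defeq L^{\langle g\rangle}$, so that $[L:L']=p$. Write $L^{(\infty,l)}$ for the extension of $L$ with $G(L^{(\infty,l)}/L)=\Gamma_{L,l}$ (the maximal multiple-$\bZ_l$-extension of $L$, as in the discussion preceding Proposition \ref{3.6}). Since $L^{(\infty,l)}$ is canonically determined by $L$, it is Galois over $L'$, and we have an exact sequence
\[
1\longrightarrow \Gamma_{L,l}\longrightarrow G(L^{(\infty,l)}/L')\longrightarrow G(L/L')\longrightarrow 1
\]
in which $G(L/L')=\langle g\rangle$ acts on $\Gamma_{L,l}=G(L^{(\infty,l)}/L)$ via the conjugation action of the statement, hence trivially by assumption. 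As the quotient $\langle g\rangle$ is cyclic, a central extension by $\langle g\rangle$ is abelian; therefore $L^{(\infty,l)}/L'$ is an abelian extension.

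Next I would extract the identity $r_l(L)=r_l(L')$. The maximal torsion-free quotient of the abelian group $G(L^{(\infty,l)}/L')$ corresponds to a multiple-$\bZ_l$-extension of $L'$; since $\Gamma_{L,l}$ is torsion-free of index $p$ in $G(L^{(\infty,l)}/L')$, that quotient is isomorphic to $\bZ_l^{r_l(L)}$, whence $r_l(L')\ge r_l(L)$. Conversely, $(L')^{(\infty,l)}L/L$ is an abelian pro-$l$ extension whose Galois group embeds with finite index in $\Gamma_{L',l}$ and is at the same time a quotient of $\Gamma_{L,l}$, so $r_l(L')\le r_l(L)$; hence equality. (Along the way one also gets $L^{(\infty,l)}=L\cdot (L')^{(\infty,l)}$ and injectivity of $\pi_{L/L',l}\colon\Gamma_{L,l}\to\Gamma_{L',l}$.)

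Finally I would play this off against the complex prime. Writing $r_l(F)=r_\bC(F)+1+\delta_l(F)$ with $\delta_l(F)\ge 0$ the Leopoldt defect, $r_l(L)=r_l(L')$ becomes $r_\bC(L)+\delta_l(L)=r_\bC(L')+\delta_l(L')$. But a complex prime $v_0$ of $L$ forces $r_\bC(L)>r_\bC(L')$: the restriction $v_0|_{L'}$ is either complex, in which case it splits completely in $L/L'$ (a complex local field has no proper extension) and thus has $p$ complex primes of $L$ above it, or it is real, in which case it ramifies in $L/L'$; counting complex primes gives $r_\bC(L)-r_\bC(L')\ge 1$ either way. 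When Leopoldt's conjecture is known for $L'$ this already contradicts $\delta_l\ge 0$; in general one argues instead directly with the abelian extension $L^{(\infty,l)}/L'$ — using that its Galois group is $\bZ_l^{r_l(L)}$ up to a cyclic group, so that an element of order $2$ in it must arise from a ramified real prime (forcing $p=2$), together with the constraint the complex prime $v_0$ imposes on the decomposition group of $v_0|_{L'}$ — and it is precisely this last step that I expect to be the main obstacle. An essentially equivalent alternative would be to identify $\Gamma_{L,l}\otimes_{\bZ_l}\bQ_l$, by class field theory, with the quotient of the semilocal units above $l$ by the global units as a $\bQ_l[G(L/K)]$-module, and to compare isotypic multiplicities of the two sides against the free $\bQ_l[G(L/K)]$-module structure of the semilocal units (using the normal basis theorem for $L/K$ and Dirichlet's unit theorem); the presence of a complex prime supplies the gap that forbids the trivial action of $g$, but controlling how large the global unit submodule can be is again the delicate point.
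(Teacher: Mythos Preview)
The paper does not prove this lemma; it is quoted verbatim from \cite[Lemma 1.1]{Shimizu2}, so there is no in-paper argument to compare against. I will therefore assess your proposal on its own merits and against the tools already available in the paper.

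Your reduction is correct and well executed through the point you flag as parenthetical: after passing to a prime-order $g$ and setting $L'=L^{\langle g\rangle}$, the triviality of the action makes $L^{(\infty,l)}/L'$ abelian, and from this you extract both $r_l(L)=r_l(L')$ and, more importantly, the identity $L^{(\infty,l)}=L\cdot (L')^{(\infty,l)}$. (For the latter, note that the torsion subgroup of $G(L^{(\infty,l)}/L')$ meets the torsion-free $\Gamma_{L,l}$ trivially, so $LM=L^{(\infty,l)}$ where $M\subset (L')^{(\infty,l)}$ is the subfield corresponding to the maximal torsion-free pro-$l$ quotient; the reverse inclusion $L(L')^{(\infty,l)}\subset L^{(\infty,l)}$ is automatic.)

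Where you get stuck is in trying to turn $r_l(L)=r_l(L')$ and $r_{\bC}(L)>r_{\bC}(L')$ into a contradiction. You are right that this route is blocked by Leopoldt, and your two suggested workarounds are vague at exactly the point where the difficulty lies. But you have already done the hard work: the identity $L^{(\infty,l)}=L\cdot(L')^{(\infty,l)}$ is precisely the hypothesis of Proposition~\ref{4.5} (applied with $K_1=L$, $K_2=L'$, and $\tilde L$ the Galois closure of $L$ over $\bQ$, so that $L^{(\infty,l)}\tilde L=(L')^{(\infty,l)}\tilde L$), and the complex-prime assumption is exactly condition (a) there. Proposition~\ref{4.5} then yields $L=L'$, contradicting $[L:L']=p>1$. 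Since Proposition~\ref{4.5} is taken from the earlier paper \cite{Shimizu}, there is no circularity. So the gap in your argument is not a missing idea but a missed citation: invoke Proposition~\ref{4.5} instead of attempting a bare rank comparison.
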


\begin{prop}\label{4.7}
For $i=1,2$, let $M_i/K_i$ be a Galois extension, $\sigma :G(M_1/K_1)\isom G(M_2/K_2)$ an isomorphism, and 
$U_i$ an open normal subgroup of $G(M_i/K_i)$ with $\sigma(U_1) = U_2$.
For $i=1,2$, write $L_i$ for the finite Galois subextension of $M_i/K_i$ corresponding to $U_i$.
Let $l$ be a prime number and $S_i \subset P_{L_i,f}$ a set of nonarchimedean primes of $L_i$ for $i=1,2$.
Assume that the following conditions hold:
\begin{itemize}

\item[(a)]
$L_{i}^{(\infty,l)} \subset M_i$ for $i=1,2$.

\item[(b)]
There exists a weak local correspondence between $S_1$ and $S_2$ for the isomorphism $\Gamma_{L_1, l} \isom \Gamma_{L_2, l}$ induced by $\sigma|_{U_1}$ 
satisfying condition $(\Char)$ and condition $(\Frob)$.

\item[(c)]
There exists a finite extension $L/L_1L_2$ such that 
$L/\bQ$ is Galois and 
$\delta_{\sup}(S_1(L)) > 0$.

\item[(d)]
$L_i$ has a complex prime for one $i$.

\end{itemize}
Then 
there exists $\tau \in G(L/\bQ)$ such that 
$\tau|_{L_2} \in \Iso(L_2/K_2, L_1/K_1)$ and the isomorphism $G(L_1/K_1) \isom G(L_2/K_2)$ induced by $\sigma$ coincides with the isomorphism induced by $\tau|_{L_2}$.
In particular, for any open normal subgroups $U_1'$, $U_2'$ of $G(M_1/K_1)$,  $G(M_2/K_2)$ containing $U_1$, $U_2$, respectively, with $\sigma(U_1') = U_2'$, it follows that 
$\tau|_{L_2'} \in \Iso(L_2'/K_2, L_1'/K_1)$
and the isomorphism $G(L_1'/K_1) \isom G(L_2'/K_2)$ induced by $\sigma$ coincides with the isomorphism induced by $\tau|_{L_2'}$, where $L_1'$, $L_2'$ are finite Galois subextensions of $L_1/K_1$, $L_2/K_2$, corresponding to $U_1'$, $U_2'$, respectively.
\end{prop}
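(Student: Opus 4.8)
The plan is to apply Proposition \ref{4.1} to the isomorphism of $\Gamma$'s induced by $\sigma|_{U_1}$, then descend to an isomorphism of number fields by Lemma \ref{4.4} and Proposition \ref{4.5}, and finally to promote this field isomorphism to one compatible with the base fields and with $\sigma$ on Galois groups, using the faithfulness statement Lemma \ref{4.6}.

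First I would note that (a) forces $\Gamma_{L_i,l}=U_i^{(l),\ab,/\tor}$ (as $U_i=G(M_i/L_i)$ is a quotient of $G_{L_i}$ through which $G_{L_i}\twoheadrightarrow\Gamma_{L_i,l}$ factors), a functorial quotient, so $\sigma|_{U_1}\colon U_1\isom U_2$ induces an isomorphism $\sigma'\colon\Gamma_{L_1,l}\isom\Gamma_{L_2,l}$ --- the one figuring in (b). Hypotheses (b) and (c) are precisely conditions (a) and (b) of Proposition \ref{4.1} for the data $(L_1,L_2,\sigma',l,S_1,S_2,L)$, so that proposition produces $\tau\in G(L/\bQ)$ with $\sigma'\circ\pi_{L/L_1,l}=\widetilde{(\tau|_{L_2})}^\ast_{l}\circ\pi_{L/\tau(L_2),l}$. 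Rewriting this, $(\widetilde{(\tau|_{L_2})}^\ast_{l})^{-1}\circ\sigma'\colon\Gamma_{L_1,l}\isom\Gamma_{\tau(L_2),l}$ intertwines $\pi_{L/L_1,l}$ with $\pi_{L/\tau(L_2),l}$, so Lemma \ref{4.4} gives $L_1^{(\infty,l)}\tau(L_2)=L_1\,\tau(L_2)^{(\infty,l)}$, hence (using $L\supset L_1\tau(L_2)$) also $L_1^{(\infty,l)}L=L\,\tau(L_2)^{(\infty,l)}$. Since $\tau|_{L_2}\colon L_2\isom\tau(L_2)$ is an isomorphism of number fields, (d) says one of $L_1$, $\tau(L_2)$ has a complex prime, so Proposition \ref{4.5} applied to $L_1$, $\tau(L_2)$ forces $L_1=\tau(L_2)$. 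Writing $\rho\defeq\tau|_{L_2}\colon L_2\isom L_1$ and substituting $\tau(L_2)=L_1$ back (and using that $\pi_{L/L_1,l}$ is surjective), we obtain $\sigma'=\widetilde{\rho}^{\,\ast}_{l}$; in particular both $L_1$ and $L_2$ now have a complex prime.

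The crux is to show $\rho(K_2)=K_1$ and that $\rho$ induces the isomorphism $\bar\sigma_U\colon G(L_1/K_1)\isom G(L_2/K_2)$ coming from $\sigma$. Let $a_i\colon\Aut(L_i)\to\Aut(\Gamma_{L_i,l})$ be the conjugation action (lift an automorphism to $\overline{\bQ}$, conjugate $G_{L_i}$, push to $\Gamma_{L_i,l}$); its restriction to $G(L_i/K_i)$ is the action of Lemma \ref{4.6}. Pushing the conjugation-compatibility $\sigma(\tilde gu\tilde g^{-1})=\sigma(\tilde g)\sigma(u)\sigma(\tilde g)^{-1}$ (for $\tilde g\in G(M_1/K_1)$, $u\in U_1$) to the abelian quotients gives $a_2(\bar\sigma_U(g))=\sigma'\circ a_1(g)\circ(\sigma')^{-1}$ for all $g\in G(L_1/K_1)$; functoriality of $\widetilde{(\,\cdot\,)}^\ast_{l}$ applied to $\rho^{-1}g\rho$, together with $\widetilde{\rho}^{\,\ast}_{l}=\sigma'$, gives $a_2(\rho^{-1}g\rho)=\sigma'\circ a_1(g)\circ(\sigma')^{-1}$ for all $g\in\Aut(L_1)$. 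Hence $a_2(\bar\sigma_U(g))=a_2(\rho^{-1}g\rho)$ for every $g\in G(L_1/K_1)$. Now $\rho^{-1}G(L_1/K_1)\rho=G(L_2/\rho^{-1}(K_1))$, so by Artin's theorem $H\defeq\langle G(L_2/K_2),\,\rho^{-1}G(L_1/K_1)\rho\rangle=G(L_2/F)$ with $F\defeq K_2\cap\rho^{-1}(K_1)$, so $L_2/F$ is finite Galois; as $L_2$ has a complex prime, $a_2|_H$ is faithful by Lemma \ref{4.6}. Since $\bar\sigma_U(g)$ and $\rho^{-1}g\rho$ both lie in $H$ and agree under $a_2$, they are equal for all $g\in G(L_1/K_1)$; thus $\rho^{-1}G(L_1/K_1)\rho=G(L_2/K_2)$, whence $\rho(K_2)=K_1$ by passing to fixed fields, i.e. $\tau|_{L_2}\in\Iso(L_2/K_2,L_1/K_1)$, and the relation $\bar\sigma_U(g)=\rho^{-1}g\rho$ says exactly that the isomorphism $G(L_1/K_1)\isom G(L_2/K_2)$ induced by $\sigma$ coincides with the one induced by $\tau|_{L_2}$. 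The ``in particular'' clause is then formal: for $U_i\subset U_i'$ with $\sigma(U_1')=U_2'$ one has $\bar\sigma_U(U_1'/U_1)=U_2'/U_2$, and the relation $\bar\sigma_U(g)=(\tau|_{L_2})^{-1}g(\tau|_{L_2})$ yields $\tau(L_2')=L_1'$, so $\tau|_{L_2'}\in\Iso(L_2'/K_2,L_1'/K_1)$ induces the restriction of $\bar\sigma_U$, which is the isomorphism induced by $\sigma$.

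The main obstacle is the step where the field isomorphism $\tau|_{L_2}$ --- which a priori is only constrained through the abelian quotients $\Gamma_{L_i,l}$ --- is shown to send $K_2$ to $K_1$; the idea is to record that $\bar\sigma_U(g)$ and $\rho^{-1}g\rho$ act identically on $\Gamma_{L_2,l}$, enlarge $G(L_2/K_2)$ to the genuinely Galois group $G(L_2/F)$ (Artin), and invoke faithfulness of its conjugation action on $\Gamma_{L_2,l}$ (Lemma \ref{4.6}). A secondary point requiring care is to keep the conjugation/functoriality conventions lined up so that the two identities in the previous paragraph both come out as ``conjugation by $\sigma'$''.
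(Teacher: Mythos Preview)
Your argument is correct and follows essentially the same route as the paper's proof: apply Proposition~\ref{4.1}, then Lemma~\ref{4.4} and Proposition~\ref{4.5} to get $\tau(L_2)=L_1$ and $\sigma'=\widetilde{(\tau|_{L_2})}^{\ast}_{l}$, then compare the two actions on $\Gamma_{L_2,l}$ and invoke Lemma~\ref{4.6}. One small inaccuracy: $\pi_{L/L_1,l}$ need not be surjective, only of open image (this is what the paper uses); since $\Gamma_{L_1,l}$ is a free $\bZ_l$-module of finite rank, two homomorphisms into the torsion-free $\Gamma_{L_2,l}$ agreeing on an open subgroup agree everywhere, so your conclusion $\sigma'=\widetilde{\rho}^{\,\ast}_{l}$ still stands. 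Your faithfulness step via $H=G(L_2/F)$ with $F=K_2\cap\rho^{-1}(K_1)$ is a slightly more hands-on variant of what the paper does (it simply applies Lemma~\ref{4.6} to all of $\Aut(L_2)=G(L_2/L_2^{\Aut(L_2)})$, which already contains both $\bar\sigma_U(g)$ and $\rho^{-1}g\rho$), but the content is the same.
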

\begin{proof}
Write $\overline{\sigma}_{L_1}:G(L_1/K_1) \isom G(L_2/K_2)$ for the isomorphism induced by $\sigma$, and $\widetilde{\sigma}_{L_1, l}:\Gamma_{L_1, l} \isom \Gamma_{L_2, l}$ for the isomorphism induced by $\sigma|_{U_1}$. 
By Proposition \ref{4.1}, 
there exists $\tau \in G(L/\bQ)$ such that 
$\widetilde{\sigma}_{L_1, l} \circ \pi_{L/L_1, l} = \widetilde{(\tau|_{L_2})}^\ast_{l} \circ \pi_{L/\tau(L_2), l}$.
Then we have $\widetilde{(\tau|_{L_2})}_{l}^{\ast -1} \circ \widetilde{\sigma}_{L_1, l} \circ \pi_{L/L_1, l} = \pi_{L/\tau(L_2), l}$.
By Lemma \ref{4.4}, 
we obtain 
$L_1^{(\infty,l)}\tau(L_2) = L_1\tau(L_2)^{(\infty,l)}$, 
and hence 
$L_1^{(\infty,l)}L = L\tau(L_2)^{(\infty,l)}$.
By Proposition \ref{4.5}, 
we obtain 
$\tau|_{L_2} \in \Iso(L_2, L_1)$.
Further, since $\Image(\pi_{L/L_1, l})$ is open in $\Gamma_{L_1, l}$, 
$\widetilde{\sigma}_{L_1, l}$ coincides with $\widetilde{(\tau|_{L_2})}^\ast_{l}$.

Write $(\tau|_{L_2})^\ast: \Aut(L_1) \isom \Aut(L_2)$ for the isomorphism induced by $\tau|_{L_2}$.
By the 
equality $\widetilde{\sigma}_{L_1, l} = \widetilde{(\tau|_{L_2})}^\ast_{l}$, 
for any $\tau_1 \in G(L_1/K_1)$, 
the actions of $\overline{\sigma}_{L_1}(\tau_1)$ and $(\tau|_{L_2})^\ast(\tau_1)$ on $\Gamma_{L_2, l}$ coincide.
Further, by (d) and Lemma \ref{4.6}, the conjugation action of $\Aut(L_2)$
on $\Gamma_{L_2, l}$ is faithful.
Therefore, 
we obtain $\overline{\sigma}_{L_1}(\tau_1) = (\tau|_{L_2})^\ast(\tau_1)$, 
so that $\overline{\sigma}_{L_1} = (\tau|_{L_2})^\ast|_{G(L_1/K_1)}$ in $\Iso(G(L_1/K_1), G(L_2/K_2))$.
Hence 
$\tau|_{L_2}$ is compatible with the actions of 
$G(L_2/K_2)$ and $G(L_1/K_1)$ 
on $L_2$ and $L_1$.
Thus, 
for any $U_1'$, $U_2'$ 
as in the assertion, 
$\tau|_{L_2}$ induces an isomorphism between $L_2^{G(L_2/L_2')} = L_2'$ and $L_1^{G(L_1/L_1')} = L_1'$.
Therefore, we obtain 
$\tau|_{L_2'} \in \Iso(L_2'/K_2, L_1'/K_1)$, 
in particular, $\tau|_{L_2} \in \Iso(L_2/K_2, L_1/K_1)$.
Further, the isomorphism $G(L_1'/K_1) \isom G(L_2'/K_2)$ induced by $\sigma$ 
coincides with the isomorphism induced by $\tau|_{L_2'}$.
\end{proof}

\section{The calculation of the Dirichlet density}
In this section, we calculate the Dirichlet density of 
$\underline{P_{K,f}^{\cC}}$ 
as a preparation for Theorem \ref{6.1}.

\begin{prop}\label{5.2}
Let $\Sigma \subset P_{\bQ,f}$.
If $\sum_{p \in \Sigma} p^{-1} = \infty$, then $\delta(\cup_{p \in \Sigma} \cs(K(\mu_p)/K))=1$.
\end{prop}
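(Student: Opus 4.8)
The plan is to pass to complements and reduce to the case $K=\bQ$. First I would dispose of the trivial case $2\in\Sigma$: then $K(\mu_2)=K$, so $\cs(K(\mu_2)/K)=P_{K,f}$ has density $1$; hence from now on assume $p\geq 3$ for every $p\in\Sigma$. For $\fq\in P_{K,f}$ unramified over $\bQ$, with $q\defeq\fq|_{\bQ}$, and $p\in\Sigma$ with $p\neq q$, the extension $K(\mu_p)/K$ is unramified at $\fq$ and the Frobenius at $\fq$ acts on $\mu_p\subset\overline{\kappa(\fq)}$ by $\zeta\mapsto\zeta^{\frak{N}(\fq)}$; hence $\fq\in\cs(K(\mu_p)/K)$ if and only if $\frak{N}(\fq)=q^{f_{\fq,K/\bQ}}\equiv 1\pmod{p}$. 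Setting $Q\defeq\bigcup_{p\in\Sigma}\{q\in P_{\bQ,f}\mid q\equiv 1\bmod p\}$, it follows that every $\fq\in P_{K,f}$ which is unramified over $\bQ$, has $f_{\fq,K/\bQ}=1$, and satisfies $\fq|_{\bQ}\in Q$ lies in $\bigcup_{p\in\Sigma}\cs(K(\mu_p)/K)$ (for such $\fq$, $\frak{N}(\fq)=q\equiv 1\bmod p$ for a suitable $p\in\Sigma$, necessarily distinct from $q$). Consequently
\[
P_{K,f}\setminus\bigcup_{p\in\Sigma}\cs(K(\mu_p)/K)\ \subset\ R\ \cup\ \{\fq\in P_{K,f}\mid f_{\fq,K/\bQ}\geq 2\}\ \cup\ \{\fq\in P_{K,f}\mid f_{\fq,K/\bQ}=1,\ \fq|_{\bQ}\notin Q\},
\]
where $R$ is the (finite, hence density-zero) set of primes of $K$ ramified over $\bQ$.

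The first two sets on the right have Dirichlet density $0$ (for the middle one, $\sum_{f_{\fq,K/\bQ}\geq 2}\frak{N}(\fq)^{-s}\leq[K:\bQ]\sum_{q}q^{-2s}$ stays bounded as $s\to 1+0$), and for the third, $\sum_{f_{\fq,K/\bQ}=1,\ \fq|_{\bQ}\notin Q}\frak{N}(\fq)^{-s}\leq[K:\bQ]\sum_{q\notin Q}q^{-s}$, so its upper density is at most $[K:\bQ]\,\delta_{\sup}(P_{\bQ,f}\setminus Q)$. Since $\sum_{\fq\in P_{K,f}}\frak{N}(\fq)^{-s}\sim\log\frac{1}{s-1}$ as $s\to 1+0$ (equivalently $\delta(P_{K,f})=1$, a standard consequence of the simple pole of $\zeta_K$ at $s=1$, or of the Chebotarev density theorem applied to the trivial extension), it therefore suffices to prove $\delta(P_{\bQ,f}\setminus Q)=0$, i.e. $\delta(Q)=1$.

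To prove this, write $P_{\bQ,f}\setminus Q=\bigcap_{p\in\Sigma}\{q\in P_{\bQ,f}\mid q\not\equiv 1\bmod p\}$, fix a finite subset $\Sigma'\subset\Sigma$, and put $N\defeq\prod_{p\in\Sigma'}p$. Then $G(\bQ(\mu_N)/\bQ)\simeq\prod_{p\in\Sigma'}\bF_p^{\ast}$, and for $q\nmid N$ the prime $q$ is unramified in $\bQ(\mu_N)/\bQ$ with Frobenius $(q\bmod p)_{p\in\Sigma'}$. Up to the finitely many $q\mid N$, the set $\{q\mid q\not\equiv 1\bmod p\text{ for all }p\in\Sigma'\}$ is the set of $q$ whose Frobenius lies in $\prod_{p\in\Sigma'}(\bF_p^{\ast}\setminus\{1\})$, so by the Chebotarev density theorem it has density $\prod_{p\in\Sigma'}\frac{p-2}{p-1}$ (here $p\geq 3$, so $p-2\geq 1$). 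Hence
\[
\delta_{\sup}(P_{\bQ,f}\setminus Q)\ \leq\ \prod_{p\in\Sigma'}\frac{p-2}{p-1}\ \leq\ \prod_{p\in\Sigma'}\frac{p-1}{p},
\]
the last inequality because $p(p-2)\leq(p-1)^2$. Letting $\Sigma'$ exhaust $\Sigma$ and using Lemma \ref{incomm3} $(ii)$ (so that $\sum_{p\in\Sigma}p^{-1}=\infty$ forces $\prod_{p\in\Sigma}\frac{p-1}{p}=0$), we obtain $\delta_{\sup}(P_{\bQ,f}\setminus Q)=0$, hence $\delta(Q)=1$; combining with the previous paragraph gives the proposition.

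The argument is otherwise routine, resting on the Chebotarev density theorem and elementary density bookkeeping; the one step demanding care is the reduction from a general number field $K$ to $\bQ$. One cannot simply transport the density statement from $\bQ$ to $K$ prime by prime, since a rational prime $q\equiv 1\bmod p$ need not have a prime of $K$ of residual degree $1$ above it. The remedy, as above, is to discard the density-zero contribution of primes of $K$ of residual degree $\geq 2$ over $\bQ$ and to bound the $K$-density of the remaining "bad" primes by $[K:\bQ]$ times the corresponding $\bQ$-density. The only other subtleties are the separate (trivial) treatment of $2\in\Sigma$ and the elementary fact that $\prod_{p\in\Sigma}\frac{p-2}{p-1}=0$ when $\sum_{p\in\Sigma}p^{-1}=\infty$, which we deduce from Lemma \ref{incomm3}.
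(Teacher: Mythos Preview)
Your proof is correct but takes a different route from the paper's. The paper removes finitely many primes from $\Sigma$ so that $K$ and $\bQ(\cup_{p\in\Sigma}\mu_p)$ are linearly disjoint and $2\notin\Sigma$, and then applies Chebotarev \emph{directly over $K$} to the extension $K(\cup_{p\in\Sigma'}\mu_p)/K$ for finite $\Sigma'\subset\Sigma$: since the Galois group factors as $\prod_{p\in\Sigma'}G(K(\mu_p)/K)\simeq\prod_{p\in\Sigma'}\bF_p^{\ast}$, one obtains the exact value $\delta\bigl(P_{K,f}\setminus\cup_{p\in\Sigma'}\cs(K(\mu_p)/K)\bigr)=\prod_{p\in\Sigma'}\tfrac{p-2}{p-1}$, and then passes to the limit using Lemma~\ref{incomm3}. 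You instead reduce to $K=\bQ$ first, by discarding the density-zero set of primes of $K$ with residual degree $\geq 2$ over $\bQ$ and bounding the remaining bad contribution by $[K:\bQ]\,\delta_{\sup}(P_{\bQ,f}\setminus Q)$; only then do you invoke Chebotarev, and only for $\bQ(\mu_N)/\bQ$. Your approach is arguably more elementary (Dirichlet's theorem suffices, rather than Chebotarev over a general base), at the cost of the extraneous factor $[K:\bQ]$; this is harmless for the proposition itself but would not yield the exact density equality recorded in Remark~\ref{5.3}. Both arguments finish the same way, bounding $\prod\tfrac{p-2}{p-1}$ by $\prod\tfrac{p-1}{p}$ and appealing to Lemma~\ref{incomm3}~$(ii)$.
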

\begin{proof}
Removing a finite subset from $\Sigma$, we may assume that $K$ and $\bQ(\cup_{p \in \Sigma}\mu_p)/\bQ$ are linearly disjoint and that $2\notin\Sigma$.
By {\cite[Lemma 4.6]{Shimizu}}, 
we have 
$$\delta_{\sup}(P_{K,f} \setminus \cup_{p \in \Sigma} \cs(K(\mu_p)/K)) + \delta_{\inf}(\cup_{p \in \Sigma} \cs(K(\mu_p)/K)) = 1.$$
Hence 
we are reduced to showing that 
$\delta_{\sup}(P_{K,f} \setminus \cup_{p \in \Sigma} \cs(K(\mu_p)/K))=0$.
Let $\Sigma' \subset \Sigma$ be any finite subset.
Then 
$G(K(\cup_{p \in \Sigma'}\mu_p)/K) \simeq \prod_{p \in \Sigma'}G(K(\mu_p)/K) \simeq \prod_{p \in \Sigma'}G(\bQ(\mu_p)/\bQ)$.
For $p \in \Sigma'$, write $pr_p \colon G(K(\cup_{p \in \Sigma'}\mu_p)/K) \twoheadrightarrow G(K(\mu_p)/K)$ for 
the canonical surjection.
Then 
$P_{K,f} \setminus (\cup_{p \in \Sigma} \cs(K(\mu_p)/K) \cup \Ram(K(\cup_{p \in \Sigma'}\mu_p)/K))$ coincides with 
$$\{ \p\in P_{K,f} \mid \text{$\p$ is unramified in $K(\cup_{p \in \Sigma'}\mu_p)/K$ and $pr_p(\Frob_\p)$ is non-trivial for all $p \in \Sigma'$} \},$$
where $\Frob_\p$ is the Frobenius element in $G(K(\cup_{p \in \Sigma'}\mu_p)/K)$ at $\p$.
Therefore, by the Chebotarev density theorem, 
\begin{equation*}
\begin{split} 
\delta(P_{K,f} \setminus \cup_{p \in \Sigma'} \cs(K(\mu_p)/K)) 
&= \frac{ \# \{ \tau \in G(K(\cup_{p \in \Sigma'}\mu_p)/K) \mid \text{$pr_p(\tau)$ is non-trivial for all $p \in \Sigma'$} \} }{ \# G(K(\cup_{p \in \Sigma'}\mu_p)/K)}\\
&=\frac{ \prod_{p \in \Sigma'} (p-2) }{ \prod_{p \in \Sigma'} (p-1) }\\
&=\prod_{p \in \Sigma'} \frac{p-2}{p-1}.
\end{split}
\end{equation*}
Hence 
$\delta_{\sup}(P_{K,f} \setminus \cup_{p \in \Sigma} \cs(K(\mu_p)/K)) 
\leq \delta(P_{K,f} \setminus \cup_{p \in \Sigma'} \cs(K(\mu_p)/K)) 
= \prod_{p \in \Sigma'} \frac{p-2}{p-1}$.
By passing to the limit, 
we obtain 
$\delta_{\sup}(P_{K,f} \setminus \cup_{p \in \Sigma} \cs(K(\mu_p)/K)) 
\leq \prod_{p \in \Sigma} \frac{p-2}{p-1}$.
Further, 
$\prod_{p \in \Sigma} \frac{p-2}{p-1} =0 \Leftrightarrow \prod_{p \in \Sigma} \frac{p-1}{p} = 0
$.
Thus, the assertion follows from Lemma \ref{incomm3} $(ii)$.
\end{proof}

\begin{rem}\label{5.3}
Let $\Sigma \subset P_{\bQ,f}$.
If $K$ and $\bQ(\cup_{p \in \Sigma}\mu_p)/\bQ$ are linearly disjoint and $2\notin\Sigma$, 
by Lemma \ref{incomm3} $(i)$, Proposition \ref{5.2} and the Chebotarev density theorem for infinite extensions 
(cf. \cite[Chapter I, 2.2, COROLLARY 2]{Serre}\footnote{In \cite[Chapter I, 2.2, COROLLARY 2]{Serre}, $\Ram(L/K)$ is assumed to be finite. However, even when $\# \Ram(L/K) = \infty$, this 
remains valid if $\delta(\Ram(L/K))=0$.}), 
the inequality $\delta_{\sup}(P_{K,f} \setminus \cup_{p \in \Sigma} \cs(K(\mu_p)/K)) 
\leq \prod_{p \in \Sigma} \frac{p-2}{p-1}$
in the proof of Proposition \ref{5.2} 
is actually an equality $\delta(P_{K,f} \setminus \cup_{p \in \Sigma} \cs(K(\mu_p)/K)) 
= \prod_{p \in \Sigma} \frac{p-2}{p-1}$, 
so that 
the converse of Proposition \ref{5.2} is true.
More generally, we can prove that 
$\delta(\cup_{p \in \Sigma} \cs(K(\mu_p)/K))=1$ 
if and only if $\sum_{p \in \Sigma} p^{-1} = \infty$ or 
there exists a finite subset $\Sigma' \subset \Sigma$ such that $\delta(\cup_{p \in \Sigma'} \cs(K(\mu_p)/K))=1$.
Indeed, 
by taking a finite subset $\Sigma' \subset \Sigma$ such that $K$ and $\bQ(\cup_{p \in \Sigma\setminus\Sigma'}\mu_p)/\bQ$ are linearly disjoint and $2\notin\Sigma\setminus\Sigma'$, 
we have 
$G(K(\cup_{p \in \Sigma}\mu_p)/K) \simeq G(K(\cup_{p \in \Sigma\setminus\Sigma'}\mu_p)/K) \times G(K(\cup_{p \in \Sigma'}\mu_p)/K)$, 
and hence 
we obtain 
$$\delta(P_{K,f} \setminus \cup_{p \in \Sigma} \cs(K(\mu_p)/K)) = 
\delta(P_{K,f} \setminus \cup_{p \in \Sigma\setminus\Sigma'} \cs(K(\mu_p)/K))
 \times 
\delta(P_{K,f} \setminus \cup_{p \in \Sigma'} \cs(K(\mu_p)/K))$$
by the Chebotarev density theorem.
\end{rem}

\begin{lem}\label{5.4.5}
Let $L/K$ be a  finite extension, 
$S\subset P_{K,f}$.
If $\delta_{\sup}(S) = 1$ (resp. $\delta(S) = 1$), then $\delta_{\sup}(S(L)) = 1$ (resp. $\delta(S(L)) = 1$).
\end{lem}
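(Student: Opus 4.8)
The plan is to reduce the statement to one elementary comparison of Dirichlet sums over $L$ and over $K$, and then pass to complements.

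First I would record the basic inequality: for any subset $T \subset P_{K,f}$ and any real $s > 1$,
\[
\sum_{\p \in T(L)} \frak{N}(\p)^{-s} \;\leq\; [L:K] \sum_{\q \in T} \frak{N}(\q)^{-s}.
\]
Indeed, $T(L) \subset P_{L,f}$ is the disjoint union, over $\q \in T$, of the set of primes $\p$ of $L$ above $\q$; there are at most $[L:K]$ of these, and each satisfies $\frak{N}(\p) = \frak{N}(\q)^{f_{\p,L/K}} \geq \frak{N}(\q)$, hence $\frak{N}(\p)^{-s} \leq \frak{N}(\q)^{-s}$ for $s>0$. Dividing by $\log\frac{1}{s-1}$ and taking $\limsup$, resp.\ $\liminf$, as $s \to 1+0$ gives
\[
\delta_{\sup}(T(L)) \leq [L:K]\,\delta_{\sup}(T), \qquad \delta_{\inf}(T(L)) \leq [L:K]\,\delta_{\inf}(T);
\]
in particular $\delta_{\sup}(T) = 0 \Rightarrow \delta_{\sup}(T(L)) = 0$, and likewise for $\delta_{\inf}$.

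Next I would use the elementary identity, valid for any number field $F$ and any $A \subset P_{F,f}$,
\[
\delta_{\sup}(A) = 1 - \delta_{\inf}(P_{F,f}\setminus A), \qquad \delta_{\inf}(A) = 1 - \delta_{\sup}(P_{F,f}\setminus A),
\]
obtained by dividing $\sum_{\p \in A}\frak{N}(\p)^{-s} + \sum_{\p \in P_{F,f}\setminus A}\frak{N}(\p)^{-s} = \sum_{\p \in P_{F,f}}\frak{N}(\p)^{-s}$ by $\log\frac{1}{s-1}$ and using that the right-hand side tends to $1$ as $s \to 1+0$, i.e.\ that $P_{F,f}$ has Dirichlet density $1$ (a standard consequence of the analytic behaviour of the Dedekind zeta function of $F$); here one uses that $\limsup x_s = \limsup y_s$ whenever $x_s - y_s \to 0$.

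To finish, I would set $T \defeq P_{K,f}\setminus S$, so that $T(L) = P_{L,f}\setminus S(L)$ by the definitions. If $\delta_{\sup}(S) = 1$, then $\delta_{\inf}(T) = 0$ (identity over $K$), hence $\delta_{\inf}(T(L)) = 0$ (basic inequality), hence $\delta_{\sup}(S(L)) = 1 - \delta_{\inf}(T(L)) = 1$ (identity over $L$). If moreover $\delta(S) = 1$, then also $\delta_{\sup}(T) = 1 - \delta_{\inf}(S) = 0$, so $\delta_{\sup}(T(L)) = 0$ and thus $\delta_{\inf}(S(L)) = 1$, giving $\delta(S(L)) = 1$. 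I do not expect a genuine obstacle here: the only points demanding care are getting the directions of the inequalities right in the first step and correctly invoking, in the second step, the fact that the full set of nonarchimedean primes of a number field has Dirichlet density $1$, which underlies the complement identity.
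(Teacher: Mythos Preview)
Your proof is correct and follows essentially the same route as the paper: pass to the complement $T = P_{K,f}\setminus S$, use the bound $\delta_{\inf}(T(L)) \le [L:K]\,\delta_{\inf}(T)$ (resp.\ for $\delta_{\sup}$) coming from the fact that each prime of $K$ has at most $[L:K]$ primes above it with larger or equal norm, and then pass back via the complement identity. The only cosmetic difference is that the paper cites \cite[Lemma 4.6]{Shimizu} for the complement identity $\delta_{\sup}(A)+\delta_{\inf}(P_{F,f}\setminus A)=1$, whereas you spell it out directly.
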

\begin{proof}
Assume $\delta_{\sup}(S) = 1$ (resp. $\delta(S) = 1$). Then, by \cite[Lemma 4.6]{Shimizu}, 
$\delta_{\inf}(P_{K,f} \setminus S) = 0$ (resp. $\delta_{\sup}(P_{K,f} \setminus S) = 0$).
Since any prime in $P_{K,f}$ splits into at most $[L:K]$ primes in $L/K$, 
we have 
$\delta_{\inf}(P_{L,f} \setminus S(L)) \leq [L:K]\delta_{\inf}(P_{K,f} \setminus S) = 0$ 
(resp. $\delta_{\sup}(P_{L,f} \setminus S(L)) \leq [L:K]\delta_{\sup}(P_{K,f} \setminus S) = 0$), and hence 
$\delta_{\inf}(P_{L,f} \setminus S(L)) = 0$ (resp. $\delta_{\sup}(P_{L,f} \setminus S(L)) = 0$).
Again by \cite[Lemma 4.6]{Shimizu}, we obtain $\delta_{\sup}(S(L)) = 1$ (resp. $\delta(S(L)) = 1$).
\end{proof}

\begin{prop}\label{5.4}
If $\sum_{p \in \Sigma(\cC)} p^{-1} = \infty$, then $\delta(\underline{P_{K,f}^{\cC}})=1$ and $\delta(P_{K,f}^{\cC})=1$.
\end{prop}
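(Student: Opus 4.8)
The plan is to reduce both assertions to Proposition \ref{5.2} (applied with $\bQ$ in place of $K$), by first proving $\delta(\underline{P_{K,f}^{\cC}})=1$ and then deducing $\delta(P_{K,f}^{\cC})=1$ from it via Lemma \ref{5.4.5}.

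For the first assertion, the key point is an explicit inclusion. Let $l\in\Sigma(\cC)$ be an odd prime and let $p$ be a prime number that splits completely in $\bQ(\mu_l)/\bQ$; then $p\neq l$ and $l\mid p-1$, so that $\mu_l\subset\bQ_p$. For every $\p\in P_{K,p}$ we then have $\p\notin P_{K,l}$ (since $p\neq l$) and $\mu_l\subset\bQ_p\subset K_\p\subset K_\p^{\cC}$, hence $\p\in P_{K,f}^{\cC,l}\subset P_{K,f}^{\cC}$ by Definition \ref{2.7}. Thus $P_{K,p}\subset P_{K,f}^{\cC}$, i.e.\ $p\in\underline{P_{K,f}^{\cC}}$, and therefore
$$\underline{P_{K,f}^{\cC}}\ \supset\ \bigcup_{l\in\Sigma(\cC)\setminus\{2\}}\cs(\bQ(\mu_l)/\bQ).$$
Since $\sum_{p\in\Sigma(\cC)}p^{-1}=\infty$, we also have $\sum_{p\in\Sigma(\cC)\setminus\{2\}}p^{-1}=\infty$, so Proposition \ref{5.2}, applied with $\bQ$ in place of $K$ and with $\Sigma=\Sigma(\cC)\setminus\{2\}$, shows that the right-hand side has Dirichlet density $1$. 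By monotonicity of $\delta_{\inf}$ this gives $\delta_{\inf}(\underline{P_{K,f}^{\cC}})=1$, hence $\delta(\underline{P_{K,f}^{\cC}})=1$.

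For the second assertion, observe that $p\in\underline{P_{K,f}^{\cC}}$ means by definition $P_{K,p}\subset P_{K,f}^{\cC}$, so that $\underline{P_{K,f}^{\cC}}(K)=\bigcup_{p\in\underline{P_{K,f}^{\cC}}}P_{K,p}\subset P_{K,f}^{\cC}$. Applying Lemma \ref{5.4.5} to the extension $K/\bQ$ and the set $\underline{P_{K,f}^{\cC}}$ (which has density $1$ by the first assertion) yields $\delta(\underline{P_{K,f}^{\cC}}(K))=1$, and hence $\delta(P_{K,f}^{\cC})=1$.

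As for difficulties, there is essentially no obstacle once Proposition \ref{5.2} is in hand: the entire content is the local observation that $\mu_l\subset\bQ_p$ whenever $p\equiv 1\bmod l$, together with the trivial fact that passing to the maximal pro-$\cC$ extension only enlarges $K_\p$. The only points needing a little care are the harmless exclusion of $l=2$ (for $l=2$ the condition ``$p$ splits completely in $\bQ(\mu_2)=\bQ$'' is vacuous and does not force $p\neq 2$, so one cannot conclude $\p\notin P_{K,l}$), and keeping track of the operations $\underline{(\,\cdot\,)}$, $(\,\cdot\,)(K)$ and the passage between $\delta_{\inf}$ and $\delta$.
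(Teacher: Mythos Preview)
Your proof is correct and follows essentially the same approach as the paper: both establish the inclusion $\underline{P_{K,f}^{\cC}}\supset\bigcup_{l\in\Sigma(\cC)\setminus\{2\}}\cs(\bQ(\mu_l)/\bQ)$ via the local observation that $\mu_l\subset\bQ_p$ when $p$ splits completely in $\bQ(\mu_l)$, then apply Proposition~\ref{5.2} (over $\bQ$) and Lemma~\ref{5.4.5}. The paper routes the inclusion through the intermediate set $P_{\bQ,f}^{\cC}$, but this is only a cosmetic difference.
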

\begin{proof}
We have 
\begin{equation*}
\begin{split}
P_{K,f}^{\cC} = \cup_{l \in \Sigma(\cC)}P_{K,f}^{\cC,l} \supset \cup_{l \in \Sigma(\cC)}P_{\bQ,f}^{\cC,l}(K) = P_{\bQ,f}^{\cC}(K),
\end{split}
\end{equation*}
so that 
$\underline{P_{K,f}^{\cC}} \supset P_{\bQ,f}^{\cC}$.
Further, 
\begin{equation*}
\begin{split}
P_{\bQ,f}^{\cC} \supset 
\cup_{l \in \Sigma(\cC)\setminus\{ 2 \}}P_{\bQ,f}^{\cC,l}
\supset 
\cup_{l \in \Sigma(\cC)\setminus\{ 2 \}}\cs(\bQ(\mu_l)/\bQ).
\end{split}
\end{equation*}
Thus, the assertions follow from 
Proposition \ref{5.2} and Lemma \ref{5.4.5}.
\end{proof}

\section{The main results}
In this section, we prove the main theorems (Theorem \ref{6.1}, Theorem \ref{charthm} and Theorem \ref{relthm}) 
and their corollaries 
using the results obtained so far.


\begin{theorem}\label{6.1}
For $i=1,2$, let $K_i$ be a number field, and $\cC_i$ a nontrivial full class of finite groups.
Assume that $\delta_{\sup}(\Sigma(\cC_i)) > 0$ for one $i$.
Then 
the canonical map $\Iso(K_2^{\cC_2}/K_2, K_1^{\cC_1}/K_1) \to \Iso(G_{K_1}^{\cC_1}, G_{K_2}^{\cC_2})$ is bijective.
In particular, if $G_{K_1}^{\cC_1} \simeq G_{K_2}^{\cC_2}$, then $K_1 \simeq K_2$.
\end{theorem}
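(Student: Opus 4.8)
The plan is to obtain injectivity from Proposition \ref{1.5} and surjectivity from Proposition \ref{U4}, feeding the latter with the local correspondence of Theorem \ref{3.5} together with the density inputs of \S 4 and \S 6. If $\Iso(G_{K_1}^{\cC_1},G_{K_2}^{\cC_2})=\emptyset$ there is nothing to prove, so fix $\sigma\in\Iso(G_{K_1}^{\cC_1},G_{K_2}^{\cC_2})$; by Lemma \ref{2.0} we then have $\Sigma(\cC_1)=\Sigma(\cC_2)=:\Sigma$, and in particular $\delta_{\sup}(\Sigma)>0$. For injectivity, if $\tau,\tau'\in\Iso(K_2^{\cC_2}/K_2,K_1^{\cC_1}/K_1)$ induce the same $\sigma$, then $\tau'\tau^{-1}\in\Aut(K_1^{\cC_1})$ fixes $K_1$ and induces the trivial conjugation automorphism of $G_{K_1}^{\cC_1}$, hence $\tau=\tau'$ by Proposition \ref{1.5}.

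For surjectivity I would set $S_0\defeq\underline{P_{K_1,f}^{\cC_1}}\cap\cs(K_1/\bQ)\subset P_{\bQ,f}$ and verify the three hypotheses of Proposition \ref{U4} for this $S_0$. Condition (a), $\#\Sigma=\infty$, follows from $\delta_{\sup}(\Sigma)>0$ through Lemma \ref{incomm3} $(i)$, which even yields $\sum_{p\in\Sigma}p^{-1}=\infty$. Condition (c) is the crux: applying Proposition \ref{incomm5} $(iii)$ with $U_1=G_{K_1}^{\cC_1}$ (and $V_1$ trivial, so $L_1=K_1$, $L_2=K_2$, $M_i=K_i^{\cC_i}$ and $\overline{\sigma|_{U_1}}=\sigma$) produces, out of the local correspondence $\phi$ of Theorem \ref{3.5}, a local correspondence between $S_0(K_1)$ and $S_0(K_2)$ for $\sigma$ satisfying conditions $(\Char)$ and $(\Deg)$; here Proposition \ref{incomm5} $(ii)$ is what identifies $\underline{P_{K_1,f}^{\cC_1}}\cap\cs(K_1/\bQ)$ with $\underline{P_{K_2,f}^{\cC_2}}\cap\cs(K_2/\bQ)$, so that the two sides really are $S_0(K_1)$ and $S_0(K_2)$. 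Condition (b): since $\cs(K_1/\bQ)\setminus S_0\subset P_{\bQ,f}\setminus\underline{P_{K_1,f}^{\cC_1}}$ and $\sum_{p\in\Sigma}p^{-1}=\infty$, Proposition \ref{5.4} gives $\delta(\underline{P_{K_1,f}^{\cC_1}})=1$, whence $\delta_{\inf}(\cs(K_1/\bQ)\setminus S_0)=0$. Proposition \ref{U4} then supplies a $\tau\in\Iso(K_2^{\cC_2}/K_2,K_1^{\cC_1}/K_1)$ with $\sigma(g)=\tau^{-1}g\tau$ for all $g\in G_{K_1}^{\cC_1}$, i.e.\ a preimage of $\sigma$, which proves surjectivity. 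The final ``in particular'' assertion is then immediate: bijectivity forces $\Iso(K_2^{\cC_2}/K_2,K_1^{\cC_1}/K_1)\neq\emptyset$ once $G_{K_1}^{\cC_1}\simeq G_{K_2}^{\cC_2}$, and restricting any such isomorphism to $K_2$ gives $K_1\simeq K_2$.

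I expect the genuine obstacle to lie not in this assembly but in the two propositions being invoked: Proposition \ref{incomm5}, whose proof relies on the separatedness statement Proposition \ref{incomm4}, itself extracted from the density estimate $\delta_{\sup}(\Sigma)\le\prod_{l_0\in\Sigma}\frac{l_0-1}{l_0}$ of Proposition \ref{incomm2}; and Proposition \ref{U4}, a delicate modification of Uchida's group-ring argument. Within the present proof the only subtle point is that the set $S_0$ on which $(\Char)$ and $(\Deg)$ are known must have density-one complement among the split primes of $K_1$ in order to drive Uchida's argument, and this is precisely what the computation $\delta(\underline{P_{K,f}^{\cC}})=1$ of Proposition \ref{5.4} guarantees.
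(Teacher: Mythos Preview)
Your proof is correct and follows essentially the same route as the paper's own proof: reduce to Proposition \ref{U4} with $S_0=\underline{P_{K_1,f}^{\cC_1}}\cap\cs(K_1/\bQ)$, obtain condition (c) from Proposition \ref{incomm5} $(iii)$ (using $(ii)$ to identify the two sides), and obtain condition (b) from Proposition \ref{5.4} via Lemma \ref{incomm3} $(i)$. The only cosmetic difference is that you separate out injectivity via Proposition \ref{1.5}, whereas the paper absorbs both existence and uniqueness into the single invocation of Proposition \ref{U4}.
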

\begin{proof}
If $\Iso(G_{K_1}^{\cC_1}, G_{K_2}^{\cC_2}) = \emptyset$, 
then $\Iso(K_2^{\cC_2}/K_2, K_1^{\cC_1}/K_1) = \emptyset$, 
so that the assertion is trivial.
Assume $\Iso(G_{K_1}^{\cC_1}, G_{K_2}^{\cC_2}) \not= \emptyset$. Let $\sigma :G_{K_1}^{\cC_1}\isom G_{K_2}^{\cC_2}$ be any isomorphism.
It suffices to show that there exists a unique $\tau \in \Iso(K_2^{\cC_2}/K_2, K_1^{\cC_1}/K_1)$ such that $\sigma$ coincides with the isomorphism induced by $\tau$.
Write $\Sigma \defeq \Sigma(\cC_1) (= \Sigma(\cC_2)$ by Lemma \ref{2.0}) and $S_0 \defeq \underline{P_{K_1,f}^{\cC_1}} \cap \cs(K_1/\bQ) (= \underline{P_{K_2,f}^{\cC_2}} \cap \cs(K_2/\bQ)$ by Proposition \ref{incomm5} $(ii)$).
Let us 
prove that the conditions in Proposition \ref{U4} holds.
(a), (c) in Proposition \ref{U4} follow from the assumption, Proposition \ref{incomm5} $(iii)$, respectively.
Since 
$$\cs(K_1/\bQ) \setminus S_0 =\cs(K_1/\bQ) \cap (P_{\bQ,f} \setminus \underline{P_{K_1,f}^{\cC_1}}) \subset P_{\bQ,f} \setminus \underline{P_{K_1,f}^{\cC_1}},$$
we have 
$$\delta_{\sup}(\cs(K_1/\bQ) \setminus S_0) \leq \delta_{\sup}(P_{\bQ,f} \setminus \underline{P_{K_1,f}^{\cC_1}}) =0,$$
where the equality follows from \cite[Lemma 4.6]{Shimizu}, Lemma \ref{incomm3} $(i)$ and Proposition \ref{5.4}.
Thus, $\delta(\cs(K_1/\bQ) \setminus S_0) =0$, and hence (b) in Proposition \ref{U4} holds.
\end{proof}

\begin{rem}\label{6.4}
$(i)$
For $i=1,2$, let $K_i$ be a number field,  and $\cC_i$ a nontrivial full class of finite groups.
In this paper so far 
the question of whether 
the existence of an isomorphism between $G_{K_1}^{\cC_1}$ and $G_{K_2}^{\cC_2}$ 
implies 
$\cC_1 = \cC_2$
is not yet answered, 
even if it is induced by a field isomorphism.
This question is completely group-theoretical.

\noindent
$(ii)$
A full class $\cC$ is completely determined by $\Sigma(\cC)$ and the subclass $\cC'$ of $\cC$ consisting of all non-abelian finite simple groups in $\cC$.
Note that if $2 \not\in\Sigma(\cC)$, then $\cC'$ is empty by the Feit-Thompson theorem (cf. \cite[THEOREM]{Feit-Thompson}).
Since $\Sigma(\cC)$ can be recovered group-theoretically from $G_{K}^{\cC}$ by Lemma \ref{2.0}, 
the question in $(i)$ is positively answered 
if 
(for any full class $\cC$ and any number field $K$,)
any finite group in $\cC'$ is isomorphic to some subquotient of $G_{K}^{\cC}$.
To prove 
the latter condition, 
for example, 
it is sufficient 
to 
solve the inverse Galois problem for any finite group in $\cC'$ and $K$ (or a 
subextension of $K^{\cC}/K$), 
or to show the existence of 
a subquotient of $G_{K}^{\cC}$ which is 
a free pro-$\cC$ group of rank $\geq 2$ (cf. \cite[Proposition 17.6.2]{Fried-Jarden}).
\end{rem}

Theorem \ref{6.1} can be generalized to the following, 
which is also a generalization of \cite[THEOREM]{Uchida2}.

\begin{cor}\label{solv.cl.}
Let $\Sigma \subset P_{\bQ,f}$.
For $i=1,2$, let $K_i$ be a number field, and $\Omega_i$ a Galois extension of $K_i$ having no nontrivial abelian pro-$(\Sigma)$ extension.
Assume that $\delta_{\sup}(\Sigma) > 0$.
Then 
the canonical map $\Iso(\Omega_2/K_2, \Omega_1/K_1) \to \Iso(G(\Omega_1/ K_1), G(\Omega_2/ K_2))$ is bijective.
\end{cor}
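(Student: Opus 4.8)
The plan is to reduce to Theorem \ref{6.1} and then to extend the resulting field isomorphism from a suitable maximal prosolvable extension up to $\Omega_i$ by the method of the proof of \cite[THEOREM]{Uchida2}, of which this corollary is the pro-$(\Sigma)$ analogue. We may assume $\Iso(G(\Omega_1/K_1),G(\Omega_2/K_2))\neq\emptyset$ and fix $\sigma$ in it; it then suffices to show $\sigma$ is induced by a unique $\tau\in\Iso(\Omega_2/K_2,\Omega_1/K_1)$, the uniqueness giving injectivity and the existence surjectivity of the canonical map.

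First I would set $\cC\defeq(\Sigma)\cap\sol$, a nontrivial full class with $\Sigma(\cC)=\Sigma$. Since a nontrivial prosolvable group has nontrivial abelianization, the first layer of the derived series of any nontrivial prosolvable pro-$(\Sigma)$ extension of $\Omega_i$ would be a nontrivial abelian pro-$(\Sigma)$ extension of $\Omega_i$; as $\Omega_i$ has none, and as $G(K_i^{\cC}/K_i^{\cC}\cap\Omega_i)$ is a closed (hence prosolvable, pro-$(\Sigma)$) subgroup of $G_{K_i}^{\cC}$, the hypothesis forces $K_i^{\cC}\subset\Omega_i$. Then $K_i^{\cC}/K_i$ is the maximal pro-$\cC$ subextension of $\Omega_i/K_i$, so $G_{K_i}^{\cC}$ is the maximal pro-$\cC$ quotient of $G(\Omega_i/K_i)$ and the kernel $N_i$ of $G(\Omega_i/K_i)\twoheadrightarrow G_{K_i}^{\cC}$ is group-theoretic; hence $\sigma(N_1)=N_2$ and $\sigma$ induces $\sigma_0\colon G_{K_1}^{\cC}\isom G_{K_2}^{\cC}$. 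Since $\delta_{\sup}(\Sigma)>0$, Theorem \ref{6.1} applied with $\cC_i=\cC$ yields a unique $\tau_0\in\Iso(K_2^{\cC}/K_2,K_1^{\cC}/K_1)$ with $\sigma_0(g_1)=\tau_0^{-1}g_1\tau_0$.

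Next I would extend $\tau_0$ to an automorphism $\widetilde\tau$ of $\overline{\bQ}$, so that $\widetilde\tau(K_2^{\cC})=K_1^{\cC}$ and conjugation by $\widetilde\tau$ induces $\sigma_0$ on the pro-$\cC$ quotients, and then run the argument of the proof of \cite[THEOREM]{Uchida2} with $K_i^{\cC}$ playing the role of the maximal prosolvable extension: using that $\Omega_i$, and hence the finite layers of $\Omega_i/K_i^{\cC}$, admit no nontrivial abelian pro-$(\Sigma)$ cover, and — exactly as in the proof of Proposition \ref{U4} — choosing the auxiliary prime $p\in\Sigma$ with $p>[K:\bQ]$ (rather than with $p\equiv1$ modulo $[K:\bQ]$, which $\delta_{\sup}(\Sigma)>0$ does not supply) and invoking Proposition \ref{U1} to compensate, one adjusts $\widetilde\tau$ within its $G(\overline{\bQ}/K_1^{\cC})$-coset so that $\widetilde\tau(\Omega_2)=\Omega_1$ and $\tau\defeq\widetilde\tau|_{\Omega_2}$ induces $\sigma$ on every open subgroup of $G(\Omega_1/K_1)$, whence on $G(\Omega_1/K_1)$ itself by the evident analogue of Proposition \ref{U1}. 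For uniqueness, if $\tau,\tau'\in\Iso(\Omega_2/K_2,\Omega_1/K_1)$ both induce $\sigma$, then $\tau'\tau^{-1}$ is a central element of $G(\Omega_1/K_1)$; its image in $G_{K_1}^{\cC}$ is central, hence trivial by Corollary \ref{1.6}, so $\tau'\tau^{-1}\in N_1$, and one concludes $\tau'\tau^{-1}=1$ as in Proposition \ref{1.5}, using the Chebotarev density theorem and the triviality of the intersection of the decomposition groups of $\Omega_1/K_1$.

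I expect the main obstacle to be the extension step: transporting Uchida's solvably-closed argument through the pro-$(\Sigma)$ decoration and, above all, checking that the supply of auxiliary primes afforded only by $\delta_{\sup}(\Sigma)>0$ — rather than a full Chebotarev count of primes in a prescribed congruence class — still suffices. This is precisely where the modification of \cite{Uchida2} already embodied in Propositions \ref{U1} and \ref{U4} must be redeployed.
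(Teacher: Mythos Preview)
Your reduction to Theorem \ref{6.1} at the level of $K_i$ and the resulting $\tau_0\colon K_2^{\cC}\isom K_1^{\cC}$ is fine, but the extension step from $K_i^{\cC}$ to $\Omega_i$ is a genuine gap. The machinery of \cite{Uchida2} (and its refinement in Proposition \ref{U4}) builds field isomorphisms by working with \emph{finite} Galois extensions of a \emph{number field} base: one takes a finite layer $K'_i/K_i$, embeds it in a finite $K/\bQ$, constructs the auxiliary $\bF_p[H]$-module and the extension $L/\bQ$, and uses Chebotarev over number fields together with the local correspondence. None of this transports to the setting you describe, where the base $K_i^{\cC}$ is an infinite extension of $\bQ$; there is no Chebotarev density statement over $K_i^{\cC}$ of the required shape, and ``finite layers of $\Omega_i/K_i^{\cC}$'' are not number fields. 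Your phrase ``adjust $\widetilde\tau$ within its $G(\overline{\bQ}/K_1^{\cC})$-coset so that $\widetilde\tau(\Omega_2)=\Omega_1$ and $\tau$ induces $\sigma$'' hides the entire difficulty: there is no a priori reason such an adjustment exists, and Propositions \ref{U1} and \ref{U4} do not supply one.

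The paper's proof proceeds differently and avoids this problem entirely. Rather than applying Theorem \ref{6.1} once to $G(\Omega_i/K_i)^{\cC}=G_{K_i}^{\cC}$ and then trying to climb, it applies Theorem \ref{6.1} to \emph{every} open normal subgroup $U_i\subset G(\Omega_i/K_i)$: the same hypothesis on $\Omega_i$ forces $L_i^{\cC}\subset\Omega_i$, hence $U_i^{\cC}=G_{L_i}^{\cC}$, so $\sigma|_{U_1}$ induces an isomorphism $G_{L_1}^{\cC}\isom G_{L_2}^{\cC}$ and Theorem \ref{6.1} yields a field isomorphism $\tau'\colon L_2^{\cC}\isom L_1^{\cC}$. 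Faithfulness of the conjugation action (Proposition \ref{1.5}) then forces $\tau'$ to intertwine the full $G(L_i^{\cC}/K_i)$-actions, so $\tau'$ restricts to the desired isomorphism on every intermediate $L'_i$, and compactness of $G_\bQ$ assembles these into $\tau\in\Iso(\Omega_2/K_2,\Omega_1/K_1)$. The key idea you are missing is precisely that Theorem \ref{6.1} applies at \emph{each} finite level $L_i$, not just at $K_i$; this is what replaces the extension step you attempted.
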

\begin{proof}
If $\Iso(G(\Omega_1/ K_1), G(\Omega_2/ K_2)) = \emptyset$, 
then $\Iso(\Omega_2/K_2, \Omega_1/K_1) = \emptyset$, 
so that the assertion is trivial.
Assume $\Iso(G(\Omega_1/ K_1), G(\Omega_2/ K_2)) \not= \emptyset$. Let $\sigma :G(\Omega_1/ K_1)\isom G(\Omega_2/ K_2)$ be any isomorphism.
It suffices to show that there exists a unique $\tau \in \Iso(\Omega_2/K_2, \Omega_1/K_1)$ such that $\sigma$ coincides with the isomorphism induced by $\tau$.
Take any open normal subgroup $U_1$ 
of $G(\Omega_1/ K_1)$.
Set $U_2 \defeq \sigma(U_1)$.
For $i=1,2$, write $L_i$ for the finite Galois subextension of $\Omega_i/K_i$ corresponding to $U_i$.
For $i=1,2$, and for any open normal subgroup $U_i'$ of $G(\Omega_i/ K_i)$ containing $U_i$ with $\sigma(U_1') = U_2'$, write $L_i'$ for the finite Galois subextension of $L_i/K_i$ corresponding to $U_i'$.
We set 
\begin{equation*}
\mathfrak{A}_{U_1}\defeq 
\left\{ \tau \in G_\bQ \left|
\begin{array}{l}
\text{
for any $U_1'$, $U_2'$
as above, 
$\tau|_{L_2'} \in \Iso(L_2'/K_2, L_1'/K_1)$
and the isomorphism}\\ 
\text{$G(L_1'/K_1) \isom G(L_2'/K_2)$ induced by $\sigma$ 
coincides with the isomorphism}\\ 
\text{induced by $\tau|_{L_2'}$
}
\end{array}
\right.\right\}.
\end{equation*}
Note that $\mathfrak{A}_{U_1}$ is a closed subset of $G_\bQ$.
In order to prove 
the existence of $\tau$, 
it suffices to show that 
$\mathfrak{A}_{U_1} \neq \emptyset$ 
for any $U_1$.
Indeed, having shown this, 
we obtain $\cap_{U_1} \mathfrak{A}_{U_1} \neq \emptyset$ by the compactness of $G_\bQ$, and the restriction to $\Omega_2$ of any isomorphism in $\cap_{U_1} \mathfrak{A}_{U_1}$ induces $\sigma$.

Let $\cC$ be the full class of all finite solvable groups in $(\Sigma)$. Then $\Sigma(\cC) = \Sigma$.
Write $\sigma|_{U_1}^\cC \colon U_1^\cC\isom U_2^\cC$ for the maximal pro-$\cC$ quotient of $\sigma|_{U_1} \colon U_1\isom U_2$.
Since $\Omega_i$ has no nontrivial 
pro-$\cC$ 
extension, we have $U_i^\cC = G_{L_i}^{\cC}$.
Therefore, by the assumption and Theorem \ref{6.1}, there exists $\tau' \in G_\bQ$ 
such that 
$\tau'|_{L_2^{\cC}} \in\Iso(L_2^{\cC}/L_2, L_1^{\cC}/L_1)$
and 
$\sigma|_{U_1}^\cC$ coincides with the isomorphism induced by $\tau'|_{L_2^{\cC}}$.
Write $(\tau'|_{L_2^{\cC}})^\ast\colon \Aut(L_1^{\cC}) \isom \Aut(L_2^{\cC})$ for the isomorphism induced by $\tau'|_{L_2^{\cC}}$.
Then $\sigma|_{U_1}^\cC = (\tau'|_{L_2^{\cC}})^\ast|_{U_1^\cC}$.
By Proposition \ref{1.5}, 
the conjugation action 
on $U_2^\cC$ 
of the elements in $\Aut(L_2^{\cC})$ 
mapping $L_2$ to $L_2$ 
is faithful. 
Write $\overline{\sigma} \colon G(L_1^{\cC}/K_1) \isom G(L_2^{\cC}/K_2)$ for the isomorphism induced by $\sigma$.
For any $g_1 \in G(L_1^{\cC}/K_1)$ and any $u_2 \in U_2^\cC$, we have 
$$\overline{\sigma}(g_1) u_2 \overline{\sigma}(g_1)^{-1} 
= (\tau'|_{L_2^{\cC}})^\ast(g_1) u_2 (\tau'|_{L_2^{\cC}})^\ast(g_1)^{-1}.$$
Therefore, we obtain $\overline{\sigma} = (\tau'|_{L_2^{\cC}})^\ast|_{G(L_1^{\cC}/K_1)}$.
Hence 
$\tau'|_{L_2^{\cC}}$ is compatible with the actions of 
$G(L_2^{\cC}/K_2)$ and $G(L_1^{\cC}/K_1)$ 
on $L_2^{\cC}$ and $L_1^{\cC}$.
Thus, 
for any $U_1'$, $U_2'$ as above, 
$\tau'|_{L_2^{\cC}}$ induces an isomorphism between $(L_2^{\cC})^{G(L_2^{\cC}/L_2')} = L_2'$ and $(L_1^{\cC})^{G(L_1^{\cC}/L_1')} = L_1'$.
Therefore, we obtain 
$\tau' \in \mathfrak{A}_{U_1}$.
Further, by the faithfulness as above, $\tau'|_{L_2^{\cC}}$ is uniquely determined by the property $\overline{\sigma} = (\tau'|_{L_2^{\cC}})^\ast|_{G(L_1^{\cC}/K_1)}$.
The uniqueness of $\tau$ follows from this.
\end{proof}

\begin{rem}\label{rem.solv.cl.}
Let $\Sigma \subset P_{\bQ,f}$, and $\Omega$ a Galois extension of $K$ having no nontrivial abelian pro-$(\Sigma)$ extension.
Just as we extended Theorem \ref{6.1} to Corollary \ref{solv.cl.}, 
most results in this paper could 
be extended to certain generalizations where we replace 
$G_{K}^{\cC}$ by $G(\Omega/K)$, $\Sigma(\cC)$ by $\Sigma$ and so on, 
by appropriately modifying the definitions of the symbols and the proofs.\footnote{The assertion of Lemma \ref{2.0} can be generalized as follows: 
The maximal set $\Sigma \subset P_{\bQ,f}$ such that $\Omega$ has no nontrivial abelian pro-$(\Sigma)$ extension can be recovered group-theoretically from $G(\Omega/K)$.
The author does not know any way to prove this.}
The proof of Corollary \ref{solv.cl.} is reduced to Theorem \ref{6.1}, however, 
by 
a similar argument to the proof of Theorem \ref{6.1} 
where we use 
these generalized results instead, we could prove directly Corollary \ref{solv.cl.} without using Theorem \ref{6.1}.
\end{rem}

\begin{cor}\label{6.6}
Notations and assumptions are the same as in Theorem \ref{6.1}.
Further, assume that 
either $\cC_1 = \cC_2$ or 
$\Iso(G_{K_1}^{\cC_1}, G_{K_2}^{\cC_2}) \not= \emptyset$.
Then 
there exists a canonical bijection 
$\Iso(K_2, K_1) \to \OutIso(G_{K_1}^{\cC_1}, G_{K_2}^{\cC_2})$.
\end{cor}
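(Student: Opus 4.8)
The plan is to deduce the corollary from Theorem \ref{6.1} by passing to suitable quotients on both sides. Recall that the canonical map of Theorem \ref{6.1} sends $\tau \in \Iso(K_2^{\cC_2}/K_2, K_1^{\cC_1}/K_1)$ to the isomorphism $\sigma_\tau \in \Iso(G_{K_1}^{\cC_1}, G_{K_2}^{\cC_2})$ defined by $g \mapsto \tau^{-1}g\tau$ (under the identifications $G_{K_i}^{\cC_i} = G(K_i^{\cC_i}/K_i)$). For $h \in G_{K_2}^{\cC_2}$ one computes directly that $\sigma_{\tau\circ h} = c_h \circ \sigma_\tau$, where $c_h \in \Inn(G_{K_2}^{\cC_2})$ denotes $x \mapsto h^{-1}xh$; since $h\mapsto c_h$ is a surjective group homomorphism onto $\Inn(G_{K_2}^{\cC_2})$, the bijection of Theorem \ref{6.1} is equivariant for the precomposition action of $G_{K_2}^{\cC_2}$ on the source and the $\Inn(G_{K_2}^{\cC_2})$-action on the target. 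Hence it induces a bijection from $\Iso(K_2^{\cC_2}/K_2, K_1^{\cC_1}/K_1)/G_{K_2}^{\cC_2}$ onto $\OutIso(G_{K_1}^{\cC_1}, G_{K_2}^{\cC_2})$, and the task reduces to identifying this source quotient with $\Iso(K_2, K_1)$ via restriction of field isomorphisms to $K_2$.

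The one input I would set up carefully is: every $f \in \Iso(K_2, K_1)$ extends to some $\tilde f \in \Iso(K_2^{\cC_2}/K_2, K_1^{\cC_1}/K_1)$, and the set of such extensions is a single $G_{K_2}^{\cC_2}$-orbit under precomposition. Extending $f$ to an automorphism $\overline f$ of $\overline{\bQ}$ and using that $K_2^{\cC_2}/K_2$ is Galois, the image $\overline f(K_2^{\cC_2})$ equals the maximal pro-$\cC_2$ Galois extension $K_1^{\cC_2}$ of $K_1$ and is independent of the choice of $\overline f$; so it is enough to know $K_1^{\cC_2} = K_1^{\cC_1}$. This is clear if $\cC_1 = \cC_2$; and if instead $\Iso(G_{K_1}^{\cC_1}, G_{K_2}^{\cC_2}) \neq \emptyset$, then Theorem \ref{6.1} provides some $\tau \in \Iso(K_2^{\cC_2}/K_2, K_1^{\cC_1}/K_1)$, and applying the same observation to $f' \defeq \tau|_{K_2}$ gives $K_1^{\cC_2} = \overline{f'}(K_2^{\cC_2}) = \tau(K_2^{\cC_2}) = K_1^{\cC_1}$, the middle equality holding because $K_2^{\cC_2}/K_2$ is Galois. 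This step — where both the extra hypothesis of the corollary and the Galois property of the extensions are used — is the one I expect to be the main (and essentially only non-formal) obstacle; the orbit assertion is then immediate, since if $\tilde f_1, \tilde f_2$ both extend $f$ then $\tilde f_1^{-1}\tilde f_2$ fixes $K_2$ pointwise and thus lies in $G_{K_2}^{\cC_2}$. In particular $\Iso(K_2, K_1)$ is empty precisely when $\OutIso(G_{K_1}^{\cC_1}, G_{K_2}^{\cC_2})$ is (using Theorem \ref{6.1}), so the degenerate case needs no further comment.

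With this in hand, define $\Psi \colon \Iso(K_2, K_1) \to \OutIso(G_{K_1}^{\cC_1}, G_{K_2}^{\cC_2})$ by $\Psi(f) \defeq [\sigma_{\tilde f}]$ for any extension $\tilde f$ of $f$; this is well defined by the equivariance recorded above, and is manifestly canonical. For surjectivity, lift a given class to $\sigma \in \Iso(G_{K_1}^{\cC_1}, G_{K_2}^{\cC_2})$, write $\sigma = \sigma_\tau$ via Theorem \ref{6.1}, and observe $\Psi(\tau|_{K_2}) = [\sigma_\tau] = [\sigma]$. For injectivity, if $\Psi(f_1) = \Psi(f_2)$ and $\tilde f_1, \tilde f_2$ are chosen extensions, then $\sigma_{\tilde f_2} = c_h \circ \sigma_{\tilde f_1} = \sigma_{\tilde f_1 \circ h}$ for some $h \in G_{K_2}^{\cC_2}$, so the injectivity of the canonical map in Theorem \ref{6.1} forces $\tilde f_2 = \tilde f_1 \circ h$, and restricting to $K_2$ gives $f_2 = \tilde f_2|_{K_2} = \tilde f_1|_{K_2} = f_1$. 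Therefore $\Psi$ is the desired canonical bijection.
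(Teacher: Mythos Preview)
Your proof is correct and follows essentially the same route as the paper: pass the bijection of Theorem \ref{6.1} to quotients by the $G_{K_2}^{\cC_2}$-action (precomposition on field isomorphisms, inner automorphisms on group isomorphisms), and identify the source quotient with $\Iso(K_2,K_1)$ via restriction. Your treatment is in fact more explicit than the paper's in justifying the identification $\Iso(K_2^{\cC_2}/K_2, K_1^{\cC_1}/K_1)/G_{K_2}^{\cC_2} \simeq \Iso(K_2,K_1)$, since you spell out why any $f\in\Iso(K_2,K_1)$ extends --- namely $K_1^{\cC_2}=K_1^{\cC_1}$, established via the extra hypothesis --- whereas the paper simply asserts this bijection and handles the empty case separately.
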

\begin{proof}
Assume that $\Iso(G_{K_1}^{\cC_1}, G_{K_2}^{\cC_2}) \not= \emptyset$.
By Theorem \ref{6.1}, we have $\Iso(K_2^{\cC_2}/K_2, K_1^{\cC_1}/K_1) \not= \emptyset$.
$G_{K_2}^{\cC_2}$ acts on $\Iso(K_2^{\cC_2}/K_2, K_1^{\cC_1}/K_1)$ by the rule $\sigma(\phi) \defeq \phi\circ\sigma^{-1}$.
Then we have a bijection 
$\Iso(K_2^{\cC_2}/K_2, K_1^{\cC_1}/K_1)/G_{K_2}^{\cC_2} \simeq \Iso(K_2, K_1)$.
By Theorem \ref{6.1}, we have a bijection 
$\Iso(K_2^{\cC_2}/K_2, K_1^{\cC_1}/K_1) \to \Iso(G_{K_1}^{\cC_1}, G_{K_2}^{\cC_2})$, which is easily seen to be $G_{K_2}^{\cC_2}$-equivariant if we let $G_{K_2}^{\cC_2}$ act by inner automorphisms on the right-hand side (cf. Notations).
Thus, factoring out by the $G_{K_2}^{\cC_2}$-actions,
we obtain the required bijection.

Assume that $\cC_1 = \cC_2$
 and $\Iso(G_{K_1}^{\cC_1}, G_{K_2}^{\cC_2}) = \emptyset$.
If $\Iso(K_2, K_1) \not= \emptyset$, then $\Iso(K_2^{\cC_2}/K_2, K_1^{\cC_1}/K_1) \not= \emptyset$, so that $\Iso(G_{K_1}^{\cC_1}, G_{K_2}^{\cC_2}) \not= \emptyset$, 
a contradiction. Thus, $\Iso(K_2, K_1) = \emptyset$. Then the assertion is trivial.
\end{proof}

\begin{cor}\label{6.7}
Assume that $\delta_{\sup}(\Sigma(\cC)) > 0$.
Then 
there is a canonical isomorphism 
$\Aut(K) \isom \Out(G_{K}^{\cC})$.
Further, assume that $\Aut(K)$ is trivial.
Then the canonical homomorphism 
$G_{K}^{\cC} \to \Aut(G_{K}^{\cC})$ induced by the conjugation action is an isomorphism, 
in particular, 
all automorphisms of $G_{K}^{\cC}$ are inner.
\end{cor}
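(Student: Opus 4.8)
The plan is to obtain the first assertion by specializing Corollary \ref{6.6} to $K_1 = K_2 = K$ and $\cC_1 = \cC_2 = \cC$ (so the hypothesis $\cC_1 = \cC_2$ of Corollary \ref{6.6} is automatic, and the hypothesis of Theorem \ref{6.1} reads $\delta_{\sup}(\Sigma(\cC)) > 0$, which holds by assumption), and then to deduce the second assertion from the resulting triviality of $\Out(G_K^{\cC})$ together with the triviality of the center of $G_K^{\cC}$ provided by Corollary \ref{1.6}.

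Concretely, first I would make the canonical isomorphism explicit. Every $\bar\tau \in \Aut(K)$ extends to an automorphism of $\overline{\bQ}$, which carries $K^{\cC}$ --- being the maximal pro-$\cC$ Galois extension of $\bar\tau(K) = K$ --- onto itself, hence restricts to some $\tilde\tau \in \Iso(K^{\cC}/K, K^{\cC}/K)$ lifting $\bar\tau$. Sending $\bar\tau$ to the class in $\Out(G_K^{\cC})$ of the automorphism $g \mapsto \tilde\tau g \tilde\tau^{-1}$ of $G_K^{\cC} = G(K^{\cC}/K)$ does not depend on the chosen lift (two lifts differ by an element of $G(K^{\cC}/K)$, which alters the induced automorphism only by an inner one) and visibly respects composition, so it defines a group homomorphism $\Phi \colon \Aut(K) \to \Out(G_K^{\cC})$. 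Its bijectivity is what Corollary \ref{6.6} yields in this case; alternatively one can argue directly: surjectivity follows from Theorem \ref{6.1}, since any $\psi \in \Aut(G_K^{\cC})$ equals $g \mapsto \tau^{-1} g \tau$ for some $\tau \in \Iso(K^{\cC}/K, K^{\cC}/K)$, so $[\psi] = \Phi(\tau^{-1}|_K)$; and injectivity follows from Proposition \ref{1.5}, since if $\Phi(\bar\tau)$ is trivial then, after multiplying a lift $\tilde\tau$ of $\bar\tau$ by a suitable element of $G(K^{\cC}/K)$, we may assume the conjugation action of $\tilde\tau$ on $G_K^{\cC}$ is trivial, and as $\tilde\tau$ fixes $K$ setwise, Proposition \ref{1.5} gives $\tilde\tau = \id$, whence $\bar\tau = \id$.

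For the second assertion, assume $\Aut(K)$ is trivial. Then $\Out(G_K^{\cC})$ is trivial by the isomorphism just established, that is, $\Aut(G_K^{\cC}) = \Inn(G_K^{\cC})$, so every automorphism of $G_K^{\cC}$ is inner and the conjugation homomorphism $G_K^{\cC} \to \Aut(G_K^{\cC})$ is surjective; its kernel is the center of $G_K^{\cC}$, which is trivial by Corollary \ref{1.6}, so it is an isomorphism. The only point requiring a little care --- and which I expect to be the main, and rather mild, obstacle --- is promoting the set-theoretic bijection of Corollary \ref{6.6} to a group isomorphism; this amounts to keeping track of the direction of the conjugation action and is settled by the explicit description of $\Phi$ above.
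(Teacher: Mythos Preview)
Your proposal is correct and follows essentially the same route as the paper: the first assertion is the specialization of Corollary \ref{6.6} to $K_1=K_2=K$, $\cC_1=\cC_2=\cC$, and the second assertion is obtained from $\Out(G_K^{\cC})=1$ together with Corollary \ref{1.6}. Your explicit description of the map $\Phi$ and the check that it is a group homomorphism (rather than merely the canonical bijection of Corollary \ref{6.6}) is a useful addition that the paper leaves implicit.
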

\begin{proof}
The first assertion follows immediately from Corollary \ref{6.6}.
Therefore, 
if $\Aut(K)$ is trivial, then 
$\Inn(G_{K}^{\cC})=\Aut(G_{K}^{\cC})$.
By Corollary \ref{1.6}, the canonical surjection 
$G_{K}^{\cC} \to \Inn(G_{K}^{\cC})$ is an isomorphism.
Thus, the second assertion follows.
\end{proof}


\begin{theorem}\label{charthm}
For $i=1,2$, let $K_i$ be a number field, $\cC_i$ a nontrivial full class of finite groups, 
$S_i \subset P_{K_i,f}$, 
and $\sigma :G_{K_1}^{\cC_1}\isom G_{K_2}^{\cC_2}$ an isomorphism.
Assume that the following conditions hold:
\begin{itemize}

\item[(a)]
For one $i$, 
$2 \in\Sigma(\cC_i)$ 
or $K_i$ has a complex prime.

\item[(b)]
There exists a 
local correspondence between $S_1$ and $S_2$ for $\sigma$, 
satisfying condition $(\Char)$.

\item[(c)]
For any finite Galois subextension $L_1$ of $K_1^{\cC_1}/K_1$, 
$\delta_{\sup}(\underline{S_1} \cap \cs(L_1/\bQ)) > 0$.

\item[(d)]
$\underline{S_2} \cap \cs(K_2/\bQ) \not= \emptyset$
.
\end{itemize}
Then 
there exists a unique $\tau \in \Iso(K_2^{\cC_2}/K_2, K_1^{\cC_1}/K_1)$ such that $\sigma$ coincides with the isomorphism induced by $\tau$. 
\end{theorem}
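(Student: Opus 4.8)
The plan is to reduce to Proposition \ref{4.7} by a compactness argument over finite levels, in the spirit of the proof of Corollary \ref{solv.cl.}. Uniqueness is immediate: if $\tau,\tau'\in\Iso(K_2^{\cC_2}/K_2,K_1^{\cC_1}/K_1)$ both induce $\sigma$, then $\tau\tau'^{-1}\in\Aut(K_1^{\cC_1})$ fixes $K_1$ and acts trivially on $G_{K_1}^{\cC_1}$ by conjugation, so $\tau=\tau'$ by Proposition \ref{1.5}. For existence, put $\Sigma\defeq\Sigma(\cC_1)(=\Sigma(\cC_2))$ and fix $l\in\Sigma$. Let $\mathcal U$ be the family of open normal subgroups $U_1$ of $G_{K_1}^{\cC_1}$; in the case that neither $K_1$ nor $K_2$ has a complex prime (so $2\in\Sigma$ by (a)) we restrict $\mathcal U$ to those $U_1$ contained in $G(K_1^{\cC_1}/K_1(\sqrt{-1}))$, which is still cofinal. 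For $U_1\in\mathcal U$ set $U_2\defeq\sigma(U_1)$ and $L_i\defeq(K_i^{\cC_i})^{U_i}$; then $L_1$ or $L_2$ has a complex prime. Define $\mathfrak A_{U_1}\subset G_\bQ$ to be the set of $\tau$ such that, for every open normal $U_1'$ of $G_{K_1}^{\cC_1}$ containing $U_1$, with $U_2'\defeq\sigma(U_1')$ and corresponding subfields $L_i'$ of $K_i^{\cC_i}/K_i$, one has $\tau|_{L_2'}\in\Iso(L_2'/K_2,L_1'/K_1)$ and the isomorphism $G(L_1'/K_1)\isom G(L_2'/K_2)$ induced by $\sigma$ coincides with the one induced by $\tau|_{L_2'}$. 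Each $\mathfrak A_{U_1}$ is closed in $G_\bQ$ and $\mathfrak A_{U_1\cap U_1''}\subset\mathfrak A_{U_1}\cap\mathfrak A_{U_1''}$; granting that each $\mathfrak A_{U_1}$ is nonempty, compactness of $G_\bQ$ yields $\tau\in\bigcap_{U_1\in\mathcal U}\mathfrak A_{U_1}$, and (taking $U_1'=G_{K_1}^{\cC_1}$, and passing to the limit over $U_1'$) $\tau$ restricts to an element of $\Iso(K_2^{\cC_2}/K_2,K_1^{\cC_1}/K_1)$ inducing $\sigma$.

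The heart of the matter is $\mathfrak A_{U_1}\neq\emptyset$ for fixed $U_1\in\mathcal U$, which I will obtain by applying Proposition \ref{4.7} with $M_i\defeq K_i^{\cC_i}$, the given $\sigma$, the subgroups $U_i$, and the prime $l$. Hypotheses (c) and (d) give $\underline{S_1}\cap\cs(K_1/\bQ)\neq\emptyset$ (use (c) with $L_1=K_1$) and $\underline{S_2}\cap\cs(K_2/\bQ)\neq\emptyset$, so Lemma \ref{CharDeg} applies to the local correspondence of hypothesis (b); in particular $\underline{S_1}\cap\cs(L_1/\bQ)=\underline{S_2}\cap\cs(L_2/\bQ)$ by Lemma \ref{CharDeg}(i). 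Let $\Sigma_0\defeq(\underline{S_1}\cap\cs(L_1/\bQ))\setminus\{l\}$; its primes are unramified in and split completely in both $L_1$ and $L_2$, and $\Sigma_0$ is contained in $(\underline{S_1}\cap\cs(K_1/\bQ))\setminus(\{l\}\cup\Ram(L_1L_2/\bQ))$, so restricting the local correspondence of Lemma \ref{CharDeg}(iii) to the primes above $\Sigma_0$ (which, being compatible with residue characteristics, lands in $\Sigma_0(L_2)$) gives, with $S_i\defeq\Sigma_0(L_i)$, a weak local correspondence between $S_1$ and $S_2$ for the isomorphism $\Gamma_{L_1,l}\isom\Gamma_{L_2,l}$ induced by $\sigma|_{U_1}$ satisfying $(\Char)$ and $(\Frob)$; this is condition (b) of Proposition \ref{4.7}. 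Condition (a) of Proposition \ref{4.7} holds since $l\in\Sigma$ makes $L_i^{(\infty,l)}$ a pro-$\cC_i$ extension of $L_i$, hence contained in $L_i^{\cC_i}=K_i^{\cC_i}$; condition (d) holds by the choice of $\mathcal U$. For condition (c), take $L$ to be the Galois closure of $L_1L_2$ over $\bQ$: every prime in $\Sigma_0$ splits completely in $L_1$ and in $L_2$, hence in $L$, whence $\delta_{\sup}(S_1(L))=[L:\bQ]\cdot\delta_{\sup}(\underline{S_1}\cap\cs(L_1/\bQ))>0$ by hypothesis (c). Proposition \ref{4.7} then furnishes $\tau\in G(L/\bQ)$ whose restrictions to the fields $L_2'$ (for all open normal $U_1'$ of $G_{K_1}^{\cC_1}$ containing $U_1$) have exactly the properties required in the definition of $\mathfrak A_{U_1}$; any lift of $\tau$ to $G_\bQ$ therefore lies in $\mathfrak A_{U_1}$.

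I expect the main obstacle to be condition (c) of Proposition \ref{4.7}: one must upgrade the positivity of $\delta_{\sup}(\underline{S_1}\cap\cs(L_1/\bQ))$ supplied by hypothesis (c) to positivity of $\delta_{\sup}(S_1(L))$ for a Galois closure $L$ that also contains the a priori unrelated field $L_2\subset K_2^{\cC_2}$. The two ingredients that make this work are the identity $\underline{S_1}\cap\cs(L_1/\bQ)=\underline{S_2}\cap\cs(L_2/\bQ)$ from Lemma \ref{CharDeg}(i) and the elementary fact that a rational prime splitting completely in two number fields splits completely in the Galois closure of their compositum. A secondary point is ensuring the complex-prime hypothesis (d) of Proposition \ref{4.7}: when only $2\in\Sigma$ this forces the restriction of $\mathcal U$ to subgroups of $G(K_1^{\cC_1}/K_1(\sqrt{-1}))$, and one must check that this restricted family is still cofinal so that the compactness argument reconstructs all of $K_2^{\cC_2}$.
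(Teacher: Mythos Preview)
Your proof is correct and follows essentially the same approach as the paper: uniqueness via Proposition~\ref{1.5}, a compactness argument over the closed sets $\mathfrak A_{U_1}\subset G_\bQ$, and nonemptiness of each $\mathfrak A_{U_1}$ via Proposition~\ref{4.7}, with the passage to complex primes via $K_1(\sqrt{-1})$ when $2\in\Sigma$, the verification of condition~(b) of Proposition~\ref{4.7} through Lemma~\ref{CharDeg}(iii), and the density estimate for condition~(c) through Lemma~\ref{CharDeg}(i). The only cosmetic difference is that you work with the set $\Sigma_0=(\underline{S_1}\cap\cs(L_1/\bQ))\setminus\{l\}$ while the paper uses the slightly larger $(\underline{S_1}\cap\cs(K_1/\bQ))\setminus(\{l\}\cup\Ram(L_1L_2/\bQ))$; since your $\Sigma_0$ is contained in the latter and already carries the full density $\delta_{\sup}(\underline{S_1}\cap\cs(L_1/\bQ))$, the two choices lead to the same verification.
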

\begin{proof}
The uniqueness of $\tau$ follows from Proposition \ref{1.5}. Let us prove the existence of $\tau$.
Write $\Sigma \defeq \Sigma(\cC_1) (= \Sigma(\cC_2)$ by Lemma \ref{2.0}).
Take any open normal subgroup $U_1$ 
of $G_{K_1}^{\cC_1}$.
Set $U_2 \defeq \sigma(U_1)$.
For $i=1,2$, write $L_i$ for the finite Galois subextension of $K_i^{\cC_i}/K_i$ corresponding to $U_i$.
For $i=1,2$, and for any open normal subgroup $U_i'$ of $G_{K_i}^{\cC_i}$ containing $U_i$ with $\sigma(U_1') = U_2'$, write $L_i'$ for the finite Galois subextension of $L_i/K_i$ corresponding to $U_i'$.
We set 
\begin{equation*}
\mathfrak{A}_{U_1}\defeq 
\left\{ \tau \in G_\bQ \left|
\begin{array}{l}
\text{
for any $U_1'$, $U_2'$
as above, 
$\tau|_{L_2'} \in \Iso(L_2'/K_2, L_1'/K_1)$
and the isomorphism}\\ 
\text{$G(L_1'/K_1) \isom G(L_2'/K_2)$ induced by $\sigma$ 
coincides with the isomorphism}\\ 
\text{induced by $\tau|_{L_2'}$
}
\end{array}
\right.\right\}.
\end{equation*}
Note that $\mathfrak{A}_{U_1}$ is a closed subset of $G_\bQ$.
In order to prove 
the existence of $\tau$, 
it suffices to show that 
$\mathfrak{A}_{U_1} \neq \emptyset$ 
for any $U_1$.
Indeed, having shown this, 
we obtain $\cap_{U_1} \mathfrak{A}_{U_1} \neq \emptyset$ by the compactness of $G_\bQ$, and the restriction to  $K_2^{\cC_2}$ of any isomorphism in $\cap_{U_1} \mathfrak{A}_{U_1}$ induces $\sigma$.
Moreover, by making $U_1$ be contained in the open subgroup of $G_{K_1}^{\cC_1}$ corresponding to $K_1(\sqrt{-1})$ if $2 \in\Sigma$, 
or by (a) otherwise, we may assume that $L_i$ has a complex prime for one $i$.

Take $l \in \Sigma$. Set $L \defeq \widetilde{L_1}\widetilde{L_2}$ and 
$S_i' \defeq ((\underline{S_i} \cap \cs(K_i/\bQ)) \setminus (\{ l \} \cup \Ram(L_1L_2/\bQ) ))(L_i)$ for $i=1,2$.
Let us 
prove that $\sigma :G_{K_1}^{\cC_1}\isom G_{K_2}^{\cC_2}$, open normal subgroups $U_1 \subset G_{K_1}^{\cC_1}$, $U_2\subset G_{K_2}^{\cC_2}$, 
and subsets $S_1' \subset P_{L_1,f}$, $S_2' \subset P_{L_2,f}$
satisfy the conditions in Proposition \ref{4.7}. 
Conditions (a), (d) in Proposition \ref{4.7} hold clearly.
Condition (b) in Proposition \ref{4.7} follows from 
(b), (c), (d) in the assumption and Lemma \ref{CharDeg} $(iii)$.
By Lemma \ref{CharDeg} $(i)$, we have 
$$\underline{S_1} \cap \cs(L/\bQ) = \underline{S_1} \cap \cs(L_1/\bQ) \cap \cs(L_2/\bQ) = \underline{S_2} \cap \cs(L_2/\bQ) = \underline{S_1} \cap \cs(L_1/\bQ).$$ 
Therefore, 
\begin{equation*}
\begin{split} 
\delta_{\sup}(S_1'(L)) 
&= \delta_{\sup}((\underline{S_1} \cap \cs(K_1/\bQ))(L))\\
&\geq \delta_{\sup}((\underline{S_1} \cap \cs(L/\bQ))(L))\\
&= [L:\bQ]\delta_{\sup}(\underline{S_1} \cap \cs(L/\bQ))\\
&= [L:\bQ]\delta_{\sup}(\underline{S_1} \cap \cs(L_1/\bQ))\\
&>0,
\end{split}
\end{equation*}
where the second equality and the last inequality 
follow from \cite[Lemma 4.1]{Shimizu} and (c) in the assumption, 
respectively.
Thus, condition (c) in Proposition \ref{4.7} holds.
By Proposition \ref{4.7}, 
there exists $\tau \in G(L/\bQ)$ such that 
$\tau|_{L_2} \in \Iso(L_2/K_2, L_1/K_1)$ and the isomorphism $G(L_1/K_1) \isom G(L_2/K_2)$ induced by $\sigma$ coincides with the isomorphism induced by $\tau|_{L_2}$, 
and in particular, 
by taking $\widetilde{\tau} \in G_\bQ$ with $\widetilde{\tau}|_{L}=\tau$, we obtain 
$\widetilde{\tau} \in \mathfrak{A}_{U_1}$.
\end{proof}

\begin{rem}\label{charrem}
We use the notations in Theorem \ref{charthm}.
Write $\Sigma \defeq \Sigma(\cC_1) (= \Sigma(\cC_2)$ by Lemma \ref{2.0}).
If $\delta_{\sup}(\Sigma) > 0$, then, by Theorem \ref{6.1}, there exists a unique $\tau \in \Iso(K_2^{\cC_2}/K_2, K_1^{\cC_1}/K_1)$ such that $\sigma$ coincides with the isomorphism induced by $\tau$. 
Let us consider the case 
that $\delta(\Sigma) = 0$ and that condition (a) in Theorem \ref{charthm} holds.
In this case, 
the Dirichlet density of the set $
P_{L_1,f}^{\cC_1} \cap \Sigma(L_1)$ in Theorem \ref{3.5} 
is $0$, 
so that 
if we set $S_i \defeq P_{L_i,f}^{\cC_i} \cap \Sigma(L_i)$ for $i=1,2$, then 
condition (c) in Theorem \ref{charthm} does not hold.
The author does not know any other way to show the existence of a local correspondence 
for $\sigma$ satisfying condition $(\Char)$.
To avoid this problem, we assume additional conditions as follows.


\end{rem}

\begin{theorem}\label{relthm}
For $i=1,2$, let $K_i$ be a number field, $\cC_i$ a nontrivial full class of finite groups, 
$\Sigma' \subset P_{\bQ,f}$, $l_0 \in 
\Sigma(\cC_1)\cap\Sigma(\cC_2)
$, 
and $\sigma :G_{K_1}^{\cC_1}\isom G_{K_2}^{\cC_2}$ an isomorphism.
Assume that the following conditions hold:
\begin{itemize}

\item[(a)]
For one $i$, 
$2 \in\Sigma(\cC_i)$ 
or $K_i$ has a complex prime.

\item[(b)]
$\delta_{\sup}(\Sigma'_{l_0}) > 0$.

\item[(c)]
$\sigma$ induces an isomorphism $$\Ker(G_{K_1}^{\cC_1} \twoheadrightarrow G_{K_1}^{(l_0)} \overset{\chi_{K_1,l}^{(l_0)}}\rightarrow {\bZ_l}^{\ast, (l_0)} \twoheadrightarrow {\bZ_l}^{\ast, (l_0)}/{\bZ_l}^{\ast, (l_0), l_0}) \isom \Ker(G_{K_2}^{\cC_2} \twoheadrightarrow G_{K_2}^{(l_0)} \overset{\chi_{K_2,l}^{(l_0)}}\rightarrow {\bZ_l}^{\ast, (l_0)} \twoheadrightarrow {\bZ_l}^{\ast, (l_0)}/{\bZ_l}^{\ast, (l_0), l_0})$$ for each $l \in \Sigma'_{l_0}$.

\end{itemize}
Then 
there exists a unique $\tau \in \Iso(K_2^{\cC_2}/K_2, K_1^{\cC_1}/K_1)$ such that $\sigma$ coincides with the isomorphism induced by $\tau$. 
\end{theorem}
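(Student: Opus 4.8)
The plan is to reduce Theorem \ref{relthm} to Theorem \ref{charthm} by manufacturing, out of hypothesis (c) and Proposition \ref{relloccor}, a pair of sets of primes to which Theorem \ref{charthm} applies. The uniqueness of $\tau$ is immediate from Proposition \ref{1.5}, so only existence is at stake. Write $\Sigma\defeq\Sigma(\cC_1)\ (=\Sigma(\cC_2)$ by Lemma \ref{2.0}$)$; since $l_0\in\Sigma$, the maximal pro-$l_0$ quotient $G_{K_i}^{(l_0)}$ is a quotient of $G_{K_i}^{\cC_i}$, and for $l\in\Sigma'_{l_0}$ (so $l\neq l_0$) the homomorphism appearing in (c) is exactly the $l$-component $r_{i,l}$ of the canonical homomorphism $G_{K_i}^{\cC_i}\to G(\bQ_{\Sigma',l_0}/\bQ)=\prod_{l\in\Sigma'_{l_0}}\bF_l^{\ast}/\bF_l^{\ast,l_0}$, because $\bF_l^{\ast}/\bF_l^{\ast,l_0}$ is the maximal exponent-$l_0$ quotient of $\bZ_l^{\ast}$ when $l\neq l_0$.

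The first step is to extract a \emph{uniform} exponent. Fix $l\in\Sigma'_{l_0}$. Since $\bF_l^{\ast}/\bF_l^{\ast,l_0}$ is cyclic of order $l_0$, the map $r_{i,l}$ is either trivial (when the degree-$l_0$ subextension of $\bQ(\mu_l)/\bQ$ lies in $K_i$) or surjective. The equality $\sigma(\Ker r_{1,l})=\Ker r_{2,l}$ from (c) then forces: either $r_{1,l}=r_{2,l}=0$, or both are surjective and $r_{2,l}\circ\sigma=a_l\cdot r_{1,l}$ for a unique $a_l\in\bF_{l_0}^{\ast}$. Setting $a_l\defeq1$ in the first case yields a finite partition $\Sigma'_{l_0}=\coprod_{a\in\bF_{l_0}^{\ast}}\{l\in\Sigma'_{l_0}\mid a_l=a\}$, and by finite subadditivity of $\delta_{\sup}$ together with (b), some $a\in\bF_{l_0}^{\ast}$ has $\delta_{\sup}(\Sigma'')>0$ for $\Sigma''\defeq\{l\in\Sigma'_{l_0}\mid a_l=a\}$; note $\Sigma''_{l_0}=\Sigma''$. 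Because the $a$-th power automorphism $(\ )^{a}$ of $G(\bQ_{\Sigma'',l_0}/\bQ)=\prod_{l\in\Sigma''}\bF_l^{\ast}/\bF_l^{\ast,l_0}$ acts as $a$-th power on each factor, the commutativity of the diagram in Proposition \ref{relloccor} (with $\Sigma'$ replaced by $\Sigma''$) follows componentwise from the definition of $a$.

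Next I apply Proposition \ref{relloccor} with $U_1=G_{K_1}^{\cC_1}$ (trivial auxiliary subgroups, so $L_i=K_i$ and the induced isomorphism is $\sigma$ itself), with $\Sigma''$ and $a$ as above. This produces finite subsets $T_i\subset P_{K_i,f}^{\cC_i}$ for which, setting $S_i\defeq A_i\setminus T_i\subset P_{K_i,f}$ (with $A_i$ as in Proposition \ref{relloccor}(i)), the restricted local correspondence $\phi|_{S_1(K_1^{\cC_1})}$ of Theorem \ref{3.5} is a local correspondence between $S_1$ and $S_2$ for $\sigma$ satisfying condition $(\Char)$, and in addition $\underline{S_i}\cap\cs(K_i/\bQ)=\underline{P_{K_i,f}^{\cC_i}\setminus T_i}\cap\cs(K_i/\bQ)$ for $i=1,2$. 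It then remains to verify that $\sigma$, $\cC_1$, $\cC_2$, $S_1$, $S_2$ satisfy hypotheses (a)--(d) of Theorem \ref{charthm}. Hypothesis (a) is (a) here, and (b) is the local correspondence just obtained. For (c) and (d), I observe from Definition \ref{2.7} that $P_{K_i,f}^{\cC_i}\supseteq P_{K_i,f}^{\cC_i,l_0}$ contains every prime of $K_i$ lying over a rational prime $p\neq l_0$ that splits completely in $K_i(\mu_{l_0})/\bQ$; hence $\underline{P_{K_i,f}^{\cC_i}}\supseteq\cs(K_i(\mu_{l_0})/\bQ)\setminus\{l_0\}$. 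Combining this with the displayed equality of the previous sentence and the finiteness of $T_i$, for every finite Galois subextension $L_i/K_i$ of $K_i^{\cC_i}/K_i$ one gets $\underline{S_i}\cap\cs(L_i/\bQ)\supseteq\cs(L_i(\mu_{l_0})/\bQ)\setminus(\text{a finite set})$, which has Dirichlet density $1/[L_i(\mu_{l_0}):\bQ]>0$. Thus (c) and (d) hold, and Theorem \ref{charthm} delivers the unique $\tau\in\Iso(K_2^{\cC_2}/K_2,K_1^{\cC_1}/K_1)$ inducing $\sigma$.

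The principal difficulty is the first step: one must identify the abstract kernel condition (c) with the assertion that $\sigma$ is compatible, up to a twist by some $a_l\in\bF_{l_0}^{\ast}$, with the restrictions to the $l$-part of $\bQ_{\Sigma',l_0}$, and then argue --- via a finite partition and subadditivity of $\delta_{\sup}$ --- that a single twist $a$ may be used on a subset of $\Sigma'_{l_0}$ of positive upper Dirichlet density, which is exactly the input required by Proposition \ref{relloccor}. Everything after that is bookkeeping on top of Proposition \ref{relloccor} and Theorem \ref{charthm}; the one point to handle with care is the notational clash between the set $\Sigma'$ of Remark \ref{csset}(ii) and the given $\Sigma'$, which is why the density estimate in the third paragraph is phrased directly in terms of $\mu_{l_0}$ rather than via Remark \ref{csset}(ii).
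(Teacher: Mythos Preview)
Your proof is correct and follows essentially the same route as the paper: interpret hypothesis (c) as compatibility of $\sigma$ with the $l$-components of the restriction $G_{K_i}^{\cC_i}\to G(\bQ_{\Sigma',l_0}/\bQ)$ up to a twist $a_l\in\bF_{l_0}^{\ast}$, pigeonhole on $\bF_{l_0}^{\ast}$ to find a single $a$ working on a subset of positive upper density, feed this into Proposition \ref{relloccor}, and then verify the hypotheses of Theorem \ref{charthm}. The only differences are cosmetic: the paper first shrinks $\Sigma'$ to force $K_i$ and $\bQ_{\Sigma',l_0}$ linearly disjoint (so every $r_{i,l}$ is surjective), whereas you keep the possibly-trivial $r_{i,l}$'s and park them at $a_l=1$; and your density formula should read $1/[\widetilde{L_i}(\mu_{l_0}):\bQ]$ rather than $1/[L_i(\mu_{l_0}):\bQ]$, though this does not affect the positivity conclusion.
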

\begin{proof}
Removing a finite subset from $\Sigma'$, we may assume that $K_i/\bQ$ and $\bQ_{\Sigma',l_0}/\bQ$ are linearly disjoint for each $i \in \{ 1,2 \}$.
Then we have $G(K_{i,\Sigma',l_0}/K_i) \simeq G(\bQ_{\Sigma',l_0}/\bQ)$.
By (c), 
for each $l \in \Sigma'_{l_0}$, there exists $a_l \in \bF_{l_0}^{\ast}$ such that the following diagram commutes: 
\begin{equation*}
\xymatrix{
G_{K_1}^{\cC_1}\ar[rr]^-{\simeq}_{\sigma} \ar@{->>}[d]&&G_{K_2}^{\cC_2}\ar@{->>}[d]\\
G(K_1(\mu_{l})/K_1)/G(K_1(\mu_{l})/K_1)^{l_0}\ar[rr]^-{\simeq}\ar[d]^-{\simeq}&&G(K_2(\mu_{l})/K_2)/G(K_2(\mu_{l})/K_2)^{l_0}\ar[d]^-{\simeq}\\
G(\bQ(\mu_{l})/\bQ)/G(\bQ(\mu_{l})/\bQ)^{l_0}\ar[rr]^-{\simeq}_{(\ )^{a_l}}&&G(\bQ(\mu_{l})/\bQ)/G(\bQ(\mu_{l})/\bQ)^{l_0}
}
\end{equation*}
where the bottom horizontal arrow is the automorphism of $G(\bQ(\mu_{l})/\bQ)/G(\bQ(\mu_{l})/\bQ)^{l_0}(\simeq \bF_{l_0})$ obtained by taking $a_l$-th powers.
By (b) 
and the finiteness of $\# \bF_{l_0}^{\ast}$, there exists $a \in \bF_{l_0}^{\ast}$ such that 
\begin{equation*}
\delta_{\sup}(
\{ l \in \Sigma'_{l_0} \mid a_l = a \}
) > 0.
\end{equation*}
Replacing $\Sigma'$ by $\{ l \in \Sigma'_{l_0} \mid a_l = a \}$, 
we may assume that 
the following diagram commutes: 
\begin{equation*}
\xymatrix{
G_{K_1}^{\cC_1}\ar[rr]^-{\simeq}_{\sigma} \ar@{->>}[d]&&G_{K_2}^{\cC_2}\ar@{->>}[d]\\
G(K_{1,\Sigma',l_0}/K_1)\ar[rr]^-{\simeq}\ar[d]^-{\simeq}&&G(K_{2,\Sigma',l_0}/K_2)\ar[d]^-{\simeq}\\
G(\bQ_{\Sigma',l_0}/\bQ)\ar[rr]^-{\simeq}_{(\ )^{a}}&&G(\bQ_{\Sigma',l_0}/\bQ)
}
\end{equation*}
Write $\Sigma \defeq \Sigma(\cC_1) (= \Sigma(\cC_2)$ by Lemma \ref{2.0}).
Set $S_i \defeq A_i \setminus T_i
$
for $i=1,2$, 
where 
a subset $A_i \subset P_{K_i,f}^{\cC_i}$ 
and a finite subset $T_i \subset P_{K_i,f}^{\cC_i}$
are taken 
as in the assertion of Proposition \ref{relloccor}.
Let us prove that the conditions in Theorem \ref{charthm} hold.
Condition (a) in Theorem \ref{charthm} follows from 
(a) in the assumption.
By (b) in the assumption and Proposition \ref{relloccor} $(i)$, 
condition (b) in Theorem \ref{charthm} holds.
Let $l \in \Sigma$.
By Remark \ref{csset} $(ii)$, $P_{K_i,f}^{\cC_i,l} \supset (\cs(\bQ(\mu_{l})/\bQ)\setminus \{ l \})(K_i)$, so that 
$\underline{P_{K_i,f}^{\cC_i}} \supset \underline{P_{K_i,f}^{\cC_i,l}} \supset \cs(\bQ(\mu_{l})/\bQ)\setminus \{ l \}$.
Therefore, 
for any finite Galois subextension $L_i$ of $K_i^{\cC_i}/K_i$, 
\begin{equation*}
\begin{split} 
\delta_{\sup}(\underline{S_i} \cap \cs(L_i/\bQ)) 
&= \delta_{\sup}(\underline{P_{K_i,f}^{\cC_i} \setminus T_i}  \cap \cs(L_i/\bQ))\\
&= \delta_{\sup}(\underline{P_{K_i,f}^{\cC_i}} \cap \cs(L_i/\bQ))\\
&\geq \delta_{\sup}((\cs(\bQ(\mu_{l})/\bQ) \cap\cs(L_i/\bQ))\\
&=\delta_{\sup}(\cs(\widetilde{L_i}(\mu_{l})/\bQ))\\
&= \frac{1}{[\widetilde{L_i}(\mu_l):\bQ]}\\
&>0,
\end{split}
\end{equation*}
where the first and last 
equalities follow from Proposition \ref{relloccor} $(ii)$ and the Chebotarev density theorem, respectively.
Thus, conditions (c), (d) in Theorem \ref{charthm} hold.
\end{proof}

\begin{rem}\label{relrem}
We use the notations in Theorem \ref{relthm}.
Condition (c) in Theorem \ref{relthm} holds 
if the following diagram commutes:
\begin{equation*}
\xymatrix{
G_{K_1}^{\cC_1}\ar[rr]^-{\simeq}_{\sigma} \ar@{->>}[d]&&G_{K_2}^{\cC_2}\ar@{->>}[d]\\
G_{K_1}^{(l_0)}\ar[rr]^-{\simeq}_{} \ar[rd]_-{}_{\chi_{K_1,l}^{(l_0)}}&&G_{K_2}^{(l_0)}\ar[dl]^-{\chi_{K_2,l}^{(l_0)}}\\
&{\bZ_l}^{\ast, (l_0)}.
}
\end{equation*}


\end{rem}

The following is an analog of ``the relative Grothendieck Conjecture''.
\begin{cor}\label{relGC}
For $i=1,2$, let $K_i$ be a number field, $\cC_i$ a nontrivial full class of finite groups, 
and $\sigma :G_{K_1}^{\cC_1}\isom G_{K_2}^{\cC_2}$ an isomorphism.
Write $\Sigma \defeq \Sigma(\cC_1) (= \Sigma(\cC_2)$ by Lemma \ref{2.0}).
Assume that the following conditions hold:
\begin{itemize}

\item[(a)]
For one $i$, 
$2 \in\Sigma(\cC_i)$ 
or $K_i$ has a complex prime.

\item[(b)]
The following diagram commutes:
\begin{equation*}
\xymatrix{
G_{K_1}^{\cC_1}\ar[rr]^-{\simeq}_{\sigma} \ar[rd]_-{}&&G_{K_2}^{\cC_2}\ar[dl]^-{}\\
&G_\bQ^{\ab,\Sigma},
}
\end{equation*}
where the diagonal arrows are the restrictions 
and $G_\bQ^{\ab,\Sigma} = G(\bQ(\cup_{n \in \bZ_{>0}} \mu_{n})/\bQ)^{\Sigma}$ by the Kronecker-Weber theorem.

\end{itemize}
Then 
there exists a unique $\tau \in \Iso(K_2^{\cC_2}/K_2, K_1^{\cC_1}/K_1)$ such that $\sigma$ coincides with the isomorphism induced by $\tau$. 
\end{cor}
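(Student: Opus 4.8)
The plan is to reduce the statement to Theorem \ref{relthm}. Since $\cC_1$ is nontrivial, $\Sigma = \Sigma(\cC_1) \cap \Sigma(\cC_2)$ is nonempty, so I fix a prime $l_0 \in \Sigma$. I take $\Sigma' \defeq P_{\bQ,f}$; then $\Sigma'_{l_0} = \cs(\bQ(\mu_{l_0})/\bQ) \setminus \{ l_0 \} = \{ l \in P_{\bQ,f} \mid l \equiv 1 \bmod l_0 \}$, which by the Chebotarev density theorem has Dirichlet density $\frac{1}{l_0 - 1} > 0$; thus condition (b) of Theorem \ref{relthm} holds, and condition (a) there is precisely hypothesis (a) here. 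So it remains to verify condition (c) of Theorem \ref{relthm}, and for this I would invoke Remark \ref{relrem}: it suffices to show that the diagram displayed there — with vertical composites $G_{K_i}^{\cC_i} \twoheadrightarrow G_{K_i}^{(l_0)} \overset{\chi_{K_i,l}^{(l_0)}}\rightarrow {\bZ_l}^{\ast,(l_0)}$ — commutes for each $l$ (in fact for every $l \in P_{\bQ,f}$, not only for $l \in \Sigma'_{l_0}$).

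To check this commutativity, note first that $l_0 \in \Sigma$, so $G_{K_i}^{(l_0)}$ is a quotient of $G_{K_i}^{\cC_i}$. By the Kronecker--Weber theorem, under the identification $G_\bQ^{\ab} \simeq \prod_p {\bZ_p}^{\ast}$ the $l$-adic cyclotomic character $\chi_{\bQ,l}$ is the projection to the $l$-th factor, and $\chi_{K_i,l}$ is its restriction along $G_{K_i} \hookrightarrow G_\bQ$. Passing to pro-$l_0$ quotients, the composite $G_{K_i}^{\cC_i} \twoheadrightarrow G_{K_i}^{(l_0)} \overset{\chi_{K_i,l}^{(l_0)}}\rightarrow {\bZ_l}^{\ast,(l_0)}$ is obtained from the restriction map $G_{K_i}^{\cC_i} \to G_\bQ^{\ab,\Sigma}$ by composing with a fixed homomorphism $G_\bQ^{\ab,\Sigma} \to {\bZ_l}^{\ast,(l_0)}$ — namely the projection onto the $l$-component of $G_\bQ^{\ab,\Sigma}$ followed by the projection to its pro-$l_0$ quotient — and this homomorphism does not depend on $i$. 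Since hypothesis (b) of the corollary says precisely that $\sigma$ is compatible with the restriction maps to $G_\bQ^{\ab,\Sigma}$, it is compatible with these further projections, so the diagram in Remark \ref{relrem} commutes. Hence condition (c) of Theorem \ref{relthm} holds, and that theorem provides the desired unique $\tau \in \Iso(K_2^{\cC_2}/K_2, K_1^{\cC_1}/K_1)$. (If $\delta_{\sup}(\Sigma) > 0$ one could alternatively just apply Theorem \ref{6.1}, but the argument above covers all cases given hypotheses (a) and (b).)

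I do not anticipate a genuine obstacle: all the number-theoretic content has already been established in Theorem \ref{relthm} and the results it rests on, so the corollary is essentially a formal consequence. The only real point to verify is the identification above — via Kronecker--Weber — of the relevant projection of the restriction map $G_{K_i}^{\cC_i} \to G_\bQ^{\ab,\Sigma}$ with the mod-$l_0$ reduction of the $l$-adic cyclotomic character, which is what makes hypothesis (b) feed into condition (c) of Theorem \ref{relthm}.
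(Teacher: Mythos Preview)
Your proof is correct and follows exactly the same route as the paper: choose $l_0 \in \Sigma$, set $\Sigma' = P_{\bQ,f}$, and reduce to Theorem \ref{relthm}. The paper's proof is two sentences and leaves the verification of condition (c) implicit; your use of Remark \ref{relrem} together with the Kronecker--Weber identification of $\chi_{K_i,l}^{(l_0)}$ as a factor of the restriction map $G_{K_i}^{\cC_i} \to G_\bQ^{\ab,\Sigma}$ is precisely the intended justification.
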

\begin{proof}
Let $l_0 \in \Sigma$ and $\Sigma' \defeq P_{\bQ,f}$.
Then $\Sigma'_{l_0} = \cs(\bQ(\mu_{l_0})/\bQ) \setminus \{ l_0 \}$, and hence the conditions in Theorem \ref{relthm} follow from the assumption.
\end{proof}

The goal of the rest of this section is reducing Conjecture \ref{7.1} 
to Conjecture \ref{7.2}.
\begin{conj}\label{7.1}
For $i=1,2$, let $K_i$ be a number field, $\cC_i$ a nontrivial full class of finite groups, and $\sigma :G_{K_1}^{\cC_1}\isom G_{K_2}^{\cC_2}$ an isomorphism.
Assume that the following condition holds:
\begin{itemize}

\item[(a)]
For one $i$, $2 \in\Sigma(\cC_i)$ or $K_i$ has a complex prime.

\end{itemize}
Then 
there exists a unique $\tau \in \Iso(K_2^{\cC_2}/K_2, K_1^{\cC_1}/K_1)$ such that $\sigma$ coincides with the isomorphism induced by $\tau$.
\end{conj}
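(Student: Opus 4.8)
The plan is to reduce Conjecture \ref{7.1} to a single group-theoretic recoverability statement, isolating the one place in the machinery of \S\S2--5 that genuinely needs a density hypothesis; this statement should be (essentially) Conjecture \ref{7.2}, asserting that the canonical local correspondence furnished by Theorem \ref{3.5} always satisfies condition $(\Char)$ --- equivalently, that the residue characteristic of each prime in $P_{K,f}^{\cC}$ is recoverable group-theoretically from $G_K^{\cC}$.

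First, by Theorem \ref{6.1} we may assume $\delta_{\sup}(\Sigma) = 0$, where $\Sigma \defeq \Sigma(\cC_1) = \Sigma(\cC_2)$ by Lemma \ref{2.0}. By Theorem \ref{3.5}, $\sigma$ already induces a Galois-equivariant local correspondence $\phi$ between $P_{K_1,f}^{\cC_1}$ and $P_{K_2,f}^{\cC_2}$ for $\sigma$, which satisfies conditions $(\Char)$ and $(\Deg)$ on the sub-correspondence supported on primes above $\Sigma$. The only ingredient missing from the proof of Theorem \ref{6.1} in the present generality is that $\phi$ satisfies $(\Char)$ on \emph{all} of $P_{K_1,f}^{\cC_1}$: for $\delta_{\sup}(\Sigma) > 0$ this is Proposition \ref{incomm5} $(i)$, proved via the separatedness of decomposition groups in $G(\bQ^{\Sigma} \cap \bQ(\cup_{l \in \Sigma}\mu_{l})/\bQ)$ (Proposition \ref{incomm4}), but that separatedness argument uses $\delta_{\sup}(\Sigma) > 0$ essentially. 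Granting Conjecture \ref{7.2}, $\phi$ is a local correspondence between $S_1 \defeq P_{K_1,f}^{\cC_1}$ and $S_2 \defeq P_{K_2,f}^{\cC_2}$ for $\sigma$ satisfying $(\Char)$.

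It then suffices to verify the hypotheses of Theorem \ref{charthm}. Condition (a) there is condition (a) of Conjecture \ref{7.1}; condition (b) is the local correspondence just produced. By Remark \ref{csset} $(ii)$, for every $l \in \Sigma$ one has $\underline{S_i} \supset \cs(\bQ(\mu_l)/\bQ) \setminus \{l\}$, so $\underline{S_2} \cap \cs(K_2/\bQ) \supset \cs(K_2(\mu_l)/\bQ) \setminus \{l\} \neq \emptyset$, giving condition (d); and for any finite Galois subextension $L_1$ of $K_1^{\cC_1}/K_1$ one has $\underline{S_1} \cap \cs(L_1/\bQ) \supset \cs(\widetilde{L_1}(\mu_l)/\bQ) \setminus \{l\}$, whence $\delta_{\sup}(\underline{S_1} \cap \cs(L_1/\bQ)) \geq 1/[\widetilde{L_1}(\mu_l):\bQ] > 0$ by the Chebotarev density theorem, giving condition (c). Theorem \ref{charthm} then yields the unique $\tau \in \Iso(K_2^{\cC_2}/K_2, K_1^{\cC_1}/K_1)$ inducing $\sigma$, which is the assertion of Conjecture \ref{7.1}.

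The main obstacle is thus Conjecture \ref{7.2}: recovering residue characteristics when $\Sigma$ is a thin set of primes. The separatedness mechanism of \S4 (Lemmas \ref{incomm1}, \ref{rel1}, \ref{rel2}, Propositions \ref{incomm2}, \ref{incomm4}) ultimately forces a bound of the form $\delta_{\sup}(\Sigma) \leq \prod_{l_0 \in \Sigma}\frac{l_0-1}{l_0}$, and this yields a contradiction only because the product vanishes exactly when $\sum_{l \in \Sigma} l^{-1} = \infty$ (Lemma \ref{incomm3}), which fails for a general density-zero $\Sigma$. A proof of Conjecture \ref{7.2} would seem to require either a finer group-theoretic invariant of a decomposition group $D_{\overline{\p}}(K^{\cC}/K)$ detecting $\overline{\p}|_\bQ$ --- beyond its images in the abelian and cyclotomic quotients recovered in \S\S2--3 and Proposition \ref{cyc3} --- or an argument that tolerates the coincidence of a thin family of projected decomposition groups yet still corrects $\phi$ on a density-one set of primes; as Remark \ref{charrem} records, no such argument is currently known.
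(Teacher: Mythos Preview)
Your proposal is correct and matches the paper's own treatment: the statement is a conjecture, and the paper's Proposition \ref{7.4} reduces it to Conjecture \ref{7.2} by exactly the route you describe --- set $S_i = P_{K_i,f}^{\cC_i}$, invoke Conjecture \ref{7.2} for condition (b) of Theorem \ref{charthm}, and verify (c), (d) via $\underline{P_{K_i,f}^{\cC_i}} \supset \cs(\bQ(\mu_l)/\bQ) \setminus \{l\}$ together with Chebotarev. Your closing discussion of the obstruction is likewise in line with Remark \ref{charrem}.
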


\begin{conj}\label{7.2}
We use the notations in Conjecture \ref{7.1}.
By Theorem \ref{3.5}, there exists a local correspondence $\phi$ between $P_{K_1,f}^{\cC_1}$ and $P_{K_2,f}^{\cC_2}$ for $\sigma$.
Then 
$\phi$ satisfies condition $(\Char)$.
\end{conj}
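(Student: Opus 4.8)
The plan is to follow, and then isolate the one essential use of the density hypothesis in, the argument that proves Proposition~\ref{incomm5}~$(i)$ (which is exactly the special case $\delta_{\sup}(\Sigma(\cC_i))>0$ of the conjecture). Write $\Sigma$ for $\Sigma(\cC_1)=\Sigma(\cC_2)$ (Lemma~\ref{2.0}), put $N\defeq\bQ^{\Sigma}\cap\bQ(\cup_{l\in\Sigma}\mu_{l^{\infty}})$ and $\mathcal{G}\defeq G(N/\bQ)=\prod_{l\in\Sigma}{\bZ_l}^{\ast,\Sigma}$ (an abelian extension of $\bQ$, and $N\subseteq\bQ^{\Sigma}\subseteq K_i^{\cC_i}$). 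First I would recover, group-theoretically and compatibly with $\sigma$, the restriction homomorphisms $\rho_i\colon G_{K_i}^{\cC_i}\to\mathcal{G}$: this follows by combining Lemma~\ref{2.0} (which recovers $\Sigma$, hence $\mathcal{G}$) with Proposition~\ref{cyc3} (which recovers each $\chi_{K_i,l}^{\cC_i}$, hence the $l$-component of $\rho_i$, for $l\in\Sigma$), so that $\rho_2\circ\sigma=\rho_1$.

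The second step is the local--global input, via Proposition~\ref{1.2}. For $\overline{\p}\in P_{K_i,f}^{\cC_i}(K_i^{\cC_i})$ lying above the prime number $p$, the image $\rho_i(D_{\overline{\p}}(K_i^{\cC_i}/K_i))$ equals the decomposition group $D_{\overline{\p}|_{N}}(N/N\cap K_i)$, which is an \emph{open} subgroup (of index at most $[K_i:\bQ]$) of the decomposition group $D_p\defeq D_{\overline{\p}|_{N}}(N/\bQ)\subseteq\mathcal{G}$ of $p$ in $N/\bQ$. Hence, setting $\overline{\p}_2\defeq\phi(\overline{\p}_1)$ and letting $p_1,p_2$ be the residue characteristics of $\overline{\p}_1|_{K_1},\overline{\p}_2|_{K_2}$, the identities $\sigma(D_{\overline{\p}_1})=D_{\overline{\p}_2}$ and $\rho_2\circ\sigma=\rho_1$ show that $D_{p_1}\cap D_{p_2}$ contains the common open subgroup $\rho_1(D_{\overline{\p}_1})=\rho_2(D_{\overline{\p}_2})$. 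Thus, to prove condition $(\Char)$ for $\phi$ it suffices to establish the following separatedness statement: for distinct prime numbers $p,q$, the subgroup $D_p\cap D_q$ is not open in $D_p$ (equivalently, not open in $D_q$).

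For $\delta_{\sup}(\Sigma)>0$ this separatedness is precisely Proposition~\ref{incomm4}, so the whole content of the conjecture beyond Proposition~\ref{incomm5} lies in establishing it when $\Sigma$ has Dirichlet density zero, and I expect this to be the real obstacle --- indeed I believe the abelian data $\rho_i$ simply does not suffice there. For instance, in the pro-$p$ case $\Sigma=\{p\}$ one has $\mathcal{G}={\bZ_p}^{\ast,\Sigma}$, a $\bZ_p$-module of rank~$1$ up to torsion; $D_q=\overline{\langle q\rangle}$ is open in $\mathcal{G}$ for most primes $q$, so for many pairs $q\neq q'$ the group $D_q\cap D_{q'}$ \emph{is} open in both, and $\rho_i$ (equivalently, the cyclotomic character) cannot detect residue characteristics at all. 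One therefore needs a strictly finer group-theoretic invariant; two approaches I would try are: $(1)$ recover $p$ from the metabelian --- or a higher nilpotent/prosolvable --- quotient of $G_{K_i}^{\cC_i}$, extracting it from the way $D_{\overline{\p}}$ meets the Galois groups of suitable non-abelian subextensions (e.g.\ Kummer extensions, or extensions ramified only at a prescribed finite set of primes) together with reciprocity; and $(2)$ a ``propagation'' argument: condition $(\Char)$ is already known by Theorem~\ref{3.5}~$(ii)$ for the primes of $P_{K_i,f}^{\cC_i}$ above $\Sigma$, and one would try to transport it to the remaining primes using the Galois-equivariance of $\phi$ and its functoriality in open subgroups (Lemmas~\ref{3.3} and~\ref{3.4}). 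Producing such an invariant --- or proving the density-zero separatedness directly --- is, I expect, exactly the gap that keeps the statement a conjecture; once available, it would yield Conjecture~\ref{7.1} through Theorem~\ref{charthm}.
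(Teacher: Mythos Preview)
This statement is Conjecture~\ref{7.2} in the paper --- an open problem, not a proven result --- so there is no proof in the paper to compare against. Your write-up is not a proof either, and you correctly recognize this: you reproduce the argument for the case $\delta_{\sup}(\Sigma)>0$ (which is exactly the paper's Proposition~\ref{incomm5}~$(i)$, resting on Proposition~\ref{incomm4}), isolate the separatedness of the decomposition groups $D_p$ in $G(\bQ^{\Sigma}\cap\bQ(\cup_{l\in\Sigma}\mu_{l^{\infty}})/\bQ)$ as the crux, and observe that this separatedness genuinely fails in the density-zero regime (your pro-$p$ example is on point). This diagnosis matches the paper's perspective precisely; the author likewise has no proof in the general case and in Remark~\ref{charrem} explicitly notes that no other route to condition~$(\Char)$ is known. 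Your suggested approaches (metabelian or higher-nilpotent invariants, propagation from the primes above $\Sigma$ via Theorem~\ref{3.5}~$(ii)$) are reasonable directions but, as you yourself say, remain speculative.
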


\begin{prop}\label{7.4}
If Conjecture \ref{7.2} holds, 
then Conjecture \ref{7.1} holds.
\end{prop}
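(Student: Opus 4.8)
The plan is to reduce Conjecture \ref{7.1} to Theorem \ref{charthm} by taking $S_i \defeq P_{K_i,f}^{\cC_i}$ for $i=1,2$. So let $K_i$, $\cC_i$ and $\sigma : G_{K_1}^{\cC_1}\isom G_{K_2}^{\cC_2}$ be as in Conjecture \ref{7.1}, write $\Sigma \defeq \Sigma(\cC_1) = \Sigma(\cC_2)$ (nonempty since the $\cC_i$ are nontrivial), and fix some $l \in \Sigma$. I will verify conditions (a)--(d) of Theorem \ref{charthm} for this choice of $S_1,S_2$; no case distinction on $\delta_{\sup}(\Sigma)$ will be needed, since Theorem \ref{charthm} imposes no density hypothesis on $\Sigma$ itself.

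Condition (a) of Theorem \ref{charthm} is literally condition (a) of Conjecture \ref{7.1}. For condition (b): by Theorem \ref{3.5} there is a (unique) local correspondence $\phi$ between $P_{K_1,f}^{\cC_1}$ and $P_{K_2,f}^{\cC_2}$ for $\sigma$, and since we are assuming Conjecture \ref{7.2}, this $\phi$ satisfies condition $(\Char)$ — this single input is the entire substance of the reduction.

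For conditions (c) and (d) I would exploit that $\underline{P_{K_i,f}^{\cC_i}}$ contains a set of positive density. Indeed, by Remark \ref{csset} $(ii)$ one has $P_{K_i,f}^{\cC_i,l} \supset \bigl(\cs(\bQ(\mu_l)/\bQ) \setminus \{ l \}\bigr)(K_i)$ (if $p$ splits completely in $\bQ(\mu_l)/\bQ$ then $\mu_l \subset \bQ_p \subset K_{i,\p}^{\cC_i}$ for $\p \mid p$, and $\p \notin P_{K_i,l}$), hence $\underline{P_{K_i,f}^{\cC_i}} \supset \underline{P_{K_i,f}^{\cC_i,l}} \supset \cs(\bQ(\mu_l)/\bQ) \setminus \{ l \}$. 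Therefore, for any finite Galois subextension $L_1$ of $K_1^{\cC_1}/K_1$ — so that $L_1/\bQ$ is a finite extension — we get $\underline{S_1} \cap \cs(L_1/\bQ) \supset \bigl(\cs(\bQ(\mu_l)/\bQ) \setminus \{ l \}\bigr) \cap \cs(L_1/\bQ)$, and the latter set has Dirichlet density $1/[\widetilde{L_1}(\mu_l):\bQ] > 0$ by the Chebotarev density theorem (a prime splits completely in $L_1$ iff it splits completely in $\widetilde{L_1}$, so splitting completely in both $\bQ(\mu_l)$ and $L_1$ amounts to splitting completely in $\widetilde{L_1}(\mu_l)$). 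This is condition (c). The same containment with $L_1$ replaced by $K_2$ gives $\underline{S_2} \cap \cs(K_2/\bQ) \supset \cs(K_2(\mu_l)/\bQ) \setminus \{ l \}$, an infinite set, which is condition (d).

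With all four hypotheses of Theorem \ref{charthm} verified, it produces the unique $\tau \in \Iso(K_2^{\cC_2}/K_2, K_1^{\cC_1}/K_1)$ inducing $\sigma$ asserted by Conjecture \ref{7.1}. I do not expect any genuine obstacle: the argument is purely a matter of packaging, the only nontrivial ingredient — a local correspondence satisfying $(\Char)$ — being handed to us by the hypothesis that Conjecture \ref{7.2} holds, while the density estimates feeding Theorem \ref{charthm} are immediate from Remark \ref{csset} $(ii)$ and Chebotarev. The one point deserving a little care is that condition (c) of Theorem \ref{charthm} must hold \emph{uniformly} over all finite Galois subextensions $L_1$; this is exactly what the containment $\underline{P_{K_i,f}^{\cC_i}} \supset \cs(\bQ(\mu_l)/\bQ)\setminus\{l\}$ secures.
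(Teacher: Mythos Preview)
Your proposal is correct and follows essentially the same route as the paper: set $S_i = P_{K_i,f}^{\cC_i}$, get condition (b) of Theorem \ref{charthm} from Conjecture \ref{7.2}, and verify (c)--(d) via the containment $\underline{P_{K_i,f}^{\cC_i}} \supset \cs(\bQ(\mu_l)/\bQ)\setminus\{l\}$ together with Chebotarev. The paper compresses the density computation by pointing to the analogous step in the proof of Theorem \ref{relthm}, whereas you spell it out directly; otherwise the arguments match.
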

\begin{proof}
Assume Conjecture \ref{7.2} holds.
Set $S_i \defeq P_{K_i,f}^{\cC_i}$
for $i=1,2$.
Then condition (b) in Theorem \ref{charthm} holds.
By a similar argument to the proof of Theorem \ref{relthm}, 
we have 
\begin{equation*}
\begin{split} 
\delta_{\sup}(\underline{S_i} \cap \cs(L_i/\bQ)) 
\geq \delta_{\sup}((\cs(\bQ(\mu_{l})/\bQ) \cap\cs(L_i/\bQ))
= \frac{1}{[\widetilde{L_i}(\mu_l):\bQ]}
>0
\end{split}
\end{equation*}
for any $l \in \Sigma$ and any finite Galois subextension $L_i$ of $K_i^{\cC_i}/K_i$, and hence 
conditions (c), (d) in Theorem \ref{charthm} hold.
\end{proof}

\begin{rem}\label{7.5}
We use the notations in Conjecture \ref{7.1}.
Write $\Sigma \defeq \Sigma(\cC_1) (= \Sigma(\cC_2)$ by Lemma \ref{2.0}).

\noindent
$(i)$
Conjecture \ref{7.1} states that as long as condition (a) holds, 
any isomorphism between $G_{K_1}^{\cC_1}$ and $G_{K_2}^{\cC_2}$ comes from a unique isomorphism between $K_2^{\cC_2}/K_2$ and $K_1^{\cC_1}/K_1$, 
even in the case when $\Sigma = \{ p \}$, that is,  
$G_{K_i}^{\cC_i} = G_{K_i}^{(p)}$ for $i=1,2$.

\noindent
$(ii)$
Let $l \in \Sigma$. By Theorem \ref{2.9} and Proposition \ref{1.4}, there exists a 
local correspondence $\phi^l$ between $P_{K_1,f}^{\cC_1,l}$ and $P_{K_2,f}^{\cC_2,l}$ for $\sigma$.
By an easy modification of the proof of Proposition \ref{7.4}, 
we can replace $\phi$ in Proposition \ref{7.4} by $\phi^l$ (for some $l \in \Sigma$).
Therefore, 
to prove Conjecture \ref{7.1}, 
it suffices to show, 
instead of Conjecture \ref{7.2}, 
the weaker condition 
that 
$\phi^l$ satisfies condition $(\Char)$ (for some $l \in \Sigma$).


\end{rem}


\end{document}